\newcommand{\R}{\mathbb{R}}	
\renewcommand{\phi}{\varphi}
\renewcommand{\div}{\mathrm{div}}
\newcommand{\nnu}{\bm{\nu}}	
\newcommand{\<}{\langle}		
\renewcommand{\>}{\rangle}		
\newcommand{\dx}{\,\mathrm{d}x}	
\newcommand{\ds}{\,\mathrm{d}S}	
\newcommand{\dz}{\,\mathrm{d}z} 
\newcommand{\dth}{\,\mathrm{d}S(\theta)}	
\newcommand{\vint}{v^{\textup{int}}}
\newcommand{\wint}{w^{\textup{int}}}
\newcommand{\ire}{{i,R,\epsilon}}
\newcommand{\jre}{{j,R,\epsilon}}
\newcommand{\mre}{{m,R,\epsilon}}
\newcommand{\nre}{{n,R,\epsilon}}
\DeclarePairedDelimiter\abs{\lvert}{\rvert}	
\DeclarePairedDelimiter\norm{\lVert}{\rVert}	
\let\oldabs\abs
\def\abs{\@ifstar{\oldabs}{\oldabs*}}
\let\oldnorm\norm
\def\norm{\@ifstar{\oldnorm}{\oldnorm*}}
\DeclareMathOperator{\dist}{\mathrm{dist}}
\DeclareMathOperator{\const}{\mathrm{const}}
\theoremstyle{plain}
\newtheorem{theorem}{Theorem}[section]
\newtheorem{lemma}[theorem]{Lemma}
\newtheorem{corollary}[theorem]{Corollary}
\newtheorem{proposition}[theorem]{Proposition}
\theoremstyle{remark}
\newtheorem{remark}[theorem]{Remark}
\author{Veronica Felli, Roberto Ognibene}
\author{Veronica Felli\footnote{
Dipartimento di Scienza dei Materiali,
 Universit\`a di Milano--Bicocca,
Via Cozzi 55, 20125 Milano, Italy,
\texttt{veronica.felli@unimib.it}}\and Roberto Ognibene\footnote{
Dipartimento di Matematica e Applicazioni,
 Universit\`a di Milano--Bicocca,
Via Cozzi 55, 20125 Milano, Italy,
\texttt{roberto.ognibene@unimib.it}}}
\date{September 9, 2018}
\title{Sharp Convergence Rate of Eigenvalues in a Domain with a Shrinking Tube}
\begin{document}

\maketitle

\begin{abstract}
In this paper we consider  a class of singularly perturbed domains,
obtained by attaching a cylindrical tube to a fixed bounded region and letting
its section shrink to zero. 
 We use
 an Almgren-type monotonicity formula to evaluate the sharp
 convergence rate of perturbed simple eigenvalues, 
via Courant-Fischer Min-Max characterization and blow-up analysis for
scaled eigenfunctions.
\end{abstract}

\paragraph{Keywords.} 
Singularly perturbed domains, asymptotics of eigenvalues, monotonicity formula.

\paragraph{MSC classification.} Primary: 35P20; Secondary: 35P15, 35J25.

\section{Introduction and Main Results}

The purpose of this work is to investigate the behaviour of the
eigenvalues of the Dirichlet-Laplacian in a class of singularly
perturbed domains: in particular we are interested in the sharp
convergence rate of the eigenvalue variation,  i.e. in the evaluation of
the leading term in its asymptotic expansion. The perturbation
consists in attaching a cylindrical tube to a fixed domain and letting
the section of the tube shrink.

Let $N\geq2$ and $\Omega\subseteq\R^N$ be open, bounded and connected. Suppose that $0\in\partial\Omega$ and that $\partial\Omega$ is flat in a neighbourhood of the origin, namely
\begin{equation}\label{eqn:hp_plane_boundary}
  \exists\,R_{\textup{max}}>1\quad\text{such that}\quad M:=\{(x_1,\dots,x_N)\in\R^N\colon x_1=0,~\abs{x}\leq R_{\textup{max}}\}\subseteq \partial\Omega.
\end{equation}
Using the following notation for the positive half-space, half-balls and half-spheres
\begin{gather*}
  \R^N_+:=\{(x_1,\dots,x_N)\in\R^N \colon x_1>0\}, \\
  B_r^+:=\{x\in \R^N_+\colon \abs{x}< r\},\qquad S_r^+:=\{x\in \R^N_+\colon \abs{x}=r\},
\end{gather*}
we can suppose, without losing generality, that
\begin{equation*}
\quad B_{R_{\textup{max}}}^+\subseteq \Omega\subseteq \R^N_+.
\end{equation*}
Let $\Sigma\subset\subset M$ be open, connected and containing the origin
$0$. For simplicity of exposition we assume that $\partial\Sigma$ is of class $C^2$; although this regularity assumption can be relaxed, see Remark \ref{rmk:lip_bound}. Moreover we assume, for sake of simplicity, that the radius of $\Sigma$ in $\R^{N-1}$ is 1, i.e.
\[
 \max_{x\in \partial\Sigma}\abs{x}=1.
\]
Finally we assume that $\Sigma$ is \textbf{starshaped} with respect to
$0$, i.e.
\begin{equation}\label{eqn:hp_sigma_starshaped}
 x\cdot\nnu\geq0 \text{ for all }x\in\partial\Sigma,
\end{equation}
where $\nnu$ denotes the exterior unit normal vector to $\partial\Sigma$. 
Let $\epsilon\in\R$, with $0<\epsilon\leq1$, and let $T_\epsilon:=(-1,0]\times\epsilon\Sigma$ be a cylindrical tube with section $\epsilon\Sigma=\{\epsilon x\colon x\in \Sigma\}$. Let us denote by
\begin{equation}\label{eqn:perturbed_dom}
  \Omega^\epsilon=\Omega\cup T_\epsilon
\end{equation}
the \emph{perturbed} domain (see Figure \ref{fig:domains}).

Let $p\in L^\infty(\R^N)$ be a weight function such that  $p\geq0$
a.e. and $p\not\equiv 0$ in $\Omega$. For any open, bounded set
$\omega\subseteq \R^N$, we consider the weighted  Dirichlet eigenvalue problem for the Laplacian on~$\omega$
\begin{equation}\label{pbm:eigen_omega}\tag{$E_\omega$}
  \left\{\begin{aligned}
    -\Delta \phi & =\lambda p \phi, & & \text{in}~\omega, \\
    \phi  &=0,  &&\text{on}~\partial\omega.
  \end{aligned}\right.
\end{equation}
By classical spectral theory
we have that, if $p\not\equiv 0$ in $\omega$, there exists a sequence of positive eigenvalues
of (\ref{pbm:eigen_omega})
\[
 0<\lambda_1(\omega)< \lambda_2(\omega)\leq \lambda_3(\omega)\leq \dots
\]
repeated according to their multiplicity.  We denote by
$(\lambda_n)_n:=(\lambda_n(\Omega))_n$ the sequence of eigenvalues of
the unperturbed problem $(E_\Omega)$, and by $(\phi_n)_n$ a
corresponding sequence of eigenfunctions such that
$\int_\Omega p\abs{\phi_n}^2 \,dx=1$ and $\int_\Omega p\phi_n \phi_m\,dx=0$ if
$n\neq m$. Similarly, we denote by
$(\lambda_n^\epsilon)_n:=(\lambda_n(\Omega^\epsilon))_n$ and
$(\phi_n^\epsilon)_n$ the sequences of eigenvalues and eigenfunctions
of the perturbed problem $(E_{\Omega^\epsilon})$, such
that $\int_\Omega p|\phi_n^\epsilon|^2 \,dx=1$ and
$\int_\Omega p\phi_n^\epsilon \phi_m^\epsilon\,dx=0$ if $n\neq m$.

Let $j\in \mathbb{N}$ be such that
\begin{equation}\label{eqn:hp_simplicity}
 \lambda_j\quad\text{is \textbf{simple}.}
\end{equation}
Assumption (\ref{eqn:hp_simplicity}) is not so restrictive: indeed, 
the simplicity of all eigenvalues is a generic property with respect
to perturbations of the domain , see \cite{Micheletti1972,Uhlenbeck1976}.

 Classical results (see for instance \cite{BUCUR_1998,DANERS_2003})
ensure the continuity
with respect to our domain perturbation, i.e. $\lambda_j^\epsilon$ is simple for $\epsilon$ small and
\begin{equation}\label{eq:7}
 \lambda_j^\epsilon\longrightarrow \lambda_j,\qquad\text{as }\epsilon\to 0.
\end{equation}
Furthermore, for every $\epsilon$   we can choose the
eigenfunction $\phi_j^\epsilon$ in such a way that 
\begin{equation}\label{eq:3}
 \phi_j^\epsilon\longrightarrow \phi_j, \qquad\text{in }H^1_0(\Omega^1),\quad\text{as }\epsilon\to 0,
\end{equation}
where the functions are trivially extended in $\Omega^1$  outside their domains. 

The main goal of this paper is to find the exact asymptotics of the difference $\lambda_j-\lambda_j^\epsilon$ as $\epsilon\to 0$.

\begin{figure}
    \subfloat[][\emph{Unperturbed Domain}]{
    \begin{tikzpicture}[scale=1.5]  
    \draw [->] (0,-1.5) -- (0,1.5) node [left] {$x_2,\dots,x_N$};
    \draw [->] (-1,0) -- (2,0) node [below] {$x_1$};
    \draw [thick] (0,0) -- (0,1) to [out=90,in=150] (1,1) to [out=330,in=30] (1.5,0.5) to [out=210,in=105] (1.75,-1) to [out=285,in=270] (0,-1) -- (0,0); 
    \draw[fill] (0,0) circle [radius=0.025];
    \node [below] at (-0.1,0) {0};
    \node [above] at (1.3,1) {$\Omega$};
    \end{tikzpicture}
   }\hspace{1cm}
    \subfloat[][\emph{Perturbed Domain}]{
    \begin{tikzpicture}[scale=1.5]
    \draw [->] (0,-1.5) -- (0,1.5) node [left] {$x_2,\dots,x_N$};
    \draw [->] (-2,0) -- (2,0) node [below] {$x_1$};
    \draw [thick] (0,0.2) -- (0,1) to [out=90,in=150] (1,1) to [out=330,in=30] (1.5,0.5) to [out=210,in=105] (1.75,-1) to [out=285,in=270] (0,-1) -- (0,-0.2);
    \draw [thick] (0,-0.2) -- (-1.5,-0.2) -- (-1.5,0.2) -- (0,0.2);
    \draw [black,fill=black,opacity=0.5] (-1.6,0) to [out=90,in=180] (-1.5,0.2) to [out=0,in=90] (-1.4,0) to [out=270,in=0] (-1.5,-0.2) to [out=180,in=270] (-1.6,0);
    \draw [black,fill=black,opacity=0.5] (-0.1,0) to [out=90,in=180] (0,0.2) to [out=0,in=90] (0.1,0) to [out=270,in=0] (0,-0.2) to [out=180,in=270] (-0.1,0);
    
    \node [above] at (-1.8,0.1) {$\epsilon\Sigma$};
    \node [above] at (1.3,1) {$\Omega^\epsilon$};
    \node [below] at (-0.75,-0.2) {$T_\epsilon $};
   \end{tikzpicture}
   }
   \caption{}
\label{fig:domains}
\end{figure}

The problem of convergence of eigenvalues and eigenfunctions of the
Dirichlet-Laplacian with respect to perturbations of the domain has
been widely studied in the past. For instance, for general
perturbations that cover the shrinking tube, in \cite{Babuska1965} the
authors investigated the stability of the spectrum with respect to
general scalar products, while \cite{Bucur2006} dealt with the
convergence of solutions of a nonlinear eigenvalue problem (see also
\cite{Dancer1988,Dancer1990}). Within an extensive literature, we
mention \cite{Hale2005}, \cite{Henry2005} and \cite{Burkenov2006} as
detailed surveys. In \cite{Davies1993,Burkenov2008} bounds for
the rate of convergence have been found; furthermore, in
\cite{Taylor2013} the framework is pretty similar to ours and the
author proved an estimate of the type
$\lambda_j-\lambda_j^\epsilon=O(\epsilon^a)$, where $a$ depends only
on the distance between $\lambda_j$ and its neighbours.  
  We mention also that asymptotic expansions of the eigenvalues of the
  Dirichlet Laplacian in domains with a thin shrinking cylindrical
  protrusion of finite length were obtained in \cite{Gadylshin}, see
  also \cite{Amirat_Chechkin_Gadylshin} for a related problem in a
  two-dimensional domain with thin shoots; 
we notice that Theorem 4.1 in  \cite{Gadylshin} provides the exact vanishing
rate of the eigenvalue variation $\lambda_j-\lambda_j^\epsilon$ only
when $\nabla\varphi_j(0)\neq0$, but it does not say what
is the leading term in the expansion when $\nabla\varphi_j(0)=0$.
  For what concerns Neumann
boundary conditions, among many others, we cite
\cite{Arrieta1995,Arrieta1995a,Jimbo1989, Jimbo1993,Nazarov1995},
which take into account singular perturbations like the shrinking
tube.

A motivation for the interest in studying the spectral behaviour of
the Laplacian on thin branching domains comes from physics: for
instance, it occurs in the theory of quantum graphs, which models the
propagation of waves in quasi one-dimensional structures, like quantum
wires, narrow waveguides, photonic crystals, blood vessels, etc. (see
e.g. \cite{Berkolaiko2013,Kuchment2005} and reference therein).
Moreover, this topic is also related with engineering problems, such
as elasticity and multi-structure problems, as well explained in
surveys \cite{Ciarlet1990,Movchan2006}.

The starting points of this work are  \cite{Gadylshin} and 
\cite{Abatangelo2014,Abatangelo2014a,Felli2013}.  On the
  one hand, the
present paper aims at  providing a criterion for selecting the leading term in
  the asymptotic expansion given in  \cite{Gadylshin}, based on the
  vanishing order of the limit eigenfunction at the junction; on the
  other hand, it improves and generalizes  
some results of
\cite{Abatangelo2014}. We note that \cite{Abatangelo2014} (as well as 
many of the aforementioned articles) deals with 
dumbbell domains in which the tubular handle is vanishing. However,
from the point of view of both the expected results and the technical
approach, our method does not require substantial adaptions to treat
also the  dumbbell case; hence for the sake of simplicity of
exposition, in the present paper we consider only perturbations of type (\ref{eqn:perturbed_dom}).

In order to state our main results, we first need to recall some known
facts. 
Let us consider the eigenvalue problem for the standard Laplacian on the $(N-1)$-dimensional unit sphere
\begin{equation}\label{eqn:eigen_sphere}
 -\Delta_{\mathbb{S}^{N-1}}\psi=\mu \psi \qquad\text{in }\mathbb{S}^{N-1}.
\end{equation}
It is well known that the eigenvalues of \eqref{eqn:eigen_sphere} are $\mu_k=k(k+N-2)$, for $k=0,1,\dots$ and that their multiplicities are (see \cite{Berger1971})
\begin{equation*}
  m_k=\binom{k+N-2}{ k}+\binom{k+N-3}{ k-1}.
\end{equation*}
If $E_k$ denotes the eigenspace of the eigenvalue $\mu_k$, then $\bigoplus_{k\geq 0} E_k=L^2(\mathbb{S}^{N-1})$. Furthermore it is known that the elements of $E_k$ are spherical harmonics, i.e. homogeneous polynomials (of $N$ variables) of degree $k$. We are interested in eigenfunctions of \eqref{eqn:eigen_sphere} that vanish on $\{x_1=0\}$, so let us call
\[
 E_k^0:=\left\{ \psi\in E_k\colon \psi(0,\theta_2,\dots,\theta_N)=0  \right\}.
\]
It is well known (see e.g. \cite[Th. 1.3]{Felli2011}) that the local
behaviour at $0\in\partial\Omega$ of eigenfunctions of $(E_\Omega)$
can be described in term of spherical harmonics vanishing on
$\{x_1=0\}$. In particular there exist  $k\in \mathbb{N}$, $k\geq 1$,
and $\Psi \in E_{k}^0$, $\Psi\neq 0$, such that
 \begin{align}
 \label{eq:1} r^{-k}\phi_j(r\theta)&\to \Psi \qquad\text{in }C^{1,\tau}(S_1^+),\quad\text{as }r\to 0^+, \\
   \label{eq:2} r^{1-k}\nabla\phi_j(r\theta)&\to \nabla\Psi \qquad\text{in }C^{0,\tau}(S_1^+,\R^N),\quad\text{as }r\to 0^+,
 \end{align}
 for all $\tau\in (0,1)$. Furthermore the asymptotic homogeneity order $k$ can be characterized as the limit of an Almgren frequency function (see \cite{Almgren1983}), i.e.
 \begin{equation*}
  \lim_{r\to 0^+}\frac{r\int_{B_r^+}\left(\abs{\nabla\phi_j}^2-\lambda_j\abs{\phi_j}^2\right)\dx}{\int_{S_r^+}\abs{\phi_j}^2\ds}=k.
 \end{equation*}
 Hereafter we will denote
 \begin{equation}\label{eqn:def_psi_k}
  \psi_{k}(r\theta):=r^{k}\Psi(\theta),\quad r\geq 0,\ \theta\in S_1^+.
\end{equation}
The exact asymptotic estimate  of the eigenvalue variation we are
going to prove  involves a
nonzero constant $m_{k}(\Sigma)$ which admits the following  variational  characterization. Let us consider the functional
\begin{equation*}
 \begin{aligned}
     J\colon &\mathcal{D}^{1,2}(\Pi)\longrightarrow \R, \\
      J(u):=&\frac{1}{2}\int_\Pi \abs{\nabla u}^2\dx-\int_\Sigma u \frac{\partial\psi_{k}}{\partial x_1}\ds,
 \end{aligned}
\end{equation*}
where
\begin{equation*}
 T_1^-:=(-\infty,0]\times\Sigma,\qquad \Pi:=T_1^-\cup \R^N_+,
\end{equation*}
and, for any open set $\omega\subseteq\R^N$,
$\mathcal{D}^{1,2}(\omega)$ denotes the completion of the space
$C_c^\infty(\omega)$ with respect to the $L^2$ norm of the gradient
(see Section \ref{section:prelim} for further details). In dimension
$2$, we will always  deal with spaces $\mathcal{D}^{1,2}(\omega)$ with
$\omega$ such that $\R^N\setminus\omega$ contains a half-line; in this
case $\mathcal{D}^{1,2}(\omega)$ can be characterized as a concrete
functional space thanks to the validity of a Hardy inequality also in
dimension 2, see Theorem \ref{thm:2dim_hardy}.

By standard minimization methods, one can prove that $J$ is bounded from below and that the infimum
\begin{equation}\label{eqn:def_m_k}
 m_{k}(\Sigma):=\inf_{u\in \mathcal{D}^{1,2}(\Pi)}J(u)
\end{equation}
is attained by some $w_{k}$. Moreover
\begin{equation}\label{eqn:m_k_integral}
m_{k}(\Sigma)=-\frac{1}{2}\int_{\Pi}\abs{\nabla w_k}^2\dx=-\frac{1}{2}\int_\Sigma \frac{\partial \psi_k}{\partial \nnu}w_k\ds <0,
\end{equation}
see \cite{Felli2013}.
With this framework in mind we are able to state our first (and main) result.
\begin{theorem}\label{thm:main_1}
 Under assumptions \eqref{eqn:hp_plane_boundary},
 \eqref{eqn:hp_sigma_starshaped} and \eqref{eqn:hp_simplicity}, let $k$ denote the vanishing order of the unperturbed eigenfunction
 $\phi_j$ as in \eqref{eq:1}--\eqref{eq:2}. Then
 \begin{equation*}
  \lim_{\epsilon\to 0}\frac{\lambda_j-\lambda_j^\epsilon}{\epsilon^{N+2k-2}}=C_{k}(\Sigma),
 \end{equation*}
 where 
 \begin{equation}\label{eqn:def_C_k}
  C_{k}(\Sigma)=-2 m_{k}(\Sigma)>0
 \end{equation}
 and $m_{k}(\Sigma)$ is defined in \eqref{eqn:def_m_k}.
\end{theorem}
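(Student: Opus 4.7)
My plan is to derive an exact identity for $\lambda_j-\lambda_j^\epsilon$ as a boundary integral over $\epsilon\Sigma$, and then to extract its leading asymptotics by blowing up $\phi_j^\epsilon$ at scale $\epsilon$ near the origin, with an Almgren-type monotonicity formula providing the compactness needed to identify the limit profile.

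The first step is the energy identity. Since $\phi_j$ vanishes on $M\supset\epsilon\Sigma$, its extension by zero to $\Omega^\epsilon$ belongs to $H^1_0(\Omega^\epsilon)$ and can be tested against the equation for $\phi_j^\epsilon$; pairing this with Green's identity applied to $-\Delta\phi_j=\lambda_j p\phi_j$ tested against $\phi_j^\epsilon\restr{\Omega}$ (whose trace on $\epsilon\Sigma$ is nontrivial), subtraction yields
\begin{equation*}
  (\lambda_j-\lambda_j^\epsilon)\int_\Omega p\,\phi_j\,\phi_j^\epsilon\dx=\int_{\epsilon\Sigma}\frac{\partial\phi_j}{\partial x_1}\,\phi_j^\epsilon\ds,
\end{equation*}
since the outward normal to $\Omega$ on $\epsilon\Sigma\subset M=\{x_1=0\}$ is $-e_1$. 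By \eqref{eq:7}--\eqref{eq:3} the left-hand coefficient tends to $1$, so the task reduces to the asymptotic analysis of the right-hand integral. Rescaling $y=x/\epsilon$, using \eqref{eq:1}--\eqref{eq:2} to write $\partial_{x_1}\phi_j(0,\epsilon y')=\epsilon^{k-1}\partial_{x_1}\psi_k(0,y')+o(\epsilon^{k-1})$ uniformly on $\Sigma$, and introducing $\Phi_\epsilon(y):=\epsilon^{-k}\phi_j^\epsilon(\epsilon y)$ on the expanding domain $\Omega^\epsilon/\epsilon\to\Pi=T_1^-\cup\R^N_+$, I obtain
\begin{equation*}
  \int_{\epsilon\Sigma}\frac{\partial\phi_j}{\partial x_1}\,\phi_j^\epsilon\ds=\epsilon^{N+2k-2}\int_\Sigma\frac{\partial\psi_k}{\partial x_1}(0,y')\,\Phi_\epsilon(0,y')\ds(y')+o(\epsilon^{N+2k-2}),
\end{equation*}
so everything reduces to identifying the limit of $\Phi_\epsilon\restr{\Sigma}$.

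The heart of the proof is the blow-up analysis for $\Phi_\epsilon$. Since $\Phi_\epsilon$ grows polynomially in the $\R^N_+$-part of $\Pi$ (following $\psi_k$), one decomposes $\Phi_\epsilon=\tilde\psi_k+W_\epsilon$, where $\tilde\psi_k$ is $\psi_k$ in $\R^N_+$ extended by $0$ to $T_1^-$ (with a smooth cutoff to handle growth at infinity), and seeks uniform-in-$\epsilon$ $\mathcal{D}^{1,2}$-bounds for $W_\epsilon$. The crucial tool here is an Almgren-type monotonicity formula for $\phi_j^\epsilon$ at the origin, valid uniformly in $\epsilon$: combined with the vanishing order $k$ of the limit $\phi_j$, it produces quantitative doubling-type estimates that rule out energy concentration at the tube scale and justify the rescaling power $\epsilon^k$. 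Passing to a weak limit $W_*$, and exploiting that the rescaled equation $-\Delta\Phi_\epsilon=\epsilon^2\lambda_j^\epsilon p(\epsilon\cdot)\Phi_\epsilon$ degenerates, the limit satisfies $-\Delta\Phi_*=0$ in $\Pi\setminus\Sigma$; the jump of the normal derivative of $\tilde\psi_k$ across $\Sigma$ then forces
\begin{equation*}
  \int_\Pi\nabla W_*\cdot\nabla v\dx=\int_\Sigma v\,\frac{\partial\psi_k}{\partial x_1}\ds\qquad\text{for every }v\in\mathcal{D}^{1,2}(\Pi),
\end{equation*}
which is exactly the Euler--Lagrange equation for the unique minimizer $w_k$ of $J$. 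Hence $W_*=w_k$, the whole family converges, and since $\psi_k\equiv 0$ on $\Sigma$ (as $\Sigma\subset\{x_1=0\}$ and $\Psi\in E_k^0$) one has $\Phi_*\restr{\Sigma}=w_k\restr{\Sigma}$. Combining with \eqref{eqn:m_k_integral} yields
\begin{equation*}
  \lim_{\epsilon\to 0}\frac{\lambda_j-\lambda_j^\epsilon}{\epsilon^{N+2k-2}}=\int_\Sigma \frac{\partial\psi_k}{\partial x_1}\,w_k\ds=-2m_k(\Sigma)=C_k(\Sigma).
\end{equation*}

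The main technical obstacle is precisely this compactness step: showing that $\Phi_\epsilon$ converges in a topology strong enough to take traces on the shrinking interface, which requires uniform-in-$\epsilon$ control of $\phi_j^\epsilon$ across the junction region. The Almgren monotonicity formula is indispensable here because it transports information between the macroscopic scale on which $\phi_j^\epsilon$ is known and the microscopic scale of the tube where the blow-up lives. In parallel, the sharp two-sided asymptotic announced in the abstract can be obtained by a Courant--Fischer min-max argument: the cheap estimate $\lambda_j^\epsilon\leq\lambda_j$ follows by testing the Rayleigh quotient of $\lambda_j^\epsilon$ with $\phi_j$ extended by zero, while the matching lower bound comes from testing the Rayleigh quotient of $\lambda_j$ with $\phi_j^\epsilon\restr{\Omega}$ corrected by an extension vanishing on $\epsilon\Sigma$ whose optimal rescaled profile is exactly $w_k$.
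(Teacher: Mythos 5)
Your exact Hadamard-type identity
\begin{equation*}
  (\lambda_j-\lambda_j^\epsilon)\int_\Omega p\,\phi_j\,\phi_j^\epsilon\dx=\int_{\epsilon\Sigma}\frac{\partial\phi_j}{\partial x_1}\,\phi_j^\epsilon\ds
\end{equation*}
is a genuinely different and elegant starting point: the paper does not use it, and instead obtains matching upper and lower bounds for $\lambda_j-\lambda_j^\epsilon$ by testing the Courant--Fischer Rayleigh quotients of $\lambda_j$ and $\lambda_j^\epsilon$ with carefully modified $j$-dimensional subspaces (Propositions \ref{prop:up_bound} and \ref{prop:low_bound}) and separately evaluating the resulting coefficients $f_R$, $g_R$ via the blow-up. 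Your identity would in principle compress both directions into one step, and your derivation of the limiting Euler--Lagrange equation $\int_\Pi\nabla W_*\cdot\nabla v = \int_\Sigma v\,\partial_{x_1}\psi_k\ds$ correctly matches the variational characterization of $w_k$.

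However, there is a genuine gap in the blow-up step, and you have put the difficulty precisely in the spot that the paper is at pains to address. You claim that the Almgren monotonicity formula by itself ``rule[s] out energy concentration at the tube scale and justif[ies] the rescaling power $\epsilon^k$,'' i.e. that it gives both $H(\phi_j^\epsilon,K_\rho\epsilon)\gtrsim\epsilon^{2k}$ and uniform $\mathcal{D}^{1,2}$ bounds on $W_\epsilon=\Phi_\epsilon-\psi_k$. But the monotonicity formula only controls the Almgren quotient of $\phi_j^\epsilon$ from above (Corollary \ref{cor:N_bounded}); it yields $H(\phi_j^\epsilon,K_\rho\epsilon)=O(\epsilon^{2-2\rho})$ for every $\rho\in(0,1/2)$ (Proposition \ref{prop:energy_estim_eps}, equation \eqref{eqn:H_bigO}) and the crude bound $H\geq\bar C\epsilon^q$ (Corollary \ref{cor:H_control_below}), but neither of these pins down the rate $\epsilon^{2k}$. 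Consequently, without further input, $\Phi_\epsilon=\epsilon^{-k}\phi_j^\epsilon(\epsilon\cdot)$ could converge to $0$ (if $H=o(\epsilon^{2k})$) or to a nontrivial but unidentified multiple $c\Phi$, and the trace $\Phi_\epsilon\restr{\Sigma}$ in your boundary integral would not be identified. The paper in fact states explicitly in the introduction that for $k\geq 1$ the monotonicity argument only shows the frequency of the limit profile is $\leq k$ and that ``this seems to be not enough for a univocal identification,'' which is why two additional ingredients are brought in: first, the min-max \emph{lower} bound $\lambda_j-\lambda_j^\epsilon\geq\epsilon^{N+2k-2}(-2m_k(\Sigma)+o(1))$ (Proposition \ref{prop:low_bound}) is combined with the upper bound to yield $\epsilon^{2k}/H(\phi_j^\epsilon,K_\rho\epsilon)=O(1)$ (Corollary \ref{cor:bdd_blow_up}); second, the local-inversion argument via the operator $F$ (Lemma \ref{lemma:vjre_estim}, Corollary \ref{cor:blow_up_bounded}) is what actually produces the uniform-in-$\epsilon$ bound $\int_{\R^N_+\setminus B_R^+}\abs{\nabla(\Phi_\epsilon-W^\epsilon)}^2\dx=O(1)$ needed to make your decomposition $\Phi_\epsilon=\psi_k+W_\epsilon$ compact in $\mathcal{D}^{1,2}(\Pi)$. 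Your closing remark that the Courant--Fischer estimates can be done ``in parallel'' misrepresents the logical structure: the min-max lower bound is a \emph{prerequisite} for the blow-up identification, not a complementary double-check. If you want to keep your exact identity as the centerpiece, you would still need to first establish the sharp lower bound $\lambda_j-\lambda_j^\epsilon\geq c\,\epsilon^{N+2k-2}$ by the $w_k$-correction argument, and then the local-inversion estimate to control $W_\epsilon$, before passing to the limit in the right-hand side; at that point most of the paper's machinery has been re-introduced.
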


We recall that, for $N\geq3$, an asymptotic expansion for the eigenvalue variation
  is constructed  using the \emph{concordance method} in \cite[Theorem
  4.1]{Gadylshin}, but explicit
  formulas are given only for the first perturbed coefficient, which turns out
  to  be a multiple of $|\nabla \varphi_j(0)|^2$; in dimension $N=2$, \cite[Theorem
  10.1]{Gadylshin} performs a more detailed asymptotic analysis
  with the computation of all the coefficients. Hence, for $N\geq3$,
  \cite{Gadylshin} finds outs what is the leading term in the
  asymptotic expansion only when $\nabla \varphi_j(0)\neq0$.
We emphasize that, differently from \cite{Gadylshin}, Theorem \ref{thm:main_1} detects the exact
  vanishing rate of $\lambda_j- \lambda_j^\epsilon$ also when $\nabla
  \varphi_j(0)=0$ and $N\geq3$; more precisely it establishes
  a direct correspondence between  the order of the infinitesimal
  $\lambda_j- \lambda_j^\epsilon$  and  the number $k$, which is the order of vanishing of
$\varphi_j$ at the junction point $0$.

The proof of Theorem \ref{thm:main_1} is based on lower and upper bounds for the difference
$\lambda_j-\lambda_j^\epsilon$ carried out using 
the
Min-Max Courant-Fischer characterization of the eigenvalues, see
Section \ref{sec:estim-diff-texorpdfs}. 
To obtain the exact asymptotics for the eigenvalue variation it is crucial to sharply  control the energy of
perturbed 
eigenfunctions  in neighbourhoods of the junction with radius of order $\epsilon$. 
The sharpness of our energy  estimates is related to the
identification of a nontrivial limit profile for blow-up of scaled
eigenfunctions, as stated in the following theorem.

\begin{theorem}\label{thm:main_2_blowup}
 Under the same assumptions of Theorem \ref{thm:main_1}, let
 $\phi_j^\epsilon$ be chosen as in \eqref{eq:3}. Then
 \begin{equation*}
\epsilon^{-k}  \phi_j^\epsilon(\epsilon x)\to \Phi(x)\qquad \text{as }\epsilon\to 0,
 \end{equation*}
in $H^1(T_1^-\cup B_R^+)$ for all $R>1$,
 where $\Phi:=w_{k}+\psi_{k}$, being  $w_{k}$  the minimizer
 for \eqref{eqn:def_m_k} and $\psi_{k}$ the homogeneous function defined  in \eqref{eqn:def_psi_k}.
\end{theorem}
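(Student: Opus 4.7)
The natural blow-up is $V^\epsilon(x) := \epsilon^{-k}\phi_j^\epsilon(\epsilon x)$, defined on the rescaled domain $\Omega^\epsilon/\epsilon$, which grows to fill $\Pi = T_1^-\cup\R^N_+$ as $\epsilon\to 0$. A direct rescaling of the eigenvalue equation gives
$$
-\Delta V^\epsilon = \epsilon^2\lambda_j^\epsilon\,p(\epsilon x)\,V^\epsilon\quad\text{in }\Omega^\epsilon/\epsilon,
$$
with homogeneous Dirichlet conditions on the rescaled boundary. The exponent $k$ is calibrated to the sharp energy growth $\int_{T_\epsilon\cup B_{R\epsilon}^+}|\nabla\phi_j^\epsilon|^2\dx\leq C_R\,\epsilon^{N+2k-2}$, so that after rescaling the Dirichlet energy of $V^\epsilon$ on every bounded truncation of $\Pi$ is bounded uniformly in $\epsilon$.

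The first step I would carry out is precisely such a uniform $H^1$ bound for $V^\epsilon$ on $T_1^-\cup B_R^+$ (with $V^\epsilon$ extended by zero outside $\Omega^\epsilon/\epsilon$). I would derive it from the sharp local energy estimates for $\phi_j^\epsilon$ at the junction produced by an Almgren-type monotonicity formula, combined with the Poincar\'e inequality in the slender tube (whose lateral traces vanish). Compactness and a diagonal extraction then yield a subsequence $V^{\epsilon_n}\rightharpoonup V$ weakly in $H^1(T_1^-\cup B_R^+)$ for every $R>1$, with strong $L^2$ convergence on each such truncation.

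To identify $V$ I pass to the limit in the PDE above (whose right-hand side converges to zero in $L^2_{\mathrm{loc}}$), obtaining that $V$ is harmonic on $\Pi\setminus\Sigma$ and vanishes on the lateral boundary of $T_1^-$ and on $\{x_1=0\}\setminus\overline{\Sigma}$. The behaviour at infinity inside the half-space component is dictated by \eqref{eq:3} together with the homogeneity expansion \eqref{eq:1}--\eqref{eq:2}: for $x$ in any fixed annulus of $\R^N_+$, $V^\epsilon(x)\to \psi_k(x)$, and a careful analysis of the decay shows that $w:=V-\psi_k$, with $\psi_k$ extended by zero on $T_1^-$, belongs to $\mathcal{D}^{1,2}(\Pi)$. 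Integrating by parts against an arbitrary $v\in\mathcal{D}^{1,2}(\Pi)$ and using the harmonicity of $\psi_k$ on $\R^N_+$ leads to
$$
\int_\Pi\nabla w\cdot\nabla v\dx = \int_\Sigma v\,\frac{\partial\psi_k}{\partial x_1}\ds,
$$
which is the Euler--Lagrange equation of $J$. Strict convexity of $J$ then forces $w=w_k$, hence $V=\Phi$, and uniqueness of the limit removes the need to pass to subsequences.

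Strong $H^1$ convergence is finally obtained by matching norms: I would compute $\int_{\Omega^\epsilon/\epsilon}|\nabla V^\epsilon|^2\dx$ using the same sharp energy expansion underlying Theorem \ref{thm:main_1} and compare it against $\int_\Pi|\nabla\Phi|^2\dx$, which by \eqref{eqn:m_k_integral} reduces to a computable boundary integral of $\psi_k$ on $\Sigma$. The hardest step is expected to be the sharp uniform energy bound in the second paragraph: it cannot be obtained by soft compactness and requires a monotonicity formula tailored to the $\epsilon$-dependent perturbed domain that controls the Almgren frequency of $\phi_j^\epsilon$ at the junction uniformly in $\epsilon$, a substantially more delicate task than the corresponding estimate for the unperturbed eigenfunction $\phi_j$.
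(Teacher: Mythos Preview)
Your outline has the right architecture (uniform local energy bounds, compactness, identification of the limit via its behaviour at infinity, upgrade to strong convergence), but two of its load-bearing steps do not go through as stated, and the paper's argument is organized quite differently precisely to avoid them.

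\textbf{The sharp energy bound is not a consequence of monotonicity.} You assert $\int_{T_\epsilon\cup B_{R\epsilon}^+}|\nabla\phi_j^\epsilon|^2\dx\leq C_R\,\epsilon^{N+2k-2}$ and attribute it to the Almgren formula. The monotonicity argument (Proposition~\ref{prop:monotonicity}, Corollary~\ref{cor:N_bounded}) only yields an \emph{upper} bound on the frequency $\mathcal{N}$; integrating it gives $H(\phi_j^\epsilon,R\epsilon)\leq C(R\epsilon)^{2(1-\rho)}$ for every $\rho>0$ (Lemma~\ref{lemma:H_increment}, Proposition~\ref{prop:energy_estim_eps}), never the sharp exponent $2k$. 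The missing estimate $H(\phi_j^\epsilon,K_\rho\epsilon)=O(\epsilon^{2k})$ is equivalent to what you need and is \emph{not} available a priori; in the paper it is obtained only \emph{a posteriori}, as part of the blow-up theorem itself (see \eqref{eqn:blow_up_th2}). For this reason the paper does not rescale by $\epsilon^{-k}$ directly but by $H(\phi_j^\epsilon,K_\rho\epsilon)^{-1/2}$, defining $\tilde\phi^\epsilon$ as in \eqref{eqn:def_phi_tilde_Z_R}: this normalization is bounded in $\mathcal H_R$ by construction (Proposition~\ref{prop:precise_energy_estim_eps}) and is nontrivial in the limit because $\int_{S_{K_\rho}^+}|\tilde\phi^\epsilon|^2\ds$ is a fixed constant.

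\textbf{The identification at infinity cannot rely on \eqref{eq:3} alone.} You write that ``for $x$ in any fixed annulus of $\R^N_+$, $V^\epsilon(x)\to\psi_k(x)$'' by combining \eqref{eq:3} with \eqref{eq:1}--\eqref{eq:2}. But \eqref{eq:3} is convergence at scale $1$, while here one needs control of $\phi_j^\epsilon-\phi_j$ at the shrinking scale $\epsilon$; the global $H^1$ convergence says nothing about $\epsilon^{-k}(\phi_j^\epsilon-\phi_j)(\epsilon\cdot)$. The paper obtains the required quantitative rate through a local inversion argument: the map $F(\lambda,\phi)=(\|\phi\|_{H_0^1}^2-\lambda_j,\,-\Delta\phi-\lambda p\phi)$ has invertible differential at $(\lambda_j,\phi_j)$ by simplicity of $\lambda_j$, and applying $\mathrm dF^{-1}$ to $F(\lambda_j^\epsilon,v_{j,R,\epsilon})$ yields (Lemma~\ref{lemma:vjre_estim}, Corollary~\ref{cor:blow_up_bounded})
\[
\int_{\frac1\epsilon\Omega\setminus B_R^+}\bigl|\nabla\tilde\phi^\epsilon-\epsilon^k H(\phi_j^\epsilon,K_\rho\epsilon)^{-1/2}\nabla W^\epsilon\bigr|^2\dx=O(1).
\]
Passing to the limit along a subsequence with $\epsilon^k/\sqrt{H}\to c\geq 0$ (such subsequences exist by Corollary~\ref{cor:bdd_blow_up}, itself a consequence of the two-sided eigenvalue bounds of Proposition~\ref{prop:up_low_bound}) gives $\int_\Pi|\nabla\tilde\Phi-c\nabla\psi_k|^2\dx<\infty$; the normalization on $S_{K_\rho}^+$ forces $\tilde\Phi\not\equiv 0$, hence $c>0$, and then Proposition~\ref{prop:def_Phi} identifies $\tilde\Phi=c\,\Phi$ and fixes $c=\Lambda_\rho^{-1/2}$. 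Only at this point does one recover $\epsilon^{-k}\phi_j^\epsilon(\epsilon\cdot)\to\Phi$. The introduction flags exactly this issue: for $k\geq 2$ ``the monotonicity argument implies that the frequency of the limit profile is less than or equal to $k$, and this seems to be not enough for a univocal identification''.
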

As mentioned before, Theorem \ref{thm:main_1} generalizes and improves 
\cite[Th. 1.1]{Abatangelo2014}: indeed, in \cite{Abatangelo2014} the
weight $p$
was assumed to vanish in a neighbourhood of the junction $\Sigma$ 
and only the case of vanishing order $k=1$ for the unperturbed
eigenfunction $\phi_j$ was considered. Furthermore the dimension $N=2$
was not included in 
\cite{Abatangelo2014}.
As in \cite{Abatangelo2014}, a fundamental tool for the proof of the
energy estimates needed to study the local behaviour of eigenfunctions
is  an Almgren-type
monotonicity formula, which was first introduced by Almgren 
\cite{Almgren1983} and then used by Garofalo and Lin 
\cite{Garofalo1986} to study unique continuation properties for
elliptic partial differential equations.

In the particular case treated in \cite{Abatangelo2014,Felli2013}, 
precise pointwise estimates from above and
from below for the perturbed eigenfunction and its gradient were  directly obtained via comparison and maximum
principles: indeed,  if the limit eigenfunction has minimal vanishing
order at the origin and the weight vanishes 
around the junction, then such eigenfunction has a fixed sign and is
harmonic in a neighbourhood of $0$.
These estimates  were  used in \cite{Felli2013} to get rid of a
remainder term in the derivative of the Almgren quotient for the
perturbed problem, however they are not available in the more general framework of the
present paper. Nevertheless,
under the  geometric assumption \eqref{eqn:hp_sigma_starshaped} on the tube section 
we succeed in proving that the remainder term has a positive sign,
thus obtaining the monotonicity formula, see Proposition \ref{prop:monotonicity}.
We also point out that the 2-dimensional case requires the proof of an
ad hoc Hardy type inequality for functions vanishing on a fixed
half-line, see \eqref{eqn:hardy2}.

We observe that in  \cite{Abatangelo2014,Felli2013} the limit of the
blow-up family $\epsilon^{-k}\phi_j^\epsilon(\epsilon x)$ was
recognized by its frequency at infinity, which must be necessarily
equal to the minimal one, i.e. 1, in the particular case
$k=1$. In the general
case $k\geq1$, the monotonicity argument implies that the frequency
of the limit profile  is less than or equal to $k$, and this seems to
be not enough for a univocal identification. To overcome this
difficulty, we use here an argument inspired by \cite{Abatangelo2015}
and based on a local inversion result giving an energy control for the
difference between the blow-up eigenfunction and a $k$-homogeneous
profile,  see Corollary
\ref{cor:blow_up_bounded}.

The paper is organized as follows. After some preliminary results  in
Section \ref{section:prelim}, in Section \ref{sec:pohozaev} we
prove a Pohozaev-type identity, which is combined with the Poincar\'e
inequalities of Section \ref{section:poincare} to develop a
monotonicity argument 
in Section
\ref{sec:monotonicity-formula}. From the monotonicity formula
established in Corollary \ref{cor:N_bounded}, we derive some local
energy estimates which allow us to deduce sharp upper and lower bounds
for the eigenvalue variation in Section
\ref{sec:estim-diff-texorpdfs}. In Section \ref{sec:blow-up-analysis}
we perform a blow-up analysis for scaled eigenfunctions from which we
deduce first Theorem \ref{thm:main_2_blowup} and then, in Section
\ref{sec:proof-theor-refthm:m}, our main result Theorem \ref{thm:main_1}. Finally, in
the appendix 
we recall an Hardy type inequality in dimension $2$ for functions
vanishing on half-lines  and an abstract lemma on maxima of quadratic forms.

\section{Preliminaries and Notation}\label{section:prelim}

In this section we introduce some basic definitions and notation which will be useful in the rest of the paper. We start fixing some notation:
\begin{align*}
 &\Omega_r^\epsilon:=T_\epsilon\cup B_r^+, \quad \epsilon\in(0,1),\
 r\in(\epsilon,R_{\textup{max}}),\\
& \mathcal{C}_r:=\partial B_r^+\setminus S_r^+,\quad r\in(0,R_{\textup{max}}),\\
& \Pi_r:=T_1^-\cup B_r^+ ,\quad r>1.
\end{align*}
 For any measurable  set $\omega\subseteq \R^N$, we denote as
 $\abs{\omega}$ its  $N$-dimensional Lebesgue measure.

For any $R\geq 2$, we will denote as $\eta_R$ a cut-off function satisfying 
\begin{equation}\label{eqn:def_cut_off}
\begin{gathered}
 \eta_R\in C^\infty(\overline{\Pi}), \quad
 \eta_R(x)=\begin{cases}
 1, & \text{for }x\in \Pi\setminus \Pi_R, \\
 0, & \text{for }x\in \Pi_{R/2}  ,
 \end{cases}\\
 \abs{\eta_R(x)}\leq 1,\quad \abs{\nabla \eta_R(x)}\leq 4/R \qquad\text{for all }x\in \Pi.
 \end{gathered}
\end{equation}

We now recall a well known quantitative result about the first eigenvalue of the Dirichlet-Laplacian on bounded domains.

\begin{theorem}[Faber-Krahn Inequality]
 Let $\omega\subseteq \R^N$ be open and bounded and let $\lambda_1^D(\omega)$ denote the first eigenvalue of the Dirichlet-Laplacian on $\omega$. Then
 \[
  \lambda_1^D(\omega)\geq \frac{\lambda_1^D(B_1)\abs{B_1}^{2/N}}{\abs{\omega}^{2/N}},
 \]
where $B_1$ denotes the $N$-dimensional ball centered at the origin and with radius 1. 
\end{theorem}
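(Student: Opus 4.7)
The plan is to give the classical proof via Schwarz (symmetric decreasing) rearrangement, which reduces the problem to the case of a ball by the P\'olya--Szeg\H{o} inequality together with the scaling behaviour of $\lambda_1^D$.

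First, I would invoke the Rayleigh quotient characterization
\[
\lambda_1^D(\omega)=\inf_{\substack{u\in H^1_0(\omega)\\ u\not\equiv 0}}\frac{\int_\omega\abs{\nabla u}^2\dx}{\int_\omega u^2\dx},
\]
and pick a positive first eigenfunction $u\in H^1_0(\omega)$ (which exists and is positive by the strong maximum principle). Let $\omega^*$ be the open ball in $\R^N$ centered at $0$ with $|\omega^*|=|\omega|$, and let $u^*\colon\omega^*\to\R$ be the Schwarz symmetrization of $u$, that is, the unique radially symmetric, nonnegative, radially nonincreasing function equimeasurable with $u$ (so that $u^*$ and $u$ have the same distribution function).

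Equimeasurability gives $\int_{\omega^*}(u^*)^2\dx=\int_\omega u^2\dx$, while the P\'olya--Szeg\H{o} inequality yields $\int_{\omega^*}\abs{\nabla u^*}^2\dx\le\int_\omega\abs{\nabla u}^2\dx$. Since $u^*\in H^1_0(\omega^*)$ (vanishing on $\partial\omega^*$ follows from $u^*$ being radial nonincreasing and $u\in H^1_0(\omega)$), using it as a test function in the Rayleigh quotient on $\omega^*$ produces
\[
\lambda_1^D(\omega^*)\le\frac{\int_{\omega^*}\abs{\nabla u^*}^2\dx}{\int_{\omega^*}(u^*)^2\dx}\le\frac{\int_\omega\abs{\nabla u}^2\dx}{\int_\omega u^2\dx}=\lambda_1^D(\omega).
\]
To conclude, I would use the elementary scaling property $\lambda_1^D(\rho B_1)=\rho^{-2}\lambda_1^D(B_1)$ for every $\rho>0$ (a direct consequence of the change of variables $x\mapsto \rho x$ in the Rayleigh quotient). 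Writing $\omega^*=\rho B_1$ with $\rho^N\abs{B_1}=\abs{\omega}$, i.e.\ $\rho=(|\omega|/|B_1|)^{1/N}$, gives
\[
\lambda_1^D(\omega^*)=\left(\frac{\abs{\omega}}{\abs{B_1}}\right)^{-2/N}\lambda_1^D(B_1)=\frac{\lambda_1^D(B_1)\abs{B_1}^{2/N}}{\abs{\omega}^{2/N}},
\]
which, combined with the previous chain, yields the claim.

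The only nontrivial ingredient is the P\'olya--Szeg\H{o} inequality, whose standard proof uses the coarea formula together with the isoperimetric inequality in $\R^N$; since this is a classical result, I would simply cite a standard reference (for instance the monographs on rearrangement inequalities) rather than reprove it. The verification that $u^*\in H^1_0(\omega^*)$ (not just in $H^1$) is also a known but slightly technical point, again handled in the same references.
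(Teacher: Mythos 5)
The paper does not actually prove this theorem: it is stated in Section~\ref{section:prelim} as a well-known classical result (with no proof and no reference given for a proof), and is used only through the Poincar\'e-type consequence \eqref{eqn:poincare_faber_krahn}. There is therefore no "paper proof" to compare against. Your argument is the standard one via Schwarz symmetrization and the P\'olya--Szeg\H{o} inequality, and it is correct: equimeasurability preserves the $L^2$ norm, P\'olya--Szeg\H{o} does not increase the Dirichlet energy, $u^*\in H^1_0(\omega^*)$, and the scaling identity $\lambda_1^D(\rho B_1)=\rho^{-2}\lambda_1^D(B_1)$ converts the comparison with the equimeasurable ball $\omega^*$ into the stated bound. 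One small remark: invoking the strong maximum principle to get a positive first eigenfunction is not needed and is slightly delicate if $\omega$ is disconnected; it suffices to take any minimizer $u$ of the Rayleigh quotient and replace it by $\abs{u}$, which has the same quotient since $\abs{\nabla\abs{u}}=\abs{\nabla u}$ almost everywhere, and then symmetrize.
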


Moreover, if we denote 
\begin{equation}\label{eqn:def_C_N}
 C_N:=\frac{1}{ \abs{B_1}^{2/N}\lambda_1^D(B_1) },
\end{equation}
 and we combine the previous Theorem with the usual Poincar\'e Inequality, we have that
\begin{equation}\label{eqn:poincare_faber_krahn}
 \int_{\omega}\abs{u}^2\dx\leq C_N \abs{\omega}^{2/N}\int_\omega \abs{\nabla u}^2\dx \quad\text{for all }u\in H^1_0(\omega).
\end{equation}

\subsection{The Space \texorpdfstring{$\mathcal{H}_R$}{The Space HR}}
For $R>1$ let us define the function space $\mathcal{H}_R$ as the completion of $C_c^\infty(\Pi_R\cup S_R^+)$ with respect to the norm induced by the scalar product
\[
 (u,v)_{\mathcal{H}_R}:= \int_{\Pi_R}\nabla u\cdot\nabla v\dx.
\]
Since $\Pi_R$ is bounded in at least 1 direction, the Poincar\'e Inequality holds. Hence $\mathcal{H}_R\hookrightarrow H^1(\Pi_R)$ continuously and we have the following characterization 
\begin{equation*}
 \mathcal{H}_R:=\left\{u\in H^1(\Pi_R)\colon u=0~\text{on}~\partial\Pi_R\setminus S_R^+ \right\}.
\end{equation*}
Moreover, when $N\geq 3$, the classical Sobolev inequality implies
that $\mathcal{H}_R\hookrightarrow L^{2^*}(\Pi_R)$ continuously, where
$2^*=\frac{2N}{N-2}$.

\subsection{Limit Profiles}

In this section we introduce some limit profiles that will appear in the blow-up analysis of scaled eigenfunctions. We recall the following result from \cite[Lemma 2.4]{Felli2013}.
\begin{proposition}\label{prop:def_Phi}
 For every $\psi\in C^2(\R^N_+)\cap C^1(\overline{\R^N_+})$  such that
 \begin{equation*}
  \left\{\begin{aligned}
          -\Delta \psi&=0, &&\text{in }\R^N_+, \\
          \psi&=0, &&\text{on }\partial \R^N_+,
         \end{aligned}\right.
 \end{equation*}
 there exists a unique $\Phi=\Phi(\psi):\Pi\to\R$ such that
  \begin{gather}
  \Phi \in \mathcal{H}_R,\qquad   \text{for all } R>1, \label{eqn:Phi_1} \\
  \left\{\begin{aligned}\label{eqn:Phi_2}
          -\Delta \Phi &=0, &&\text{in }\Pi , \\
          \Phi&=0, &&\text{on }\partial\Pi  ,        
         \end{aligned}\right. \\
         \int_\Pi  \abs{\nabla(\psi-\Phi)}^2\dx<+ \infty .\label{eqn:Phi_3}
 \end{gather}
\end{proposition}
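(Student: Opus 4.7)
The natural decomposition is $\Phi = \tilde\psi + w$, where $\tilde\psi$ denotes the extension of $\psi$ to all of $\Pi$ obtained by setting $\tilde\psi\equiv 0$ on $T_1^-$ (so $\tilde\psi$ agrees with $\psi$ on $\R^N_+$ and is continuous across the junction $\Sigma$, thanks to $\psi=0$ on $\partial\R^N_+$), and $w\in\mathcal{D}^{1,2}(\Pi)$ is a correction term. A direct integration by parts using $\Delta\psi=0$ in $\R^N_+$ and $\psi=0$ on $\{x_1=0\}$ yields, for every $v\in C_c^\infty(\Pi)$,
\[
\int_\Pi\nabla\tilde\psi\cdot\nabla v\dx = -\int_\Sigma v\,\partial_1\psi\ds,
\]
so that $-\Delta\tilde\psi = -(\partial_1\psi)\delta_\Sigma$ as a distribution on $\Pi$. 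Consequently, requiring $\Phi$ to be harmonic in $\Pi$ is equivalent to asking $w$ to satisfy $\int_\Pi\nabla w\cdot\nabla v\dx = \int_\Sigma v\,\partial_1\psi\ds$ for all test $v$, which is precisely the Euler--Lagrange equation of the functional
\[
J(u) := \frac{1}{2}\int_\Pi|\nabla u|^2\dx - \int_\Sigma u\,\partial_1\psi\ds,\qquad u\in\mathcal{D}^{1,2}(\Pi).
\]

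To produce $w$ I would apply the direct method of the calculus of variations. The quadratic part of $J$ is manifestly weakly lower semicontinuous and coercive in the $\mathcal{D}^{1,2}(\Pi)$-norm; the real task is the continuity of the linear form $u\mapsto\int_\Sigma u\,\partial_1\psi\ds$. Since $\Sigma$ is bounded and $\psi\in C^1(\overline{\R^N_+})$, the weight $\partial_1\psi|_\Sigma$ is bounded, so it suffices to establish a trace-type estimate $\|u\|_{L^2(\Sigma)}\leq C\|\nabla u\|_{L^2(\Pi)}$. For $N\geq 3$ this follows from the Sobolev embedding $\mathcal{D}^{1,2}(\Pi)\hookrightarrow L^{2^*}(\Pi)$ combined with the standard $H^1$ trace inequality on a bounded Lipschitz neighbourhood of $\Sigma$; for $N=2$ it relies instead on the ad hoc Hardy-type inequality in dimension two alluded to in the introduction (Theorem~\ref{thm:2dim_hardy}). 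Coercivity of $J$ then follows from Young's inequality, and a minimizing sequence produces a minimizer $w\in\mathcal{D}^{1,2}(\Pi)$.

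Setting $\Phi := \tilde\psi + w$, the three properties now follow quickly. For \eqref{eqn:Phi_1}: on every $\Pi_R$ one has $\tilde\psi\in H^1(\Pi_R)$ with zero trace on $\partial\Pi\cap\overline{\Pi_R}$ by construction, while $w\in\mathcal{D}^{1,2}(\Pi)$ restricts to an element of $\mathcal{H}_R$ thanks to the Poincaré inequality available on $\Pi_R$, so $\Phi\in\mathcal{H}_R$. For \eqref{eqn:Phi_2}: summing the Euler--Lagrange identity for $w$ with the distributional identity for $-\Delta\tilde\psi$ gives $\int_\Pi\nabla\Phi\cdot\nabla v\dx = 0$ for every $v\in C_c^\infty(\Pi)$, so $\Phi$ is weakly (hence classically, by elliptic regularity) harmonic in $\Pi$, and the Dirichlet condition is inherited from the two summands. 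For \eqref{eqn:Phi_3}: interpreting $\psi$ as $\tilde\psi$ on $\Pi$ one has $\nabla(\psi-\Phi) = -\nabla w\in L^2(\Pi)$, and the integral equals the Dirichlet energy of $w$.

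Uniqueness is immediate: if $\Phi_1,\Phi_2$ both satisfy the conclusions, their difference is harmonic in $\Pi$, has zero trace on $\partial\Pi$ and gradient in $L^2(\Pi)$, hence belongs to $\mathcal{D}^{1,2}(\Pi)$; testing its weak harmonicity against itself forces $\nabla(\Phi_1-\Phi_2)\equiv 0$ and therefore $\Phi_1=\Phi_2$. The main technical obstacle is the trace/continuity estimate in dimension $N=2$, where $\mathcal{D}^{1,2}(\Pi)$ is not embedded in any Lebesgue space by the Sobolev inequality alone, and the two-dimensional Hardy inequality for functions vanishing on a half-line becomes essential to ensure that the linear part of $J$ is well defined and dominated by $\|\nabla u\|_{L^2(\Pi)}$.
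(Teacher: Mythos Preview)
Your proposal is correct and follows exactly the approach the paper indicates: the paper does not prove Proposition~\ref{prop:def_Phi} in situ but recalls it from \cite[Lemma~2.4]{Felli2013}, and the surrounding text (the definition of the functional $J$, the minimizer $w_k$, and the decomposition \eqref{eqn:Phi_components}) makes clear that the construction is precisely $\Phi=\tilde\psi+w$ with $w$ the minimizer of $J$ in $\mathcal{D}^{1,2}(\Pi)$. Your treatment of the two-dimensional case via the Hardy inequality of Theorem~\ref{thm:2dim_hardy} is also exactly what the paper points to.
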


Hereafter we will denote
\begin{equation}\label{eqn:def_Phi}
 \Phi:=\Phi(\psi_{k})
\end{equation}
where $\psi_{k}$ is the function defined in \eqref{eqn:def_psi_k}. As
observed in \cite{Felli2013} we have that 
\begin{equation}\label{eqn:Phi_components}
 \Phi=
\begin{cases}
\psi_k+w_k,&\text{in }\R^N_+,\\
w_k,&\text{in }\Pi\setminus\R^N_+,
\end{cases}
\end{equation}
where $w_k$ is the function realizing the minimum $m_k(\Sigma)$ in
\eqref{eqn:def_m_k}.
We observe that, in the particular case $N=2$, the
  function $\Phi$ corresponds to the function $X_k$ introduced in
  \cite[Sections 10-11]{Gadylshin}.
By a classical Dirichlet principle, one can easily obtain the following result.
\begin{lemma}\label{lemma:def_U_R}
  For every $R>1$ there exists a unique function $U_R\in \mathcal{H}_R$ solution to the following minimization problem
  \[
    \min_{u\in \mathcal{H}_R} \left\{\int_{\Pi_R}\abs{\nabla u}^2\dx \colon u=\psi_k~\text{on }S_R^+ \right\}.
  \]
  Moreover it weakly solves
  \[
    \left\{\begin{aligned}
             -\Delta U_R &=0, && \text{in }\Pi_R, \\
             U_R&=0 ,&& \text{on }\partial \Pi_R \setminus S_R^+, \\
             U_R &=\psi_k, &&\text{on }S_R^+.
           \end{aligned}\right.
  \]
\end{lemma}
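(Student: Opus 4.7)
The plan is to apply the direct method of the calculus of variations to the strictly convex functional $u\mapsto \int_{\Pi_R}\abs{\nabla u}^2\dx$ on the affine admissible set
\[
\mathcal{A}_R:=\bigl\{u\in \mathcal{H}_R\colon u=\psi_k\text{ on }S_R^+\bigr\}.
\]

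First I would check that $\mathcal{A}_R\neq\emptyset$. Since $\Psi\in E_k^0$ vanishes on $\{x_1=0\}$, the function $\psi_k$ defined in \eqref{eqn:def_psi_k} vanishes (in the trace sense) on $\partial\R^N_+\cap B_R^+$. Extending $\psi_k$ by zero on $T_1^-$ therefore yields a function $v_0\in H^1(\Pi_R)$ which vanishes on $\partial\Pi_R\setminus S_R^+$ and coincides with $\psi_k$ on $S_R^+$; in particular $v_0\in\mathcal{A}_R$.

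Next I would prove existence by taking a minimizing sequence $(u_n)\subset \mathcal{A}_R$. Writing $u_n=v_0+\tilde u_n$, the differences $\tilde u_n\in\mathcal{H}_R$ have zero trace on all of $\partial\Pi_R$, hence $\tilde u_n\in H^1_0(\Pi_R)$. Since $\Pi_R$ is bounded in at least one direction, the Poincar\'e inequality applies and gives a uniform bound on $\|\tilde u_n\|_{H^1(\Pi_R)}$ in terms of $\|\nabla u_n\|_{L^2}+\|v_0\|_{H^1}$. Extracting a weakly convergent subsequence $u_n\rightharpoonup U_R$ in $H^1(\Pi_R)$, weak lower semicontinuity of the Dirichlet integral shows that $U_R$ attains the infimum, while continuity of the trace on $S_R^+$ under weak $H^1$ convergence guarantees that $U_R\in\mathcal{A}_R$. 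Uniqueness is immediate from the strict convexity of the Dirichlet integral on the affine set $\mathcal{A}_R$: if $U_R$ and $U_R'$ were two minimizers, then $\tfrac12(U_R+U_R')\in \mathcal{A}_R$ would give a strictly smaller energy unless $U_R=U_R'$.

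Finally, to derive the Euler--Lagrange system, I would use admissible variations: for every $\phi\in C_c^\infty(\Pi_R)$ the function $U_R+t\phi$ belongs to $\mathcal{A}_R$ for all $t\in\R$ (since $\phi$ vanishes near $S_R^+$ and near the rest of $\partial\Pi_R$), so differentiating the functional at $t=0$ yields
\[
\int_{\Pi_R}\nabla U_R\cdot\nabla\phi\dx=0\qquad\text{for all }\phi\in C_c^\infty(\Pi_R),
\]
i.e.\ $-\Delta U_R=0$ in $\Pi_R$ in the weak sense. The boundary conditions $U_R=0$ on $\partial\Pi_R\setminus S_R^+$ and $U_R=\psi_k$ on $S_R^+$ are built into the membership $U_R\in\mathcal{A}_R$.

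There is no genuine obstacle here: the only mildly delicate point is the construction of a concrete element of $\mathcal{A}_R$, which is resolved once one observes that $\psi_k$ already vanishes on $\{x_1=0\}$ and can therefore be extended by zero across $\Sigma$ without losing $H^1$ regularity.
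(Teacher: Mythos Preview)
Your proof is correct and is precisely the classical Dirichlet principle that the paper invokes without details; the paper simply states ``By a classical Dirichlet principle, one can easily obtain the following result'' before the lemma, so your argument fleshes out exactly what is meant. The only minor remark is that once you write $u_n=v_0+\tilde u_n$ with $\tilde u_n\in H^1_0(\Pi_R)$, weak closedness of $H^1_0(\Pi_R)$ already forces the weak limit to inherit the correct boundary data, so you need not appeal separately to trace continuity.
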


\begin{lemma}\label{lemma:conv_U_R}
 For every $r>1$ we have
 \[
  U_R\longrightarrow \Phi \qquad\text{in }\mathcal{H}_r, \qquad\text{as }R\to +\infty.
 \]
\end{lemma}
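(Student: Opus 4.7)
The plan is to combine the Dirichlet principle characterization of $U_R$ with the explicit decomposition of $\Phi$ given in \eqref{eqn:Phi_components}, and then build a cheap competitor by cutting off the minimizer $w_k$ away from $S_R^+$.

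First, I would set $V_R:=\Phi-U_R$ on $\Pi_R$. Since both $\Phi$ and $U_R$ are harmonic in $\Pi_R$ and vanish on $\partial\Pi_R\setminus S_R^+$, so does $V_R$; and on $S_R^+\subset\R^N_+$, one has $V_R=(\psi_k+w_k)-\psi_k=w_k$ by \eqref{eqn:Phi_components}. Hence $V_R\in\mathcal{H}_R$ is the harmonic extension of $w_k|_{S_R^+}$ with zero trace on the rest of $\partial\Pi_R$, and by the Dirichlet principle
\begin{equation*}
\int_{\Pi_R}|\nabla V_R|^2\dx \;=\; \min\Bigl\{\int_{\Pi_R}|\nabla u|^2\dx \;:\; u\in\mathcal{H}_R,\ u=w_k\text{ on }S_R^+\Bigr\}.
\end{equation*}

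Second, with $\eta_R$ the cutoff defined in \eqref{eqn:def_cut_off} (so $\eta_R\equiv 1$ on $\Pi\setminus\Pi_R$, and hence on $S_R^+$ by continuity, and $\eta_R\equiv 0$ on $\Pi_{R/2}$), I would take $\tilde v_R:=\eta_R w_k$ as competitor. Since $w_k\in\mathcal{D}^{1,2}(\Pi)$ has vanishing trace on $\partial\Pi$ and $\eta_R$ vanishes on the whole tube $T_1^-\subset\Pi_{R/2}$, the restriction $\tilde v_R|_{\Pi_R}$ lies in $\mathcal{H}_R$, equals $w_k$ on $S_R^+$, and is supported in the annulus $B_R^+\setminus B_{R/2}^+$. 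A Young-type expansion gives
\begin{equation*}
\int_{\Pi_R}|\nabla\tilde v_R|^2\dx \;\leq\; 2\int_{B_R^+\setminus B_{R/2}^+}|\nabla w_k|^2\dx \;+\; 2\int_{B_R^+\setminus B_{R/2}^+} w_k^2\,|\nabla\eta_R|^2\dx.
\end{equation*}
The first summand vanishes as $R\to+\infty$ since $\nabla w_k\in L^2(\Pi)$. For the second, using $|\nabla\eta_R|\leq 4/R$ together with $1/R^2\leq 1/|x|^2$ on $\{|x|\leq R\}$, one bounds it by a constant times $\int_{B_R^+\setminus B_{R/2}^+} w_k^2/|x|^2\dx$, a tail of a convergent integral thanks to a Hardy-type inequality on $\Pi$: the classical half-space Hardy inequality suffices for $N\geq 3$, while for $N=2$ one invokes the ad hoc Theorem \ref{thm:2dim_hardy}, which applies precisely because $w_k$ vanishes along a half-line.

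Finally, assembling the three steps yields, for every $r>1$ and $R>r$,
\begin{equation*}
\|U_R-\Phi\|_{\mathcal{H}_r}^2 \;\leq\; \int_{\Pi_R}|\nabla V_R|^2\dx \;\leq\; \int_{\Pi_R}|\nabla\tilde v_R|^2\dx \;\xrightarrow[R\to+\infty]{}\; 0,
\end{equation*}
which is the claimed convergence. The main technical obstacle I foresee is the vanishing of the second energy term, which forces the use of a Hardy-type inequality in $\Pi$; routine in dimension $N\geq 3$, the two-dimensional case genuinely requires exploiting that $w_k$ vanishes on a half-line via Theorem \ref{thm:2dim_hardy}.
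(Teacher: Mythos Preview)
Your proposal is correct and is essentially the same argument as the paper's: both set up $U_R-\Phi$ (your $-V_R$) as the Dirichlet minimizer with boundary datum $\psi_k-\Phi$ on $S_R^+$, compare it with the competitor $\eta_R(\psi_k-\Phi)$, and control the resulting energy tail via Hardy's inequality (classical for $N\geq 3$, the half-line version \eqref{eqn:hardy2} for $N=2$). The only cosmetic difference is that you invoke \eqref{eqn:Phi_components} to name the boundary datum $w_k$ rather than $\psi_k-\Phi$, but since $\eta_R$ is supported in $\R^N_+$ for $R>2$ these are the same function up to sign on the relevant set.
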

\begin{proof}
 We can assume $R>\max\{r,2\}$. The function $U_R-\Phi$ satisfies, in a weak sense,
 \[
  \left\{\begin{aligned}
          -\Delta(U_R-\Phi)&=0, && \text{in }\Pi_R, \\
          U_R-\Phi &=0, && \text{on }\partial\Pi_R\setminus S_R^+, \\
          U_R-\Phi &= \psi_k-\Phi, &&\text{on }S_R^+,
         \end{aligned}\right.
 \]
and then it is the least energy function among those having these boundary conditions.
Let $\eta=\eta_R\in C^\infty(\overline{\Pi})$ be the cut-off function
defined in \eqref{eqn:def_cut_off}. Then
\begin{gather*}
 \int_{\Pi_r}\abs{\nabla(U_R-\Phi)}^2\dx\leq \int_{\Pi_R}\abs{\nabla(U_R-\Phi)}^2\dx\leq \int_{\Pi_R}\abs{\nabla (\eta(\psi_k-\Phi))}^2\dx \leq \\
 \leq 2\int_{\Pi_R}\abs{\nabla\eta}^2\abs{\psi_k-\Phi}^2\dx +2\int_\Pi \abs{\eta}^2\abs{\nabla (\psi_k-\Phi)}^2\dx\leq \\
 \leq \frac{32}{R^2}\int_{\Pi_R\setminus \Pi_{R/2}}\abs{\psi_k-\Phi}^2\dx +2\int_{\Pi-\Pi_{R/2}}\abs{\nabla (\psi_k-\Phi)}^2\dx \leq \\
 \leq 32\int_{\Pi\setminus\Pi_{R/2}}\frac{\abs{\psi_k-\Phi}^2}{\abs{x}^2}\dx+2\int_{\Pi-\Pi_{R/2}}\abs{\nabla (\psi_k-\Phi)}^2\dx\longrightarrow 0
\end{gather*}
thanks to \eqref{eqn:Phi_3} and Hardy's Inequality. In the case $N=2$ we use the fact that $1+\abs{x}^2\leq 2\abs{x}^2$ for $\abs{x}\geq 1$ and the 2-dimensional Hardy's Inequality \eqref{eqn:hardy2}.
\end{proof}

Using again the Dirichlet principle, we construct also the limit profile $Z_R$ as follows.

\begin{lemma}\label{lemma:def_Z_R}
  For every $R>1$ there exists a unique function $Z_R\in H^1(B_R^+)$ solution to the following minimization problem
  \[
    \min_{u\in H^1(B_R^+)} \left\{\int_{B_R^+}\abs{\nabla u}^2\dx \colon u=0~\text{on }\mathcal{C}_R,~u=\Phi ~\text{on }S_R^+ \right\}.
  \]
  Moreover it weakly solves
  \[
    \left\{\begin{aligned}
             -\Delta Z_R &=0 ,&& \text{in }B_R^+, \\
             Z_R&=0, && \text{on }\mathcal{C}_R, \\
             Z_R &=\Phi, &&\text{on }S_R^+.
           \end{aligned}\right.
  \]
\end{lemma}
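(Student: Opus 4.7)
The plan is to apply the direct method of the calculus of variations to the affine admissible set
\[
\mathcal{A}_R := \left\{u \in H^1(B_R^+) : u = 0 \text{ on } \mathcal{C}_R,\ u = \Phi \text{ on } S_R^+\right\},
\]
where the boundary values are taken in the trace sense. The first step is to verify that $\mathcal{A}_R$ is nonempty. Indeed, by Proposition \ref{prop:def_Phi} we have $\Phi \in \mathcal{H}_R$, so in particular $\Phi|_{B_R^+} \in H^1(B_R^+)$ and $\Phi$ has vanishing trace on $\partial \Pi_R \setminus S_R^+$; since $\mathcal{C}_R \subset \partial \Pi_R \setminus S_R^+$, the restriction $\Phi|_{B_R^+}$ itself belongs to $\mathcal{A}_R$ and provides an explicit comparison function.

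Next, consider the energy functional $\mathcal{F}(u) := \int_{B_R^+} |\nabla u|^2 \dx$ on $\mathcal{A}_R$. Writing a generic $u \in \mathcal{A}_R$ as $u = \Phi + v$ with $v \in H^1_0(B_R^+)$ and using the Poincar\'e inequality on the bounded half-ball $B_R^+$, any minimizing sequence is bounded in $H^1(B_R^+)$. Extracting a weakly convergent subsequence, combined with the weak closedness of the affine subspace $\mathcal{A}_R$ (trace is weakly continuous on bounded Lipschitz domains) and the weak lower semicontinuity of the convex functional $\mathcal{F}$, yields a minimizer $Z_R \in \mathcal{A}_R$. Uniqueness is a consequence of strict convexity on the affine space: if $Z_R'$ is another minimizer, then $Z_R - Z_R' \in H^1_0(B_R^+)$, and the parallelogram identity together with minimality imply $\nabla(Z_R - Z_R') \equiv 0$ in $B_R^+$, whence $Z_R = Z_R'$ by the Poincar\'e inequality.

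Finally, to identify the Euler--Lagrange equation, for every $\varphi \in H^1_0(B_R^+)$ and every $t \in \R$ the perturbation $Z_R + t\varphi$ still belongs to $\mathcal{A}_R$; differentiating $\mathcal{F}(Z_R + t \varphi)$ at $t=0$ produces
\[
\int_{B_R^+} \nabla Z_R \cdot \nabla \varphi \dx = 0 \quad \text{for all } \varphi \in H^1_0(B_R^+),
\]
which is precisely the weak formulation of $-\Delta Z_R = 0$ in $B_R^+$. The prescribed boundary conditions on $\mathcal{C}_R$ and $S_R^+$ hold by membership in $\mathcal{A}_R$.

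I do not foresee any substantive obstacle in the argument, as the lemma is essentially a textbook instance of the Dirichlet principle on a Lipschitz domain with mixed boundary data. The only point requiring brief care is the nonemptiness of $\mathcal{A}_R$, which is handled directly by using $\Phi$ itself as an admissible extension, and this is transparent from Proposition \ref{prop:def_Phi} and the characterization of $\mathcal{H}_R$.
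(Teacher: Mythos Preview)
Your proof is correct and follows exactly the approach the paper indicates: the paper does not give an explicit proof of this lemma but simply states ``Using again the Dirichlet principle, we construct also the limit profile $Z_R$,'' and your argument is a clean, standard instance of that principle. The only point of substance---nonemptiness of the admissible set via $\Phi|_{B_R^+}\in\mathcal{A}_R$---is handled correctly using Proposition~\ref{prop:def_Phi} and the characterization of $\mathcal{H}_R$.
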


\section{A Pohozaev-Type Inequality}\label{sec:pohozaev}

The purpose of this section is to prove the following inequality.

\begin{proposition}\label{prop:pohoz_inequality}
 There exists $\tilde{\epsilon},\tilde{r}>0$, with
 $0<\tilde{\epsilon}<\tilde{r}\leq R_{\textup{max}}$, such that, for
 $\epsilon\in (0,\tilde{\epsilon}]$, $r\in (\epsilon,\tilde{r}]$ and
 $i\in\{1,\dots, j\}$, we have 
 \begin{equation}\label{eqn:prop_pohoz_inequality}
  \int_{S_r^+}\abs{\nabla \phi_i^\epsilon}^2\ds -\frac{N-2}{r}\int_{\Omega_r^\epsilon}\abs{\nabla\phi_i^\epsilon}^2\dx\geq 2 \int_{S_r^+}\left( \frac{\partial \phi_i^\epsilon}{\partial\nnu}\right)^2\ds+\frac{2\lambda_i^\epsilon}{r}\int_{\Omega_r^\epsilon}p\phi_i^\epsilon\nabla\phi_i^\epsilon\cdot x\dx.
 \end{equation}
\end{proposition}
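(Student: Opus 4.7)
The plan is to derive \eqref{eqn:prop_pohoz_inequality} from the standard Pohozaev identity, obtained by multiplying $-\Delta\phi_i^\epsilon=\lambda_i^\epsilon p\phi_i^\epsilon$ by the radial multiplier $\nabla\phi_i^\epsilon\cdot x$ and integrating on $\Omega_r^\epsilon$. Using the pointwise identity $\nabla u\cdot\nabla(\nabla u\cdot x)=\frac{2-N}{2}\abs{\nabla u}^2+\frac12\div(\abs{\nabla u}^2 x)$ together with two applications of the divergence theorem, a direct computation gives the equality
\[
\int_{S_r^+}\abs{\nabla\phi_i^\epsilon}^2\ds-\frac{N-2}{r}\int_{\Omega_r^\epsilon}\abs{\nabla\phi_i^\epsilon}^2\dx=2\int_{S_r^+}\left(\frac{\partial\phi_i^\epsilon}{\partial\nnu}\right)^2\ds+\frac{1}{r}\int_{\Gamma_D}\left(\frac{\partial\phi_i^\epsilon}{\partial\nnu}\right)^2(x\cdot\nnu)\ds+\frac{2\lambda_i^\epsilon}{r}\int_{\Omega_r^\epsilon}p\phi_i^\epsilon\,\nabla\phi_i^\epsilon\cdot x\dx,
\]
where $\Gamma_D$ denotes the Dirichlet portion of $\partial\Omega_r^\epsilon$. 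The $S_r^+$ boundary terms simplify using $\nnu=x/r$ (hence $\nabla\phi_i^\epsilon\cdot x=r\,\partial\phi_i^\epsilon/\partial\nnu$), and those on $\Gamma_D$ collapse to the stated integral in $(x\cdot\nnu)$ because $\phi_i^\epsilon=0$ there forces $\nabla\phi_i^\epsilon=(\partial\phi_i^\epsilon/\partial\nnu)\,\nnu$. The inequality \eqref{eqn:prop_pohoz_inequality} then follows by showing $x\cdot\nnu\geq0$ a.e.\ on $\Gamma_D$ and discarding the middle term on the right-hand side.

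\textbf{Sign of the Dirichlet boundary term.} We choose $\tilde\epsilon,\tilde r$ with $0<\tilde\epsilon<\tilde r\leq R_{\textup{max}}$ so that, for all $\epsilon\in(0,\tilde\epsilon]$ and $r\in(\epsilon,\tilde r]$, the closure $\overline{\epsilon\Sigma}$ lies in the relative interior of $M\cap\overline{B_r^+}$; this is possible thanks to \eqref{eqn:hp_plane_boundary} and the normalization $\max_{x\in\partial\Sigma}\abs{x}=1$. Then $\Gamma_D$ decomposes into three $C^2$ pieces: the flat annular region $\{x_1=0\}\cap\overline{B_r^+}\setminus\epsilon\Sigma$, the lateral tube $(-1,0)\times\partial(\epsilon\Sigma)$, and the end cap $\{-1\}\times\overline{\epsilon\Sigma}$. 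On the flat part $\nnu=-e_1$ and $x_1=0$, whence $x\cdot\nnu=0$; on the end cap $\nnu=-e_1$ and $x_1=-1$, whence $x\cdot\nnu=1$; on the lateral tube the outward normal is $(0,\nnu_\Sigma(x'/\epsilon))$, and
\[
x\cdot\nnu=x'\cdot\nnu_\Sigma(x'/\epsilon)=\epsilon\,(x'/\epsilon)\cdot\nnu_\Sigma(x'/\epsilon)\geq0
\]
by the starshapedness assumption \eqref{eqn:hp_sigma_starshaped}. This furnishes the required sign.

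\textbf{Main obstacle.} The principal technical point is the rigorous justification of the integration by parts, since $\Omega_r^\epsilon$ is only Lipschitz, with codimension-two edges where the three Dirichlet pieces meet one another and where $S_r^+$ meets $\{x_1=0\}$. By classical elliptic regularity and the $C^2$ assumption on $\partial\Sigma$, we have $\phi_i^\epsilon\in H^2_{\textup{loc}}$ up to the smooth portions of $\partial\Omega_r^\epsilon$, but not up to the edges. The standard remedy is to approximate $\Omega_r^\epsilon$ from inside by smooth subdomains exhausting it, apply the Pohozaev identity on each (where all boundary integrals are classical), and pass to the limit using uniform $H^1$ bounds together with the $L^2$ convergence of traces of $\nabla\phi_i^\epsilon$ on the approximating surfaces. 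The sign of $x\cdot\nnu$ on each Dirichlet piece is preserved in this limit, so \eqref{eqn:prop_pohoz_inequality} follows.
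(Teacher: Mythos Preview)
Your formal Pohozaev computation and the sign analysis on the three Dirichlet pieces are correct, and you correctly identify that the only issue is justifying the integration by parts near the re-entrant edge $\{0\}\times\partial(\epsilon\Sigma)$. However, your proposed fix --- approximating $\Omega_r^\epsilon$ from \emph{inside} and passing to the limit via ``$L^2$ convergence of traces of $\nabla\phi_i^\epsilon$ on the approximating surfaces'' --- has a genuine gap. Near a re-entrant edge the eigenfunction is generically \emph{not} $H^2$, so $|\nabla\phi_i^\epsilon|$ need not have an $L^2$ trace on surfaces collapsing to the edge, and ``uniform $H^1$ bounds'' cannot supply one. Moreover, on the portion of the approximating boundary that lies \emph{inside} $\Omega_r^\epsilon$ (the rounded corner), $\phi_i^\epsilon\neq 0$, so the boundary terms do not collapse to $(\partial\phi_i^\epsilon/\partial\nnu)^2(x\cdot\nnu)$; you would have to show that the full integrals $\int|\nabla\phi_i^\epsilon|^2(x\cdot\nnu)\ds$ and $\int(\partial\phi_i^\epsilon/\partial\nnu)(\nabla\phi_i^\epsilon\cdot x)\ds$ over this shrinking surface tend to zero, which requires precise control of the edge singularity that you do not provide.

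The paper circumvents this by a different approximation: it \emph{enlarges} $\Omega_r^\epsilon$ to a starshaped domain $D_\delta=\Omega_r^\epsilon\cup Q_\delta$ satisfying a uniform exterior ball condition (by smoothing the re-entrant corner from the outside), and on $D_\delta$ it solves an \emph{auxiliary} problem $-\Delta u_\delta=\lambda_i^\epsilon p\,u_\delta$ with the same data as $\phi_i^\epsilon$ on $S_r^+$. Since $D_\delta$ has the exterior ball property, $u_\delta\in H^2(D_\delta)$ and the Pohozaev identity holds rigorously for $u_\delta$; starshapedness of $D_\delta$ gives $x\cdot\nnu\geq 0$ on the Dirichlet boundary, so one drops that term and obtains the inequality for $u_\delta$. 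The limit $\delta\to0$ then involves only volume integrals over $D_\delta$ and surface integrals over $S_r^+$ (away from any edge), for which the convergences $u_\delta\to\phi_i^\epsilon$ in $H^1(D_{\delta_0})$ and $\nabla u_\delta\to\nabla\phi_i^\epsilon$ in $L^2(S_r^+)$ suffice. The key point is that the troublesome boundary term is discarded \emph{before} the limit, so no control of edge traces of $\nabla\phi_i^\epsilon$ is ever needed.
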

We observe that solutions to problems of type
\[
  \left\{\begin{aligned}
           -\Delta u & = f, & & \text{in }\Omega_r^\epsilon, \\
           u& =0, &&\text{on }\partial\Omega_r^\epsilon,
         \end{aligned}\right.
\]
with $f\in L^2(\Omega_r^\epsilon)$, in general do not belong to
$H^2(\Omega_r^\epsilon)$ since $\partial\Omega_r^\epsilon$ is only
Lipschitz continuous and doesn't verify a uniform exterior ball
condition (which ensures $L^2$-integrability of second order
derivatives, see \cite{ADOLFSSON_1992}). But, along a proof of a
Pohozaev Identity, one tests the equation with the function
$\nabla \phi_i^\epsilon\cdot x$, which could fail to be $H^1$ in our case. To
overcome this difficulty we implement an approximation process.

\subsection{Approximating Domains}
Let $\epsilon\in (0,1]$ and $r\in (\epsilon,R_{\textup{max}}]$: in order to remove the concave edge $\Gamma_\epsilon:=\partial (\epsilon\Sigma)$, we will approximate the domain $\Omega^\epsilon_r$ with a family of starshaped domains $\Omega^\epsilon_{r,\delta}$ (with $0\leq \delta< r-\epsilon$) such that 
\[
  \Omega^\epsilon_{r,0}=\Omega^\epsilon_{r},\quad\Omega^\epsilon_{r,\delta_1}\subset \Omega^\epsilon_{r,\delta_2}\quad\text{for all }0\leq \delta_1\leq\delta_2<r-\epsilon,
\]
and such that every $\Omega^\epsilon_{r,\delta}$ verifies the uniform exterior ball condition. In particular we will define $Q_\delta$ such that
\[
  \Omega^\epsilon_{r,\delta}=\Omega^\epsilon_r\cup Q_\delta.
\]
For $\delta>0$ small we define a ``$\delta$-enlargement'' of $\epsilon\Sigma$:
\[
 \epsilon\Sigma_\delta:=\left\{ x\in \R^{N-1}\setminus\epsilon\Sigma \colon \dist(x,\Gamma_\epsilon)<\delta\right\}.
\]
Let $h\in C^\infty((0,+\infty))\cap C([0,+\infty))$ such that $h(0)=\delta$, $h(s)=0$ for $s\in [\delta,+\infty)$, $h'(s)<0$ for all $s\in (0,\delta)$ and $h^{-1}(s)=h(s)$ for $s\in (0,\delta)$. We define $G\colon\epsilon\Sigma_\delta \subset \R^{N-1}\to \R$ as
\[
G(z):=-h(d(z)), 
\]
where $d(z)=\dist (z,\Gamma_\epsilon)$. Now, let 
\[
  Q_\delta=\left\{ x\in \R^N\colon (0,x_2,\dots,x_N)\in \epsilon\Sigma_\delta~\text{and}~ G(x_2,\dots,x_N)<x_1\leq 0\right\},
\]
\begin{figure}\hspace{0.5cm}
    \subfloat[][\emph{The function $h$}]{
    \begin{tikzpicture}[scale=1.5]
    \draw [->] (0,-1) -- (0,2) node [right] {$h(s)$};
    \draw [->] (-1,0) -- (2,0) node [below] {$s$};
    \draw [thick] (0,1) to [out=270,in=180] (1,0) -- (2,0); 
    \draw [dashed] (-0.5,-0.5) -- (1.5,1.5);
    \filldraw[black] (1,0) circle (1pt) node[anchor=north] {$\delta$};
    \filldraw[black] (0,1) circle (1pt) node[anchor=east] {$\delta$};
    \end{tikzpicture}}\hspace{2cm}
    \subfloat[][\emph{The Approximating Domain}]{\begin{tikzpicture}[scale=1.5]
    \draw [->] (0,-1.5) -- (0,1.5) node [left] {$x_2,\dots,x_N$};
    \draw [->] (-2,0) -- (2,0) node [below] {$x_1$};
    \draw [thick] (0,0.2) -- (0,1) to [out=90,in=150] (1,1) to [out=330,in=30] (1.5,0.5) to [out=210,in=105] (1.75,-1) to [out=285,in=270] (0,-1) -- (0,-0.2);
    \draw [thick] (0,-0.2) -- (-1.5,-0.2) -- (-1.5,0.2) -- (0,0.2);
    \draw [black,fill=black,opacity=0.5] (-1.6,0) to [out=90,in=180] (-1.5,0.2) to [out=0,in=90] (-1.4,0) to [out=270,in=0] (-1.5,-0.2) to [out=180,in=270] (-1.6,0);
    	\draw [thick] (-0.4,0.2) to [out=0,in=270] (0,0.6);
    \draw [thick] (-0.4,-0.2) to [out=0,in=90] (0,-0.6);
    
    \draw [thick,red] (0,0.8) to [out=0,in=90] (0.8,0) to [out=270,in=0] (0,-0.8);
    \draw [thick,dotted] (0,0) -- (0.64,0.46);
    
    \node [below] at (0.3,0.2) {\scriptsize $r$};
    \node [below] at (0.8,-0.6) {$\Omega^\epsilon_{r,\delta}$};
    
    \node [above] at (-0.3,0.3) {$Q_\delta$};
    \node [above] at (0.75,0.5) {\small $S_r^+$};
   \end{tikzpicture}
   }\caption{}
   \label{fig:approx_domains}
\end{figure}
see Figure \ref{fig:approx_domains}.

For what concerns the regularity we observe that, since $\Gamma_\epsilon$ is of class $C^2$, then also $d$ and the graph of $G$ are of class $C^2$ (see \cite{Krantz1981}). Moreover it is easy to verify that the approximating domain $\Omega_{r,\delta}^\epsilon$ satisfies the uniform exterior ball condition.

\begin{remark}\label{rmk:starshapeness}
 We point out that $\Omega_{r,\delta}^\epsilon$ is starshaped. Indeed, first, if $x\in \mathcal{C}_r\setminus \left( \epsilon\Sigma_\delta \cup \epsilon\Sigma\right)$, trivially $x\cdot \nnu(x)=0$. If $x\in \{x_1=-\epsilon\}\cap \partial\Omega_{r,\delta}^\epsilon$, then $x\cdot\nnu(x)=-x_1=\epsilon >0$. Third, if $x\in (-\epsilon,-\delta)\times\Gamma_\epsilon$, then $\nnu(x)=\nnu_0(x')$, where $x'=(x_2,\dots,x_N)$ and $\nnu_0(x')$ is the exterior unit normal of $\Gamma_\epsilon$; thus $x\cdot \nnu(x)=x'\cdot \nnu_0(x')\geq 0$ by \eqref{eqn:hp_sigma_starshaped}. Finally, let $x\in \{(G(x'),x')\colon x'\in \epsilon\Sigma_\delta\}$: in this case we have that
\[
 \nnu(x)=\frac{(-1,-h'(d(x'))\nabla d(x'))}{\norm{(-1,-h'(d(x'))\nabla d(x'))}},
\]
and so
\[
 x\cdot \nnu(x)=\frac{-G(x')-h'(d(x'))\nabla d(x')\cdot x'}{\norm{(-1,-h'(d(x'))\nabla d(x'))}}\geq 0,
\]
where we used the properties of the functions $G$ and $h$ and the fact
that, since $0\in \Sigma$ and $\Sigma$ is starshaped, $\frac{\partial
  d}{\partial x_i}x_i\geq 0$ for any $i=2,\dots,N$, see \cite[Proof of
Lemma 14.16]{GILBARG_2015}.
\end{remark}

\subsection{Approximating Problems}\label{sec:approx_prob}
For $\alpha\in (0,1)$, let us fix $\tilde{r},\tilde{\epsilon}>0$, with $0<\tilde{\epsilon}<\tilde{r}\leq R_{\textup{max}}$, such that
\begin{equation}\label{eqn:hp_D_delta}
  \abs{\Omega_{r,\delta}^\epsilon}\leq \left( \frac{1-\alpha}{C_N\lambda_j \norm{p}_\infty} \right)^{\frac{N}{2}} \quad\text{for all }\epsilon\in(0,\tilde{\epsilon}),~r\in (\epsilon,\tilde{r}),~\delta\in(0,r-\epsilon),
\end{equation}
where $C_N$ has been defined in \eqref{eqn:def_C_N}.

For fixed $i\in\{1,\dots,j\}$, $\epsilon\in (0,\tilde{\epsilon}]$, $r\in (\epsilon, \tilde{r}]$ and for all $\delta \in (0,r-\epsilon)$, let us consider the problem

\begin{equation}\label{pbm:eigen_D_delta}
  \left\{
  \begin{aligned}
    -\Delta u &=\lambda_i^\epsilon p u, & & \text{in}~D_\delta, \\
    u & =0, & &\text{on}~\partial D_\delta\setminus S_r^+, \\
    u & = \phi_i^\epsilon, & & \text{on}~S^+_r,
  \end{aligned}
  \right.
\end{equation}
where, for simplicity of notation, in this section we call
$D_\delta:=\Omega_{r,\delta}^\epsilon$; we also denote
$\delta_0:=r-\epsilon$.
\begin{theorem}
  There exists a unique $u_\delta\in H^1(D_\delta)$ solution to problem \eqref{pbm:eigen_D_delta}. Moreover the family $\{u_\delta\}_{\delta\in(0,\delta_0)}$ is bounded in $H^1(D_{\delta_0})$ with respect to $\delta$.
\end{theorem}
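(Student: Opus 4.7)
The plan is to reduce \eqref{pbm:eigen_D_delta} to a homogeneous Dirichlet problem by subtracting a $\delta$-independent lifting of the boundary data, and then apply the Lax--Milgram lemma on $H^1_0(D_\delta)$. The coercivity of the resulting bilinear form will come from the Faber--Krahn--Poincaré inequality \eqref{eqn:poincare_faber_krahn} combined with the volume bound \eqref{eqn:hp_D_delta}, which was precisely calibrated for this purpose so that the coercivity constant is independent of $\delta$ and $\epsilon$.

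First I would construct a function $v\in H^1(D_{\delta_0})$, compactly supported in $\overline{B_r^+}$, satisfying $v=\phi_i^\epsilon$ on $S_r^+$ and $v=0$ on $\mathcal{C}_r$. Such a $v$ exists because $\phi_i^\epsilon$ vanishes on $\mathcal{C}_r\setminus \epsilon\Sigma\subset\partial\Omega^\epsilon$, so that the boundary data is compatible at $\partial S_r^+\cap\mathcal{C}_r$; a harmonic extension into $B_r^+$ multiplied by a suitable cut-off does the job. The key point is that $v$ is $\delta$-independent: since its support avoids $Q_\delta$ and it vanishes on $\mathcal{C}_r\supset\epsilon\Sigma\cup\epsilon\Sigma_\delta$, we have $v=0$ on $\partial D_\delta\setminus S_r^+$ for every $\delta\in[0,\delta_0)$ simultaneously.

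Writing $u_\delta=w_\delta+v$, I would then seek $w_\delta\in H^1_0(D_\delta)$ satisfying the variational problem
\[
\int_{D_\delta}\nabla w_\delta\cdot \nabla\varphi\dx -\lambda_i^\epsilon\int_{D_\delta}pw_\delta\varphi\dx = \lambda_i^\epsilon\int_{D_\delta}pv\varphi\dx -\int_{D_\delta}\nabla v\cdot\nabla\varphi\dx
\]
for every $\varphi\in H^1_0(D_\delta)$. The central point is coercivity of the left-hand bilinear form $a_\delta$: for $w\in H^1_0(D_\delta)$, \eqref{eqn:poincare_faber_krahn}, \eqref{eqn:hp_D_delta}, together with $\lambda_i^\epsilon\leq\lambda_j$ and $p\leq\norm{p}_\infty$ give
\[
\lambda_i^\epsilon\int_{D_\delta}p\, w^2\dx \leq \lambda_j\norm{p}_\infty C_N \abs{D_\delta}^{2/N}\int_{D_\delta}\abs{\nabla w}^2\dx \leq (1-\alpha)\int_{D_\delta}\abs{\nabla w}^2\dx,
\]
whence $a_\delta(w,w)\geq \alpha\norm{\nabla w}_{L^2(D_\delta)}^2$, with $\alpha$ independent of $\delta$. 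Lax--Milgram then yields existence and uniqueness of $w_\delta$, hence of $u_\delta=w_\delta+v$; uniqueness of $u_\delta$ itself follows because the difference of two solutions lies in $H^1_0(D_\delta)$ and the coercivity forces it to vanish.

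Finally, for the uniform bound, the Lax--Milgram estimate gives $\alpha\norm{\nabla w_\delta}_{L^2(D_\delta)}\leq\norm{F_\delta}_{H^{-1}(D_\delta)}$, where $F_\delta$ is the right-hand side functional. Cauchy--Schwarz and \eqref{eqn:poincare_faber_krahn} bound $\norm{F_\delta}_{H^{-1}(D_\delta)}$ by a constant depending only on $\norm{v}_{H^1(D_{\delta_0})}$, $\lambda_j$, $\norm{p}_\infty$, and $\abs{D_{\delta_0}}$, none of which depend on $\delta$. Extending $w_\delta$ by zero to $D_{\delta_0}\setminus D_\delta$ (which preserves the $H^1$ norm because $w_\delta\in H^1_0(D_\delta)$) and adding the fixed $v$ yields the desired uniform $H^1(D_{\delta_0})$ bound on $u_\delta$. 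The main subtlety I would watch for is the construction of a single lifting $v$ compatible with the moving boundaries $\partial D_\delta$; this is resolved by choosing $v$ supported in $\overline{B_r^+}$, so that it cannot ``see'' the $\delta$-dependent region $Q_\delta$ at all.
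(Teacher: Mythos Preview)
Your proposal is correct and follows essentially the same approach as the paper: reduce to a homogeneous Dirichlet problem via a lifting, apply Lax--Milgram with coercivity coming from \eqref{eqn:poincare_faber_krahn} and \eqref{eqn:hp_D_delta}, then extend by zero to obtain the uniform bound. The only difference is the choice of lifting: the paper simply uses $\phi_i^\epsilon$ itself (extended by zero in $D_{\delta_0}\setminus\Omega_r^\epsilon$), which already has the right trace on $S_r^+$ and vanishes on $\partial D_\delta\setminus S_r^+$ for every $\delta$, so no separate construction is needed; the resulting right-hand side is $F=\lambda_i^\epsilon p\phi_i^\epsilon+\Delta\phi_i^\epsilon\in H^{-1}(D_\delta)$, whose $H^{-1}$ norm is easily seen to be $O(1)$ in $\delta$.
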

\begin{proof}
  We observe that, if we extend $\phi_i^\epsilon$ to zero in $D_{\delta_0}\setminus \Omega_r^\epsilon$ and we let $v=u-\phi_i^\epsilon$, then problem \eqref{pbm:eigen_D_delta} is equivalent to
    \begin{equation}\label{pbm:eigen_D_delta_v}
    \left\{
    \begin{aligned}
      -\Delta v &=\lambda_i^\epsilon p v +F, & & \text{in}~D_\delta, \\
      v & =0, & &\text{on}~\partial D_\delta,
    \end{aligned}
    \right.
  \end{equation}
  where $F:=\lambda_i^\epsilon p \phi_i^\epsilon+\Delta\phi_i^\epsilon\in H^{-1}(D_\delta)$. Existence and uniqueness of a solution $v_\delta\in H^1_0(D_\delta)$ to \eqref{pbm:eigen_D_delta_v} easily comes from Lax-Milgram Theorem. Indeed, the bilinear form
  \[
   a(v,w):=\int_{D_\delta}\left(\nabla v\cdot\nabla w-\lambda_i^\epsilon p vw\right)\dx,\quad v,w\in H^1_0(D_\delta)
  \]
  is coercive, since, by \eqref{eqn:poincare_faber_krahn} and \eqref{eqn:hp_D_delta}, we have
  \begin{equation}\label{eqn:coercivity_lax}
  \begin{split}
   a(v,v)&=\int_{D_\delta}\left(\abs{\nabla v}^2-\lambda_i^\epsilon p\abs{v}^2\right)\dx \\ &\geq \left( 1-C_N\lambda_j\norm{p}_\infty \abs{D_\delta}^{\frac{2}{N}} \right)\int_{D_\delta}\abs{\nabla v}^2\dx\geq \alpha \int_{D_\delta}\abs{\nabla v}^2\dx.
   \end{split}
  \end{equation}
 From Lax-Milgram Theorem we also know that
  \[
    \norm{\nabla v_\delta}_{L^2(D_{\delta_0})}=\norm{\nabla v_\delta}_{L^2(D_\delta)}\leq \frac{\norm{F}_{H^{-1}(D_\delta)}}{\alpha},
  \]
  where $v_\delta$ has been trivially extended in
  $D_{\delta_0}\setminus D_\delta$. One can easily prove that
  $\norm{F}_{H^{-1}(D_\delta)}=O(1)$ as $\delta\to 0$. Then
  $u_\delta:=v_\delta+\phi_i^\epsilon$ is the unique solution to
  \eqref{pbm:eigen_D_delta} and
  $\{ u_\delta\}_{\delta\in(0,\delta_0)}$ is bounded in
  $H^1(D_{\delta_0})$.
\end{proof}

\begin{theorem}\label{thm:strong_conv_u_delta}
If $u_\delta\in H^1(D_\delta)$ is the unique solution to \eqref{pbm:eigen_D_delta}, then
  \[
    u_\delta\longrightarrow \phi_i^\epsilon\qquad\text{in }H^1(D_{\delta_0}), \quad\text{as }\delta\to 0.
  \]
\end{theorem}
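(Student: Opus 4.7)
The plan is to rewrite the problem in terms of $v_\delta := u_\delta - \phi_i^\epsilon$, where $\phi_i^\epsilon$ is trivially extended by zero to $D_{\delta_0} \setminus \Omega_r^\epsilon$. Since $\phi_i^\epsilon$ vanishes on $\partial\Omega^\epsilon$, and in particular on the interface between $\Omega_r^\epsilon$ and the added collar $Q_\delta$, this extension belongs to $H^1(D_{\delta_0})$. Then $v_\delta \in H^1_0(D_\delta)$ solves problem \eqref{pbm:eigen_D_delta_v}, and the Lax--Milgram estimate already used in the previous theorem gives
\[
\norm{\nabla v_\delta}_{L^2(D_\delta)} \leq \alpha^{-1} \norm{F}_{H^{-1}(D_\delta)}.
\]
Thus the whole claim reduces to showing $\norm{F}_{H^{-1}(D_\delta)} \to 0$ as $\delta \to 0^+$.

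The key observation is that, since $\phi_i^\epsilon$ satisfies $-\Delta \phi_i^\epsilon = \lambda_i^\epsilon p \phi_i^\epsilon$ classically inside $\Omega_r^\epsilon$, the bulk contributions to $F = \lambda_i^\epsilon p \phi_i^\epsilon + \Delta \phi_i^\epsilon$ cancel, and only a surface term supported on the interior interface $\Sigma_\delta^{\mathrm{int}} := \partial\Omega_r^\epsilon \cap D_\delta$ survives. More precisely, integration by parts in $\Omega_r^\epsilon$ against any $w \in H^1_0(D_\delta)$ gives
\[
\langle F,w\rangle = -\int_{\Sigma_\delta^{\mathrm{int}}} \frac{\partial \phi_i^\epsilon}{\partial \nu}\, w \ds.
\]
The surface $\Sigma_\delta^{\mathrm{int}}$ is the union of the flat-wall strip $\{0\}\times \epsilon\Sigma_\delta$ and of the tubular piece $(-\delta,0)\times \Gamma_\epsilon$ adjacent to the reentrant edge $\Gamma_\epsilon$; it has $(N-1)$-Hausdorff measure of order $\delta$, and every one of its points lies within distance $O(\delta)$ from $\partial D_\delta$.

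Two ingredients then control this surface integral. Since $w=0$ on $\partial D_\delta$, the one-dimensional fundamental theorem of calculus applied on segments of length $O(\delta)$ orthogonal to $\Sigma_\delta^{\mathrm{int}}$ yields
\[
\norm{w}_{L^2(\Sigma_\delta^{\mathrm{int}})} \leq C \delta^{1/2} \norm{\nabla w}_{L^2(D_\delta)}.
\]
On the other hand, by standard elliptic regularity for the Dirichlet Laplacian on Lipschitz domains with $L^\infty$ data, the normal derivative $\partial_\nu \phi_i^\epsilon$ lies in $L^2$ on $\Sigma_{\delta_0}^{\mathrm{int}}$ (near the reentrant edge the opening angle $3\pi/2$ still yields a Sobolev exponent above $1/2$, hence an $L^2$ trace of the gradient up to the edge); absolute continuity of the integral then forces $\norm{\partial_\nu \phi_i^\epsilon}_{L^2(\Sigma_\delta^{\mathrm{int}})} \to 0$ as $|\Sigma_\delta^{\mathrm{int}}| \to 0$. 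Cauchy--Schwarz gives $\norm{F}_{H^{-1}(D_\delta)} = O\bigl(\delta^{1/2} \norm{\partial_\nu \phi_i^\epsilon}_{L^2(\Sigma_\delta^{\mathrm{int}})}\bigr)$, which vanishes in the limit.

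The main technical obstacle is precisely to legitimize the surface representation of $F$ and the $L^2$-integrability of $\partial_\nu \phi_i^\epsilon$ up to the nonsmooth edge $\Gamma_\epsilon$: a priori $\phi_i^\epsilon$ is only $H^1$ on the nonsmooth Lipschitz domain $\Omega^\epsilon$, so the normal trace of its gradient is only in $H^{-1/2}$ of the boundary. Either one appeals to corner-regularity results at reentrant edges, or one bypasses the direct computation by working with smooth approximations of $\phi_i^\epsilon$ and passing to the limit in the $H^{-1}$ estimate. In either case the crucial geometric fact is that $\Sigma_\delta^{\mathrm{int}}$ has width $O(\delta)$ and sits within distance $O(\delta)$ from $\partial D_\delta$, which via a Poincar\'e-type argument forces the discrepancy in the equation to vanish.
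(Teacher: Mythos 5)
Your proposal takes a genuinely different route from the paper, and you yourself flag the gap in it, but the gap is real and not optional. The paper's proof is entirely soft: from the boundedness of $\{u_\delta\}$ in $H^1(D_{\delta_0})$ (established in the preceding theorem) it extracts a weak $H^1$ limit $U$ along a subsequence, identifies $U=\phi_i^\epsilon$ by showing $U$ solves the limiting mixed boundary value problem on $\Omega_r^\epsilon$ and using the coercivity \eqref{eqn:coercivity_lax} for uniqueness, and then upgrades weak to strong convergence by proving convergence of the Dirichlet norms: testing \eqref{pbm:eigen_D_delta} with $u_\delta$, using compactness of $H^1\hookrightarrow L^2$, and using that $\nabla u_\delta\rightharpoonup\nabla\phi_i^\epsilon$ in $H(\div,D_{\delta_0})$, whence $\int_{S_r^+}\partial_\nu u_\delta\,\phi_i^\epsilon\to\int_{S_r^+}\partial_\nu\phi_i^\epsilon\,\phi_i^\epsilon$ by the $H(\div)$ normal-trace theorem in $\left(H^{1/2}_{00}(S_r^+)\right)^*$. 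Urysohn's subsequence principle finishes. Nothing beyond $H^1$ regularity of $\phi_i^\epsilon$ is needed.

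Your proposal instead aims at $\norm{F}_{H^{-1}(D_\delta)}\to 0$ directly. The reduction of $\langle F,w\rangle$ to a surface integral over $\Sigma_\delta^{\rm int}$ and the boundary-layer Poincar\'e estimate $\norm{w}_{L^2(\Sigma_\delta^{\rm int})}\lesssim\delta^{1/2}\norm{\nabla w}_{L^2(D_\delta)}$ are sound ideas. But the final ingredient you need, that $\partial_\nu\phi_i^\epsilon$ is a genuine $L^2$ function on $\Sigma_{\delta_0}^{\rm int}$ up to the reentrant edge $\Gamma_\epsilon$, is not ``standard elliptic regularity'' in the setting the paper works in: $\phi_i^\epsilon$ is only known to be $H^1$ on the Lipschitz domain $\Omega^\epsilon$, so the normal component of its gradient on $\partial\Omega^\epsilon$ is a priori only a distribution in $H^{-1/2}$. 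Your heuristic (the dihedral angle $3\pi/2$ gives a leading exponent $2/3>1/2$, so the gradient should have an $L^2$ trace) is plausible, but making it rigorous requires edge-regularity theory (Kondrat'ev-type estimates for an $(N-2)$-dimensional reentrant edge, uniformly in $\epsilon$) that is neither proved nor cited in the paper, and this is precisely the kind of pointwise boundary-regularity that the paper's entire Section~3 strategy, building the smooth approximating domains $D_\delta$ and then passing to the limit, is designed to circumvent. Even the formula $\langle F,w\rangle=-\int_{\Sigma_\delta^{\rm int}}\partial_\nu\phi_i^\epsilon\,w\,\mathrm{d}S$ is itself only a formal Green identity until this regularity is secured. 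So as written the proof is incomplete: either supply the edge-regularity result (with uniform constants), or adopt the paper's compactness argument, which sidesteps the issue entirely.
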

\begin{proof}
Since $\{ u_\delta\}_{\delta\in(0,\delta_0)}$ is bounded in $H^1(D_{\delta_0})$, then there exists $U\in H^1(D_{\delta_0})$ such that, up to a subsequence,
\[
 u_\delta \rightharpoonup U\qquad\text{weakly in }H^1(D_{\delta_0}),\quad \text{as }\delta \to 0.
\]
Actually $U\in H^1(\Omega_r^\epsilon)$ and moreover it weakly solves
\[
 \left\{\begin{aligned}
  -\Delta U &=\lambda_i^\epsilon p U, &&\text{in }\Omega_r^\epsilon, \\
  U &=0, && \text{on }\partial\Omega_r^\epsilon\setminus S_r^+, \\
  U & =\phi_i^\epsilon, &&\text{on }S_r^+.
 \end{aligned}\right.
\]
From the coercivity obtained in \eqref{eqn:coercivity_lax} we deduce that $U=\phi_i^\epsilon$.

In order to prove strong convergence in $H^1(D_{\delta_0})$, we notice that
\begin{equation}\label{eqn:aux_str_conv}
  \int_{D_{\delta_0}}\abs{\nabla u_\delta}^2\dx=\lambda_i^\epsilon\int_{D_{\delta_0}}p u_\delta^2\dx+\int_{S_r^+}\phi_i^\epsilon \frac{\partial u_\delta}{\partial \nnu}\ds.
\end{equation}
From the compactness of the embedding $H^1(D_{\delta_0})\hookrightarrow L^2(D_{\delta_0})$ we have that $u_\delta\to \phi_i^\epsilon$ in $L^2(D_{\delta_0})$ and so $\int_{D_{\delta_0}}p \abs{u_\delta}^2\dx \to \int_{D_{\delta_0}}p \abs{\phi_i^\epsilon}^2\dx$. Moreover, from the equation \eqref{pbm:eigen_D_delta}, we have that $\nabla u_\delta\rightharpoonup \nabla \phi_i^\epsilon$ weakly in $H(\div,D_{\delta_0})$ as $\delta\to 0$. Hence classical trace theorems for vector functions yield 
\[
  \int_{S_r^+}\frac{\partial u_\delta}{\partial\nnu}\phi_i^\epsilon\ds \to \int_{S_r^+}\frac{\partial \phi_i^\epsilon}{\partial\nnu}\phi_i^\epsilon\ds,\quad\text{as }\delta\to 0.
\]
Therefore, from \eqref{eqn:aux_str_conv} and from the equation satisfied by $\phi_i^\epsilon$, we conclude that, along a subsequence,
\[
\lim_{\delta\to 0}\int_{D_{\delta_0}}\abs{\nabla u_\delta}^2\dx
=\lambda_i^\epsilon\int_{D_{\delta_0}}p\abs{\phi_i^\epsilon}^2\dx+\int_{S_r^+}\frac{\partial
  \phi_i^\epsilon}{\partial\nnu}\phi_i^\epsilon\ds=\int_{\Omega_r^\epsilon}\abs{\nabla\phi_i^\epsilon}^2\dx=\int_{D_{\delta_0}}\abs{\nabla
  \phi_i^\epsilon}^2\dx.
\]
Thanks to \emph{Urysohn's Subsequence Principle} the proof is thereby complete.
\end{proof}

\begin{theorem}\label{thm:conv_grad_boundary}
Let $u_\delta\in H^1(D_\delta)$ be the unique solution to \eqref{pbm:eigen_D_delta}. Then
  \[
    \nabla u_\delta \longrightarrow \nabla \phi_i^\epsilon\qquad\text{in }L^2(S^+_r),\quad\text{as }\delta\to 0.
  \]
\end{theorem}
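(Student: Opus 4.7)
The plan is to deduce the $L^2(S_r^+)$-convergence of gradients from an $H^2$-convergence of the difference $w_\delta:=u_\delta-\phi_i^\epsilon$ in a neighbourhood of $S_r^+$, which in turn follows from standard boundary elliptic regularity after odd reflection across $\{x_1=0\}$. By Theorem \ref{thm:strong_conv_u_delta} we have $w_\delta\to 0$ in $H^1(D_{\delta_0})$. Since both $u_\delta$ and $\phi_i^\epsilon$ solve $-\Delta v=\lambda_i^\epsilon p v$ on the half-ball $B_r^+\subseteq D_\delta\cap\Omega^\epsilon$, the same equation is satisfied by $w_\delta$ in $B_r^+$. Furthermore $w_\delta=0$ on $S_r^+$ by the boundary condition in \eqref{pbm:eigen_D_delta}, and $w_\delta=0$ on the flat piece $\mathcal{C}_r\setminus\epsilon\Sigma$ since both $u_\delta$ and $\phi_i^\epsilon$ vanish there by their respective Dirichlet conditions.

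Fix $\eta\in(0,r-\epsilon)$ and set $V_\eta:=B_r^+\setminus\overline{B_{r-\eta}^+}$; by our choice of $\eta$, $\partial V_\eta\cap\{x_1=0\}\subseteq\mathcal{C}_r\setminus\epsilon\Sigma$, so $w_\delta$ carries homogeneous Dirichlet data on that face. Introducing the odd extension $\tilde w_\delta(x_1,x'):=-w_\delta(-x_1,x')$ for $x_1<0$ (and $\tilde w_\delta=w_\delta$ for $x_1\geq 0$), together with the even extension $\tilde p$ of $p$ across $\{x_1=0\}$, one checks that $\tilde w_\delta\in H^1(\tilde V_\eta)$ with $\tilde V_\eta:=B_r\setminus\overline{B_{r-\eta}}$ and that
\begin{equation*}
-\Delta \tilde w_\delta=\lambda_i^\epsilon\,\tilde p\,\tilde w_\delta\quad\text{in }\tilde V_\eta,\qquad \tilde w_\delta=0\quad\text{on }S_r.
\end{equation*}
Since $S_r$ is smooth and $\tilde p\in L^\infty$, classical boundary $H^2$-regularity up to a smooth portion of the boundary (see e.g.\ \cite[Theorem 8.12]{GILBARG_2015}) applied on the slightly smaller annulus $A:=B_r\setminus\overline{B_{r-\eta/2}}$ gives a constant $C>0$, independent of $\delta$, with
\begin{equation*}
\|\tilde w_\delta\|_{H^2(A)}\leq C\bigl(\|\lambda_i^\epsilon\tilde p\,\tilde w_\delta\|_{L^2(\tilde V_\eta)}+\|\tilde w_\delta\|_{L^2(\tilde V_\eta)}\bigr)\leq C'\|w_\delta\|_{L^2(V_\eta)}.
\end{equation*}

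As $\|w_\delta\|_{L^2(V_\eta)}\to 0$, we infer $\tilde w_\delta\to 0$ in $H^2(A)$; by the continuity of the gradient trace $H^2(A)\to H^{1/2}(S_r,\R^N)$ this yields $\nabla\tilde w_\delta\to 0$ in $L^2(S_r)$, whence $\nabla w_\delta\to 0$ in $L^2(S_r^+)$ by restriction to the upper hemisphere. The statement then follows from $\nabla u_\delta=\nabla\phi_i^\epsilon+\nabla w_\delta$ on $S_r^+$. The principal subtlety is the right-angle corner at the equator $\partial S_r^+\cap\{x_1=0\}$, where $\partial D_\delta$ is only Lipschitz and second-order regularity up to the boundary is not at one's disposal; the odd reflection bypasses this obstacle because $w_\delta$ has homogeneous Dirichlet data on both faces incident to the corner, so after reflection the relevant portion of the boundary is the smooth sphere $S_r$ and the classical estimates apply with constants uniform in~$\delta$.
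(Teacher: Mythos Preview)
Your proof is correct and follows essentially the same approach as the paper's: odd reflection across $\{x_1=0\}$ in an annular neighbourhood of $S_r^+$, followed by classical $H^2$ boundary regularity and trace embeddings. The only (harmless) imprecision is that $u_\delta$ vanishes on $\mathcal{C}_r\setminus(\epsilon\Sigma\cup\epsilon\Sigma_\delta)$ rather than on all of $\mathcal{C}_r\setminus\epsilon\Sigma$, but since $\epsilon\Sigma_\delta\subseteq B_{\epsilon+\delta}$ and $r-\eta>\epsilon$, for $\delta$ small the flat face of $V_\eta$ lies outside $\epsilon\Sigma_\delta$ and your Dirichlet claim holds there.
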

\begin{proof}
By classical elliptic regularity theory, it is easy to prove that an odd reflection of $u_\delta$ through the hyperplane $\{x_1=0\}$ in a neighbourhood of $\{x\colon \abs{x}=r\}$ converges to $\phi_i^\epsilon$ in $H^2$, as $\delta\to 0$. Hence the conclusion follows by trace embeddings.
\end{proof}

\begin{theorem}\label{thm:pohoz_identity}
  Let $u_\delta$ be the unique solution to
  \eqref{pbm:eigen_D_delta}. Then the following identity holds
  \begin{multline*}
    \int_{S^+_r}\abs{\nabla u_\delta}^2\ds-\frac{N-2}{r}\int_{D_\delta}\abs{\nabla u_\delta}^2\dx\\
   =\frac{1}{r}\int_{\partial D_\delta\setminus S_r^+}\left(\frac{\partial u_\delta}{\partial \nnu}\right)^2x\cdot \nnu \ds +2\int_{S^+_r}\left(\frac{\partial u_\delta}{\partial \nnu}\right)^2\ds+\frac{2\lambda_i^\epsilon}{r}\int_{D_\delta}p\,u_\delta\,\nabla u_\delta\cdot x\dx.
 \end{multline*}
\end{theorem}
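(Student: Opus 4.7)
The plan is to derive the stated identity by the classical Pohozaev multiplier method, namely by testing the equation $-\Delta u_\delta=\lambda_i^\epsilon p\,u_\delta$ against $\nabla u_\delta\cdot x$ and integrating over $D_\delta$. The reason this is legitimate here (whereas it was not on $\Omega_r^\epsilon$ itself) is that $D_\delta$ was constructed to satisfy the uniform exterior ball condition; by the Adolfsson-type regularity result cited above, this yields $u_\delta\in H^2(D_\delta)$, so $\nabla u_\delta\cdot x$ is a bona fide $H^1$ function and every integration by parts below is justified.

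\noindent First I would compute, for any $u\in H^2(D_\delta)$, the algebraic identity
\begin{equation*}
\nabla u\cdot\nabla(\nabla u\cdot x)=\abs{\nabla u}^2+\tfrac12 x\cdot\nabla\abs{\nabla u}^2.
\end{equation*}
Multiplying $-\Delta u_\delta=\lambda_i^\epsilon p\,u_\delta$ by $\nabla u_\delta\cdot x$, integrating on $D_\delta$, and applying the divergence theorem to both $\int\nabla u_\delta\cdot\nabla(\nabla u_\delta\cdot x)$ and to $\tfrac12\int x\cdot\nabla\abs{\nabla u_\delta}^2$ (using $\div x=N$), one obtains the ``raw'' Pohozaev identity
\begin{equation*}
\lambda_i^\epsilon\!\int_{D_\delta}\!\!p\,u_\delta\,\nabla u_\delta\cdot x\dx=\tfrac{2-N}{2}\!\int_{D_\delta}\!\!\abs{\nabla u_\delta}^2\dx+\tfrac12\!\int_{\partial D_\delta}\!\!(x\cdot\nnu)\abs{\nabla u_\delta}^2\ds-\!\int_{\partial D_\delta}\!\!\tfrac{\partial u_\delta}{\partial\nnu}\,\nabla u_\delta\cdot x\ds.
\end{equation*}

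\noindent Next I would split $\partial D_\delta=S_r^+\cup(\partial D_\delta\setminus S_r^+)$ and use the boundary conditions. On $\partial D_\delta\setminus S_r^+$ the trace of $u_\delta$ vanishes, hence $\nabla u_\delta=\frac{\partial u_\delta}{\partial\nnu}\nnu$ there, so $\abs{\nabla u_\delta}^2=(\partial_\nnu u_\delta)^2$ and $\nabla u_\delta\cdot x=(\partial_\nnu u_\delta)(x\cdot\nnu)$; combining the two boundary terms yields the single contribution $-\tfrac12\int_{\partial D_\delta\setminus S_r^+}(x\cdot\nnu)(\partial_\nnu u_\delta)^2\ds$, with the wrong sign relative to the statement. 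On $S_r^+$ the outer normal is $\nnu=x/r$, so $x\cdot\nnu=r$ and $\nabla u_\delta\cdot x=r\,\partial_\nnu u_\delta$; the two boundary terms on $S_r^+$ collapse to $\tfrac{r}{2}\int_{S_r^+}\abs{\nabla u_\delta}^2\ds-r\int_{S_r^+}(\partial_\nnu u_\delta)^2\ds$. Finally, rearranging and multiplying through by $2/r$ isolates $\int_{S_r^+}\abs{\nabla u_\delta}^2\ds-\frac{N-2}{r}\int_{D_\delta}\abs{\nabla u_\delta}^2\dx$ on the left and produces exactly the three terms on the right of the claimed identity (the sign of the $\partial D_\delta\setminus S_r^+$ term flips because it moves to the other side).

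\noindent The only real point of care, and the main ``obstacle'', is the $H^2$ regularity of $u_\delta$: without it, the multiplier $\nabla u_\delta\cdot x$ would not be in $H^1$ and the integrations by parts above would be formal. This is precisely why the approximation $\Omega^\epsilon_r\rightsquigarrow D_\delta=\Omega^\epsilon_{r,\delta}$ was designed to satisfy the uniform exterior ball condition, and it is the reason Proposition~\ref{prop:pohoz_inequality} will be obtained only after passing $\delta\to0$ using Theorems \ref{thm:strong_conv_u_delta}--\ref{thm:conv_grad_boundary} and the nonnegativity of $x\cdot\nnu$ on $\partial D_\delta\setminus S_r^+$ established in Remark~\ref{rmk:starshapeness}; the identity of the present theorem is the exact, pre-limit statement from which that inequality will follow.
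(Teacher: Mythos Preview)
Your proposal is correct and follows essentially the same approach as the paper: establish $u_\delta\in H^2(D_\delta)$ via the exterior ball condition, test the equation with $\nabla u_\delta\cdot x\in H^1(D_\delta)$, integrate by parts using the Rellich--Pohozaev identity, and split the boundary using $u_\delta=0$ on $\partial D_\delta\setminus S_r^+$ and $\nnu=x/r$ on $S_r^+$. The only cosmetic difference is that the paper writes the key algebraic identity in the equivalent divergence form $\nabla u_\delta\cdot\nabla(\nabla u_\delta\cdot x)=\tfrac12\div\bigl(|\nabla u_\delta|^2 x\bigr)-\tfrac{N-2}{2}|\nabla u_\delta|^2$, which absorbs one of your two integrations by parts into a single step.
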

\begin{proof}
  We first observe that, by classical regularity theory, $u_\delta\in H^2(D_\delta)$ since $D_\delta$ verifies an exterior ball condition. Let us now test equation \eqref{pbm:eigen_D_delta} with the function $\nabla u_\delta \cdot x\in H^1(D_\delta)$. Integrating by parts and using the following identity
   \[
    \nabla u_\delta\cdot\nabla(\nabla u_\delta\cdot x)=\frac{1}{2}\div\left({\abs{\nabla u_\delta}^2 x}\right)-\frac{N-2}{2}\abs{\nabla u_\delta}^2
  \]
  we obtain the conclusion.
\end{proof}

\begin{proof}[Proof of Proposition \ref{prop:pohoz_inequality}]
  Let $u_\delta$ be the unique solution to \eqref{pbm:eigen_D_delta}. From Theorem \ref{thm:pohoz_identity} and Remark \ref{rmk:starshapeness} we know that
  \begin{gather*}
    \int_{S^+_r}\abs{\nabla u_\delta}^2\ds-\frac{N-2}{r}\int_{D_\delta} \abs{\nabla u_\delta}^2\dx   
    \geq 2\int_{S^+_r}\left(\frac{\partial u_\delta}{\partial \nnu}\right)^2\ds+ \frac{2\lambda_i^\epsilon}{r}\int_{D_\delta}p\,u_\delta\,\nabla u_\delta\cdot x\dx.
  \end{gather*}
  Now, thanks to Theorems \ref{thm:strong_conv_u_delta} and
  \ref{thm:conv_grad_boundary}, we can pass to the limit as $\delta\to
  0$ in the  above inequality, thus obtaining \eqref{eqn:prop_pohoz_inequality}.
\end{proof}

\begin{remark}\label{rmk:lip_bound}
 We observe that the assumption of $C^2$-regularity for $\partial \Sigma$ can be relaxed; indeed, if $\Sigma$ is less regular (e.g. if $\partial\Sigma$ is Lipschitz continuous), we can approximate $\epsilon\Sigma$ with a class of $C^2$- regular domains $(\epsilon\Sigma)_\beta$ and start the procedure of Section \ref{sec:pohozaev} from the domain $(\epsilon\Sigma)_\beta$. 
\end{remark}

\section{Poincar\'e-Type Inequalities}\label{section:poincare}

In this section we consider the following spaces for
$\epsilon\in (0,1]$ and $r>\epsilon$:
\[
V_\epsilon(B_r^+):=\left\{u\in H^1(B_r^+)\colon
  u=0~\text{on}~\mathcal{C}_r\setminus \epsilon\Sigma \right\}, \quad
V_0(B_r^+):=\{ u\in H^1(B_r^+)\colon u=0~\text{on}~\mathcal{C}_r\}.
\]
 We point out that, for $0\leq\epsilon_1\leq\epsilon_2<r$,
\begin{equation}\label{eqn:inclusion_V}
  H^1_0(B_r^+)\subseteq V_0(B_r^+)\subseteq V_{\epsilon_1}(B_r^+)\subseteq V_{\epsilon_2}(B_r^+)\subseteq H^1(B_r^+).
\end{equation}

\begin{lemma}[Poincar\'e-Type Inequality]
  Let $r>0$. Then, for every $u\in H^1(B_r^+)$,
  the following inequality holds
  \begin{equation}\label{eqn:poincare_type}
   \frac{N-1}{r^2}\int_{B_r^+}\abs{u}^2\dx\leq\int_{B_r^+}\abs{\nabla u}^2\dx+\frac{1}{r}\int_{S_r^+}\abs{u}^2\ds.
  \end{equation}

\end{lemma}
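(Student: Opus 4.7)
The plan is to prove \eqref{eqn:poincare_type} by applying the divergence theorem to the vector field $|u|^2 x$ on $B_r^+$, which exploits the fact that the flat portion $\mathcal{C}_r \subset \{x_1=0\}$ contributes nothing to the boundary integral. By a standard density argument it suffices to work with $u\in C^\infty(\overline{B_r^+})$ and extend to $H^1(B_r^+)$ at the end.

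First, I would compute
\begin{equation*}
\div\bigl(|u|^2 x\bigr) = N|u|^2 + 2u\,\nabla u\cdot x
\end{equation*}
and integrate over $B_r^+$. The outer unit normal on $S_r^+$ is $x/r$, so $x\cdot\nnu = r$ there, whereas on $\mathcal{C}_r$ the outer normal is $-e_1$, giving $x\cdot\nnu = -x_1 = 0$. Hence the divergence theorem yields
\begin{equation*}
N\int_{B_r^+}|u|^2\dx + 2\int_{B_r^+} u\,\nabla u\cdot x\dx = r\int_{S_r^+}|u|^2\ds.
\end{equation*}

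Next I would isolate the bulk term and bound the cross term using Cauchy--Schwarz together with Young's inequality $2ab\leq a^2+b^2$, applied with $a=|u|$ and $b=|\nabla u||x|$, together with the trivial estimate $|x|\leq r$ on $B_r^+$:
\begin{equation*}
\Bigl|2\int_{B_r^+} u\,\nabla u\cdot x\dx\Bigr|
\leq \int_{B_r^+}|u|^2\dx + \int_{B_r^+}|x|^2|\nabla u|^2\dx
\leq \int_{B_r^+}|u|^2\dx + r^2\int_{B_r^+}|\nabla u|^2\dx.
\end{equation*}
Substituting this into the identity above gives
\begin{equation*}
(N-1)\int_{B_r^+}|u|^2\dx \leq r\int_{S_r^+}|u|^2\ds + r^2\int_{B_r^+}|\nabla u|^2\dx,
\end{equation*}
and dividing by $r^2$ produces exactly \eqref{eqn:poincare_type}.

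There is no serious obstacle here: the sharpness of the constant $N-1$ comes precisely from choosing the parameter in Young's inequality to equal $1$, which is forced by matching the coefficient $1$ in front of $\int_{B_r^+}|\nabla u|^2\dx$ on the right-hand side. The only mild technicality is justifying the integration by parts for $u\in H^1(B_r^+)$ rather than $C^\infty$, but this is immediate by approximation since $H^1(B_r^+)\cap C^\infty(\overline{B_r^+})$ is dense in $H^1(B_r^+)$ and the trace on $S_r^+$ is continuous with respect to the $H^1$ norm.
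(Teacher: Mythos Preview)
Your proof is correct and essentially identical to the paper's argument: both apply the divergence theorem to $|u|^2 x$ over $B_r^+$, use that $x\cdot\nnu=0$ on $\mathcal{C}_r$, and then bound the cross term $2u\,\nabla u\cdot x$ via the inequality $2u\,\nabla u\cdot x\geq -|u|^2-|\nabla u\cdot x|^2$ (which the paper writes as $0\leq(u+\nabla u\cdot x)^2$ and you phrase as Young's inequality) together with $|x|\leq r$.
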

\begin{proof}
Integrating the equality 
$\div (u^2x)=2u\nabla u\cdot x+Nu^2$ over $B_r^+$
 and recalling the elementary inequality
$  0\leq (u+\nabla u\cdot x)^2=\abs{u}^2+\abs{\nabla u\cdot
  x}^2+2u\nabla u\cdot x$, we obtain that 
 \[
  \int_{\partial B_r^+}\abs{u}^2 x\cdot \nnu \ds=\int_{B_r^+}\left(2u\nabla u\cdot x+N\abs{u}^2\right)\dx\geq -\int_{B_r^+}\left(\abs{u}^2+\abs{\nabla u\cdot x}^2\right)\dx+N\int_{B_r^+}\abs{u}^2\dx.
 \]
 Since $x\cdot\nnu=0$ on $\mathcal{C}_r$ and $\abs{x}\leq r$, then
 \[
r  \int_{S_r^+}\abs{u}^2\ds\geq -r^2\int_{B_r^+}\abs{\nabla u}^2\dx+(N-1)\int_{B_r^+}\abs{u}^2\dx.
 \]
 Reorganizing the terms and dividing by $r^2$ yields the thesis. 
\end{proof}

\begin{lemma}
 For $0\leq  \sigma<1$ the infimum
 \[
  m_\sigma=\inf_{\substack{u\in V_\sigma (B_1^+) \\ u\neq 0 }} \frac{\int_{B_1^+}\abs{\nabla u}^2\dx}{\int_{S_1^+}\abs{u}^2\ds}
 \]
 is achieved. Moreover $m_\sigma>0$, the map $\sigma\mapsto m_\sigma$ is non-increasing in $[0,1)$ and continuous in 0 and $m_0=1$.
\end{lemma}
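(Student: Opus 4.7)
The plan is to handle the four assertions by the direct method of the calculus of variations, together with an odd-reflection trick to identify $m_0$.

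For existence and positivity, I would take a minimizing sequence $(u_n)\subset V_\sigma(B_1^+)$ normalized by $\int_{S_1^+}u_n^2\,dS=1$; applying the Poincaré-type inequality \eqref{eqn:poincare_type} with $r=1$ would force $H^1(B_1^+)$-boundedness. Passing to a weak subsequential limit $u^*$ and using compactness of the trace $H^1(B_1^+)\hookrightarrow L^2(\partial B_1^+)$, I would check that $u^*\in V_\sigma(B_1^+)$ still satisfies $\int_{S_1^+}(u^*)^2\,dS=1$, and that weak lower semicontinuity of the Dirichlet integral makes $u^*$ a minimizer. Positivity is then immediate: $m_\sigma=0$ would make $u^*$ constant, contradicting the vanishing on the positive $(N-1)$-measure set $\mathcal{C}_1\setminus\sigma\Sigma$. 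Monotonicity is an immediate consequence of the chain of inclusions in \eqref{eqn:inclusion_V}.

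For the identification $m_0=1$, I would extend any $u\in V_0(B_1^+)$ by odd reflection across $\{x_1=0\}$ to $\tilde u\in H^1(B_1)$, so that $\int_{B_1}|\nabla\tilde u|^2\,dx=2\int_{B_1^+}|\nabla u|^2\,dx$, $\int_{S_1}\tilde u^2\,dS=2\int_{S_1^+}u^2\,dS$, and $\int_{S_1}\tilde u\,dS=0$ by oddness. This lets $\tilde u$ be tested against the Rayleigh quotient for the first nontrivial Steklov eigenvalue of the unit ball, which is classically equal to $1$ (attained by the restrictions $x_1,\dots,x_N$); hence $m_0\geq 1$. Testing with $u(x)=x_1$ then yields $m_0\leq 1$, so $m_0=1$. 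I expect this step to be the main subtlety of the lemma, though the odd-reflection trick makes the reduction transparent; an alternative would be to expand the harmonic minimizer in spherical harmonics vanishing on $\{x_1=0\}$ and compute the Rayleigh quotient mode by mode, recovering the value $k=1$ for the smallest admissible degree.

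For continuity at $0$, monotonicity already gives $m_\sigma\leq 1$, so only the reverse inequality $\liminf_{\sigma\to 0^+}m_\sigma\geq 1$ is needed. I would pick minimizers $u_\sigma$ normalized on $S_1^+$, again bounded in $H^1(B_1^+)$, and extract a weak subsequential limit $u^*$ along $\sigma_n\to 0^+$. Strong $L^2$-trace convergence preserves the normalization, and since $u_{\sigma_n}=0$ on $\mathcal{C}_1\setminus\sigma_n\Sigma$ with $|\sigma_n\Sigma|\to 0$, I would conclude that $u^*=0$ a.e. on $\mathcal{C}_1$, i.e. $u^*\in V_0(B_1^+)$. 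Weak lower semicontinuity and the already established value $m_0=1$ then give $1\leq\int_{B_1^+}|\nabla u^*|^2\,dx\leq\liminf_n m_{\sigma_n}$, and Urysohn's subsequence principle closes the argument.
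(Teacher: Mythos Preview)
Your proposal is correct, and for existence, positivity, monotonicity, and continuity at $0$ it matches the paper's argument essentially line by line (minimizing sequence, Poincar\'e-type bound \eqref{eqn:poincare_type}, compact trace, weak lower semicontinuity, Urysohn).

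The genuine difference is in the identification $m_0=1$. The paper does not use odd reflection or the Steklov spectrum; instead it observes that any minimizer $u_0$ for $m_0$ is harmonic in $B_1^+$ and vanishes on $\mathcal{C}_1$, and then invokes the Almgren frequency function
\[
M(r)=\frac{r\int_{B_r^+}|\nabla u_0|^2\,dx}{\int_{S_r^+}|u_0|^2\,dS},
\]
which is nondecreasing with $\lim_{r\to 0^+}M(r)=k\in\mathbb{N}$, $k\geq 1$; hence $m_0=M(1)\geq 1$, and the test function $x_1$ gives equality. Your odd-reflection argument reduces the bound $m_0\geq 1$ to the classical fact that the first nontrivial Steklov eigenvalue of the unit ball equals $1$, which is arguably more elementary and self-contained; the paper's route, on the other hand, stays within the monotonicity-formula framework that drives the rest of the article and avoids importing the Steklov spectrum as an external ingredient. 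Both are clean; the alternative you mention (expanding the harmonic minimizer in spherical harmonics vanishing on $\{x_1=0\}$) is in fact the same computation underlying either approach.
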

\begin{proof}
 For $u\in V_\sigma(B_1^+)$, let us denote
 \[
  F(u):=\frac{\int_{B_1^+}\abs{\nabla u}^2\dx}{\int_{S_1^+}\abs{u}^2\ds}.
 \]
 Let $\{ u_n\}_n\subseteq V_\sigma (B_1^+)$ be a minimizing sequence such that $\int_{S_1^+}\abs{u_n}^2\ds=1$. From \eqref{eqn:poincare_type} it follows that $\{ u_n\}_n$ is bounded in $H^1(B_1^+)$ and so there exists $\tilde{u}\in H^1(B_1^+)$ such that, up to a subsequence, $u_n\rightharpoonup \tilde{u}$ in $H^1(B_1^+)$. Taking into account the compact embedding $H^1(B_1^+)\hookrightarrow L^2(\partial B_1^+)$, we have that $\int_{S_1^+}\abs{\tilde{u}}^2\ds=1$ and then $\tilde{u}\neq 0$. Moreover $\tilde{u}=0$ on $\mathcal{C}_r\setminus \sigma \Sigma$, since $\{u_n\}$ do; in particular $\tilde{u}\in V_\sigma (B_1^+)$. By weak lower semicontinuity, we have that
 \[
 \int_{B_1^+}\abs{\nabla \tilde{u}}^2\dx\leq \liminf_{n\to +\infty}\int_{B_1^+}\abs{\nabla u_n}^2\dx=m_\sigma.
 \]
 Then $m_\sigma=F(\tilde{u})$, i.e. $\tilde{u}$ attains the infimum $m_\sigma$. Trivially $m_\sigma>0$,
 due to the null boundary conditions on
 $\mathcal{C}_r\setminus\sigma\Sigma$. The monotonicity of the map
 $\sigma\mapsto m_\sigma$ follows from \eqref{eqn:inclusion_V}.
 
 Now we have to prove continuity. Let $\sigma_n\to0^+$. For every $n$ there exists $\tilde{u}_n \in V_{\sigma_n}(B_1^+)$ such that
 \[
  \int_{S_1^+}\abs{\tilde{u}_n}^2\ds=1,\qquad\int_{B_1^+}\abs{\nabla \tilde{u}_n}^2\dx=m_{\sigma_n}.
 \]
 Then, since $m_{\sigma_n}\leq m_0$ for all $n$, we have that $\{\tilde{u}_n\}_n$ is bounded in $H^1(B_1^+)$. Thus there exists $u_0\in H^1(B_1^+)$ sucht that, up to a subsequence, $\tilde{u}_n\rightharpoonup u_0$ weakly in $H^1(B_1^+)$.
 So, first
 \[
  \int_{S_1^+}\abs{u_0}^2\ds=1\qquad \text{and}\qquad u_0=0\quad\text{on}\quad \mathcal{C}_r.
 \]
 Furthermore
 \[
  m_0 \leq\int_{B_1^+}\abs{\nabla u_0}^2\dx\leq \liminf m_{\sigma_n}\leq\limsup m_{\sigma_n}\leq m_0.
 \]
 Then, along the subsequence, $m_0=\lim_{n\to +\infty} m_{\sigma_n}$. Thanks to \emph{Urysohn's Subsequence Principle} we may conclude that $m_0=\lim_{\sigma\to 0^+}m_\sigma$.
 
 Finally we prove that $m_0=1$. Since the function $u_0$ achieving $m_0$ is harmonic in $B_1^+$, thanks to classical monotonicity arguments (we refer to \cite{Felli2011} for further details) we can say that the function
 \[
  r\longmapsto M(r):=\frac{r\int_{B_r^+}\abs{\nabla u_0}^2\dx}{\int_{S_r^+}\abs{u_0}^2\ds}
 \]
 is non-decreasing and that there exists $k\in \mathbb{N}$, $k\geq 1$, such that
$\lim_{r\to 0^+}M(r)=k$.
 Hence $M(r)\geq 1$ for every $r\geq 0$. Then
 \[
  m_0=\frac{\int_{B_1^+}\abs{\nabla u_0}^2\dx}{\int_{S_1^+}\abs{u_0}^2\ds}=M(1)\geq 1.
 \]
 Furthermore the function $v(x)=x_1$ belongs to $V_0(B_1^+)$ and $F(v)=1$; hence $m_0=1$.
\end{proof}

\begin{corollary}\label{cor:ineq_rho}
Let $\epsilon\in (0,1]$ and $r>\epsilon$. Then
 \[
  \frac{m_{\epsilon/r}}{r}\int_{S_r^+}\abs{u}^2\ds\leq \int_{B_r^+}\abs{\nabla u}^2\dx\qquad\text{for all }u\in V_\epsilon(B_r^+).
 \]
 Moreover, for every $\rho\in(0,1)$ there exists $\mu_\rho>1$ such that, if $\epsilon<\frac{r}{\mu_\rho}$, then
 \begin{equation}\label{eqn:m_cont}
  \frac{1-\rho}{r}\int_{S_r^+}\abs{u}^2\ds\leq \int_{B_r^+}\abs{\nabla u}^2\dx\qquad\text{for all }u\in V_\epsilon(B_r^+).
  \end{equation}
\end{corollary}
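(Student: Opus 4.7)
The plan is to reduce both inequalities to the variational characterization of $m_\sigma$ on the unit half-ball by a scaling argument, and then to invoke the continuity of $\sigma\mapsto m_\sigma$ at $0$ together with $m_0=1$ for the second statement.

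For the first inequality, given $u\in V_\epsilon(B_r^+)$, I would set $v(y):=u(ry)$ for $y\in B_1^+$. Since $u$ vanishes on $\mathcal{C}_r\setminus\epsilon\Sigma$, the rescaled function $v$ vanishes on $\mathcal{C}_1\setminus(\epsilon/r)\Sigma$, so $v\in V_{\epsilon/r}(B_1^+)$. The previous lemma then gives
\[
 m_{\epsilon/r}\int_{S_1^+}\abs{v}^2\ds\leq\int_{B_1^+}\abs{\nabla v}^2\dx.
\]
A direct change of variable yields $\int_{B_1^+}\abs{\nabla v}^2\dx=r^{2-N}\int_{B_r^+}\abs{\nabla u}^2\dx$ and $\int_{S_1^+}\abs{v}^2\ds=r^{1-N}\int_{S_r^+}\abs{u}^2\ds$. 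Cancelling the common factor $r^{1-N}$ and dividing by $r$ gives exactly the stated inequality.

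For the second inequality, I would exploit what was proved in the previous lemma: the map $\sigma\mapsto m_\sigma$ is continuous at $0$ with $m_0=1$. Fix $\rho\in(0,1)$. By continuity, there exists $\sigma_\rho\in(0,1)$ such that $m_\sigma>1-\rho$ whenever $0\leq\sigma<\sigma_\rho$. Setting $\mu_\rho:=1/\sigma_\rho>1$, the hypothesis $\epsilon<r/\mu_\rho$ is equivalent to $\epsilon/r<\sigma_\rho$, hence $m_{\epsilon/r}>1-\rho$, and the desired estimate \eqref{eqn:m_cont} follows from the first inequality already proved.

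No step here is delicate: the scaling is routine and the continuity/monotonicity facts are already established. The only point I would be careful about is verifying that the rescaled boundary conditions land $v$ in $V_{\epsilon/r}(B_1^+)$ (which they do since the dilation $x\mapsto x/r$ maps $\mathcal{C}_r\setminus\epsilon\Sigma$ onto $\mathcal{C}_1\setminus(\epsilon/r)\Sigma$), and that the homogeneity of the Dirichlet energy and boundary integral is applied with the correct powers of $r$.
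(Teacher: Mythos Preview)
Your proof is correct and follows essentially the same approach as the paper: a scaling $v(y)=u(ry)$ to reduce to the unit half-ball, then the continuity of $\sigma\mapsto m_\sigma$ at $0$ with $m_0=1$ for the second part. The paper's proof is simply a terser version of what you wrote.
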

\begin{proof}
 If we let $\sigma=\epsilon/r$ in the previous Lemma, we have that
 \[
  m_{\epsilon/r}\int_{S_1^+}\abs{u}^2\ds\leq\int_{B_1^+}\abs{\nabla u}^2\dx\qquad\text{for all }u\in V_{\epsilon/r}(B_1^+).
 \]
 The first inequality follows by the change of variables $y=rx$, while the second one trivially comes from the continuity of $m_\sigma$ in $0$.
\end{proof}

From \eqref{eqn:poincare_type} and Corollary \ref{cor:ineq_rho} one
can easily prove the following corollary.

\begin{corollary}
   For every $\rho\in(0,1)$ there exists $\mu_\rho>1$ such that, for 
every $r>0$ and 
$\epsilon<\frac{r}{\mu_\rho}$,
  \begin{equation}\label{eqn:poincare_type_1}
    \frac{N-1}{r^2}\int_{B_r^+}\abs{v}^2\dx\leq\left(1+\frac{1}{1-\rho}\right)
    \int_{B_r^+}\abs{\nabla v}^2\dx\quad\text{for all } v\in V_\epsilon(B_r^+).
  \end{equation}
\end{corollary}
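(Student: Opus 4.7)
The plan is to simply combine the two ingredients already established: the Poincaré-type inequality \eqref{eqn:poincare_type}, which holds for any function in $H^1(B_r^+)$, and the boundary-trace estimate \eqref{eqn:m_cont} from Corollary \ref{cor:ineq_rho}, which exploits the partial Dirichlet condition $v=0$ on $\mathcal{C}_r\setminus\epsilon\Sigma$ characterizing $V_\epsilon(B_r^+)$.

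Concretely, I would fix $\rho\in(0,1)$ and take $\mu_\rho>1$ to be the same constant provided by Corollary \ref{cor:ineq_rho}, so that whenever $\epsilon<r/\mu_\rho$ the inequality \eqref{eqn:m_cont} is available for all $v\in V_\epsilon(B_r^+)$. Rewriting it as
\[
\frac{1}{r}\int_{S_r^+}\abs{v}^2\ds\leq \frac{1}{1-\rho}\int_{B_r^+}\abs{\nabla v}^2\dx,
\]
I would then plug this boundary bound into the right-hand side of \eqref{eqn:poincare_type} applied to $v$:
\[
\frac{N-1}{r^2}\int_{B_r^+}\abs{v}^2\dx\leq\int_{B_r^+}\abs{\nabla v}^2\dx+\frac{1}{r}\int_{S_r^+}\abs{v}^2\ds\leq \left(1+\frac{1}{1-\rho}\right)\int_{B_r^+}\abs{\nabla v}^2\dx,
\]
which is precisely \eqref{eqn:poincare_type_1}.

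There is essentially no obstacle here: the smallness condition $\epsilon<r/\mu_\rho$ is inherited from Corollary \ref{cor:ineq_rho}, and \eqref{eqn:poincare_type} is used without any extra hypothesis on $v$. The role of the partial Dirichlet condition is entirely absorbed into the trace estimate \eqref{eqn:m_cont}; without it, the boundary term in \eqref{eqn:poincare_type} could not be dominated by the Dirichlet energy, since constants would produce a nontrivial boundary contribution but no gradient.
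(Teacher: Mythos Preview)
Your proof is correct and is exactly the argument the paper intends: the text introduces this corollary with the remark that it follows easily from \eqref{eqn:poincare_type} and Corollary \ref{cor:ineq_rho}, and your combination of the two is precisely that.
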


\section{Monotonicity Formula}\label{sec:monotonicity-formula}

For any $0<\epsilon<r\leq R_{\textup{max}}$, $\lambda>0$, $\phi\in H^1(\Omega_r^\epsilon)$, let us introduce the functions
\[
  E(\phi,r,\lambda,\epsilon):=\frac{1}{r^{N-2}}\int_{\Omega^\epsilon_{r}}\left(\abs{\nabla \phi}^2-\lambda p\abs{\phi}^2\right)\dx
\]
and
\[
  H(\phi,r):=\frac{1}{r^{N-1}}\int_{S^+_r}\abs{\phi}^2\ds.
\]
Moreover we define the Almgren type frequency function
\[
  \mathcal{N}(\phi,r,\lambda,\epsilon):=\frac{E(\phi,r,\lambda,\epsilon)}{H(\phi,r)}.
\]

\begin{lemma}[Integration on the Tube]\label{lemma:int_tube}
   There exists a constant $\kappa=\kappa(N,\Sigma)>0$, depending only on $N$ and $\abs{\Sigma}$, such that, for every $\epsilon\in (0,1]$ and for every $u\in H^1(T_\epsilon)$ such that $u=0$ on $\partial T_\epsilon\setminus\epsilon\Sigma$,
  \[
   \int_{T_\epsilon}\abs{u}^2\dx\leq\kappa\epsilon^{2(N-1)/N} \int_{T_\epsilon}\abs{\nabla u}^2\dx.
  \]
\end{lemma}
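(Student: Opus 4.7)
The exponent $2(N-1)/N$ matches $\abs{T_\epsilon}^{2/N}$ with $\abs{T_\epsilon}=\epsilon^{N-1}\abs{\Sigma}$, which suggests applying the Poincar\'e--Faber--Krahn inequality \eqref{eqn:poincare_faber_krahn} directly to $u$ on $T_\epsilon$. The obstacle is that $u$ is not in $H^1_0(T_\epsilon)$: the hypothesis forces $u$ to vanish on the lateral wall $(-1,0)\times\partial(\epsilon\Sigma)$ and on the left face $\{-1\}\times\epsilon\Sigma$, but leaves $u$ unconstrained on the right face $\{0\}\times\epsilon\Sigma$.

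To supply the missing boundary condition I would extend $u$ by even reflection across the hyperplane $\{x_1=0\}$. Concretely, set $\tilde T_\epsilon:=(-1,1)\times\epsilon\Sigma$ and
\[
 \tilde u(x_1,x'):=\begin{cases} u(x_1,x'), & x_1\in(-1,0],\\ u(-x_1,x'), & x_1\in(0,1).\end{cases}
\]
A standard reflection argument gives $\tilde u\in H^1(\tilde T_\epsilon)$, and its boundary trace is zero everywhere: on the lateral wall $(-1,1)\times\partial(\epsilon\Sigma)$ by the hypothesis on $u$, and on the two end faces $\{\pm 1\}\times\epsilon\Sigma$ because $\tilde u(\pm 1,x')=u(-1,x')=0$. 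Hence $\tilde u\in H^1_0(\tilde T_\epsilon)$.

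Applying \eqref{eqn:poincare_faber_krahn} to $\tilde u$ and using $\abs{\tilde T_\epsilon}=2\epsilon^{N-1}\abs{\Sigma}$ together with the symmetry identities $\int_{\tilde T_\epsilon}\abs{\tilde u}^2\dx=2\int_{T_\epsilon}\abs{u}^2\dx$ and $\int_{\tilde T_\epsilon}\abs{\nabla\tilde u}^2\dx=2\int_{T_\epsilon}\abs{\nabla u}^2\dx$, the factors of $2$ on both sides cancel and one obtains
\[
 \int_{T_\epsilon}\abs{u}^2\dx \leq 2^{2/N}C_N\abs{\Sigma}^{2/N}\,\epsilon^{2(N-1)/N}\int_{T_\epsilon}\abs{\nabla u}^2\dx,
\]
so the claim holds with $\kappa:=2^{2/N}C_N\abs{\Sigma}^{2/N}$, which depends only on $N$ and $\abs{\Sigma}$. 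There is essentially no serious obstacle here: the only points requiring care are verifying the three boundary computations for $\tilde u$ and recalling that even reflection of an $H^1$ function across a hyperplane again lies in $H^1$ (derivatives tangential to the reflection face are continuous across it, while the normal derivative merely flips sign and so contributes no singular part to the distributional gradient). I note in passing that a simpler transverse approach --- slicing in $x_1$ and applying a Poincar\'e inequality on each $(N-1)$-dimensional section, where $u(x_1,\cdot)\in H^1_0(\epsilon\Sigma)$ by the lateral boundary condition --- would actually produce the stronger factor $\epsilon^2$, but the stated bound $\epsilon^{2(N-1)/N}$ is all that is needed in the sequel.
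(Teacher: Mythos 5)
Your proof is correct and coincides with the paper's: the authors also extend $u$ by even reflection across $\{x_1=0\}$ to get a function in $H^1_0((-1,1)\times\epsilon\Sigma)$ and then apply \eqref{eqn:poincare_faber_krahn}, obtaining the same constant $\kappa=2^{2/N}C_N\abs{\Sigma}^{2/N}$. Your closing remark that a fiberwise Poincar\'e inequality on the cross-sections would give the stronger factor $\epsilon^2$ is also accurate (since $\epsilon^2\leq\epsilon^{2(N-1)/N}$ for $\epsilon\leq 1$), though not needed for the sequel.
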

\begin{proof}
  Let $\tilde{T}_\epsilon=T_\epsilon \cup \varsigma(T_\epsilon)$, where $\varsigma$ is the reflection through the hyperplane $\{x_1=0\}$, and let $\tilde{u}$ be the even extension of $u$ on $\tilde{T}_\epsilon$. Since $\tilde{u}\in H^1_0(\tilde{T}_\epsilon)$, thanks to \eqref{eqn:poincare_faber_krahn} we have that
  \[
   \int_{T_\epsilon}\abs{u}^2\dx=\frac{1}{2}\int_{\tilde{T}_\epsilon}\abs{\tilde{u}}^2\dx\leq \frac{C_N}{2}|\tilde{T}_\epsilon|^{2/N}\int_{\tilde{T}_\epsilon}\abs{\nabla \tilde{u}}^2\dx =C_N 2^{2/N}\abs{\Sigma}^{2/N}\epsilon^{2(N-1)/N}\int_{T_\epsilon}\abs{\nabla u}^2\dx.
  \]
Hence we can conclude the proof letting $\kappa =C_N 2^{2/N}\abs{\Sigma}^{2/N}$.
\end{proof}

\begin{lemma}\label{lemma:H_well_posed1}
 There exists $\epsilon_1\in(0,1],~R_1>0$, with $0<\epsilon_1< R_1\leq R_{\textup{max}}$, such that
 \[
  H(\phi_i^\epsilon,r)>0\qquad\text{for all }\epsilon\in(0,\epsilon_1],\quad\text{for all }r\in(\epsilon,R_1],\quad\text{for all }i=1,\dots,j.
 \]
\end{lemma}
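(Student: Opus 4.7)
I would argue by contradiction, exploiting the Faber--Krahn inequality. If $H(\phi_i^\epsilon,r)=0$ then $\phi_i^\epsilon$ has zero trace on the spherical cap $S_r^+$; since $\phi_i^\epsilon$ already vanishes on $\partial\Omega^\epsilon\cap\partial\Omega_r^\epsilon$ by the Dirichlet condition, the restriction $\phi_i^\epsilon\restr{\Omega_r^\epsilon}$ belongs to $H^1_0(\Omega_r^\epsilon)$. Testing equation $(E_{\Omega^\epsilon})$ against an arbitrary $\psi\in H^1_0(\Omega_r^\epsilon)$ extended by zero to $\Omega^\epsilon$, one sees that this restriction weakly solves $-\Delta u=\lambda_i^\epsilon pu$ in $\Omega_r^\epsilon$.

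Two cases now arise. If $\phi_i^\epsilon\equiv 0$ on $\Omega_r^\epsilon$, then classical unique continuation for $-\Delta u=\lambda pu$ with bounded potential propagates this vanishing to the whole connected set $\Omega^\epsilon$, contradicting the normalization $\int_\Omega p\abs{\phi_i^\epsilon}^2\dx=1$. Otherwise, $\lambda_i^\epsilon$ is an eigenvalue of $(E_{\Omega_r^\epsilon})$, and the Faber--Krahn inequality combined with the elementary bound $\lambda_1(\omega)\geq\lambda_1^D(\omega)/\norm{p}_\infty$ gives
\[
 \lambda_i^\epsilon\;\geq\;\lambda_1(\Omega_r^\epsilon)\;\geq\;\frac{1}{C_N\,\norm{p}_\infty\,\abs{\Omega_r^\epsilon}^{2/N}}.
\]
Since $\abs{\Omega_r^\epsilon}=\abs{\Sigma}\epsilon^{N-1}+\abs{B_1^+}r^N\to 0$ as $\epsilon,r\to 0^+$, whereas $\lambda_i^\epsilon\to\lambda_i\leq\lambda_j$ by \eqref{eq:7}, this lower bound exceeds any fixed constant once $\epsilon$ and $r$ are taken small enough.

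To make the parameters explicit, I would first fix $\epsilon_0\in(0,1]$ so that $\lambda_i^\epsilon\leq\lambda_j+1$ for every $i\in\{1,\dots,j\}$ and $\epsilon\in(0,\epsilon_0]$ (which is possible by \eqref{eq:7} applied to the finitely many eigenvalues $\lambda_1,\dots,\lambda_j$), then choose $R_1\in(0,R_{\textup{max}}]$ small enough that $C_N\norm{p}_\infty\bigl(\abs{\Sigma}\epsilon_0^{N-1}+\abs{B_1^+}R_1^N\bigr)^{2/N}(\lambda_j+1)<1$, and finally set $\epsilon_1:=\min\{\epsilon_0,R_1/2\}$ in order to have $0<\epsilon_1<R_1$. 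The monotonicity $\abs{\Omega_r^\epsilon}\leq\abs{\Omega_{R_1}^{\epsilon_0}}$ in the regime $\epsilon\leq\epsilon_1$, $r\leq R_1$ then turns the pointwise contradiction into the desired uniform one. The only genuinely delicate point is the variational reduction in the first paragraph: it hinges on the fact that $S_r^+$ cleanly separates $\Omega^\epsilon$ with $\Omega_r^\epsilon$ sitting entirely on one side, so that the trace condition $\phi_i^\epsilon=0$ on $S_r^+$ alone is enough to place the restriction into $H^1_0(\Omega_r^\epsilon)$, with no need for a Cauchy-type matching of normal derivatives.
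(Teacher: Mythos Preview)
Your argument is essentially the paper's: both derive a Poincar\'e/Faber--Krahn bound on $\Omega_r^\epsilon$ showing that $\phi_i^\epsilon\in H^1_0(\Omega_r^\epsilon)$ forces $\phi_i^\epsilon\equiv 0$ there once $\epsilon,r$ are small, and then conclude by unique continuation. The paper splits the domain into tube and half-ball (using Lemma~\ref{lemma:int_tube} and \eqref{eqn:poincare_type}) whereas you apply Faber--Krahn globally; the difference is cosmetic.

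There is, however, a genuine slip in your parameter selection. You fix $\epsilon_0$ first and only then ask for $R_1$ with
\[
C_N\norm{p}_\infty\bigl(\abs{\Sigma}\epsilon_0^{N-1}+\abs{B_1^+}R_1^N\bigr)^{2/N}(\lambda_j+1)<1,
\]
but the term $\abs{\Sigma}\epsilon_0^{N-1}$ is already frozen and may by itself violate the inequality, so no admissible $R_1$ need exist. The cleanest repair is to drop $\epsilon_0$ altogether: since $\Omega\subset\Omega^\epsilon$, domain monotonicity for the min--max gives $\lambda_i^\epsilon\leq\lambda_i\leq\lambda_j$ for \emph{every} $\epsilon\in(0,1]$ (this is exactly what the paper uses when it writes $\nu_n\leq\lambda_j$). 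Then choose $R_1$ small so that $C_N\norm{p}_\infty\bigl(\abs{\Sigma}R_1^{N-1}+\abs{B_1^+}R_1^N\bigr)^{2/N}\lambda_j<1$ and set $\epsilon_1=R_1/2$; for $\epsilon\leq\epsilon_1<R_1$ and $r\leq R_1$ one has $\abs{\Omega_r^\epsilon}\leq\abs{\Sigma}R_1^{N-1}+\abs{B_1^+}R_1^N$ and the contradiction goes through uniformly. (Incidentally, \eqref{eq:7} is stated only for the simple eigenvalue $\lambda_j$; the convergence for all $i$ is true but comes from the references \cite{BUCUR_1998,DANERS_2003}, and with domain monotonicity you do not need it here anyway.)
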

\begin{proof}
  Suppose by contradiction that for every $n$ there exists
  $\epsilon_n\in (0,1],~ r_n\in (\epsilon_n,R_{\textup{max}}]$ and
  $i_n\in\{1,\dots,j\}$ such that $r_n\to 0$ and
  $H(\phi_{i_n}^{\epsilon_n},r_n)= 0$. Let us denote
  $\nu_n:=\lambda_{i_n}^{\epsilon_n}$,
  $\xi_n:=\phi_{i_n}^{\epsilon_n}$ and
  $\Omega_n:=\Omega_{r_n}^{\epsilon_n}$. From this it follows that
  $\xi_n=0$ on $S_{r_n}^+$ and that
 \[
  \int_{\Omega_n} \abs{\nabla \xi_n}^2\dx=\nu_n\int_{\Omega_n}p\abs{\xi_n}^2\dx\leq \lambda_j\norm{p}_\infty\int_{\Omega_n}\abs{\xi_n}^2\dx.
 \]
  Using Lemma \ref{lemma:int_tube} when integrating on the tube we obtain
  \[
   \int_{T_{\epsilon_n}}\abs{\xi_n}^2\dx\leq \kappa\epsilon_n^{2(N-1)/N}\int_{\Omega_n} \abs{\nabla \xi_n}^{2}\dx.
  \]
  Moreover \eqref{eqn:poincare_type} says that
  \[
   \int_{B_{r_n}^+}\abs{\xi_n}^2\dx\leq \frac{r_n^2}{N-1}\int_{\Omega_n} \abs{\nabla \xi_n}^{2}\dx.
  \]
  Then we have that
  \[
   \int_{\Omega_n}\abs{\nabla \xi_n}^2\dx\leq\lambda_j\norm{p}_\infty \left(\kappa\epsilon_n^{2(N-1)/N}+\frac{r_n^2}{N-1}\right)\int_{\Omega_n}\abs{\nabla \xi_n}^{2}\dx.
  \]
  Thus $\xi_n\equiv 0$ in $\Omega_n$, provided $n$ is sufficiently
  large. Thanks to classical unique continuation properties for elliptic
  equations it follows that $\xi_n=0$ in $\Omega^{\epsilon_n}$, which is
  a contradiction.
\end{proof}

\begin{lemma}\label{lemma:H_well_posed2}
 Let 
 \[
  R_2=\min\left\{\left(\frac{N-1}{\lambda_j\norm{p}_\infty}\right)^{1/2},R_{\textup{max}}\right\}.
 \]
 For every $r\in(0,R_2]$ there exist $c_r>0$ and $\epsilon_r\in (0,1]$, with $\epsilon_r<r$, such that
 \[
  H(\phi_i^\epsilon,r)\geq c_r \qquad\text{for all }\epsilon\in(0,\epsilon_r),\quad\text{for all }i=1,\dots,j.
 \]
\end{lemma}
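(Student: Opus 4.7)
The plan is a proof by contradiction: if the conclusion failed, passing to the limit in the sequence of perturbed eigenfunctions would yield a limit eigenfunction of $(E_\Omega)$ vanishing on $S_r^+$, and the Poincar\'e-type inequality~\eqref{eqn:poincare_type} combined with $r\leq R_2$ would force this limit to vanish identically, contradicting its nontrivial normalization.

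Concretely, suppose the conclusion fails for some $r\in (0,R_2]$. Then there exist sequences $\epsilon_n \to 0^+$ and $i_n\in\{1,\dots,j\}$ with $H(\phi_{i_n}^{\epsilon_n},r)\to 0$; by a pigeonhole argument I may pass to a subsequence along which $i_n\equiv i$ for some fixed $i$, so that $\int_{S_r^+}\abs{\phi_i^{\epsilon_n}}^2\ds\to 0$. Invoking the classical continuity of Dirichlet eigenvalues and eigenfunctions under domain perturbations (\cite{BUCUR_1998,DANERS_2003}, as used in \eqref{eq:7}--\eqref{eq:3}), applied to $\lambda_i$ after possibly extracting a further subsequence chosen inside the eigenspace, I get $\phi_i^{\epsilon_n}\to \phi_i^*$ strongly in $H^1_0(\Omega^1)$, where $\phi_i^*$ is an eigenfunction of $(E_\Omega)$ with eigenvalue $\lambda_i$ satisfying $\int_\Omega p\abs{\phi_i^*}^2\dx=1$, hence $\phi_i^*\not\equiv 0$. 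Compactness of the trace embedding $H^1(B_r^+)\hookrightarrow L^2(S_r^+)$ forces $\phi_i^*=0$ on $S_r^+$; since $r\leq R_{\textup{max}}$ ensures $\mathcal{C}_r\subseteq M\subseteq \partial\Omega$, we also have $\phi_i^*=0$ on $\mathcal{C}_r$, so $\phi_i^*$ restricted to $B_r^+$ lies in $H^1_0(B_r^+)$ and satisfies $-\Delta\phi_i^*=\lambda_i p\phi_i^*$ weakly on $B_r^+$.

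Testing this equation against $\phi_i^*$ and applying \eqref{eqn:poincare_type} (whose boundary term vanishes) gives
\[
 \int_{B_r^+}\abs{\nabla\phi_i^*}^2\dx=\lambda_i\int_{B_r^+}p\abs{\phi_i^*}^2\dx\leq \frac{\lambda_j\norm{p}_\infty r^2}{N-1}\int_{B_r^+}\abs{\nabla\phi_i^*}^2\dx\leq \int_{B_r^+}\abs{\nabla\phi_i^*}^2\dx,
\]
the last step using $r\leq R_2$. The middle coefficient is strictly less than $1$ whenever $r<R_2$, or whenever $i<j$ (since $\lambda_i\leq \lambda_{j-1}<\lambda_j$ by simplicity of $\lambda_j$); in either case this immediately forces $\phi_i^*\equiv 0$ on $B_r^+$, and classical unique continuation for elliptic equations with bounded coefficients (as already invoked at the end of Lemma~\ref{lemma:H_well_posed1}) propagates $\phi_i^*\equiv 0$ to all of $\Omega$, contradicting $\int_\Omega p\abs{\phi_i^*}^2\dx=1$.

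The main obstacle is the remaining borderline case $i=j$ and $r=R_2$, where none of the intermediate inequalities is automatically strict. Here I would exploit the fact that \eqref{eqn:poincare_type} is strict for every nonzero $u\in H^1_0(B_r^+)$: tracing through its proof, equality forces $\abs{\nabla u\cdot x}^2=r^2\abs{\nabla u}^2$ a.e.\ on $B_r^+$, which is incompatible with $\nabla u\not\equiv 0$ since $\abs{x}<r$ outside a null set; hence $\nabla u\equiv 0$, and the boundary condition yields $u\equiv 0$. This gives $\phi_i^*\equiv 0$ on $B_r^+$ also in the borderline case, and the unique continuation argument closes the proof exactly as before.
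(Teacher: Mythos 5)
Your argument follows the same contradiction-plus-compactness strategy as the paper: extract a limit from the perturbed eigenfunctions along a subsequence, show the limit is a nontrivial eigenfunction of $(E_\Omega)$ vanishing on $S_r^+$ and on $\mathcal{C}_r$, then use the Poincar\'e inequality \eqref{eqn:poincare_type} together with $r\leq R_2$ and unique continuation to derive a contradiction. Two remarks on the comparison. First, a minor stylistic point: the paper only invokes the uniform energy bound $\int_{\Omega^1}\abs{\nabla\phi_i^{\epsilon_n}}^2\dx=\lambda_i^{\epsilon_n}\leq\lambda_j$ and weak $H^1_0(\Omega^1)$-compactness plus the compact embeddings $H^1\hookrightarrow L^2(\Omega^1)$, $H^1\hookrightarrow L^2(S_r^+)$; this is lighter than citing strong $H^1_0$-convergence of eigenfunctions (which, as you note, requires some care for the possibly degenerate eigenvalues $\lambda_i$ with $i<j$), though your version is also correct once the subsequence issue is handled. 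Second, and more substantively, your treatment of the borderline case $i=j$, $r=R_2$ actually patches a small slip in the paper. The paper concludes from $0\geq\bigl(\tfrac{N-1}{R_2^2}-\lambda_j\norm{p}_\infty\bigr)\int_{B_r^+}\abs{v}^2\dx$ that ``this last factor is positive,'' but with the stated definition of $R_2$ one only gets $\tfrac{N-1}{R_2^2}\geq\lambda_j\norm{p}_\infty$, so the factor can be exactly $0$ when $R_2=\bigl(\tfrac{N-1}{\lambda_j\norm{p}_\infty}\bigr)^{1/2}$ and $r=R_2$. Your observation that equality in \eqref{eqn:poincare_type} with vanishing boundary term forces $\abs{\nabla u\cdot x}^2=\abs{x}^2\abs{\nabla u}^2=r^2\abs{\nabla u}^2$ a.e., hence $\nabla u\equiv 0$ since $\abs{x}<r$ a.e.\ in $B_r^+$, closes that gap cleanly and makes the lemma hold at $r=R_2$ as stated.
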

\begin{proof}
  We will prove the lemma for a fixed $i\in\{1,\dots,j\}$ and take as
  $c_r$ the minimum among the constants found for each $i$.  Suppose
  by contradiction that for a certain ${r}\in(0,R_2]$ and for every $n$
  (large enough) there exists $\epsilon_n\in(0,1/n)$ such that
  \begin{equation}\label{eqn:lemma_H_const}
      H(\phi_i^{\epsilon_n},{r})<\frac{1}{n}.
  \end{equation}
  We first note that, since $\epsilon_n\to 0$, then
  $\lambda_i^{\epsilon_n}\to\lambda_i$ (see
  \cite{DANERS_2003}). Moreover
  \[
  \int_{\Omega^1}\abs{\nabla\phi_i^{\epsilon_n}}\dx =
  \int_{\Omega^{\epsilon_n}}\abs{\nabla\phi_i^{\epsilon_n}}\dx=\lambda_i^{\epsilon_n}\int_{\Omega^{\epsilon_n}}p\abs{\phi_i^{\epsilon_n}}^2=\lambda_i^{\epsilon_n}\leq
  \lambda_j.
  \]
  Hence there exists $v\in H^1_0(\Omega^1)$ such that
  $\phi_i^{\epsilon_n}\rightharpoonup v$ weakly in $H^1_0(\Omega^1)$
  along a subsequence. Note that actually $v\in H^1_0(\Omega)$ and
  $v\not\equiv 0$ in $\Omega$, since $\int_\Omega
  p\abs{v}^2\dx=1$.
  Moreover $\phi_i^{\epsilon_n}\to v$ strongly in $L^2(\Omega^1)$ and in
  $L^2(S_r^+)$, so that \eqref{eqn:lemma_H_const} implies that $v=0$ on
  $S_r^+$. This tells us that $v$ weakly solves
  \[\left\{\begin{aligned}
     -\Delta v&=\lambda_i pv, && \text{in }B_r^+, \\
     v&=0, &&\text{on } \partial B_r^+.
    \end{aligned}\right.  
  \]
  Testing the above equation with $v$ we obtain that
  $\int_{B_{{r}}^+}\abs{\nabla v}^2\dx=\lambda_i\int_{B_{{r}}^+}
  p\abs{v}^2\dx$
  and then, thanks to \eqref{eqn:poincare_type} and the fact that 
  $\lambda_i\leq\lambda_j$ and ${r}\leq R_2$, 
  \begin{align*}
    0=\int_{B_{{r}}^+}\left(\abs{\nabla
    v}^2-\lambda_ip\abs{v}^2\right)\dx&\geq 
\left( \frac{N-1}{r^2}-\lambda_j\norm{p}_\infty
                                        \right)\int_{B_{{r}}^+}\abs{v}^2\dx\\
  &\geq\left( \frac{N-1}{R_2^2}-\lambda_j\norm{p}_\infty \right)\int_{B_{{r}}^+}\abs{v}^2\dx.
  \end{align*}
  Due to the initial choice of $R_2$ we have that this last factor is
  positive; then $v=0$ in $B_{{r}}^+$.  This, together with classical
  unique continuation principles, implies that $v=0$ in $\Omega$,
  which is a contradiction.
\end{proof}

Let $\tilde{\epsilon}$ and $\tilde{r}$ be the constants found in Proposition \ref{prop:pohoz_inequality}.

\begin{proposition}\label{prop:E'_ineq}
Let $i\in \{1,\dots,j\}$, $\epsilon\in (0,\tilde{\epsilon}]$ and $r\in (\epsilon,\tilde{r}]$. Then
  \begin{multline}\label{eqn:th_E'}
    \frac{\mathrm{d} E}{\mathrm{d} r}(\phi_i^\epsilon,r,\lambda_i^\epsilon,\epsilon)\geq \frac{1}{r^{N-2}}\left[2\int_{S^+_r}\left(\frac{\partial \phi_i^\epsilon}{\partial \nnu}\right)^2\ds+\right. \\
    \left.+ \frac{2\lambda_i^\epsilon}{r}\int_{\Omega_r^\epsilon}p\,\phi_i^\epsilon\,\nabla \phi_i^\epsilon\cdot x\dx-\lambda_i^\epsilon\int_{S^+_r}p\abs{\phi_i^\epsilon}^2\ds+\frac{N-2}{r}\lambda_i^\epsilon \int_{\Omega^\epsilon_r}p\abs{\phi_i^\epsilon}^2\dx\right]
  \end{multline}
  and
  \begin{equation}\label{eqn:th_H'}
   \frac{\mathrm{d} H}{\mathrm{d} r}(\phi_i^\epsilon,r)=\frac{2}{r^{N-1}}\int_{S_r^+}\phi_i^\epsilon\frac{\partial \phi_i^\epsilon}{\partial \nnu}\ds =\frac{2}{r}E(\phi_i^\epsilon,r,\lambda_i^\epsilon,\epsilon).
  \end{equation}

\end{proposition}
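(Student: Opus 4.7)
The plan is to compute the two derivatives directly by ``freezing'' the fixed part $T_\epsilon$ (which does not depend on $r$) and differentiating the integral over $B_r^+$ using a polar change of variables; then to apply the Pohozaev-type inequality of Proposition \ref{prop:pohoz_inequality} to bound $\int_{S_r^+}|\nabla\phi_i^\epsilon|^2\ds$ from below, after which \eqref{eqn:th_E'} follows by a straightforward rearrangement.

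First I would derive \eqref{eqn:th_H'}. Writing $x=r\theta$ with $\theta\in S_1^+$ gives $H(\phi_i^\epsilon,r)=\int_{S_1^+}|\phi_i^\epsilon(r\theta)|^2\dth$, and since $\phi_i^\epsilon$ is smooth on a neighbourhood of $S_r^+$ (because $S_r^+\subset B_{R_{\textup{max}}}^+\subset\Omega$), differentiating under the integral and converting back to $S_r^+$ yields
\[
\frac{\mathrm{d}H}{\mathrm{d}r}(\phi_i^\epsilon,r)=\frac{2}{r^{N-1}}\int_{S_r^+}\phi_i^\epsilon\,\frac{\partial\phi_i^\epsilon}{\partial\nnu}\ds.
\]
On the other hand, testing the equation $-\Delta\phi_i^\epsilon=\lambda_i^\epsilon p\phi_i^\epsilon$ with $\phi_i^\epsilon$ over $\Omega_r^\epsilon$, and recalling that $\phi_i^\epsilon=0$ on $\partial\Omega_r^\epsilon\setminus S_r^+$, gives
\[
\int_{\Omega_r^\epsilon}(|\nabla\phi_i^\epsilon|^2-\lambda_i^\epsilon p|\phi_i^\epsilon|^2)\dx=\int_{S_r^+}\phi_i^\epsilon\,\frac{\partial\phi_i^\epsilon}{\partial\nnu}\ds,
\]
i.e.\ $r^{N-2}E(\phi_i^\epsilon,r,\lambda_i^\epsilon,\epsilon)=\int_{S_r^+}\phi_i^\epsilon\partial_{\nnu}\phi_i^\epsilon\ds$; combining the two displays gives \eqref{eqn:th_H'}.

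For \eqref{eqn:th_E'} I would write $E=r^{-(N-2)}f(r)$ with $f(r):=\int_{\Omega_r^\epsilon}(|\nabla\phi_i^\epsilon|^2-\lambda_i^\epsilon p|\phi_i^\epsilon|^2)\dx$. Since $\Omega_r^\epsilon=T_\epsilon\cup B_r^+$ with $T_\epsilon$ independent of $r$ and the two pieces disjoint, coarea gives $f'(r)=\int_{S_r^+}(|\nabla\phi_i^\epsilon|^2-\lambda_i^\epsilon p|\phi_i^\epsilon|^2)\ds$, so
\[
\frac{\mathrm{d}E}{\mathrm{d}r}=-\frac{N-2}{r}E+\frac{1}{r^{N-2}}\int_{S_r^+}\bigl(|\nabla\phi_i^\epsilon|^2-\lambda_i^\epsilon p|\phi_i^\epsilon|^2\bigr)\ds.
\]
At this point Proposition \ref{prop:pohoz_inequality} furnishes
\[
\int_{S_r^+}|\nabla\phi_i^\epsilon|^2\ds\geq\frac{N-2}{r}\int_{\Omega_r^\epsilon}|\nabla\phi_i^\epsilon|^2\dx+2\int_{S_r^+}\Bigl(\frac{\partial\phi_i^\epsilon}{\partial\nnu}\Bigr)^2\ds+\frac{2\lambda_i^\epsilon}{r}\int_{\Omega_r^\epsilon}p\,\phi_i^\epsilon\nabla\phi_i^\epsilon\cdot x\dx,
\]
and splitting $\int_{\Omega_r^\epsilon}|\nabla\phi_i^\epsilon|^2\dx=r^{N-2}E+\lambda_i^\epsilon\int_{\Omega_r^\epsilon}p|\phi_i^\epsilon|^2\dx$ produces a term $\frac{N-2}{r}E$ that cancels the $-\frac{N-2}{r}E$ above. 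What remains is exactly the bracket on the right-hand side of \eqref{eqn:th_E'}.

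The only delicate point is justifying $f'(r)=\int_{S_r^+}(\cdots)\ds$ and the trace identity $\int_{S_r^+}|\nabla\phi_i^\epsilon|^2\ds$; both are unproblematic because $S_r^+$ lies in the open set $\Omega$ where $\phi_i^\epsilon$ is $C^\infty$ by interior elliptic regularity, so polar coordinates are perfectly admissible there. The genuine work, namely dealing with the non-smooth edge $\Gamma_\epsilon$ of $\Omega_r^\epsilon$ in the Pohozaev identity, has already been carried out in Proposition \ref{prop:pohoz_inequality} via the approximation $\Omega_{r,\delta}^\epsilon$; the starshapedness of $\Sigma$, which turns the boundary integral on $\partial\Omega_r^\epsilon\setminus S_r^+$ into a nonnegative quantity, is precisely what turns the identity into the one-sided inequality used here.
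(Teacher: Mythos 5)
Your proof is correct and follows essentially the same approach as the paper: compute $\mathrm{d}E/\mathrm{d}r$ by the product rule and coarea, insert the Pohozaev-type inequality of Proposition \ref{prop:pohoz_inequality}, and cancel the $\pm\frac{N-2}{r}E$ terms; $\mathrm{d}H/\mathrm{d}r$ is obtained by differentiating in polar coordinates and then using the eigenvalue equation and integration by parts. The paper's proof is simply a terser version of exactly the same computation.
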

\begin{proof}
  We compute the derivative 
  \[
    \frac{\mathrm{d} E}{\mathrm{d} r}=\frac{2-N}{r^{N-1}}\int_{\Omega^\epsilon_{r}}\left(\abs{\nabla \phi_i^\epsilon}^2-\lambda_i^\epsilon p \abs{\phi_i^\epsilon}^2\right)\dx+\frac{1}{r^{N-2}}\int_{S^+_r}\left(\abs{\nabla \phi_i^\epsilon}^2-\lambda_i^\epsilon p \abs{\phi_i^\epsilon}^2\right)\ds.
  \]
  Then, thanks to \eqref{eqn:prop_pohoz_inequality}, we obtain \eqref{eqn:th_E'}. The proof of \eqref{eqn:th_H'} follows from direct computations, the equation satisfied by $\phi_i^\epsilon$ and integration by parts.
\end{proof}

\begin{lemma}\label{lemma:estim_1}
  Let $\rho\in(0,1/2]$, $\mu_\rho$ be as in Corollary \ref{cor:ineq_rho}, $\epsilon\in(0,1]$ and $r\in(\epsilon,R_{\textup{max}}]$. If $\epsilon\mu_\rho<r$, then 
 \[
  \int_{\Omega_r^\epsilon} \abs{u}^2\dx\leq K_{\epsilon,r}^1\int_{\Omega_r^\epsilon}\abs{\nabla u}^2\dx
 \]
 for any $u\in H^1(\Omega_r^\epsilon)$ such that $u=0$ on
 $\partial\Omega_r^\epsilon\setminus S_r^+$, where
\[
  K_{\epsilon,r}^1=\kappa\epsilon^{2(N-1)/N} +\frac{3r^2}{N-1}
 \]
  and $\kappa$ is as in Lemma \ref{lemma:int_tube}. 
\end{lemma}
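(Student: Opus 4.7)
The plan is to decompose $\Omega_r^\epsilon=T_\epsilon\cup B_r^+$ (disjoint up to the interface $\{0\}\times\epsilon\Sigma$) and estimate $\int |u|^2\dx$ separately on each piece, then add. The only thing to verify at the outset is how the trace condition $u=0$ on $\partial\Omega_r^\epsilon\setminus S_r^+$ distributes to the two subdomains. On $T_\epsilon$, the portion of the boundary not shared with the half-ball, namely $\partial T_\epsilon\setminus (\{0\}\times\epsilon\Sigma)$, is entirely contained in $\partial\Omega_r^\epsilon\setminus S_r^+$, so $u$ restricted to $T_\epsilon$ vanishes there, i.e.\ on $\partial T_\epsilon\setminus\epsilon\Sigma$. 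On $B_r^+$, the portion $\mathcal{C}_r\setminus\epsilon\Sigma$ of its flat boundary is likewise contained in $\partial\Omega_r^\epsilon\setminus S_r^+$, so $u|_{B_r^+}\in V_\epsilon(B_r^+)$.

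With these trace conditions in hand, Lemma \ref{lemma:int_tube} applied to $u|_{T_\epsilon}$ yields
\[
\int_{T_\epsilon}\abs{u}^2\dx\leq \kappa\,\epsilon^{2(N-1)/N}\int_{T_\epsilon}\abs{\nabla u}^2\dx.
\]
On the half-ball side, since $\epsilon\mu_\rho<r$ by hypothesis, the Poincar\'e-type inequality \eqref{eqn:poincare_type_1} applies to $u|_{B_r^+}\in V_\epsilon(B_r^+)$ and, taking $\rho=1/2$ (so that $1+\tfrac{1}{1-\rho}=3$, which dominates the corresponding factor for any $\rho\in(0,1/2]$), gives
\[
\int_{B_r^+}\abs{u}^2\dx\leq \frac{3r^2}{N-1}\int_{B_r^+}\abs{\nabla u}^2\dx.
\]

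Summing the two estimates and bounding each gradient integral by the full integral over $\Omega_r^\epsilon$ produces
\[
\int_{\Omega_r^\epsilon}\abs{u}^2\dx\leq \left(\kappa\,\epsilon^{2(N-1)/N}+\frac{3r^2}{N-1}\right)\int_{\Omega_r^\epsilon}\abs{\nabla u}^2\dx,
\]
which is exactly $K_{\epsilon,r}^1\int_{\Omega_r^\epsilon}\abs{\nabla u}^2\dx$. There is no real obstacle here: both ingredients (the Faber-Krahn-based tube estimate and the Poincar\'e-type inequality on $V_\epsilon(B_r^+)$) have already been proved, and the only point requiring a moment of care is the transfer of the boundary condition to each subdomain across the interface $\{0\}\times\epsilon\Sigma$, as described above.
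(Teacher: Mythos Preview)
Your proof is correct and follows essentially the same approach as the paper: split $\Omega_r^\epsilon$ into the tube and the half-ball, apply Lemma~\ref{lemma:int_tube} on $T_\epsilon$ and inequality~\eqref{eqn:poincare_type_1} on $B_r^+$, bound $1+\tfrac{1}{1-\rho}\leq 3$ using $\rho\leq 1/2$, and add. Your explicit verification of how the boundary condition transfers to each subdomain is a nice addition that the paper leaves implicit.
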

\begin{proof}
  Thanks to Lemma \ref{lemma:int_tube} we have an estimate about the integral on the tube, i.e.
  \begin{equation*}
   \int_{T_\epsilon}\abs{u}^2\dx\leq\kappa \epsilon^{2(N-1)/N} \int_{T_\epsilon}\abs{\nabla u}^2\dx\leq \kappa \epsilon^{2(N-1)/N} \int_{\Omega_r^\epsilon}\abs{\nabla u}^2\dx.
  \end{equation*}
  On the other hand, by \eqref{eqn:poincare_type_1} we have that
  \[
   \int_{B_r^+}\abs{u}^2\dx\leq \frac{r^2}{N-1}\left(1+\frac{1}{1-\rho}\right)\int_{B_r^+}\abs{\nabla u }^2\dx\leq \frac{3r^2}{N-1}\int_{\Omega_r^\epsilon} \abs{\nabla u}^{2}\dx.
  \]
  The conclusion follows by adding the two parts.
\end{proof}

\begin{lemma}\label{lemma:estim_2}
  Let $\rho\in(0,1/2)$, $\mu_\rho$ be as in Corollary \ref{cor:ineq_rho}, $\epsilon\in(0,1]$ and $r\in(\epsilon,R_{\textup{max}}]$. If $\epsilon\mu_\rho<r$, then
  \[
   \int_{\Omega_r^\epsilon}\abs{ u \nabla u\cdot x}\dx\leq K_{\epsilon,r}^2 \int_{\Omega_r^\epsilon}\abs{\nabla u}^2\dx
  \]
  for any $u\in H^1(\Omega_r^\epsilon)$ such that $u=0$ on $\partial\Omega_r^\epsilon\setminus S_r^+$, where
  \[
   K_{\epsilon,r}^2=\sqrt{2\kappa}\epsilon^{(N-1)/N}+\sqrt{\frac{3}{N-1}} r^2
  \]
  and $\kappa$ is as in Lemma \ref{lemma:int_tube}.
\end{lemma}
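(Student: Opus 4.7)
The strategy is to mirror the structure of Lemma \ref{lemma:estim_1}: split $\Omega_r^\epsilon = T_\epsilon \cup B_r^+$, estimate the integral on each piece by Cauchy--Schwarz together with a pointwise bound on $\abs{x}$, and then apply the appropriate Poincar\'e-type inequality already available on each piece.

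Specifically, on the tube $T_\epsilon = (-1,0]\times\epsilon\Sigma$ we have $\abs{x}^2 \leq 1 + \epsilon^2 \leq 2$ (using $\epsilon \leq 1$), so by Cauchy--Schwarz
\[
\int_{T_\epsilon} \abs{u\,\nabla u \cdot x}\dx \leq \sqrt{2}\left(\int_{T_\epsilon}\abs{u}^2\dx\right)^{\!1/2}\!\left(\int_{T_\epsilon}\abs{\nabla u}^2\dx\right)^{\!1/2}.
\]
Since $u$ vanishes on $\partial T_\epsilon \setminus \epsilon\Sigma$, Lemma \ref{lemma:int_tube} yields $\int_{T_\epsilon}\abs{u}^2 \leq \kappa\epsilon^{2(N-1)/N}\int_{T_\epsilon}\abs{\nabla u}^2$, and plugging this in gives the bound $\sqrt{2\kappa}\,\epsilon^{(N-1)/N}\int_{T_\epsilon}\abs{\nabla u}^2\dx$.

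On the half-ball $B_r^+$ we use $\abs{x} \leq r$ to write
\[
\int_{B_r^+}\abs{u\,\nabla u\cdot x}\dx \leq r\left(\int_{B_r^+}\abs{u}^2\dx\right)^{\!1/2}\!\left(\int_{B_r^+}\abs{\nabla u}^2\dx\right)^{\!1/2}.
\]
Since $u=0$ on $\partial\Omega_r^\epsilon \setminus S_r^+$, the restriction $u\restr{B_r^+}$ lies in $V_\epsilon(B_r^+)$; the hypothesis $\epsilon\mu_\rho < r$ thus makes \eqref{eqn:poincare_type_1} applicable, and since $\rho \leq 1/2$ we have $1 + \frac{1}{1-\rho}\leq 3$, yielding $\int_{B_r^+}\abs{u}^2 \leq \frac{3r^2}{N-1}\int_{B_r^+}\abs{\nabla u}^2$. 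Substituting produces $\sqrt{\frac{3}{N-1}}\,r^2\int_{B_r^+}\abs{\nabla u}^2\dx$.

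Adding the two contributions, bounding each piece by the full integral over $\Omega_r^\epsilon$, gives the desired constant $K_{\epsilon,r}^2 = \sqrt{2\kappa}\,\epsilon^{(N-1)/N} + \sqrt{3/(N-1)}\,r^2$. There is no real obstacle here; the only point to watch is keeping the $\abs{x}$ bound on the tube independent of $\epsilon$ (so that the $\epsilon^{(N-1)/N}$ factor comes purely from Lemma \ref{lemma:int_tube}) and checking that the boundary condition on $u$ restricts correctly to put $u\restr{B_r^+}$ in $V_\epsilon(B_r^+)$.
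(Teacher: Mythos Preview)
Your proof is correct and follows essentially the same approach as the paper: split $\Omega_r^\epsilon$ into $T_\epsilon$ and $B_r^+$, apply Cauchy--Schwarz together with the pointwise bound on $\abs{x}$ on each piece, and then invoke Lemma \ref{lemma:int_tube} on the tube and \eqref{eqn:poincare_type_1} on the half-ball. You have simply made explicit the origin of the factor $\sqrt{2}$ (from $\abs{x}^2\le 1+\epsilon^2\le 2$ on $T_\epsilon$) and the verification that $u\restr{B_r^+}\in V_\epsilon(B_r^+)$, which the paper leaves implicit.
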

\begin{proof}
  First we consider the integral over $T_\epsilon$: thanks to
  Cauchy-Schwarz Inequality and  Lemma \ref{lemma:int_tube} we know
  that
 \begin{equation*}
  \int_{T_\epsilon} \abs{ u \nabla u\cdot x}\dx\dx\leq \sqrt{2\kappa} \epsilon^{(N-1)/N}\int_{\Omega_r^\epsilon}\abs{\nabla u}^2\dx.
 \end{equation*}
From the Cauchy-Schwarz Inequality and \eqref{eqn:poincare_type_1} it
follows that
 \[
  \int_{B_r^+}\abs{ u \nabla u\cdot x}\dx   \leq  \sqrt{\frac{3}{N-1}}r^2\int_{\Omega_r^\epsilon}\abs{\nabla u}^2\dx.
 \]
 Adding the two parts we conclude the proof.
\end{proof}

\begin{corollary}\label{cor:E_coerciv}
  Let $\rho\in(0,1/2)$, $\mu_\rho$ be as in Corollary \ref{cor:ineq_rho}, $\epsilon\in(0,1]$, $r\in(\epsilon,R_{\textup{max}}]$. If $\epsilon\mu_\rho<r$ then
  \begin{equation}\label{eqn:E_coerciv_1}
   \int_{\Omega_r^\epsilon}\left(\abs{\nabla u}^2-\lambda_i^\epsilon p\abs{u}^2\right)\dx\geq \left(1-\lambda_i^\epsilon\norm{p}_\infty K_{\epsilon,r}^1 \right) \int_{\Omega_r^\epsilon}\abs{\nabla u}^2\dx
  \end{equation}
  for any $u\in H^1(\Omega_r^\epsilon)$ such that $u=0$ on
  $\partial\Omega_r^\epsilon\setminus S_r^+$ and for all
  $i\in \{1,\dots,j\}$. Furthermore there exists
  $r_0\leq R_{\textup{max}}$ such that, for every $r,\epsilon$
  satisfying $\epsilon\mu_\rho<r\leq r_0$, we have
  \[
    \int_{\Omega_r^\epsilon}\abs{\nabla u}^2\dx\leq 2\int_{\Omega_r^\epsilon}\left(\abs{\nabla u}^2-\lambda_i^\epsilon p\abs{u}^2\right)\dx 
  \]
    for any $u\in H^1(\Omega_r^\epsilon)$ such that $u=0$ on $\partial\Omega_r^\epsilon\setminus S_r^+$ and for all $i\in \{1,\dots,j\}$.
\end{corollary}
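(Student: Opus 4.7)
The first inequality is a direct consequence of Lemma \ref{lemma:estim_1}. Indeed, since $u=0$ on $\partial\Omega_r^\epsilon\setminus S_r^+$ and $\epsilon\mu_\rho<r$, that lemma gives
\[
\int_{\Omega_r^\epsilon} p|u|^2\dx\leq \norm{p}_\infty\int_{\Omega_r^\epsilon}|u|^2\dx\leq \norm{p}_\infty K_{\epsilon,r}^1\int_{\Omega_r^\epsilon}|\nabla u|^2\dx.
\]
Multiplying by $\lambda_i^\epsilon\geq0$ and subtracting from $\int_{\Omega_r^\epsilon}|\nabla u|^2\dx$ yields \eqref{eqn:E_coerciv_1}.

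For the second inequality, it suffices to show that $r_0$ can be chosen so that $1-\lambda_i^\epsilon\norm{p}_\infty K_{\epsilon,r}^1\geq \tfrac{1}{2}$ uniformly for $i\in\{1,\dots,j\}$, $\epsilon\in(0,1]$ and $\epsilon\mu_\rho<r\leq r_0$. The key observation is that the eigenvalues $\lambda_i^\epsilon$, for $i\in\{1,\dots,j\}$, admit a uniform upper bound $\Lambda$: indeed, by \eqref{eq:7} we have $\lambda_i^\epsilon\to\lambda_i\leq\lambda_j$ as $\epsilon\to 0$, while the continuous dependence of Dirichlet eigenvalues on the domain ensures boundedness of $\epsilon\mapsto\lambda_i^\epsilon$ on any interval $[\epsilon_*,1]$ with $\epsilon_*>0$.

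Since $\mu_\rho>1$, the constraint $\epsilon\mu_\rho<r$ gives $\epsilon<r$, hence
\[
K_{\epsilon,r}^1=\kappa\epsilon^{2(N-1)/N}+\frac{3r^2}{N-1}\leq \kappa r^{2(N-1)/N}+\frac{3r^2}{N-1},
\]
and the right-hand side tends to $0$ as $r\to 0^+$ (note $2(N-1)/N>0$ for $N\geq 2$). Thus we can select $r_0\in(0,R_{\textup{max}}]$ so small that $\Lambda\norm{p}_\infty\bigl(\kappa r_0^{2(N-1)/N}+\tfrac{3r_0^2}{N-1}\bigr)\leq \tfrac{1}{2}$, which guarantees $1-\lambda_i^\epsilon\norm{p}_\infty K_{\epsilon,r}^1\geq\tfrac{1}{2}$ for all admissible $\epsilon,r,i$. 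Combining this with the first inequality completes the proof.

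The whole argument is essentially computational; no substantial obstacle is expected, the only delicate point being the uniform upper bound on $\lambda_i^\epsilon$, which is handled by combining the convergence \eqref{eq:7} for small $\epsilon$ with the continuity of the map $\epsilon\mapsto\lambda_i^\epsilon$ on $(0,1]$.
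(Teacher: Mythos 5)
Your proof is correct and follows essentially the same route as the paper. The only detour is in bounding $\lambda_i^\epsilon$ uniformly: you invoke the convergence \eqref{eq:7} together with continuity of $\epsilon\mapsto\lambda_i^\epsilon$ on $(0,1]$ to produce a bound $\Lambda$, whereas the paper simply uses the monotonicity of Dirichlet eigenvalues with respect to domain inclusion, $\Omega\subseteq\Omega^\epsilon\Rightarrow\lambda_i^\epsilon\leq\lambda_i\leq\lambda_j$, so that $r_0$ can be chosen to satisfy $K_{\epsilon,r}^1\leq K_{r_0,r_0}^1\leq\frac{1}{2\lambda_j\|p\|_\infty}$ directly. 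Both arguments are valid, but the paper's choice $\Lambda=\lambda_j$ is cleaner, makes $r_0$ explicit, and avoids appealing to continuity of the eigenvalue map away from $\epsilon=0$.
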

\begin{proof}
  The first statement \eqref{eqn:E_coerciv_1} easily comes from Lemma \ref{lemma:estim_1}. Besides, if we choose $r_0\leq R_{\textup{max}}$ such that
  \[
   K_{\epsilon,r}^1\leq K_{r_0,r_0}^1\leq \frac{1}{2\lambda_j\norm{p}_\infty},
  \]
  from \eqref{eqn:E_coerciv_1}, we can conclude the proof.
\end{proof}

\begin{lemma}\label{lemma:H_increment}
  Let $\rho\in(0,1/2)$ and $\mu_\rho$ be as in Corollary
  \ref{cor:ineq_rho}. Let $R_1$ and $\epsilon_1$ be as in Lemma
  \ref{lemma:H_well_posed1}. Then there exists $\tau>0$ depending only
  on $N$, $\lambda_j$, $\|p\|_\infty$ and $|\Sigma|$ such that, for
  every $\epsilon\in (0,\epsilon_1]$, $r_1,r_2$, with
  $0<\mu_\rho\epsilon< r_1\leq r_2\leq \min\{1,R_1\}$, we have that
 \[
 \frac{H(\phi_i^\epsilon,r_2)}{H(\phi_i^\epsilon,r_1)}\geq
 \exp\left(-\tau
   R_1^{2(N-1)/N}\right)\left(\frac{r_2}{r_1}\right)^{2(1-\rho)}
 \quad\text{for all $i\in\{1,\dots,j\}$}.
\]
\end{lemma}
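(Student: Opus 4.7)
\emph{Proof plan.} The plan is to convert the desired ratio into an integrated lower bound on the Almgren frequency $\mathcal{N}$. By identity \eqref{eqn:th_H'} of Proposition \ref{prop:E'_ineq}, $\frac{\mathrm{d}}{\mathrm{d}r}\log H(\phi_i^\epsilon,r)=\frac{2}{r}\mathcal{N}(\phi_i^\epsilon,r,\lambda_i^\epsilon,\epsilon)$, hence
\[
\log\frac{H(\phi_i^\epsilon,r_2)}{H(\phi_i^\epsilon,r_1)}=\int_{r_1}^{r_2}\frac{2\,\mathcal{N}(\phi_i^\epsilon,s,\lambda_i^\epsilon,\epsilon)}{s}\,\mathrm{d}s.
\]
It therefore suffices to prove a pointwise lower bound of the form $\mathcal{N}(\phi_i^\epsilon,r,\lambda_i^\epsilon,\epsilon)\geq (1-\rho)-C\,r^{2(N-1)/N}$ for every admissible triple $(i,r,\epsilon)$, with $C$ depending only on $N$, $\lambda_j$, $\norm{p}_\infty$ and $\abs{\Sigma}$.

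To obtain such a bound I would split $\mathcal{N}=E_1/H-E_2/H$, where $E_1:=r^{2-N}\int_{\Omega_r^\epsilon}\abs{\nabla\phi_i^\epsilon}^2\dx$ and $E_2:=\lambda_i^\epsilon r^{2-N}\int_{\Omega_r^\epsilon}p\abs{\phi_i^\epsilon}^2\dx$. Since $\phi_i^\epsilon\restr{B_r^+}\in V_\epsilon(B_r^+)$ (as $\phi_i^\epsilon$ vanishes on $\mathcal{C}_r\setminus\epsilon\Sigma$) and $\mu_\rho\epsilon<r$, Corollary \ref{cor:ineq_rho} applied to $u=\phi_i^\epsilon\restr{B_r^+}$ yields $E_1/H\geq \frac{r\int_{B_r^+}\abs{\nabla\phi_i^\epsilon}^2\dx}{\int_{S_r^+}\abs{\phi_i^\epsilon}^2\ds}\geq 1-\rho$. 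For the potential term, Lemma \ref{lemma:estim_1} applied to $\phi_i^\epsilon$ (which vanishes on $\partial\Omega_r^\epsilon\setminus S_r^+$) gives $\int_{\Omega_r^\epsilon}\abs{\phi_i^\epsilon}^2\dx\leq K_{\epsilon,r}^1\int_{\Omega_r^\epsilon}\abs{\nabla\phi_i^\epsilon}^2\dx$; combined with the uniform bound $\lambda_i^\epsilon\leq 2\lambda_j$ guaranteed by \eqref{eq:7} (after possibly shrinking $\epsilon_1$), this produces $E_2\leq 2\lambda_j\norm{p}_\infty K_{\epsilon,r}^1\,E_1$. Using $\epsilon<r\leq 1$ together with $2(N-1)/N<2$, both $\epsilon^{2(N-1)/N}$ and $r^2$ are bounded by $r^{2(N-1)/N}$, so $K_{\epsilon,r}^1\leq (\kappa+\tfrac{3}{N-1})r^{2(N-1)/N}$. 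Shrinking $R_1$ if necessary so that $2\lambda_j\norm{p}_\infty K_{\epsilon,r}^1\leq 1$ on the relevant range (which is allowed by intersecting $R_1$ with the coercivity threshold of Corollary \ref{cor:E_coerciv}, a quantity depending only on the listed data), I conclude
\[
\mathcal{N}\geq \bigl(1-2\lambda_j\norm{p}_\infty K_{\epsilon,r}^1\bigr)\frac{E_1}{H}\geq (1-\rho)-C_0\,r^{2(N-1)/N},\qquad C_0:=2\lambda_j\norm{p}_\infty\bigl(\kappa+\tfrac{3}{N-1}\bigr).
\]

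Inserting this pointwise lower bound into the integral representation of $\log(H(r_2)/H(r_1))$ and using the elementary estimate $\int_{r_1}^{r_2}s^{2(N-1)/N-1}\,\mathrm{d}s\leq \frac{N}{2(N-1)}r_2^{2(N-1)/N}\leq \frac{N}{2(N-1)}R_1^{2(N-1)/N}$, one obtains $\log\frac{H(\phi_i^\epsilon,r_2)}{H(\phi_i^\epsilon,r_1)}\geq 2(1-\rho)\log\frac{r_2}{r_1}-\tau R_1^{2(N-1)/N}$ with $\tau:=NC_0/(N-1)$, and exponentiation gives the claim. The hardest part will be the $\epsilon$-uniform control of the potential term $E_2$ near the junction: this is precisely where the Faber--Krahn-type $L^2$ estimate on $T_\epsilon$ (Lemma \ref{lemma:int_tube}) enters, via Lemma \ref{lemma:estim_1}, to produce the sharp rate $\epsilon^{2(N-1)/N}$; the leading exponent $2(1-\rho)$ on the ratio $r_2/r_1$ instead originates directly from the $\epsilon$-perturbed Poincar\'e inequality \eqref{eqn:m_cont}.
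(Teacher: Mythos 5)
Your argument follows the paper's proof almost line for line: writing $(\log H)' = \tfrac{2}{r}\mathcal{N}$ via \eqref{eqn:th_H'}, lower-bounding $\mathcal{N}$ by combining the coercivity estimate (Lemma~\ref{lemma:estim_1}, equivalently Corollary~\ref{cor:E_coerciv}) with the $\epsilon$-uniform Poincar\'e inequality \eqref{eqn:m_cont}, absorbing $\epsilon^{2(N-1)/N}$ and $r^2$ into $r^{2(N-1)/N}$, and integrating. The only cosmetic differences are that you invoke $\lambda_i^\epsilon\leq 2\lambda_j$ via \eqref{eq:7} and talk of shrinking $\epsilon_1$ and $R_1$, whereas the paper simply uses $\lambda_i^\epsilon\leq\lambda_j$ (which holds unconditionally by domain monotonicity, since $\Omega\subset\Omega^\epsilon$) and tacitly works in the regime where the coefficient $1-\lambda_j\|p\|_\infty K^1_{\epsilon,r}$ is nonnegative.
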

\begin{proof}
 With the notation
 $E(r)=E(\phi_i^\epsilon,r,\lambda_i^\epsilon,\epsilon)$,
 $H(r)=H(\phi_i^\epsilon,r)$ and $\frac{\mathrm{d} H}{\mathrm{d}
   r}(\phi_i^\epsilon,r)=H'(r)$, 
 we have that, from Proposition \ref{prop:E'_ineq} and Corollary \ref{cor:E_coerciv}
 \[
  H'(r)=\frac{2}{r}E(r)=\frac{2}{r^{N-1}}\int_{\Omega_r^\epsilon}\left( \abs{\nabla \phi_i^\epsilon}^2-\lambda_i^\epsilon p\abs{\phi_i^\epsilon}^2 \right)\dx\geq \frac{2}{r^{N-1}}(1-\lambda_i^\epsilon\norm{p}_\infty K_{\epsilon,r}^1)\int_{\Omega_r^\epsilon}\abs{\nabla \phi_i^\epsilon}^2\dx
 \]
for all $\epsilon\mu_\rho<r\leq \min\{1,R_1\}$.
 Hence, since $\lambda_i^\epsilon\leq\lambda_j$, thanks to \eqref{eqn:m_cont}
 \[
  H'(r)\geq \frac{2}{r^{N-1}}(1-\lambda_j\norm{p}_\infty K_{\epsilon,r}^1)\frac{1-\rho}{r}\int_{S_r^+}\abs{\phi_i^\epsilon}^2\ds=\frac{2(1-\rho)}{r}(1-\lambda_j\norm{p}_\infty K_{\epsilon,r}^1)H(r).
 \]
 So we have
 \[
  \frac{H'(r)}{H(r)}\geq\frac{2(1-\rho)}{r}\left[1-\tau_1\epsilon^{2(N-1)/N}-\tau_2 r^2\right]
 \]
 where $\tau_1=\lambda_j\norm{p}_\infty\kappa$ and
 $\tau_2=\lambda_j\norm{p}_\infty\frac{3}{N-1}$. Since $\epsilon<r$
 and $r\leq 1$, if $\tau_0=\tau_1+\tau_2$, then
 \[
  \left( \log H(r)
  \right)'=\frac{H'(r)}{H(r)}\geq\frac{2(1-\rho)}{r}\left[1-\tau_0
    r^{2(N-1)/N}\right]
\geq \frac{2(1-\rho)}{r}-2\tau_0 r^{1-\frac{2}{N}}.
 \]
Integrating from $r_1$ to $r_2$ and letting $\tau:=\tau_0 N/(N-1)$, we
obtain 
 \[
  \log \frac{H(\phi_i^\epsilon,r_2)}{H(\phi_i^\epsilon,r_1)}\geq 2(1-\rho)\log\frac{r_2}{r_1}-\tau(r_2^{2(N-1)/N}-r_1^{2(N-2)/N})\geq 2(1-\rho)\log\frac{r_2}{r_1}-\tau R_1^{2(N-1)/N}.
 \]
 Taking the exponentials yields the thesis.
\end{proof}

Hereafter let $R_0:=\min\{1,R_1,R_2,r_0\}$ where $R_1,R_2,r_0$ are defined in Lemma \ref{lemma:H_well_posed1}, Lemma \ref{lemma:H_well_posed2} and Corollary \ref{cor:E_coerciv} respectively. Moreover let $\epsilon_0=\min \{1,\tilde{\epsilon},\epsilon_1\}$ where $\tilde{\epsilon},\epsilon_1$ are defined in Proposition \ref{prop:pohoz_inequality} and Lemma \ref{lemma:H_well_posed1} respectively.

\begin{proposition}\label{prop:monotonicity}
   Let $\rho\in(0,1/2)$ and $\mu_\rho$ be as in Corollary \ref{cor:ineq_rho}. Then, for every $r\in (0,R_0]$, $\epsilon\in (0,\epsilon_0]$ such that $0<\epsilon\mu_\rho<r\leq R_0$
  \[
    \frac{\mathrm{d} \mathcal{N}}{\mathrm{d} r}(\phi_i^\epsilon,r,\lambda_i^\epsilon,\epsilon)\geq - f(r)\mathcal{N}(\phi_i^\epsilon,r,\lambda_i^\epsilon,\epsilon) \quad\text{for all } i\in \{1,\dots,j\},
  \]
  where
  \[
    f(r)=c_1 r +c_2 r^{(N-2)/N}+c_3 r^{-1/N}
  \]
  and $c_n$'s are positive constants depending only on $\rho$,
  $\norm{p}_\infty$, $\lambda_j$, the dimension $N$
 and the geometry of the problem (in particular on $\Omega$ and on $\abs{\Sigma}_{N-1}$).
\end{proposition}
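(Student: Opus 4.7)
The plan is to establish the differential inequality in the logarithmic form $(\log\mathcal{N})' \geq -f(r)$ by splitting
\begin{equation*}
\frac{\mathcal{N}'}{\mathcal{N}} = \frac{E'}{E} - \frac{H'}{H}
\end{equation*}
and showing that the principal term in the lower bound \eqref{eqn:th_E'} for $E'$ exactly compensates $H'/H$ via a Cauchy--Schwarz inequality, so that only the three $p$-dependent correction terms of \eqref{eqn:th_E'} contribute to $\mathcal{N}'/\mathcal{N}$, each producing one of the three summands of $f(r)$.

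For the compensation step, Proposition \ref{prop:E'_ineq} gives $H'/H = 2E/(rH) = 2\mathcal{N}/r$. Testing the equation $-\Delta\phi_i^\epsilon = \lambda_i^\epsilon p\phi_i^\epsilon$ against $\phi_i^\epsilon$ on $\Omega_r^\epsilon$ and using the Dirichlet conditions on $\partial\Omega_r^\epsilon\setminus S_r^+$ yields
\begin{equation*}
r^{N-2}E(\phi_i^\epsilon,r,\lambda_i^\epsilon,\epsilon) = \int_{S_r^+}\phi_i^\epsilon\,\frac{\partial\phi_i^\epsilon}{\partial\nnu}\ds,
\end{equation*}
so Cauchy--Schwarz gives $r^{2(N-2)}E^2 \leq r^{N-1}H\int_{S_r^+}\bigl(\tfrac{\partial\phi_i^\epsilon}{\partial\nnu}\bigr)^2\ds$. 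Dividing by $E$ then shows that the leading piece $\tfrac{2}{r^{N-2}}\int_{S_r^+}\bigl(\tfrac{\partial\phi_i^\epsilon}{\partial\nnu}\bigr)^2\ds$ of the lower bound for $E'$ is already $\geq 2E/(rH) = H'/H$, whence
\begin{equation*}
\frac{\mathcal{N}'}{\mathcal{N}} \geq \frac{\mathcal{R}(r)}{E},
\end{equation*}
where $\mathcal{R}(r)$ collects the three remaining $p$-terms in \eqref{eqn:th_E'}.

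The heart of the proof is then to bound each contribution to $|\mathcal{R}(r)|/E$ by one summand of $f(r)$. The crucial input is the gradient coercivity $\int_{\Omega_r^\epsilon}|\nabla\phi_i^\epsilon|^2\dx \leq 2r^{N-2}E$ supplied by Corollary \ref{cor:E_coerciv}, which converts the denominator $E$ into a gradient integral. For the two volume terms I apply Lemma \ref{lemma:estim_2} to $\int p\phi_i^\epsilon\nabla\phi_i^\epsilon\cdot x\dx$ and Lemma \ref{lemma:estim_1} to $\int p|\phi_i^\epsilon|^2\dx$; after division by $E$ these produce prefactors proportional to $K_{\epsilon,r}^2/r$ and $K_{\epsilon,r}^1/r$, and the inequalities $\epsilon^{(N-1)/N}/r \leq r^{-1/N}$ and $\epsilon^{2(N-1)/N}/r \leq r^{(N-2)/N}$ (valid because $\epsilon<r\leq 1$) convert these into the $c_3 r^{-1/N}$ and $c_2 r^{(N-2)/N}$ contributions, accompanied by $O(r)$ pieces.

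The delicate term is the surface integral $-\frac{\lambda_i^\epsilon}{r^{N-2}}\int_{S_r^+}p|\phi_i^\epsilon|^2\ds$: a naive bound via $H$ alone would introduce a factor $1/\mathcal{N}$, which cannot be absorbed into an $f(r)$ independent of $\mathcal{N}$. The plan here is to use that $\phi_i^\epsilon|_{B_r^+}\in V_\epsilon(B_r^+)$ (since it vanishes on $\mathcal{C}_r\setminus\epsilon\Sigma$) and to invoke the sharp boundary Poincaré inequality \eqref{eqn:m_cont} of Corollary \ref{cor:ineq_rho}, whose applicability is guaranteed precisely by the threshold $\epsilon\mu_\rho<r$ built into the statement; it gives
\begin{equation*}
\int_{S_r^+}|\phi_i^\epsilon|^2\ds \leq \frac{r}{1-\rho}\int_{\Omega_r^\epsilon}|\nabla\phi_i^\epsilon|^2\dx \leq \frac{2r^{N-1}}{1-\rho}E,
\end{equation*}
whence an $O(r)$ correction yielding the $c_1 r$ summand. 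I expect this boundary estimate to be the main technical hurdle, since it is the point at which the geometric constraint $\epsilon\mu_\rho<r$ enters essentially; the remaining work reduces to careful bookkeeping of the powers of $\epsilon$ and $r$ and assembling the constants $c_1,c_2,c_3$ from $\rho$, $\|p\|_\infty$, $\lambda_j$, $N$, and $|\Sigma|$.
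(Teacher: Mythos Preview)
Your proposal is correct and follows essentially the same approach as the paper. The only cosmetic difference is organizational: the paper expands $\mathcal{N}'=(E'H-EH')/H^2$ directly and observes that the Cauchy--Schwarz term $\bigl(\int_{S_r^+}(\partial_\nu\phi_i^\epsilon)^2\bigr)\bigl(\int_{S_r^+}|\phi_i^\epsilon|^2\bigr)-\bigl(\int_{S_r^+}\phi_i^\epsilon\partial_\nu\phi_i^\epsilon\bigr)^2\geq0$, whereas you pass through the logarithmic derivative $\mathcal{N}'/\mathcal{N}=E'/E-H'/H$; the subsequent treatment of the three $p$-remainder terms via Lemmas~\ref{lemma:estim_1}, \ref{lemma:estim_2}, Corollary~\ref{cor:ineq_rho}, and the coercivity of Corollary~\ref{cor:E_coerciv} is identical in both arguments.
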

\begin{proof}
     With the usual notation
    \[
    \frac{\mathrm{d} \mathcal{N}}{\mathrm{d} r}(\phi_i^\epsilon,r,\lambda_i^\epsilon,\epsilon)=:\mathcal{N}'(r),\quad \frac{\mathrm{d} E}{\mathrm{d} r}(\phi_i^\epsilon,r,\lambda_i^\epsilon,\epsilon):=E'(r),\quad \frac{\mathrm{d} H}{\mathrm{d} r}(\phi_i^\epsilon,r):=H'(r),
    \]
    from Proposition \ref{prop:E'_ineq} we have that
    \begin{align*}
      \mathcal{N}'(r)&\geq
                       \frac{1}{H^2}\frac{2}{r^{2N-3}}\left\{\left[\bigg(\int_{S^+_r}
\left(\frac{\partial \phi_i^\epsilon}{\partial \nnu}\right)^2\ds\bigg)
\bigg(\int_{S^+_r}\abs{\phi_i^\epsilon}^2\ds\bigg)-\left(\int_{S_r^+}
 \phi_i^\epsilon\frac{\partial \phi_i^\epsilon}{\partial \nnu}\ds \right)^2\right] \right.\\
                     &\quad +\left.\left[
                       \frac{\lambda_i^\epsilon}{r}\int_{\Omega_r^\epsilon}p\,\phi_i^\epsilon\,\nabla
                       \phi_i^\epsilon\cdot
                       x\dx-\frac{\lambda_i^\epsilon}{2}
\int_{S^+_r}p\abs{\phi_i^\epsilon}^2\ds+\frac{N-2}{2r}\lambda_i^\epsilon
                       \int_{\Omega^\epsilon_r}p\abs{\phi_i^\epsilon}^2\dx\right]
\int_{S^+_r}\abs{\phi_i^\epsilon}^2\ds\right\}.
    \end{align*}
    By Cauchy-Schwarz Inequality we have that
    \begin{equation*}
      \mathcal{N}'(r)\geq \frac{2\lambda_i^\epsilon}{\int_{S^+_r}\abs{\phi_i^\epsilon}^2}\left[ \int_{\Omega_r^\epsilon}p\,\phi_i^\epsilon\,\nabla \phi_i^\epsilon\cdot x\dx+\frac{N-2}{2} \int_{\Omega^\epsilon_r}p\abs{\phi_i^\epsilon}^2\dx-\frac{r}{2}\int_{S^+_r}p\abs{\phi_i^\epsilon}^2\ds\right].
    \end{equation*}
 Thanks to Lemmas \ref{lemma:estim_1}, \ref{lemma:estim_2}, Corollary
 \ref{cor:ineq_rho} and
 Corollary \ref{cor:E_coerciv} we can say that
 \begin{align*}
   \mathcal{N}'(r)&\geq -\frac{2\lambda_i^\epsilon\norm{p}_\infty}{\int_{S^+_r}\abs{\phi_i^\epsilon}^2}\left[K_{\epsilon,r}^2+\frac{(N-2)}{2}K_{\epsilon,r}^1+\frac{r^2}{2(1-\rho)}\right]\int_{\Omega_r^\epsilon}\abs{\nabla\phi_i^\epsilon}^2\dx  \\
                  &  \geq -\frac{4\lambda_i^\epsilon\norm{p}_\infty}{r^{N-1}H(r)}r^{N-2}E(r)\left[K_{\epsilon,r}^2+\frac{(N-2)}{2} K_{\epsilon,r}^1+r^2 \right].
 \end{align*}
 Taking into account that $K_{\epsilon,r}^n<K_{r,r}^n$, $n=1,2,$ we have
   \[
    \mathcal{N}'(r)\geq -\left(c_1 r+c_2 r^{(N-2)/N}+c_3 r^{-1/N}\right)\mathcal{N}(r) =-f(r)\mathcal{N}(r)
  \]
  by some constants $c_1,c_2,c_3>0$ independent of $r$ and $\epsilon$.
\end{proof}

\begin{corollary}\label{cor:N_bounded}
  Let $\rho\in(0,\frac12)$ and $\mu_\rho$ be as in Corollary
  \ref{cor:ineq_rho}. Then, for every $\mu>\mu_\rho$, $r\in (0,R_0]$,
  and $\epsilon\in (0,\epsilon_0]$ such that $\epsilon\mu\leq r\leq R_0$, we have that
  \[
   \mathcal{N}(\phi_i^\epsilon,r,\lambda_i^\epsilon,\epsilon)\leq e^{\int_r^{R_0}f(t)\,dt}\mathcal{N}(\phi_i^\epsilon,R_0,\lambda_i^\epsilon,\epsilon).
  \]
\end{corollary}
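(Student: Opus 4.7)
The statement follows by integrating the differential inequality of Proposition \ref{prop:monotonicity} using an integrating factor. First observe that $f(t) = c_1 t + c_2 t^{(N-2)/N} + c_3 t^{-1/N}$ has an integrable singularity at the origin (since $-1/N > -1$), so that
\[
F(r) := \int_r^{R_0} f(t)\,dt
\]
is finite for every $r \in (0, R_0]$, and $F$ is of class $C^1$ on $(0, R_0]$ with $F'(r) = -f(r)$.

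The plan is to introduce the auxiliary function
\[
g(r) := e^{-F(r)}\,\mathcal{N}(\phi_i^\epsilon, r, \lambda_i^\epsilon, \epsilon), \qquad r \in [\epsilon\mu, R_0],
\]
and show that $g$ is non-decreasing. Since $\mu > \mu_\rho$, the interval $[\epsilon\mu, R_0]$ is contained in $(\epsilon\mu_\rho, R_0]$, so Proposition \ref{prop:monotonicity} is applicable throughout. Differentiating $g$ gives
\[
g'(r) = e^{-F(r)}\bigl[\mathcal{N}'(\phi_i^\epsilon, r, \lambda_i^\epsilon, \epsilon) + f(r)\,\mathcal{N}(\phi_i^\epsilon, r, \lambda_i^\epsilon, \epsilon)\bigr] \geq 0,
\]
where the last inequality is precisely Proposition \ref{prop:monotonicity}. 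Hence $g(r) \leq g(R_0)$ for every $r \in [\epsilon\mu, R_0]$, and since $F(R_0) = 0$, rearranging yields
\[
\mathcal{N}(\phi_i^\epsilon, r, \lambda_i^\epsilon, \epsilon) \leq e^{F(r)}\,\mathcal{N}(\phi_i^\epsilon, R_0, \lambda_i^\epsilon, \epsilon) = e^{\int_r^{R_0} f(t)\,dt}\,\mathcal{N}(\phi_i^\epsilon, R_0, \lambda_i^\epsilon, \epsilon),
\]
which is the desired estimate.

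The only technical point that must be checked for the calculus manipulations above to be rigorous is that $\mathcal{N}(\phi_i^\epsilon, \cdot, \lambda_i^\epsilon, \epsilon)$ is absolutely continuous on $[\epsilon\mu, R_0]$. This follows from the explicit formulas for $E'$ and $H'$ derived in Proposition \ref{prop:E'_ineq}, the boundary regularity of $\phi_i^\epsilon$ (which is smooth in the interior and has square-integrable traces on the relevant spheres), and the strict positivity of $H$ on $[\epsilon\mu, R_0]$ supplied by Lemmas \ref{lemma:H_well_posed1}--\ref{lemma:H_well_posed2}. Since no substantial new obstacle arises beyond integrating the monotonicity inequality, the corollary is essentially immediate from the machinery already developed.
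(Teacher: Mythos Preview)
Your argument is correct and is exactly the approach the paper takes: form the product $e^{-\int_r^{R_0} f(t)\,dt}\mathcal{N}(r)$, observe from Proposition~\ref{prop:monotonicity} that its derivative is nonnegative, and integrate from $r$ to $R_0$. The paper's proof is the one-line version of what you wrote; your additional remarks on integrability of $f$ and absolute continuity of $\mathcal{N}$ are reasonable justifications but not a different method.
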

\begin{proof}
Form Proposition \ref{prop:monotonicity} it follows that
$\big(e^{-\int_r^{R_0}f(t)\,dt}\mathcal{N}(r)\big)'\geq 0$,
  which, by integration over $(r,R_0)$, yields the conclusion.
\end{proof}

\subsection{Energy Estimates}

\begin{proposition}\label{prop:precise_energy_estim_eps}
  Let $\rho\in(0,1/2)$. Then there exists $K_\rho>0$ such that, for
  every $R\geq K_\rho$ and for every $i\in \{1,\dots, j \}$, we have
 \begin{gather}
   \int_{\Omega_{R\epsilon}^\epsilon}\abs{\nabla \phi_i^\epsilon}^2\dx=O(\epsilon^{N-2}H(\phi_i^\epsilon,K_\rho \epsilon))\quad \text{as }\epsilon\to 0^+ ,\label{eqn:precise_energy_estim_th1}\\
   \int_{\Omega_{R\epsilon}^\epsilon}\abs{ \phi_i^\epsilon}^2\dx=O(\epsilon^{N-\frac{2}{N}}H(\phi_i^\epsilon,K_\rho \epsilon)) \quad \text{as }\epsilon\to 0^+ ,\label{eqn:precise_energy_estim_th2}\\
   \int_{S_{R\epsilon}^+}\abs{\phi_i^\epsilon}^2\ds=O(\epsilon^{N-1}H(\phi_i^\epsilon,K_\rho
   \epsilon)) \quad \text{as }\epsilon\to 0^+
   .\label{eqn:precise_energy_estim_th3}
 \end{gather}
\end{proposition}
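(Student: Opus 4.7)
The plan is to derive all three estimates from a uniform upper bound on the Almgren quotient $\mathcal{N}(\phi_i^\epsilon, r, \lambda_i^\epsilon, \epsilon)$ over the range $K_\rho\epsilon \leq r \leq R_0$. I will choose $K_\rho > \mu_\rho$ so that Corollary \ref{cor:N_bounded} can be applied with $\mu = K_\rho$, yielding
\[
\mathcal{N}(\phi_i^\epsilon, r, \lambda_i^\epsilon, \epsilon) \leq e^{\int_0^{R_0} f(t)\,dt}\,\mathcal{N}(\phi_i^\epsilon, R_0, \lambda_i^\epsilon, \epsilon)
\]
for all such $r$; the exponential is finite because $f(r) = c_1 r + c_2 r^{(N-2)/N} + c_3 r^{-1/N}$ has only exponents strictly greater than $-1$ (using $N \geq 2$).

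The central step will be showing that $\mathcal{N}(\phi_i^\epsilon, R_0, \lambda_i^\epsilon, \epsilon)$ is bounded independently of $\epsilon$. Its numerator $E(\phi_i^\epsilon, R_0, \lambda_i^\epsilon, \epsilon)$ is controlled from above via the uniform $H^1$-bound on $\phi_i^\epsilon$ coming from \eqref{eq:3} together with the convergence $\lambda_i^\epsilon \to \lambda_i$, while its denominator satisfies $H(\phi_i^\epsilon, R_0) \geq c_{R_0} > 0$ by Lemma \ref{lemma:H_well_posed2}. This will produce a constant $\Lambda > 0$ with $\mathcal{N}(\phi_i^\epsilon, r, \lambda_i^\epsilon, \epsilon) \leq \Lambda$ for all $K_\rho\epsilon \leq r \leq R_0$ and $\epsilon$ small enough.

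With the uniform bound in hand, I will obtain \eqref{eqn:precise_energy_estim_th1} by combining Corollary \ref{cor:E_coerciv} with the identity $E = \mathcal{N}\,H$:
\[
\int_{\Omega_r^\epsilon}\abs{\nabla \phi_i^\epsilon}^2 \dx \leq 2 r^{N-2} E(\phi_i^\epsilon, r, \lambda_i^\epsilon, \epsilon) = 2 r^{N-2}\, \mathcal{N}(\phi_i^\epsilon, r, \lambda_i^\epsilon, \epsilon)\, H(\phi_i^\epsilon, r) \leq 2\Lambda r^{N-2}\, H(\phi_i^\epsilon, r).
\]
To pass from $H(\phi_i^\epsilon, R\epsilon)$ to $H(\phi_i^\epsilon, K_\rho\epsilon)$, I will integrate the differential inequality $(\log H(r))' = (2/r)\mathcal{N}(\phi_i^\epsilon, r, \lambda_i^\epsilon, \epsilon) \leq 2\Lambda/r$ (which follows from Proposition \ref{prop:E'_ineq}) from $K_\rho\epsilon$ to $R\epsilon$, obtaining $H(\phi_i^\epsilon, R\epsilon) \leq (R/K_\rho)^{2\Lambda}\, H(\phi_i^\epsilon, K_\rho\epsilon)$. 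Evaluating at $r = R\epsilon$ then yields \eqref{eqn:precise_energy_estim_th1} with an $R$-dependent $O$-constant. Estimate \eqref{eqn:precise_energy_estim_th2} will follow by applying Lemma \ref{lemma:estim_1} to $\phi_i^\epsilon$ on $\Omega_{R\epsilon}^\epsilon$: its $L^2$-to-gradient constant $K^1_{\epsilon, R\epsilon} = \kappa \epsilon^{2(N-1)/N} + 3(R\epsilon)^2/(N-1) = O(\epsilon^{2 - 2/N})$ multiplies the previous bound to produce the $\epsilon^{N - 2/N}$ scaling. Estimate \eqref{eqn:precise_energy_estim_th3} is immediate from $\int_{S_{R\epsilon}^+}\abs{\phi_i^\epsilon}^2\ds = (R\epsilon)^{N-1}\, H(\phi_i^\epsilon, R\epsilon)$ combined with the $H$-comparison above. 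The most delicate point is the uniform upper bound on $\mathcal{N}$ at the fixed radius $R_0$, which ultimately rests on the non-degeneracy of the limit eigenfunction $\phi_i$ near the junction point.
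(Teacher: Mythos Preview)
Your proposal is correct and follows essentially the same route as the paper: bound $\mathcal{N}$ at the fixed radius $R_0$ via the trivial energy bound and Lemma~\ref{lemma:H_well_posed2}, transfer the bound to all radii $K_\rho\epsilon\leq r\leq R_0$ via Corollary~\ref{cor:N_bounded}, then combine Corollary~\ref{cor:E_coerciv} with the integrated inequality $(\log H)'\leq 2\Lambda/r$ and Lemma~\ref{lemma:estim_1}. One minor remark: the uniform $H^1$-bound on $\phi_i^\epsilon$ for all $i\leq j$ does not come from \eqref{eq:3} (which concerns only $\phi_j^\epsilon$) but directly from the normalization $\int_{\Omega^\epsilon}|\nabla\phi_i^\epsilon|^2=\lambda_i^\epsilon\leq\lambda_j$.
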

\begin{proof}

  For $\rho\in(0,1/2)$ let us consider $\mu_\rho$ as in Corollary
  \ref{cor:ineq_rho},
  $\epsilon_0=\min \{1,\tilde{\epsilon},\epsilon_1\}$,
  $\epsilon_{R_0}$ as in Lemma \ref{lemma:H_well_posed2} and let
  $K_\rho>\max\{\mu_\rho,R_0/\epsilon_0,R_0/\epsilon_{R_0}\}$. From
  Corollary \ref{cor:N_bounded} we deduce that, if $R\geq K_\rho$ and
  $R\epsilon< R_0$
 \begin{equation}\label{eqn:energy_estim_1}
  \mathcal{N}(\phi_i^\epsilon,R\epsilon,\lambda_i^\epsilon,\epsilon)\leq e^{\int_{R\epsilon}^{R_0}f(t)\,dt}\mathcal{N}(\phi_i^\epsilon,R_0,\lambda_i^\epsilon,\epsilon).
 \end{equation}
 Now let us analyze the frequency function $\mathcal{N}$ at radius $R_0$:
 \[
  E(\phi_i^\epsilon,R_0,\lambda_i^\epsilon,\epsilon)=\frac{1}{R_0^{N-2}}\int_{\Omega_{R_0}^\epsilon}\left( \abs{\nabla\phi_i^\epsilon}^2-\lambda_i^\epsilon\abs{\phi_i^\epsilon}^2 \right)\dx\leq\frac{1}{R_0^{N-2}}\int_{\Omega^\epsilon}\abs{\nabla\phi_i^\epsilon}^2\dx\leq\frac{\lambda_j}{R_0^{N-2}}.
 \]
 Moreover, thanks to Lemma \ref{lemma:H_well_posed2}
 \[
  H(\phi_i^\epsilon,R_0)\geq c_{R_0}.
 \]
 Thus we have that
 \begin{equation}\label{eqn:energy_estim_0}
   \mathcal{N}(\phi_i^\epsilon,R_0,\lambda_i^\epsilon,\epsilon)
   \leq\frac{\lambda_j}{c_{R_0}R_0^{N-2}}.
 \end{equation}
 Then, from \eqref{eqn:energy_estim_1}
 \begin{equation}\label{eqn:energy_estim_2}
   \int_{\Omega_{R\epsilon}^\epsilon}\left(
     \abs{\nabla\phi_i^\epsilon}^2-\lambda_i^\epsilon\abs{\phi_i^\epsilon}^2
   \right)\dx\leq
   \const H(\phi_i^{\epsilon},R\epsilon)(R\epsilon)^{N-2}.
 \end{equation}
 From the second statement of Corollary \ref{cor:E_coerciv} we have that
 \begin{equation}\label{eqn:energy_estim_3}
  \int_{\Omega_{R\epsilon}^\epsilon}\abs{\nabla\phi_i^\epsilon}^2\dx 
\leq 2\const  H(\phi_i^{\epsilon},R\epsilon)(R\epsilon)^{N-2}.
 \end{equation}
 Now let $K_\rho \epsilon\leq r\leq R_0$. Then, from Proposition \ref{prop:E'_ineq}
 \begin{equation*}
  \frac{H'(\phi_i^\epsilon,r)}{H(\phi_i^\epsilon,r)}=\frac{2}{r}\mathcal{N}(\phi_i^\epsilon,r,\lambda_i^\epsilon,\epsilon)
 \end{equation*}
 and from Corollary \ref{cor:N_bounded} and \eqref{eqn:energy_estim_0}
 \begin{equation}\label{eqn:energy_estim_6}
  \frac{H'(\phi_i^\epsilon,r)}{H(\phi_i^\epsilon,r)}\leq \frac{C}{r}.
 \end{equation}
 Now, integrating the previous  inequality from $K_\rho\epsilon$ to $R\epsilon$, we obtain
 \begin{equation*}
  \log \frac{H(\phi_i^\epsilon,R\epsilon)}{H(\phi_i^\epsilon,K_\rho\epsilon)}\leq C\log \frac{R\epsilon}{K_\rho\epsilon},
 \end{equation*}
 hence
 $ H(\phi_i^\epsilon,R\epsilon)\leq \const_{\rho,R}
 H(\phi_i^\epsilon,K_\rho\epsilon)$,
 i.e.
 $H(\phi_i^\epsilon,R\epsilon)=O(H(\phi_i^\epsilon,K_\rho\epsilon))$
 as $\epsilon\to 0$.  Then \eqref{eqn:precise_energy_estim_th1}
 follows from \eqref{eqn:energy_estim_3}, whereas
 \eqref{eqn:precise_energy_estim_th3} is a direct consequence of the
 previous estimate and definition of $H$.  Finally, thanks to Lemma
 \ref{lemma:estim_1} and \eqref{eqn:precise_energy_estim_th1}, we have
 \[
  \int_{\Omega_{R\epsilon}^\epsilon}\abs{\phi_i^\epsilon}^2\dx\leq (c_1\epsilon^{2(N-1)/N}+c_2(R\epsilon)^2)\int_{\Omega_{R\epsilon}^\epsilon}\abs{\nabla\phi_i^\epsilon}\dx=O(\epsilon^{N-\frac{2}{N}}H(\phi_i^\epsilon,K_\rho\epsilon)),
 \]
 as $\epsilon\to 0$, thus proving \eqref{eqn:precise_energy_estim_th2}.
\end{proof}

\begin{proposition}\label{prop:energy_estim_eps}
  Let $\rho\in(0,1/2)$ and $K_\rho$ be as in Proposition
  \ref{prop:precise_energy_estim_eps}. Then there exists $C_\rho>0$
  such that, for every $R\geq K_\rho$, for every
  $\epsilon\in (0,\epsilon_0]$ such that $R\epsilon\leq R_0$, and for
  every $i\in\{1,\dots,j\}$ we have
  \begin{align*}
   & \int_{\Omega_{R\epsilon}^\epsilon}\abs{\nabla \phi_i^\epsilon}^2\dx\leq C_\rho(R\epsilon)^{N-2\rho}, \\
   & \int_{\Omega_{R\epsilon}^\epsilon}\abs{\phi_i^\epsilon}^2\dx\leq C_\rho(R\epsilon)^{N+2-2\rho-2/N}, \\
   & \int_{S_{R\epsilon}^+}\abs{\phi_i^\epsilon}^2\dx\leq C_\rho(R\epsilon)^{N+1-2\rho}. 
  \end{align*}
\end{proposition}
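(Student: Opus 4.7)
The plan is to first establish a sharp decay bound $H(\phi_i^\epsilon, R\epsilon) \leq C_\rho(R\epsilon)^{2-2\rho}$ with $C_\rho$ independent of $R$, via a direct application of the monotonicity-based inequality of Lemma \ref{lemma:H_increment}, and then to convert this into the three stated estimates by combining it with Corollary \ref{cor:E_coerciv}, Corollary \ref{cor:N_bounded} and Lemma \ref{lemma:estim_1}.

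For the $H$-bound, I would apply Lemma \ref{lemma:H_increment} with $r_1 = R\epsilon$ and $r_2 = R_0$; the hypotheses $\mu_\rho\epsilon < r_1 \leq r_2 \leq \min\{1, R_1\}$ hold since $R \geq K_\rho > \mu_\rho$ by the very choice of $K_\rho$ in Proposition \ref{prop:precise_energy_estim_eps}, and $R_0 \leq \min\{1, R_1\}$ by construction. This yields
\[
H(\phi_i^\epsilon, R\epsilon) \leq e^{\tau R_1^{2(N-1)/N}}\left(\frac{R\epsilon}{R_0}\right)^{2(1-\rho)} H(\phi_i^\epsilon, R_0).
\]
Since $\int_{\Omega^\epsilon}|\nabla \phi_i^\epsilon|^2\dx = \lambda_i^\epsilon \leq \lambda_j$ and $\phi_i^\epsilon \in H^1_0(\Omega^\epsilon)$, the family $\{\phi_i^\epsilon\}$ is bounded in $H^1(\Omega^1)$ (after trivial extension), and the trace embedding on the smooth interior hemisphere $S_{R_0}^+$ (which lies inside $\Omega^1$ since $R_0 \leq 1 < R_{\textup{max}}$) gives a uniform bound on $H(\phi_i^\epsilon, R_0)$. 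The desired estimate then follows, crucially with $C_\rho$ independent of $R$, because the factor $R^{2(1-\rho)}$ is absorbed into the natural quantity $(R\epsilon)^{2(1-\rho)}$.

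The third inequality follows immediately from the definition $H(\phi, r) = r^{-(N-1)}\int_{S_r^+}|\phi|^2\ds$. For the first, the second statement of Corollary \ref{cor:E_coerciv} (applicable since $R\epsilon \leq R_0 \leq r_0$) gives
\[
\int_{\Omega_{R\epsilon}^\epsilon}|\nabla\phi_i^\epsilon|^2\dx \leq 2(R\epsilon)^{N-2}\,\mathcal{N}(\phi_i^\epsilon, R\epsilon, \lambda_i^\epsilon, \epsilon)\,H(\phi_i^\epsilon, R\epsilon),
\]
and the frequency factor is controlled by Corollary \ref{cor:N_bounded} together with the bound $\mathcal{N}(\phi_i^\epsilon, R_0, \lambda_i^\epsilon, \epsilon) \leq \lambda_j/(c_{R_0}R_0^{N-2})$ noted in the proof of Proposition \ref{prop:precise_energy_estim_eps}; the integral $\int_0^{R_0} f(t)\,dt$ is finite because the worst term $r^{-1/N}$ has exponent greater than $-1$, so the exponential prefactor in Corollary \ref{cor:N_bounded} is $R$- and $\epsilon$-independent. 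Plugging in the $H$-bound yields the desired $(R\epsilon)^{N-2\rho}$ estimate. For the second inequality, Lemma \ref{lemma:estim_1} at radius $R\epsilon$ gives
\[
\int_{\Omega_{R\epsilon}^\epsilon}|\phi_i^\epsilon|^2\dx \leq \Bigl(\kappa\epsilon^{2(N-1)/N} + \tfrac{3(R\epsilon)^2}{N-1}\Bigr)\int_{\Omega_{R\epsilon}^\epsilon}|\nabla\phi_i^\epsilon|^2\dx,
\]
and since $\epsilon \leq R\epsilon \leq R_0 \leq 1$, both $\epsilon^{2(N-1)/N} = \epsilon^{2-2/N}$ and $(R\epsilon)^2$ are dominated by a multiple of $(R\epsilon)^{2-2/N}$; combining with the gradient bound gives the exponent $N+2-2\rho-2/N$.

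The whole argument is a direct chaining of the tools already established, so I do not foresee any substantive obstacle. The only point requiring some care is the $R$-independence of $C_\rho$, which is automatic thanks to the specific exponent $2(1-\rho)$ appearing in Lemma \ref{lemma:H_increment}, as any apparent $R$-dependence is absorbed into the natural factor $(R\epsilon)^{2-2\rho}$ rather than leaving a stray $R$-growing constant.
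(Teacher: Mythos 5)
Your proposal is correct and follows essentially the same route as the paper: apply Lemma~\ref{lemma:H_increment} with $r_1 = R\epsilon$, $r_2 = R_0$ to obtain $H(\phi_i^\epsilon, R\epsilon) = O((R\epsilon)^{2-2\rho})$ with an $R$- and $\epsilon$-uniform constant, and then chain this with the gradient bound \eqref{eqn:energy_estim_3} (equivalently, Corollaries~\ref{cor:E_coerciv} and~\ref{cor:N_bounded}) and Lemma~\ref{lemma:estim_1} to get the three estimates. The only cosmetic deviation is that you bound $H(\phi_i^\epsilon, R_0)$ by a uniform trace-embedding argument, while the paper uses the Poincar\'e-type inequality \eqref{eqn:m_cont}; both give the needed $\epsilon$-independent constant.
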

\begin{proof}
 
 From Lemma \ref{lemma:H_increment} we know that
 \begin{equation}\label{eqn:energy_estim_4}
  H(\phi_i^\epsilon,R\epsilon)\leq \exp\left(\tau R_1^{2(N-1)/N}\right)\left( \frac{R\epsilon}{R_0} \right)^{2(1-\rho)}H(\phi_i^\epsilon,R_0)
 \end{equation}
 and, from \eqref{eqn:m_cont}, we have
 \begin{equation}\label{eqn:energy_estim_5}
  H(\phi_i^\epsilon,R_0)=\frac{1}{R_0^{N-1}}\int_{S_{R_0}^+}\abs{\phi_i^\epsilon}^2\ds\leq\frac{1}{R_0^{N-2}(1-\rho)}\int_{\Omega_{R_0}^\epsilon}\abs{\nabla\phi_i^\epsilon}^2\dx\leq\frac{\lambda_j}{R_0^{N-2}(1-\rho)}.
 \end{equation}
 Combining \eqref{eqn:energy_estim_4} and \eqref{eqn:energy_estim_5}
 with  Proposition \ref{prop:precise_energy_estim_eps} (in particular
 estimate \eqref{eqn:energy_estim_3} in the proof) we can deduce all the claims.
\end{proof}

As a consequence of Proposition \ref{prop:energy_estim_eps} we can say that
 \begin{equation}\label{eqn:H_bigO}
   H(\phi_i^\epsilon,K_\rho\epsilon)=O(\epsilon^{2-2\rho})\quad\text{as }\epsilon\to 0.
 \end{equation}

As a byproduct of the proof of Proposition
\ref{prop:precise_energy_estim_eps} we obtain the following result.
\begin{corollary}\label{cor:H_control_below}
 Let $\rho\in (0,1/2)$ and $K_\rho$ be as in Proposition \ref{prop:precise_energy_estim_eps}. Then there exist $\bar{C},q>0$ such that, if $\epsilon\in (0,\epsilon_0]$ and $K_\rho\epsilon<R_0$,
 \begin{equation}\label{eqn:H_control_below}
  H(\phi_j^\epsilon,K_\rho \epsilon)\geq \bar{C}\epsilon^q.
 \end{equation}
\end{corollary}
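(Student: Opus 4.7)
The plan is to invert the differential inequality \eqref{eqn:energy_estim_6} already established in the proof of Proposition \ref{prop:precise_energy_estim_eps} and then match it with the uniform lower bound for $H$ at the fixed radius $R_0$ provided by Lemma \ref{lemma:H_well_posed2}.

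More precisely, inequality \eqref{eqn:energy_estim_6} gives a constant $C>0$ (independent of $\epsilon$ and $i$) such that
\[
\frac{H'(\phi_j^\epsilon,r)}{H(\phi_j^\epsilon,r)}\leq \frac{C}{r}\qquad\text{for every } r\in[K_\rho\epsilon,R_0].
\]
Since $H(\phi_j^\epsilon,\cdot)>0$ on this interval by Lemma \ref{lemma:H_well_posed1}, I can integrate between $K_\rho\epsilon$ and $R_0$ to obtain
\[
\log\frac{H(\phi_j^\epsilon,R_0)}{H(\phi_j^\epsilon,K_\rho\epsilon)}\leq C\log\frac{R_0}{K_\rho\epsilon},
\]
which, after exponentiation and rearrangement, yields
\[
H(\phi_j^\epsilon,K_\rho\epsilon)\geq \left(\frac{K_\rho}{R_0}\right)^{\!C}\epsilon^C\, H(\phi_j^\epsilon,R_0).
\]

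At this point I only need a uniform (in $\epsilon$) lower bound for $H(\phi_j^\epsilon,R_0)$. This is exactly the content of Lemma \ref{lemma:H_well_posed2}, applied with $r=R_0\leq R_2$: there exist $c_{R_0}>0$ and $\epsilon_{R_0}>0$ such that $H(\phi_j^\epsilon,R_0)\geq c_{R_0}$ for all $\epsilon\in(0,\epsilon_{R_0}]$. Recalling from the proof of Proposition \ref{prop:precise_energy_estim_eps} that $K_\rho$ was chosen so that $R_0/K_\rho\leq\epsilon_{R_0}$ (so that $K_\rho\epsilon<R_0$ already forces $\epsilon<\epsilon_{R_0}$), the bound applies and gives
\[
H(\phi_j^\epsilon,K_\rho\epsilon)\geq \bar C\,\epsilon^{q}, \qquad \bar C:=c_{R_0}\left(\frac{K_\rho}{R_0}\right)^{\!C}>0,\quad q:=C>0,
\]
which is the claim. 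There is no real obstacle here: the corollary is essentially the reverse of \eqref{eqn:H_bigO}, following from the same monotonicity-type Gr\"onwall argument read in the opposite direction, together with the non-degeneracy of $H$ at the fixed scale $R_0$ coming from the convergence $\phi_j^\epsilon\to\phi_j$ encoded in Lemma \ref{lemma:H_well_posed2}.
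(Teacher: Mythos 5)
Your proof is correct and follows essentially the same route as the paper: integrate the differential inequality \eqref{eqn:energy_estim_6} over $(K_\rho\epsilon,R_0)$, exponentiate, and invoke the uniform lower bound $H(\phi_j^\epsilon,R_0)\geq c_{R_0}$ from Lemma \ref{lemma:H_well_posed2}, which applies because the choice $K_\rho>R_0/\epsilon_{R_0}$ in Proposition \ref{prop:precise_energy_estim_eps} ensures $\epsilon<\epsilon_{R_0}$ whenever $K_\rho\epsilon<R_0$. This matches the paper's argument step for step, with $q$ being the constant $C$ from \eqref{eqn:energy_estim_6} and $\bar C=c_{R_0}(K_\rho/R_0)^q$.
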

\begin{proof}
  If we integrate \eqref{eqn:energy_estim_6} over
  $(K_\rho\epsilon,R_0)$ and take the exponentials, we obtain
 \[
 \frac{H(\phi_j^\epsilon,R_0)}{H(\phi_j^\epsilon,K_\rho\epsilon)}\leq
 \left( \frac{R_0}{K_\rho \epsilon} \right)^q,
 \]
 denoting by $q$ the constant $C$ in \eqref{eqn:energy_estim_6}.
Then Lemma \ref{lemma:H_well_posed2} implies that
 \[
 H(\phi_j^\epsilon,K_\rho\epsilon)\geq c_{R_0} \left(
   \frac{K_\rho\epsilon}{R_0} \right)^q.
 \]
 Hence the claim is proved with $\bar{C}:=c_{R_0}(K_\rho/R_0)^q$.
\end{proof}

\section{Estimates on the Difference \texorpdfstring{$\lambda_j-\lambda_j^\epsilon$}{Estimates on the Difference of Eigenvalues}}\label{sec:estim-diff-texorpdfs}

\subsection{Upper Bound}
For any $i\in\{0,\dots,j\}$, $R>1$ and $\epsilon\in (0,1]$, with
$R\epsilon\leq R_{\textup{max}}$, let us consider the following
minimization problem

\begin{equation*}
 \min \left\{\int_{B_{R\epsilon}^+}\abs{\nabla u}^2\dx\colon u\in H^1(B_{R\epsilon}^+),~u=0~\text{on}~\mathcal{C}_{R\epsilon},~u=\phi_i^\epsilon~\text{on}~S_{R\epsilon}^+\right\}.
\end{equation*}
One can prove that this problem has a unique solution
$v^{\textup{int}}_{i,R,\epsilon}$, which weakly solves
\begin{equation*}
 \left\{\begin{aligned}
         -\Delta \vint_\ire & =0, && \text{in }B_{R\epsilon}^+, \\
         \vint_\ire & = 0, && \text{on }\mathcal{C}_{R\epsilon} ,\\
         \vint_\ire & = \phi_i^\epsilon, && \text{on }S_{R\epsilon}^+.
        \end{aligned}\right.
\end{equation*}

\begin{proposition}\label{prop:precise_energy_estim_vint}
Let $\rho\in(0,1/2)$ and $K_\rho$ be as in Proposition \ref{prop:precise_energy_estim_eps}. Then
\begin{gather}
 \int_{B_{R\epsilon}^+}\abs{\nabla\vint_\ire}^2\dx=O(\epsilon^{N-2}H(\phi_i^\epsilon,K_\rho\epsilon)) \quad \text{as }\epsilon\to 0^+,\label{eqn:precise_energy_estim_vint_th1}\\
 \int_{B_{R\epsilon}^+}\abs{\vint_\ire}^2\dx=O(\epsilon^{N}H(\phi_i^\epsilon,K_\rho\epsilon))\quad \text{as }\epsilon\to 0^+,\label{eqn:precise_energy_estim_vint_th2} \\
 \int_{S_{R\epsilon}^+}\abs{\vint_\ire}^2\dx=O(\epsilon^{N-1}H(\phi_i^\epsilon,K_\rho\epsilon))\quad \text{as }\epsilon\to 0^+.\label{eqn:precise_energy_estim_vint_th3}
\end{gather}
for all $R\geq 2$ and for any $i=1,\dots,j$. Moreover there exists
$\hat{C}_\rho$ such that, if $R\geq \max\{2,K_\rho\}$ and
$\epsilon<R_0/R$,
\begin{gather}
  \label{eq:4} \int_{B_{R\epsilon}^+}\abs{\nabla\vint_\ire}^2\dx\leq
  \hat{C}_\rho(R\epsilon)^{N-2\rho}, \\
\label{eq:5} \int_{B_{R\epsilon}^+}\abs{\vint_\ire}^2\dx\leq
 \hat{C}_\rho(R\epsilon)^{N+2-2\rho}, \\
\label{eq:6} \int_{S_{R\epsilon}^+}\abs{\vint_\ire}^2\dx\leq
\hat{C}_\rho(R\epsilon)^{N+1-2\rho}.
\end{gather}
\end{proposition}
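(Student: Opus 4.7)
I would prove the proposition by comparing $\vint_\ire$ with a cut-off competitor built out of $\phi_i^\epsilon$, using the Dirichlet principle it satisfies, and then feeding the resulting inequality into the a priori energy estimates of Propositions \ref{prop:precise_energy_estim_eps} and \ref{prop:energy_estim_eps}.

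After rescaling to $B_R^+$ via $y=x/\epsilon$, the harmonic function $\tilde v(y):=\vint_\ire(\epsilon y)$ minimises the Dirichlet energy among $H^1(B_R^+)$ functions vanishing on $\mathcal C_R$ and agreeing with $\tilde\phi(y):=\phi_i^\epsilon(\epsilon y)$ on $S_R^+$. I would pick a radial cut-off $\eta\in C^\infty(\overline{\R^N})$ with $\eta\equiv 0$ on a neighbourhood of $\overline\Sigma$ and $\eta\equiv 1$ near $S_R^+$; since $R\geq 2$ while $\Sigma$ has radius $1$ we can arrange this with $|\nabla\eta|\leq C$, and since $\tilde\phi=0$ on $\mathcal C_R\setminus\Sigma$ the competitor $w:=\eta\tilde\phi$ is admissible. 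The Dirichlet principle, the pointwise bound $|\nabla w|^2\leq 2|\nabla\tilde\phi|^2+2|\nabla\eta|^2|\tilde\phi|^2$ and a change of variables then produce
\begin{equation*}
 \int_{B_{R\epsilon}^+}|\nabla \vint_\ire|^2\dx\leq 2\int_{B_{R\epsilon}^+}|\nabla \phi_i^\epsilon|^2\dx+\frac{C}{\epsilon^2}\int_{A_\epsilon}|\phi_i^\epsilon|^2\dx,
\end{equation*}
where $A_\epsilon$ is an annular region contained in a fixed half-ball $B_{\bar r\epsilon}^+$.

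For \eqref{eqn:precise_energy_estim_vint_th1}, the first term is controlled directly by \eqref{eqn:precise_energy_estim_th1} (trivially enlarging the domain to $\Omega_{K_\rho\epsilon}^\epsilon$ if $R<K_\rho$), while the second term is converted into a gradient plus a boundary term through the Poincar\'e-type inequality \eqref{eqn:poincare_type} applied to $\phi_i^\epsilon$ on $B_{\bar r\epsilon}^+$:
\begin{equation*}
 \frac{1}{\epsilon^2}\int_{A_\epsilon}|\phi_i^\epsilon|^2\dx\leq C\int_{B_{\bar r\epsilon}^+}|\nabla\phi_i^\epsilon|^2\dx+\frac{C}{\epsilon}\int_{S_{\bar r\epsilon}^+}|\phi_i^\epsilon|^2\ds,
\end{equation*}
each term being $O(\epsilon^{N-2}H(\phi_i^\epsilon,K_\rho\epsilon))$ by \eqref{eqn:precise_energy_estim_th1} and \eqref{eqn:precise_energy_estim_th3}, possibly combined with the monotonicity of $r\mapsto H(\phi_i^\epsilon,r)$ at small scales (a consequence of the non-negativity of the Almgren quotient granted by Corollary \ref{cor:E_coerciv}). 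Estimate \eqref{eqn:precise_energy_estim_vint_th3} is immediate from the boundary equality $\vint_\ire=\phi_i^\epsilon$ on $S_{R\epsilon}^+$ together with \eqref{eqn:precise_energy_estim_th3}, and \eqref{eqn:precise_energy_estim_vint_th2} follows by applying \eqref{eqn:poincare_type} directly to $\vint_\ire\in V_0(B_{R\epsilon}^+)$ and inserting the two estimates just obtained, which contributes the desired factor $(R\epsilon)^2$.

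The uniform-in-$R$ estimates \eqref{eq:4}--\eqref{eq:6} are obtained by repeating the same scheme but invoking Proposition \ref{prop:energy_estim_eps} in place of Proposition \ref{prop:precise_energy_estim_eps}. The key point, and the main technical subtlety of the proof, is that the constant produced by the cut-off step must not depend on $R$: since in this regime $R\geq\max\{2,K_\rho\}$, I would take $\eta$ supported at a scale dictated only by $K_\rho$, which keeps $\bar r$ bounded by $K_\rho$, and then the annular term reduces to $C_\rho(K_\rho\epsilon)^{N-2\rho}\leq C_\rho(R\epsilon)^{N-2\rho}$. One has to take care to absorb the $\epsilon^{-2}$ factor coming from $|\nabla\eta|^2$ by the $\epsilon^2$ gained from the Poincar\'e inequality \eqref{eqn:poincare_type} on the half-ball $B_{\bar r\epsilon}^+$, rather than by the coarser bound \eqref{eqn:precise_energy_estim_th2} on the full $\Omega_{\bar r\epsilon}^\epsilon$, which would lose a factor $\epsilon^{-2/N}$ due to the tube contribution and would not be sharp enough.
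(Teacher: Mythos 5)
Your proof is correct and follows the same overall strategy as the paper — Dirichlet's principle with a cut-off competitor built from $\phi_i^\epsilon$, a Poincar\'e-type inequality to absorb the cut-off gradient term, and the established energy estimates from Propositions \ref{prop:precise_energy_estim_eps} and \ref{prop:energy_estim_eps}. The one genuine technical difference is the placement of the cut-off. The paper takes $\eta=\eta_R(\cdot/\epsilon)$, which transitions between $|x|=R\epsilon/2$ and $|x|=R\epsilon$, so that $|\nabla\eta|^2\sim (R\epsilon)^{-2}$ pairs exactly with the Poincar\'e constant $(R\epsilon)^{-2}$ of \eqref{eqn:poincare_type_1} on the full $B_{R\epsilon}^+$; this makes the $R$-uniform estimates \eqref{eq:4}--\eqref{eq:6} come out automatically, with no extra bookkeeping and no boundary integrals. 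You instead transition near the junction at a fixed inner radius $\sim\epsilon$, so $|\nabla\eta|^2\sim\epsilon^{-2}$ but the annular region is small; you then have to control $\epsilon^{-2}\int_{A_\epsilon}|\phi_i^\epsilon|^2$ via \eqref{eqn:poincare_type} on $B_{\bar r\epsilon}^+$, which produces a boundary term at the fixed scale $\bar r\epsilon$ that in turn needs the monotonicity of $r\mapsto H(\phi_i^\epsilon,r)$ (i.e.\ $E\geq 0$, not directly the Almgren quotient) to be compared with $H(\phi_i^\epsilon,K_\rho\epsilon)$. Both routes work; the paper's is a little cleaner for the uniform-in-$R$ statements, while yours carries more moving parts (the $H$-monotonicity step, and the $\bar r$ versus $\mu_\rho$ and $K_\rho$ bookkeeping) but correctly isolates the pitfall you flag at the end: using \eqref{eqn:precise_energy_estim_th2} on all of $\Omega_{\bar r\epsilon}^\epsilon$ would indeed lose a factor $\epsilon^{-2/N}$ and ruin the sharpness.
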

\begin{proof}
  Proving \eqref{eqn:precise_energy_estim_vint_th3} is trivial due to \eqref{eqn:precise_energy_estim_th3}, since
  $\vint_\ire=\phi_i^\epsilon$ on $S_{R\epsilon}^+$. Let
  $\eta=\eta_{R}(\frac\cdot\epsilon)$, with $\eta_R$ defined in \eqref{eqn:def_cut_off};
  then
 \begin{multline*}
  \int_{B_{R\epsilon}^+}\abs{\nabla \vint_\ire}^2\dx \leq \int_{B_{R\epsilon}^+}\abs{\nabla(\eta\phi_i^\epsilon)}^2\dx \leq \\
  \leq 2\left(\int_{B_{R\epsilon}^+} \abs{\nabla \phi_i^\epsilon}^2+\frac{16}{(R\epsilon)^2}\int_{B_{R\epsilon}^+} \abs{\phi_i^\epsilon}^2\dx \right)\leq \const_\rho \int_{\Omega_{R\epsilon}^\epsilon}\abs{\nabla \phi_i^\epsilon}^2\dx,
 \end{multline*}
 where the last step comes from \eqref{eqn:poincare_type_1}. Combining
 this inequality with \eqref{eqn:precise_energy_estim_th1} we obtain
 \eqref{eqn:precise_energy_estim_vint_th1}. Moreover
 \eqref{eqn:poincare_type_1} and
 \eqref{eqn:precise_energy_estim_vint_th1} yield
 \eqref{eqn:precise_energy_estim_vint_th2}. Finally estimates
 \eqref{eq:4}--\eqref{eq:6} follow from the above argument and
 Proposition \ref{prop:energy_estim_eps}.
\end{proof}

Now let us define, for all $i\in\{1,\dots,j\}$, for all $R>1$ and $\epsilon\in (0,1]$ such that $R\epsilon\leq R_{\textup{max}}$,
\begin{equation}\label{eqn:def_vjre}
  v_{i,R,\epsilon}:=\left\{\begin{aligned}
			   & \vint_\ire, && \text{in }B_{R\epsilon}^+, \\
			   & \phi_i^\epsilon, && \text{in }\Omega\setminus B_{R\epsilon}^+,
			 \end{aligned}\right.
\end{equation}
and 
\begin{equation}\label{eqn:def_phi_tilde_Z_R}
 Z_R^\epsilon(x):=\frac{\vint_\jre(\epsilon x)}{\sqrt{H(\phi_j^\epsilon,K_\rho\epsilon)}},\qquad \tilde{\phi}^\epsilon(x):=\frac{\phi_j^\epsilon(\epsilon x)}{\sqrt{H(\phi_j^\epsilon,K_\rho\epsilon)}}.
\end{equation}
 It is easy to prove that the family of functions $\{v_{1,R,\epsilon},\dots,v_\jre\}$ is linearly independent in $H^1_0(\Omega)$.

\begin{lemma}\label{lemma:estim_v}
 For all $R\geq \max\{2,K_\rho\}$, we have that, as $\epsilon\to 0^+$,
 \begin{align}
  \int_\Omega\abs{\nabla v_\jre}^2\dx&=\lambda_j^\epsilon+\epsilon^{N-2}H(\phi_j^\epsilon,K_\rho\epsilon)\left(\int_{B_R^+}\abs{\nabla Z_R^\epsilon}^2\dx-\int_{\Pi_R}\abs{\nabla \tilde{\phi}^\epsilon}^2\dx \right), \label{eqn:estim_vire_1}\\
  \int_\Omega\abs{\nabla v_\ire}^2\dx&= \lambda_i^\epsilon+O(\epsilon^{N-2\rho})\quad\text{for all }i\in\{1,\dots,j\}, \label{eqn:estim_vire_2}\\
  \int_\Omega\nabla v_\ire\cdot\nabla v_\jre\dx&=O\left(\epsilon^{N-1-\rho}\sqrt{H(\phi_j^\epsilon,K_\rho\epsilon)}\right) \quad\text{for all }i\in\{1,\dots,j-1\},\label{eqn:estim_vire_3}\\
  \int_\Omega\nabla v_\ire\cdot\nabla v_\nre\dx&=O(\epsilon^{N-2\rho}) \quad\text{for all }i,n\in\{1,\dots,j\},~ i\neq n, \label{eqn:estim_vire_4}\\
  \int_\Omega p\abs{v_\jre}^2\dx&=1+O(\epsilon^{N-2/N}H(\phi_j^\epsilon,K_\rho\epsilon)), \label{eqn:estim_vire_5}\\
  \int_\Omega p\abs{v_\ire}^2\dx&=1+O(\epsilon^{N+2-2\rho-2/N}) \quad\text{for all }i\in\{1,\dots,j\}, \label{eqn:estim_vire_6}\\
  \int_\Omega p v_\ire v_\jre\dx&=O\left(\epsilon^{N+1-\rho-2/N}\sqrt{H(\phi_j^\epsilon,K_\rho\epsilon)}\right) \quad\text{for all }i\in\{1,\dots,j-1\},\label{eqn:estim_vire_7}\\
  \int_\Omega p v_\ire v_\nre \dx&= O(\epsilon^{N+2-2\rho-2/N})\quad\text{for all }i,n\in\{1,\dots,j\},~ i\neq n, \label{lemma:estim_vire_8}
 \end{align}
 where, in \eqref{eqn:estim_vire_1}, $\tilde{\phi}^\epsilon$ has been trivially extended in $\Pi_R$ outside its domain.
\end{lemma}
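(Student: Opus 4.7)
The common thread in all eight estimates is the decomposition
\[
\int_\Omega [\,\cdot\,]\dx=\Big(\int_\Omega-\int_{B_{R\epsilon}^+}\Big)[\,\cdot\,]\,\text{evaluated on }\phi^\epsilon\text{'s}+\int_{B_{R\epsilon}^+}[\,\cdot\,]\,\text{evaluated on }v^{\textup{int}}\text{'s},
\]
since $v_\ire=\phi_i^\epsilon$ outside $B_{R\epsilon}^+$ and $v_\ire=\vint_\ire$ inside. On the ``global'' piece $\int_\Omega$ I would invoke the eigenvalue equation and the orthonormalization of $\phi_n^\epsilon$, while on the two remaining pieces (one over $B_{R\epsilon}^+$ of $\phi_i^\epsilon$, one of $\vint_\ire$) I would apply, respectively, Propositions~\ref{prop:precise_energy_estim_eps}--\ref{prop:energy_estim_eps} and Proposition~\ref{prop:precise_energy_estim_vint}, using Cauchy--Schwarz on every cross term.

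For the key identity \eqref{eqn:estim_vire_1} I would write
\[
\int_\Omega|\nabla v_\jre|^2\dx=\int_\Omega|\nabla\phi_j^\epsilon|^2\dx-\int_{B_{R\epsilon}^+}|\nabla\phi_j^\epsilon|^2\dx+\int_{B_{R\epsilon}^+}|\nabla\vint_\jre|^2\dx,
\]
use $\int_{\Omega^\epsilon}|\nabla\phi_j^\epsilon|^2\dx=\lambda_j^\epsilon\int_{\Omega^\epsilon}p|\phi_j^\epsilon|^2\dx$ from the weak formulation together with the normalization of $\phi_j^\epsilon$, and then combine the residual $T_\epsilon$-integral with the ball integral of $\phi_j^\epsilon$ to produce an integral over $\Omega_{R\epsilon}^\epsilon=T_\epsilon\cup B_{R\epsilon}^+$. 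Finally I would perform the blow-up change of variables $x\mapsto\epsilon x$ on both the $\vint_\jre$ and $\phi_j^\epsilon$ integrals and normalize by $H(\phi_j^\epsilon,K_\rho\epsilon)$: the first becomes $\epsilon^{N-2}H(\phi_j^\epsilon,K_\rho\epsilon)\int_{B_R^+}|\nabla Z_R^\epsilon|^2\dx$ and the second $\epsilon^{N-2}H(\phi_j^\epsilon,K_\rho\epsilon)\int_{\Pi_R}|\nabla\tilde\phi^\epsilon|^2\dx$, giving \eqref{eqn:estim_vire_1} verbatim. Estimate \eqref{eqn:estim_vire_2} follows from the same decomposition applied to any $i\in\{1,\dots,j\}$, where now I would bound the residual $\int_{\Omega_{R\epsilon}^\epsilon}|\nabla\phi_i^\epsilon|^2\dx$ and $\int_{B_{R\epsilon}^+}|\nabla\vint_\ire|^2\dx$ separately by Proposition~\ref{prop:energy_estim_eps} and \eqref{eq:4}, both of which give $O((R\epsilon)^{N-2\rho})$.

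For the off-diagonal gradient terms \eqref{eqn:estim_vire_3}--\eqref{eqn:estim_vire_4}, the starting point is that distinct simple eigenvalues $\lambda_i^\epsilon\neq\lambda_n^\epsilon$ force $\int_{\Omega^\epsilon}p\,\phi_i^\epsilon\phi_n^\epsilon\dx=0$ and hence, by testing the eigenvalue equation, $\int_{\Omega^\epsilon}\nabla\phi_i^\epsilon\cdot\nabla\phi_n^\epsilon\dx=0$. After the usual decomposition only integrals supported in $\Omega_{R\epsilon}^\epsilon$ survive, and Cauchy--Schwarz together with the appropriate propositions gives the bound; the sharper rate in \eqref{eqn:estim_vire_3} compared with \eqref{eqn:estim_vire_4} comes from using the $H$-weighted estimate \eqref{eqn:precise_energy_estim_th1} (respectively \eqref{eqn:precise_energy_estim_vint_th1}) on the $\phi_j^\epsilon$ (resp.\ $\vint_\jre$) factor and the plain $\epsilon$-power from Proposition~\ref{prop:energy_estim_eps} (resp.\ \eqref{eq:4}) on the other factor.

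The weighted $L^2$ identities \eqref{eqn:estim_vire_5}--\eqref{lemma:estim_vire_8} are produced by exactly the same split, now replacing gradient bounds with the $L^2$ bounds \eqref{eqn:precise_energy_estim_th2}, \eqref{eqn:precise_energy_estim_vint_th2} and the corresponding $\epsilon$-power versions, and invoking the normalizations $\int_\Omega p|\phi_i^\epsilon|^2\dx=1$ and $\int_\Omega p\,\phi_i^\epsilon\phi_n^\epsilon\dx=0$. The only real delicacy, and the main bookkeeping obstacle, is to verify that the exponents of $\epsilon$ and the powers of $\sqrt{H(\phi_j^\epsilon,K_\rho\epsilon)}$ match \emph{exactly} as stated: one must consistently use the sharp ``$H$-weighted'' estimates of Proposition~\ref{prop:precise_energy_estim_eps} whenever the index $j$ appears (so that the $\sqrt H$ factor is genuine and not just a crude $\epsilon^{1-\rho}$ bound from \eqref{eqn:H_bigO}), while using the cruder $\epsilon^{N-2\rho}$-type estimates of Proposition~\ref{prop:energy_estim_eps} on the remaining indices; the sharpness of \eqref{eqn:estim_vire_1}, \eqref{eqn:estim_vire_3}, \eqref{eqn:estim_vire_5} and \eqref{eqn:estim_vire_7}, which is crucial for the eigenvalue asymptotics in Section~\ref{sec:estim-diff-texorpdfs}, depends precisely on not losing the $H$-factor prematurely.
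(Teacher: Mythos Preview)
Your approach is essentially identical to the paper's: the same decomposition $\int_\Omega=\int_{\Omega^\epsilon}-\int_{\Omega_{R\epsilon}^\epsilon}+\int_{B_{R\epsilon}^+}$, the same scaling to produce $Z_R^\epsilon$ and $\tilde\phi^\epsilon$, and the same use of Propositions~\ref{prop:precise_energy_estim_eps}, \ref{prop:energy_estim_eps}, \ref{prop:precise_energy_estim_vint} together with Cauchy--Schwarz on the cross terms. One small inaccuracy: the orthogonality $\int_{\Omega^\epsilon}p\,\phi_i^\epsilon\phi_n^\epsilon\dx=0$ is not a consequence of simplicity (only $\lambda_j$ is assumed simple) but is part of the chosen orthonormalization of the $\phi_n^\epsilon$ stated in the introduction; the conclusion you need is therefore given for free.
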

\begin{proof}
 We will only prove the first part of the estimates, i.e. \eqref{eqn:estim_vire_1}, \eqref{eqn:estim_vire_2}, \eqref{eqn:estim_vire_3}, \eqref{eqn:estim_vire_4}, since the second part is completely analogous. To prove \eqref{eqn:estim_vire_1} we observe that, by scaling,
 \begin{gather*}
  \int_\Omega\abs{\nabla v_\jre}^2\dx=\int_{B_{R\epsilon}^+}\abs{\nabla \vint_\jre}^2\dx+\int_{\Omega^\epsilon}\abs{\nabla \phi_j^\epsilon}^2\dx-\int_{\Omega_{R\epsilon}^\epsilon}\abs{\nabla\phi_j^\epsilon}^2\dx \\
  =\lambda_j^\epsilon+\epsilon^{N-2}H(\phi_j^\epsilon,K_\rho\epsilon)\left(\int_{B_R^+}\abs{\nabla Z_R^\epsilon}^2\dx-\int_{\Pi_R}\abs{\nabla \tilde{\phi}^\epsilon}^2\dx \right).
 \end{gather*}
  Thanks to Propositions \ref{prop:energy_estim_eps} and \ref{prop:precise_energy_estim_vint} we have that
 \begin{gather*}
  \int_\Omega\abs{\nabla v_\ire}^2\dx=\int_{B_{R\epsilon}^+}\abs{\nabla \vint_\ire}^2\dx+\int_{\Omega^\epsilon}\abs{\nabla \phi_i^\epsilon}^2\dx-\int_{\Omega_{R\epsilon}^\epsilon}\abs{\nabla\phi_i^\epsilon}^2\dx\\
  =\lambda_i^\epsilon+O(\epsilon^{N-2\rho}),
 \end{gather*}
as $\epsilon\to0^+$, thus proving \eqref{eqn:estim_vire_2} and, by Cauchy-Schwarz Inequality, for $i<j$
 \begin{gather*}
 \int_\Omega\nabla v_\ire\cdot\nabla v_\jre=\int_{B_{R\epsilon}^+}\nabla \vint_\ire\nabla \vint_\jre\dx+\int_{\Omega^\epsilon}\nabla \phi_i^\epsilon \cdot\nabla \phi_j^\epsilon\dx-\int_{\Omega_{R\epsilon}^\epsilon}\nabla \phi_i^\epsilon\cdot\nabla\phi_j^\epsilon\dx \\
 =O(\epsilon^{\frac{N-2\rho}{2}})O\left(\epsilon^{\frac{N-2}{2}}\sqrt{H(\phi_j^\epsilon,K_\rho\epsilon)}\right)=O\left(\epsilon^{N-1-\rho}\sqrt{H(\phi_j^\epsilon,K_\rho\epsilon)}\right)  ,
 \end{gather*}
as $\epsilon\to0^+$, thus proving \eqref{eqn:estim_vire_3}. Similarly, for $i\neq n$
  \begin{gather*}
 \int_\Omega\nabla v_\ire\cdot\nabla v_\nre=\int_{B_{R\epsilon}^+}\nabla \vint_\ire\nabla \vint_\nre\dx+\int_{\Omega^\epsilon}\nabla \phi_i^\epsilon \cdot\nabla \phi_n^\epsilon\dx-\int_{\Omega_{R\epsilon}^\epsilon}\nabla \phi_i^\epsilon\cdot\nabla\phi_n^\epsilon\dx \\
 =O(\epsilon^{N-2\rho})  ,
 \end{gather*}
as $\epsilon\to0^+$, which provides \eqref{eqn:estim_vire_4}.
\end{proof}

We construct a basis $\{\hat v_{1,R,\epsilon},\dots,\hat v_\jre\}$ of
the space $\textrm{span}\,\{v_{1,R,\epsilon},\dots,v_\jre\}$ such that
\[
\int_\Omega p\, \hat v_{n,R,\epsilon}\hat v_{m,R,\epsilon}\,dx=0\quad\text{for
$n\neq m$},
\]
by defining
 \begin{gather*}
  \hat{v}_\jre = v_\jre,\quad
  \hat{v}_\ire=v_\ire-\sum_{n=i+1}^j d_{i,n}^\epsilon \hat{v}_\nre,\quad\text{for all $i=1,\dots,j-1$ },
 \end{gather*}
 where
 \[
  d_{i,n}^\epsilon=\frac{\int_{\Omega}p\,v_\ire\,\hat{v}_\nre\dx }{\int_{\Omega}p\abs{\hat{v}_\nre}^2\dx}.
 \]
Using the estimates established in Lemma \ref{lemma:estim_v}, one can prove the following
 \begin{align}
  \int_{\Omega}\abs{\nabla \hat{v}_\jre }^2\dx &= \lambda_j^\epsilon+\epsilon^{N-2}H(\phi_j^\epsilon,K_\rho\epsilon)\left(\int_{B_R^+}\abs{\nabla Z_R^\epsilon}^2\dx-\int_{\Pi_R}\abs{\nabla \tilde{\phi}^\epsilon}^2\dx \right),\label{eqn:v_graham_1}\\
  \int_{\Omega}\abs{\nabla \hat{v}_\ire }^2\dx &=\lambda_i^\epsilon+O(\epsilon^{N-2\rho}) \quad\text{for all }i\in\{1,\dots,j\}, \label{eqn:v_graham_2}\\
  \int_{\Omega}\nabla \hat{v}_\jre\cdot \nabla \hat{v}_\ire \dx &=O(\epsilon^{N-1-\rho} \sqrt{H(\phi_j^\epsilon,K_\rho\epsilon)}) \quad\text{for all }i\in\{1,\dots,j-1\}, \label{eqn:v_graham_3}\\
  \int_{\Omega}\nabla \hat{v}_\ire\cdot \nabla \hat{v}_\mre \dx &=O(\epsilon^{N-2\rho}) \quad\text{for all }i,m\in\{1,\dots,j\}, \quad i\neq m ,\label{eqn:v_graham_4}\\
  \int_{\Omega}p\abs{ \hat{v}_\jre }^2\dx &=1+O(\epsilon^{N-2/N} H(\phi_j^\epsilon,K_\rho\epsilon)), \label{eqn:v_graham_5} \\
  \int_{\Omega}p\abs{ \hat{v}_\ire }^2\dx &=1+O(\epsilon^{N+2-2\rho-2/N}) \quad\text{for all }i\in\{1,\dots,j\} ,\label{eqn:v_graham_6}
 \end{align}
 as $\epsilon\to 0$.

\begin{proposition}\label{prop:up_bound}
  Let $\rho\in(0,1/2)$, $K_\rho$ as defined in Proposition 
\ref{prop:precise_energy_estim_eps} and $R\geq K_\rho$. For $\epsilon <R_0/R$ there exists $f_R(\epsilon)$ such that
  \begin{equation*}
    \lambda_j-\lambda_j^\epsilon\leq \epsilon^{N-2}H(\phi_j^\epsilon,K_\rho\epsilon)(f_R(\epsilon)+o(1))\quad\text{as }\epsilon\to 0
  \end{equation*}
  and
  \[
    f_R(\epsilon)=\int_{B_R^+}\abs{\nabla Z_R^\epsilon}^2\dx-\int_{\Pi_R}\abs{\nabla \tilde{\phi}^\epsilon}^2\dx,
  \]
  where $\tilde{\phi}^\epsilon$ has been trivially extended in $\Pi_R$ outside its domain.
\end{proposition}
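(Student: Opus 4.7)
The plan is to apply the Courant--Fischer min-max characterization of $\lambda_j$ to the $j$-dimensional trial subspace $V_\epsilon := \mathrm{span}\{\hat v_{1,R,\epsilon},\dots,\hat v_{j,R,\epsilon}\} \subset H^1_0(\Omega)$. Since $V_\epsilon$ is $j$-dimensional,
\[
\lambda_j \le \sup_{u\in V_\epsilon\setminus\{0\}}\frac{\int_\Omega\abs{\nabla u}^2\dx}{\int_\Omega p\abs{u}^2\dx},
\]
and, writing $u=\sum_{i=1}^j c_i\hat v_{i,R,\epsilon}$, the right-hand side equals the largest generalized eigenvalue of the pencil $(A,B)$ defined by $A_{in}=\int_\Omega\nabla\hat v_\ire\cdot\nabla\hat v_\nre\dx$ and $B_{in}=\int_\Omega p\,\hat v_\ire\hat v_\nre\dx$. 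By the $p$-orthogonality built into $\{\hat v_\ire\}$, $B$ is diagonal, with entries of the form $1+o(1)$ by \eqref{eqn:v_graham_5}--\eqref{eqn:v_graham_6}. By \eqref{eqn:v_graham_1}--\eqref{eqn:v_graham_2} the diagonal of $A$ reads
\[
\left(\lambda_1^\epsilon+O(\epsilon^{N-2\rho}),\dots,\lambda_{j-1}^\epsilon+O(\epsilon^{N-2\rho}),\ \lambda_j^\epsilon+\epsilon^{N-2}H(\phi_j^\epsilon,K_\rho\epsilon)f_R(\epsilon)\right),
\]
while the off-diagonal entries are controlled by \eqref{eqn:v_graham_3}--\eqref{eqn:v_graham_4}.

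Next I would invoke the abstract lemma on maxima of quadratic forms recalled in the appendix, which furnishes an asymptotic expansion of the largest generalized eigenvalue of such a nearly diagonal pencil. In our setting the lemma should yield
\[
\max\mathrm{spec}(B^{-1}A)=\lambda_j^\epsilon+\epsilon^{N-2}H(\phi_j^\epsilon,K_\rho\epsilon)f_R(\epsilon)+\sum_{i<j}\frac{A_{ij}^2}{\lambda_j^\epsilon-\lambda_i^\epsilon}+\mathcal{E}_\epsilon,
\]
where the sum captures the second-order perturbation coming from the cross-entries and $\mathcal{E}_\epsilon$ bundles the lower-order contributions (from $A_{in}$ with $i,n<j$ and from the $B$-correction). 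Subtracting $\lambda_j^\epsilon$ and verifying $\mathcal{E}_\epsilon=o(\epsilon^{N-2}H(\phi_j^\epsilon,K_\rho\epsilon))$ as well as $\sum_{i<j}A_{ij}^2/(\lambda_j^\epsilon-\lambda_i^\epsilon)=o(\epsilon^{N-2}H(\phi_j^\epsilon,K_\rho\epsilon))$ would give the claimed inequality.

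The quantitative check relies on \eqref{eqn:H_bigO}, which gives $H(\phi_j^\epsilon,K_\rho\epsilon)=O(\epsilon^{2-2\rho})$, so that the target scale is $\epsilon^{N-2}H=O(\epsilon^{N-2\rho})$. The cross-terms $A_{ij}$ with $i<j$ satisfy $A_{ij}^2=O(\epsilon^{2N-2-2\rho}H)$ by \eqref{eqn:v_graham_3}, and the simplicity assumption \eqref{eqn:hp_simplicity} together with the continuity \eqref{eq:7} yields a uniform positive lower bound on the gaps $\lambda_j^\epsilon-\lambda_i^\epsilon$ for small $\epsilon$; hence their contribution, divided by the target scale, is $O(\epsilon^{N-2\rho})\to 0$. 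The lower off-diagonal entries with $i,n<j$, being $O(\epsilon^{N-2\rho})$, enter the top eigenvalue only at order $\epsilon^{2(N-2\rho)}$, still $o(\epsilon^{N-2}H)$. Finally the $B$-perturbation contributes a multiple of $\lambda_j^\epsilon\cdot O(\epsilon^{N-2/N}H)$, which is $o(\epsilon^{N-2}H)$ because $\epsilon^{2-2/N}\to 0$.

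The main obstacle is precisely this last bookkeeping: the cross-terms $A_{ij}$ have absolute size $\epsilon^{N-2\rho}$, exactly the scale of the leading correction $\epsilon^{N-2}Hf_R$ itself, so a naive triangle-inequality argument on the off-diagonal part of $A$ would fail to deliver the required $o(\cdot)$ control. It is only through the second-order perturbation mechanism, namely squaring and dividing by the spectral gap, that these cross-contributions become genuinely subdominant, and this is exactly the point where the simplicity hypothesis \eqref{eqn:hp_simplicity} is essential.
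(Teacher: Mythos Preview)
Your approach is the same as the paper's: Courant--Fischer on the subspace spanned by the $\hat v_\ire$, reduction to the quadratic form with matrix $M_{i,n}^\epsilon$, and then the abstract Lemma~\ref{lemma:quadratic_form}. The identification of the ``squaring'' mechanism for the cross-entries $A_{ij}$, $i<j$, is exactly the right point.

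There is, however, one genuine missing ingredient. When you assert that the contributions of the lower off-diagonal block (and more generally the residual $\mathcal E_\epsilon$) are $o(\epsilon^{N-2}H(\phi_j^\epsilon,K_\rho\epsilon))$, you are implicitly using a \emph{lower} bound on $H(\phi_j^\epsilon,K_\rho\epsilon)$; the only estimate you cite, \eqref{eqn:H_bigO}, is an upper bound and says nothing about how fast $H$ could vanish. If $H$ were allowed to decay super-polynomially, the claim $\epsilon^{2(N-2\rho)}=o(\epsilon^{N-2}H)$ would fail, and, more to the point, the last hypothesis of Lemma~\ref{lemma:quadratic_form} --- the existence of $M\in\mathbb N$ with $\epsilon^{(2+M)\alpha}=o(\sigma(\epsilon))$ --- could not be verified for $\sigma(\epsilon)=\epsilon^{N-2}H(\phi_j^\epsilon,K_\rho\epsilon)$. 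The paper closes this gap by invoking Corollary~\ref{cor:H_control_below}, which gives $H(\phi_j^\epsilon,K_\rho\epsilon)\ge \bar C\epsilon^{q}$ for some fixed $q>0$; one then chooses $M>(2\rho-2+q)\cdot 2/(N-2\rho)$ and the lemma applies with $\alpha=N/2-\rho$. You should add this step: without it the passage from the matrix asymptotics to the conclusion is not justified.

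A minor remark: the abstract lemma in the appendix is stated for a quadratic form on the unit sphere (i.e.\ with $B=I$), not for a generalized pencil, and its conclusion is simply $\max Q_\epsilon=\sigma(\epsilon)(\mu(\epsilon)+o(1))$ rather than the explicit second-order formula you wrote down. In practice the paper first normalizes $\tilde v_\ire=\hat v_\ire/\big(\int_\Omega p|\hat v_\ire|^2\big)^{1/2}$, subtracts $\lambda_j^\epsilon\delta_i^n$, and applies the lemma directly; your perturbation-theoretic sketch is the heuristic behind the lemma, not a replacement for it.
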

\begin{proof}
  By the \emph{Courant-Fischer Min-Max} characterization of eigenvalues
  \[
  \lambda_j=\min \left\{ \max_{\substack{\alpha_1,\dots,\alpha_j\in \R
        \\ \sum_{i=1}^j\abs{\alpha_i}^2=1}}\frac{\int_{\Omega}\abs{\nabla
        \left(\sum_{i=1}^j \alpha_i u_i\right)}^2\dx}{\int_{\Omega}p\abs{\sum_{i=1}^j
        \alpha_i u_i}^2\dx} \colon \{u_1,\dots,u_j\}\subseteq
    H^1_0(\Omega) ~ \text{linearly independent}\right\}.
  \]
  Testing the  Rayleigh  quotient with the family of functions
  \[
    \tilde{v}_\ire=\frac{\hat{v}_\ire}{\sqrt{\int_{\Omega}p\abs{\hat{v}_\ire}^2\dx}}
  \]
  we obtain that
  \begin{gather*}
    \lambda_j-\lambda_j^\epsilon\leq
    \max_{\substack{\alpha_1,\dots,\alpha_j\in \R \\ 
\sum_{i=1}^j\abs{\alpha_i}^2=1}}\int_{\Omega}\bigg|\nabla \bigg(\sum_{i=1}^j \alpha_i \tilde{v}_\ire\bigg)\bigg|^2\dx-\lambda_j^\epsilon
    =\max_{\substack{\alpha_1,\dots,\alpha_j\in \R \\
        \sum_{i=1}^j\abs{\alpha_i}^2=1}}\sum_{i,n=1}^j
    M_{i,n}^\epsilon\alpha_i\alpha_n
  \end{gather*}
  where
  \[
    M_{i,n}^\epsilon=\frac{\int_{\Omega}\nabla\hat{v}_\ire\cdot\nabla\hat{v}_\nre\dx}{\left(\int_{\Omega}p\abs{\hat{v}_\ire}^2\dx\right)^{1/2}\left(\int_{\Omega}p\abs{\hat{v}_\nre}^2\dx\right)^{1/2}}-\lambda_j^\epsilon\delta_i^n,
  \]
 with $\delta_i^n$ denoting the usual \emph{Kronecker delta}, i.e. 
$\delta_i^n=0$ for $i\neq n$ and $\delta_i^n=1$ for $i=n$. From
estimates \eqref{eqn:v_graham_1}--\eqref{eqn:v_graham_6} one can
derive the following estimates
\begin{align*}
  &  M_{j,j}(\epsilon)=\epsilon^{N-2}H(\phi_j^\epsilon,K_\rho\epsilon)(f_R(\epsilon)+O(\epsilon^{2-2/N})), \\
&    M_{i,j}(\epsilon)=O\left(\epsilon^{N-1-\rho}\sqrt{H(\phi_j^\epsilon,K_\rho\epsilon)}\right)
\quad\text{and}\quad
M_{i,i}(\epsilon)=\lambda_i^\epsilon-\lambda_j^\epsilon+o(1)  
\quad\text{for all }i<j, \\
  & M_{i,n}(\epsilon)=O(\epsilon^{N-2\rho})\quad\text{for all }i,n<j,\quad i\neq n,
\end{align*}
  as $\epsilon\to 0$. Moreover, from Corollary \ref{cor:H_control_below}, we know that
  $H(\phi_j^\epsilon,K_\rho\epsilon)\geq \bar{C}\epsilon^q$ 
  for some $\bar{C},q>0$. Therefore, taking also into account
  \eqref{eqn:H_bigO} and the fact that $f_R(\epsilon)=O(1)$ as
  $\epsilon\to0$ in view of \eqref{eqn:precise_energy_estim_th1} and \eqref{eqn:precise_energy_estim_vint_th1},
the hypotheses of Lemma \ref{lemma:quadratic_form} are satisfied with
  \[
    \sigma(\epsilon)=\epsilon^{N-2}H(\phi_j^\epsilon,K_\rho\epsilon),\quad \mu(\epsilon)=f_R(\epsilon)+o(1),\quad \alpha=\frac{N}{2}-\rho,\quad M>(2\rho-2+q)\frac{2}{N-2\rho}.
  \]
  The proof is thereby complete.
\end{proof}

\subsection{Lower Bound}

For any $R>1$ and $\epsilon\in (0,1]$, with $R\epsilon\leq R_{\textup{max}}$, let us consider the following minimization problem
\begin{equation}\label{eqn:min_wint}
 \min \left\{\int_{\Omega_{R\epsilon}^\epsilon}\abs{\nabla u}^2\dx\colon u\in H^1(\Omega_{R\epsilon}^\epsilon),~u=0~\text{on}~\partial\Omega_{R\epsilon}^\epsilon\setminus S_{R\epsilon}^+,~u=\phi_j~\text{on}~S_{R\epsilon}^+\right\}.
\end{equation}
One can prove that this problem has a unique solution $w^{\textup{int}}_{j,R,\epsilon}$, which weakly verifies
\begin{equation*}
 \left\{\begin{aligned}
         -\Delta \wint_\jre & =0 ,&& \text{in }\Omega_{R\epsilon}^\epsilon, \\
         \wint_\jre & = 0, && \text{on }\partial\Omega_{R\epsilon}^\epsilon\setminus S_{R\epsilon}^+, \\
         \wint_\jre & = \phi_j, && \text{on }S_{R\epsilon}^+.
        \end{aligned}\right.
\end{equation*}
Let us define
\begin{equation*}
 w_{j,R,\epsilon}:=\left\{\begin{aligned}
			   & \wint_\jre ,&& \text{in }\Omega_{R\epsilon}^\epsilon, \\
			   & \phi_j, && \text{in }\Omega\setminus B_{R\epsilon}^+.
			 \end{aligned}\right.
\end{equation*}

\begin{lemma}\label{lemma:energy_estim_zero}
 There exists $\tilde{C}>0$ such that, for all $i\in \{1,\dots,j-1\}$, for all $R>1$ and $\epsilon\in (0,1]$, with $R\epsilon \leq R_{\textup{max}}$, 
 \begin{gather*}
  \int_{B_{R\epsilon}^+}\abs{\nabla \phi_i}^2\dx\leq \tilde{C}(R\epsilon)^N, \quad
  \int_{B_{R\epsilon}^+}\abs{ \phi_i}^2\dx\leq \tilde{C}(R\epsilon)^{N+2}, \quad
  \int_{S_{R\epsilon}^+}\abs{ \phi_i}^2\dx\leq \tilde{C}(R\epsilon)^{N+1},
 \end{gather*}
 and
 \begin{gather*}
    \int_{B_{R\epsilon}^+}\abs{\nabla \phi_j}^2\dx\leq \tilde{C}(R\epsilon)^{N+2k-2}, \quad
  \int_{B_{R\epsilon}^+}\abs{ \phi_j}^2\dx\leq \tilde{C}(R\epsilon)^{N+2k}, \quad
  \int_{S_{R\epsilon}^+}\abs{ \phi_j}^2\dx\leq \tilde{C}(R\epsilon)^{N+2k-1}.  
 \end{gather*}

\end{lemma}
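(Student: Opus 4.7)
The plan is to split the argument according to whether $i<j$ or $i=j$ and, in each case, combine a pointwise bound on $\phi_i$ and $\nabla\phi_i$ in a small half-ball around the origin with a soft argument that handles the remaining range of $R\epsilon$ bounded away from zero. Since the unperturbed eigenfunctions are fixed objects (independent of $\epsilon$), all constants can be absorbed into a single $\tilde C$ at the end.

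For $i\in\{1,\dots,j-1\}$ the key observation is that, by assumption \eqref{eqn:hp_plane_boundary}, the portion of $\partial\Omega$ near $0$ is a piece of the hyperplane $\{x_1=0\}$. Coupled with $p\in L^\infty$, classical elliptic regularity up to a flat portion of the boundary (either Schauder or $W^{2,q}$ estimates followed by Sobolev embedding) applied to $-\Delta\phi_i=\lambda_i p\phi_i$ yields $\phi_i\in C^{1,\alpha}(\overline{B_{r_*}^+})$ for some $r_*\in(0,R_{\textup{max}}]$ and some $\alpha\in(0,1)$. Since $0\in\partial\Omega$ and $\phi_i\equiv0$ on $\partial\Omega$, one has $\phi_i(0)=0$, so that
\[
|\phi_i(x)|\le L|x|,\qquad |\nabla\phi_i(x)|\le L,\qquad x\in B_{r_*}^+,
\]
with $L:=\|\nabla\phi_i\|_{L^\infty(B_{r_*}^+)}$. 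Integrating these pointwise bounds in spherical coordinates yields the three desired estimates whenever $R\epsilon\le r_*$.

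For $i=j$ I would invoke directly the asymptotics \eqref{eq:1}--\eqref{eq:2}: the uniform convergence on $S_1^+$ implies the existence of $r_*>0$ and $C>0$ such that $|\phi_j(r\theta)|\le C r^k$ and $|\nabla\phi_j(r\theta)|\le C r^{k-1}$ for all $r\in(0,r_*]$ and all $\theta\in S_1^+$, and integrating in spherical coordinates produces the powers $(R\epsilon)^{N+2k-2}$, $(R\epsilon)^{N+2k}$, $(R\epsilon)^{N+2k-1}$ for $R\epsilon\le r_*$. To treat the remaining regime $r_*<R\epsilon\le R_{\textup{max}}$ it suffices to observe that each of the right-hand sides in the statement is then bounded below by a positive constant, while the corresponding left-hand sides are bounded above by fixed quantities depending only on $\phi_i$ (e.g. $\int_\Omega|\nabla\phi_i|^2\dx=\lambda_i$ and an $L^2(\Omega)$ bound on $\phi_i$, together with trace estimates). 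Enlarging $\tilde C$ absorbs this regime. I do not foresee any substantial obstacle: the only non-routine ingredient is the $C^{1,\alpha}$ regularity of $\phi_i$ up to $0$ for $i<j$, which is guaranteed by the flatness hypothesis \eqref{eqn:hp_plane_boundary}, while the case $i=j$ is a direct consequence of the already available asymptotic expansion.
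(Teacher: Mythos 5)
Your proposal is correct. For $i=j$ you follow exactly the paper's route, using \eqref{eq:1}--\eqref{eq:2} to get $|\phi_j(r\theta)|\lesssim r^k$, $|\nabla\phi_j(r\theta)|\lesssim r^{k-1}$ near $0$ and then integrating. For $i<j$ the paper again appeals to the boundary asymptotics of \cite[Th.\ 1.3]{Felli2011}, which give $\phi_i$ a vanishing order $k_i\geq 1$ at $0$ and hence $|\phi_i|\lesssim |x|^{k_i}$, $|\nabla\phi_i|\lesssim |x|^{k_i-1}$; since $k_i\geq 1$ this dominates $(R\epsilon)^{N+2k_i-2}\leq (R\epsilon)^N$ etc. Your variant replaces this with elementary $C^{1,\alpha}$ regularity up to the flat boundary portion, yielding the weaker but sufficient pointwise bounds $|\phi_i|\lesssim|x|$, $|\nabla\phi_i|\lesssim 1$. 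This is a more elementary ingredient that avoids the full machinery of the Almgren-type expansion for the lower-index eigenfunctions, at no loss of conclusion. You also make explicit the soft treatment of the regime $R\epsilon$ bounded away from zero, which the paper leaves implicit. Overall the same idea, with a slightly more self-contained treatment of the $i<j$ case.
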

\begin{proof}
 It follows from classical asymptotic estimates at the boundary, see e.g. \cite[Th. 1.3]{Felli2011} and \eqref{eq:1},\eqref{eq:2}.
\end{proof}

\begin{lemma}\label{lemma:energy_estim_wint}
 There exists $\hat{C}>0$ such that, for all $R>1$ and $\epsilon\in(0,1]$, with $R\epsilon\leq R_{\textup{max}}$,
  \begin{gather}
  \int_{\Omega_{R\epsilon}^\epsilon}\abs{\nabla \wint_\jre}^2\dx\leq \hat{C}(R\epsilon)^{N+2k-2}, \label{eqn:wint_estim_1}\\
  \int_{S_{R\epsilon}^+}\abs{ \wint_\jre}^2\dx\leq \hat{C}(R\epsilon)^{N+2k-1}. \label{eqn:wint_estim_3}
 \end{gather}
Furthermore, for all $R>\mu_{1/2}$ and $\epsilon\in(0,1]$, with $R\epsilon\leq R_{\textup{max}}$,
\begin{equation}
 \int_{\Omega_{R\epsilon}^\epsilon}\abs{ \wint_\jre}^2\dx\leq \hat{C}(R\epsilon)^{N+2k-2/N}. \label{eqn:wint_estim_2}
\end{equation}
\end{lemma}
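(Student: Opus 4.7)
The strategy is to exploit the fact that $\wint_\jre$ is the minimizer in \eqref{eqn:min_wint}, so any admissible competitor automatically controls its Dirichlet energy. The decay bounds for $\phi_j$ near $0$ established in Lemma \ref{lemma:energy_estim_zero} will then deliver the first estimate, and the other two will follow by Poincar\'e-type inequalities already at our disposal.

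For the gradient bound \eqref{eqn:wint_estim_1}, I would take as competitor the function which equals $\phi_j$ on $B_{R\epsilon}^+$ and is extended by $0$ on $T_\epsilon$. This extension lies in $H^1(\Omega_{R\epsilon}^\epsilon)$ because $\phi_j$ vanishes on $M\supset\epsilon\Sigma$, so the trace matches across the junction; moreover it satisfies the prescribed boundary conditions of \eqref{eqn:min_wint} since $\phi_j=0$ on the flat part of $\partial\Omega$ and the extension vanishes on the whole tube boundary. Minimality then yields
\[
\int_{\Omega_{R\epsilon}^\epsilon}\abs{\nabla\wint_\jre}^2\dx
\leq\int_{B_{R\epsilon}^+}\abs{\nabla\phi_j}^2\dx
\leq\tilde{C}(R\epsilon)^{N+2k-2},
\]
the last inequality being exactly Lemma \ref{lemma:energy_estim_zero}. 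The boundary estimate \eqref{eqn:wint_estim_3} is immediate since $\wint_\jre=\phi_j$ on $S_{R\epsilon}^+$, so $\int_{S_{R\epsilon}^+}\abs{\wint_\jre}^2\ds=\int_{S_{R\epsilon}^+}\abs{\phi_j}^2\ds\leq\tilde{C}(R\epsilon)^{N+2k-1}$ again by Lemma \ref{lemma:energy_estim_zero}.

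For the $L^2$ estimate \eqref{eqn:wint_estim_2}, I would split the integral over $\Omega_{R\epsilon}^\epsilon=T_\epsilon\cup B_{R\epsilon}^+$ and treat each piece separately. On $T_\epsilon$ the function $\wint_\jre$ vanishes on $\partial T_\epsilon\setminus\epsilon\Sigma$, so Lemma \ref{lemma:int_tube} gives
\[
\int_{T_\epsilon}\abs{\wint_\jre}^2\dx
\leq\kappa\,\epsilon^{2(N-1)/N}\int_{T_\epsilon}\abs{\nabla\wint_\jre}^2\dx
\leq\const\cdot R^{N+2k-2}\epsilon^{N+2k-2/N},
\]
using \eqref{eqn:wint_estim_1}; since $N\geq 2$ and $R>\mu_{1/2}>1$ one has $R^{N+2k-2}\leq R^{N+2k-2/N}$, so this piece is bounded by $\const\cdot(R\epsilon)^{N+2k-2/N}$. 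On $B_{R\epsilon}^+$, the restriction of $\wint_\jre$ belongs to $V_\epsilon(B_{R\epsilon}^+)$ (it vanishes on $\mathcal C_{R\epsilon}\setminus\epsilon\Sigma\subset\partial\Omega$), and the hypothesis $R>\mu_{1/2}$ is exactly what is needed to apply the Poincar\'e-type inequality \eqref{eqn:poincare_type_1} with $\rho=1/2$ and $r=R\epsilon$, yielding
\[
\int_{B_{R\epsilon}^+}\abs{\wint_\jre}^2\dx
\leq\const\cdot(R\epsilon)^2\int_{B_{R\epsilon}^+}\abs{\nabla\wint_\jre}^2\dx
\leq\const\cdot(R\epsilon)^{N+2k},
\]
which in turn is dominated by $\const\cdot(R\epsilon)^{N+2k-2/N}$ since $R\epsilon\leq R_{\textup{max}}$ (the constant absorbs the harmless factor $R_{\textup{max}}^{2/N}$). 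Summing the two contributions delivers \eqref{eqn:wint_estim_2}. The only genuinely delicate point is verifying that the zero-extension of $\phi_j$ is an admissible competitor in \eqref{eqn:min_wint} — which hinges on the already-known fact $\phi_j\restr{M}=0$ — and that the Poincar\'e-type inequality on the half-ball can indeed be invoked uniformly, for which the hypothesis $R>\mu_{1/2}$ in the statement has been tailored.
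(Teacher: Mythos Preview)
Your argument is correct and follows essentially the same route as the paper. The paper's proof is terser: for \eqref{eqn:wint_estim_2} it simply invokes Lemma~\ref{lemma:estim_1} (with $\rho=1/2$, $r=R\epsilon$, which is where the hypothesis $R>\mu_{1/2}$ enters) together with \eqref{eqn:wint_estim_1}, whereas you unpack that lemma inline by splitting over $T_\epsilon$ and $B_{R\epsilon}^+$ and applying Lemma~\ref{lemma:int_tube} and \eqref{eqn:poincare_type_1} separately---exactly the ingredients of Lemma~\ref{lemma:estim_1}'s own proof.
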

\begin{proof}
 \eqref{eqn:wint_estim_3} is trivial since $\wint_\jre=\phi_j$ on $S_{R\epsilon}^+$. Also \eqref{eqn:wint_estim_1} is simple since $\phi_j$ is an admissible test function for \eqref{eqn:min_wint}. Finally \eqref{eqn:wint_estim_2} comes from \eqref{eqn:wint_estim_1}, \eqref{eqn:wint_estim_3} and Lemma \ref{lemma:estim_1}.
\end{proof}

Now let us define, for all $R>1$ and $\epsilon\in (0,1]$ such that $R\epsilon\leq R_{\textup{max}}$,
\begin{equation}\label{eqn:def_W_U_R}
 U_R^\epsilon(x)=\frac{\wint_\jre(\epsilon x)}{\epsilon^k},\qquad W^\epsilon(x)=\frac{\phi_j(\epsilon x)}{\epsilon^k}.
\end{equation}
 From \eqref{eq:1}, we easily deduce that 
 \[
  W^\epsilon\longrightarrow \psi_k\qquad\text{in }H^1(B_R^+)\quad\text{as }\epsilon\to 0,\quad\text{for all }R>0,
 \]
 where $\psi_k$ has been defined in \eqref{eqn:def_psi_k}.

\begin{lemma}\label{lemma:conv_U_R_eps}
 We have that
 \[
  U_R^\epsilon\longrightarrow U_R \qquad\text{in }\mathcal{H}_R \quad \text{as }\epsilon\to 0,\quad\text{for all } R>1,
 \]
 where $U_R$ is defined in Lemma \ref{lemma:def_U_R}.
\end{lemma}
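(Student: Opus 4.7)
\medskip

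\noindent\textbf{Proof proposal.} The plan is to rescale everything to the fixed domain $\Pi_R$, construct an explicit extension of the boundary mismatch $W^\epsilon-\psi_k$, and then exploit the harmonicity of $U_R^\epsilon$ and $U_R$ via a single integration by parts.

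First, the change of variable $y=\epsilon x$ together with the equation solved by $\wint_\jre$ and the definitions \eqref{eqn:def_W_U_R} show that $U_R^\epsilon\in\mathcal{H}_R$ is harmonic in $\Pi_R$, vanishes on $\partial\Pi_R\setminus S_R^+$ and satisfies $U_R^\epsilon=W^\epsilon$ on $S_R^+$; in particular it is the unique minimizer of $\int_{\Pi_R}|\nabla u|^2\dx$ among $u\in\mathcal{H}_R$ with trace $W^\epsilon$ on $S_R^+$. Thus $U_R^\epsilon$ and $U_R$ (as introduced in Lemma \ref{lemma:def_U_R}) solve the same boundary value problem on $\Pi_R$ with respective Dirichlet data $W^\epsilon$ and $\psi_k$ on $S_R^+$, and the convergence $W^\epsilon\to\psi_k$ in $H^1(B_R^+)$ is already established in the excerpt just before the statement.

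The crucial observation is that both $W^\epsilon$ and $\psi_k$ have zero trace on the entire flat face $\mathcal{C}_R$: for $\psi_k$ this is because $\Psi\in E_k^0$, while for $W^\epsilon$ it holds as soon as $R\epsilon\leq R_{\textup{max}}$, since $\{(0,x')\colon\abs{x'}\leq R\epsilon\}\subseteq M\subseteq\partial\Omega$ and $\phi_j=0$ on $\partial\Omega$. Consequently the function
\begin{equation*}
 E^\epsilon(x):=\begin{cases} W^\epsilon(x)-\psi_k(x), & x\in B_R^+,\\ 0, & x\in T_1^-,\end{cases}
\end{equation*}
has matching traces across the junction $\Sigma$ (both sides vanish) and vanishes on $\partial\Pi_R\setminus S_R^+$, so $E^\epsilon\in\mathcal{H}_R$. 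Since $W^\epsilon\to\psi_k$ in $H^1(B_R^+)$, we have $\|\nabla E^\epsilon\|_{L^2(\Pi_R)}\to 0$ as $\epsilon\to 0$.

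Set $v^\epsilon:=U_R^\epsilon-U_R-E^\epsilon\in\mathcal{H}_R$. On $S_R^+$ the trace of $v^\epsilon$ is $W^\epsilon-\psi_k-(W^\epsilon-\psi_k)=0$, hence $v^\epsilon\in H^1_0(\Pi_R)$. Since $U_R^\epsilon$ and $U_R$ are both harmonic, $v^\epsilon$ weakly satisfies $-\Delta v^\epsilon=\Delta E^\epsilon$ in $\Pi_R$; testing with $v^\epsilon$ itself and applying Cauchy-Schwarz,
\begin{equation*}
 \int_{\Pi_R}\abs{\nabla v^\epsilon}^2\dx=-\int_{\Pi_R}\nabla E^\epsilon\cdot\nabla v^\epsilon\dx\leq\norm{\nabla E^\epsilon}_{L^2(\Pi_R)}\norm{\nabla v^\epsilon}_{L^2(\Pi_R)},
\end{equation*}
so $\norm{\nabla v^\epsilon}_{L^2(\Pi_R)}\leq\norm{\nabla E^\epsilon}_{L^2(\Pi_R)}\to 0$. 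A triangle inequality $\norm{U_R^\epsilon-U_R}_{\mathcal{H}_R}\leq\norm{\nabla v^\epsilon}_{L^2(\Pi_R)}+\norm{\nabla E^\epsilon}_{L^2(\Pi_R)}$ then delivers the conclusion.

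The only non-routine step is the construction of $E^\epsilon$: what makes it work (and what one should flag as the subtle point) is precisely the fact that $\phi_j$ vanishes along the full segment $M$ containing $\Sigma$, so that trivially extending $W^\epsilon-\psi_k$ by zero through the tube gives an admissible function in $\mathcal{H}_R$ without any cutoff argument; the rest is a routine energy estimate based on the fact that harmonic functions minimize Dirichlet energy.
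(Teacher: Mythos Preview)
Your construction of $E^\epsilon$ and the subsequent energy estimate are clean, but there is a genuine gap at the very first step. The rescaled tube $\frac{1}{\epsilon}T_\epsilon=(-\tfrac{1}{\epsilon},0]\times\Sigma$ is strictly contained in $T_1^-=(-\infty,0]\times\Sigma$, so $U_R^\epsilon$ is only defined on $\big((-\tfrac{1}{\epsilon},0]\times\Sigma\big)\cup B_R^+$; the function you treat as an element of $\mathcal{H}_R$ is its trivial zero extension to the rest of $\Pi_R$ (the paper says this explicitly right after introducing $g_R(\epsilon)$ and again at the start of the proof). That extension is \emph{not} harmonic across the interface $\{-\tfrac{1}{\epsilon}\}\times\Sigma$, and $U_R^\epsilon$ is \emph{not} the Dirichlet minimizer in $\mathcal{H}_R$ with trace $W^\epsilon$ on $S_R^+$.

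This breaks your testing step: on $\{-\tfrac{1}{\epsilon}\}\times\Sigma$ you have $U_R^\epsilon=0$ and $E^\epsilon=0$, so $v^\epsilon=-U_R$, which is generally nonzero there. Integrating by parts against $v^\epsilon$ then leaves an extra boundary term $\int_{\{-1/\epsilon\}\times\Sigma}\frac{\partial U_R^\epsilon}{\partial\nnu}\,v^\epsilon\ds$ that your identity $\int_{\Pi_R}|\nabla v^\epsilon|^2=-\int_{\Pi_R}\nabla E^\epsilon\cdot\nabla v^\epsilon$ ignores. The term does vanish as $\epsilon\to 0$ (both $U_R$ and $\partial U_R^\epsilon/\partial\nnu$ decay exponentially along the tube, by separation of variables in $(-\infty,0)\times\Sigma$ and $(-\tfrac{1}{\epsilon},0)\times\Sigma$ respectively), but this is an additional nontrivial estimate you would have to supply. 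The paper avoids the issue entirely: it first extracts a weak limit in $\mathcal{H}_R$ and identifies it with $U_R$ using only \emph{compactly supported} test functions (for any such $\varphi$, eventually $\mathrm{supp}\,\varphi\subset\big((-\tfrac{1}{\epsilon},0]\times\Sigma\big)\cup B_R^+$, so harmonicity of $U_R^\epsilon$ on its actual domain suffices), and then upgrades to strong convergence by showing $\int_{\Pi_R}|\nabla U_R^\epsilon|^2=\int_{S_R^+}\frac{\partial U_R^\epsilon}{\partial\nnu}W^\epsilon\ds\to\int_{S_R^+}\frac{\partial U_R}{\partial\nnu}\psi_k\ds=\int_{\Pi_R}|\nabla U_R|^2$.
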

\begin{proof}
 From Lemma \ref{lemma:energy_estim_wint} and from the definition of $U_R^\epsilon$ we know that
 \[
  \int_{\Pi_R}\abs{\nabla U_R^\epsilon}^2\dx=O(1) \qquad\text{as }\epsilon\to 0.
 \]
 where $U_R^\epsilon$ has been trivially extended in $\Pi_R$ outside
 its domain. So there exists $V=V_R\in \mathcal{H}_R$ such that, along
 a sequence $\epsilon=\epsilon_n\to0$,
 \[
  U_R^\epsilon\rightharpoonup V\qquad\text{weakly in }\mathcal{H}_R\quad\text{as }\epsilon=\epsilon_n\to 0.
 \]
 This means that
 \[
  \nabla U_R^\epsilon \rightharpoonup \nabla V\qquad\text{weakly in }L^2(\Pi_R)\quad\text{as }\epsilon=\epsilon_n\to 0.
 \]
 Since $U_R^\epsilon=W^\epsilon$ on $S_R^+$ and $W^\epsilon\to \psi_k$
 in $L^2(S_R^+)$, then $V$ satisfies (in a weak sense) the same
 equation as $U_R$, defined in Lemma \ref{lemma:def_U_R}. So, by
 uniqueness, $V=U_R$. 
Since the limit $V=U_R$ is the same for every subsequence,
\emph{Urysohn's Subsequence Principle} implies that the convergence
$U_R^\epsilon\rightharpoonup U_R$ holds as $\epsilon \to 0$ (not
only along subsequences).

To prove strong convergence it is enough to show that
$\|U_R^\epsilon\|_{\mathcal H_R}\to \|U_R\|_{\mathcal H_R}$ as $\epsilon\to0$. First we notice that, trivially, $-\Delta U_R^\epsilon\rightharpoonup -\Delta U_R$ weakly in $L^2(\Pi_R)$: so, we have that $\nabla U_R^\epsilon\rightharpoonup \nabla U_R$ in $H(\div,\Pi_R)$, thus
 \[
  \frac{\partial U_R^\epsilon}{\partial \nnu}\rightharpoonup \frac{\partial U_R}{\partial\nnu} \qquad\text{in }\left( H^{1/2}_{00}(S_R^+) \right)^*\quad\text{as }\epsilon\to 0,
 \]
 where $( H^{1/2}_{00}(S_R^+) )^*$ is the dual of the Lions-Magenes space $H^{1/2}_{00}(S_R^+)$. Then, since $W^\epsilon\to \psi_k$ in $H^{1/2}_{00}(S_R^+)$ as $\epsilon\to 0$, we obtain that
 \[
  \int_{\Pi_R}\abs{\nabla
    U_R^\epsilon}^2\dx=\int_{S_R^+}\frac{\partial
    U_R^\epsilon}{\partial\nnu}W^\epsilon\ds\to
  \int_{S_R^+}\frac{\partial
    U_R}{\partial\nnu}\psi_k\ds=\int_{\Pi_R}\abs{\nabla
    U_R}^2\quad\text{as }\epsilon\to0,
 \]
thus completing the proof.
\end{proof}

It is easy to prove that the family of functions
$\{\phi_1,\phi_2,\dots,\phi_{j-1}, w_\jre\}$ is linearly independent
in $ H^1_0(\Omega^\epsilon)$. As in the previous section, we construct
a new basis of the space
\[
  \textrm{span}\{\phi_1,\phi_2,\dots,\phi_{j-1}, w_\jre\}\subseteq H^1_0(\Omega^\epsilon)
\]
by defining, for all $i=1,\dots,j-1$
 \[
  \hat{w}_\ire=\phi_i
 \]
 and
 \[
   \hat{w}_\jre=w_\jre -\sum_{i=1}^{j-1}c_{i}^\epsilon \phi_i,
 \]
 where
 \[
   c_{i}^\epsilon=\int_{\Omega^\epsilon}p w_\jre \phi_i\dx.
 \]
 In this way we have that $\int_{\Omega^\epsilon}p\,
 \hat{w}_{n,R,\epsilon} \hat{w}_{m,R,\epsilon}\,dx=0$ if $n\neq m$.

Using the estimates established in Lemmas \ref{lemma:energy_estim_zero} and \ref{lemma:energy_estim_wint}, one can prove the following
 \begin{align}
  \int_{\Omega^\epsilon}\abs{\nabla \hat{w}_\jre }^2\dx &= \lambda_j+\epsilon^{N+2k-2}\left(\int_{\Pi_R}\abs{\nabla U_R^\epsilon}^2\dx -\int_{B_R^+}\abs{\nabla W^\epsilon}^2\dx+o(1)\right),\label{eqn:w_graham_1}\\
  \int_{\Omega^\epsilon}\nabla \hat{w}_\jre\cdot \nabla \hat{w}_\ire \dx &=O(\epsilon^{N+k-1}) \quad\text{for all }i\in \{1,\dots,j-1\} ,\label{eqn:w_graham_3}\\
  \int_{\Omega^\epsilon}p\abs{ \hat{w}_\jre }^2\dx &=1+O(\epsilon^{N+2k-2/N}), \label{eqn:w_graham_5} 
 \end{align}
 as $\epsilon\to 0$.

\begin{proposition}\label{prop:low_bound}
  Let $\rho\in(0,1/2)$, $K_\rho$ as defined in Proposition \ref{prop:precise_energy_estim_eps} and $R\geq K_\rho$. For $\epsilon <R_0/R$ there exists $g_R(\epsilon)$ such that
  \begin{equation*}
    \lambda_j^\epsilon-\lambda_j\leq \epsilon^{N+2k-2}(g_R(\epsilon)+o(1))\quad\text{as }\epsilon\to 0
  \end{equation*}
  and
  \[
    g_R(\epsilon)=\int_{\Pi_R}\abs{\nabla U_R^\epsilon}^2\dx-\int_{B_R^+}\abs{\nabla W^\epsilon}^2\dx,
  \]
  where $U_R^\epsilon$ has been trivially extended in $\Pi_R$ outside its domain.
\end{proposition}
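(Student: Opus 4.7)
The approach mirrors the one used for the upper bound in Proposition \ref{prop:up_bound}, with the roles of $\Omega$ and $\Omega^\epsilon$ interchanged. I apply the Courant-Fischer Min-Max characterization of $\lambda_j^\epsilon$ to the $j$-dimensional subspace $\mathrm{span}\{\hat{w}_{1,R,\epsilon},\dots,\hat{w}_\jre\}\subseteq H^1_0(\Omega^\epsilon)$ constructed above, and read off the leading order of the resulting quadratic form. After normalizing the basis in $L^2(\Omega^\epsilon,p\,\dx)$ by setting
\[
 \tilde{w}_\ire:=\frac{\hat{w}_\ire}{\bigl(\int_{\Omega^\epsilon}p\abs{\hat{w}_\ire}^2\dx\bigr)^{1/2}},
\]
the Min-Max inequality yields
\[
 \lambda_j^\epsilon-\lambda_j\leq\max_{\sum_{i=1}^j\abs{\alpha_i}^2=1}\sum_{i,n=1}^j M_{i,n}^\epsilon\alpha_i\alpha_n,\qquad M_{i,n}^\epsilon:=\int_{\Omega^\epsilon}\nabla\tilde{w}_\ire\cdot\nabla\tilde{w}_\nre\dx-\lambda_j\delta_i^n.
\]

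The entries of $M^\epsilon$ can then be read off from \eqref{eqn:w_graham_1}--\eqref{eqn:w_graham_5}. Since $\hat{w}_\ire=\phi_i$ for $i<j$, the orthogonality of the unperturbed eigenfunctions gives $M_{i,n}^\epsilon=0$ for $i\ne n<j$ and $M_{i,i}^\epsilon=\lambda_i-\lambda_j<0$, uniformly bounded away from zero by simplicity of $\lambda_j$. The normalization of $\hat{w}_\jre$ combined with \eqref{eqn:w_graham_1} and \eqref{eqn:w_graham_5} produces the dominant entry $M_{j,j}^\epsilon=\epsilon^{N+2k-2}(g_R(\epsilon)+o(1))$, while \eqref{eqn:w_graham_3} yields the off-diagonal bound $M_{i,j}^\epsilon=O(\epsilon^{N+k-1})$ for $i<j$. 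Moreover $g_R(\epsilon)=O(1)$ as $\epsilon\to0$, as one checks by the change of variable $y=\epsilon x$ using Lemma \ref{lemma:energy_estim_wint} and Lemma \ref{lemma:energy_estim_zero}.

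It then suffices to feed the matrix $M^\epsilon$ into the abstract Lemma \ref{lemma:quadratic_form} with the choice $\sigma(\epsilon)=\epsilon^{N+2k-2}$, $\mu(\epsilon)=g_R(\epsilon)+o(1)$, and exponent $\alpha=N/2$: the off-diagonal scale $\epsilon^{N+k-1}$ equals $\epsilon^{N/2}\sqrt{\sigma(\epsilon)}$, and the ``non-$j$'' block of $M^\epsilon$ is diagonal with uniformly negative entries bounded away from zero. The lemma produces the maximum $\sigma(\epsilon)(\mu(\epsilon)+o(1))$, which is precisely the claimed bound.

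The main technical point is verifying the off-diagonal estimate $M_{i,j}^\epsilon=O(\epsilon^{N+k-1})$, whose margin $\epsilon^{N/2}$ above the diagonal scale $\sqrt{\sigma(\epsilon)}$ is exactly what enables Lemma \ref{lemma:quadratic_form} to apply. For $i<j$, this reduces to the Cauchy-Schwarz bound $\int_{\Omega_{R\epsilon}^\epsilon}\abs{\nabla\wint_\jre}\abs{\nabla\phi_i}\dx=O(\epsilon^{(N+2k-2)/2})\cdot O(\epsilon^{N/2})$, combining \eqref{eqn:wint_estim_1} with Lemma \ref{lemma:energy_estim_zero}; the contributions to $\int_{\Omega^\epsilon}\nabla\hat{w}_\jre\cdot\nabla\hat{w}_\ire\dx$ coming from the orthogonalization correction $\sum c_i^\epsilon\phi_i$ turn out to be of the same order or smaller. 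Once this and analogous control on the normalization factors are in place, the rest of the argument is a direct transcription of the upper-bound scheme.
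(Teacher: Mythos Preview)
Your proof is correct and follows essentially the same approach as the paper: Courant--Fischer applied to the $j$-dimensional space $\mathrm{span}\{\hat w_{1,R,\epsilon},\dots,\hat w_\jre\}$, the same normalized test functions $\tilde w_\ire$, the same asymptotics for the matrix entries drawn from \eqref{eqn:w_graham_1}--\eqref{eqn:w_graham_5}, and the same invocation of Lemma~\ref{lemma:quadratic_form} with $\sigma(\epsilon)=\epsilon^{N+2k-2}$, $\mu(\epsilon)=g_R(\epsilon)+o(1)$, $\alpha=N/2$. The paper also records the explicit choice $M>4(k-1)/N$ needed for the last hypothesis of Lemma~\ref{lemma:quadratic_form}, which you leave implicit but is immediate.
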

\begin{proof}
  By the \emph{Courant-Fischer Min-Max} characterization of eigenvalues
  \[
    \lambda_j^\epsilon=\min\left\{
      \max_{\substack{\alpha_1,\dots,\alpha_j\in \R \\
          \sum_{i=1}^j\abs{\alpha_i}^2=1}}\frac{\int_{\Omega^\epsilon}\big|\nabla
        \big(\sum_{i=1}^j \alpha_i u_i\big)\big|^2\dx}
{\int_{\Omega^\epsilon}p\big|\sum_{i=1}^j \alpha_i u_i\big|^2\dx} \colon\{ u_1,\dots,u_j\}\subseteq  H^1_0(\Omega^\epsilon) ~\text{linearly independent}\right\}.
  \]
  Testing the  Rayleigh quotient with the family of functions
  \[
    \tilde{w}_\ire=\frac{\hat{w}_\ire}{\sqrt{\int_{\Omega^\epsilon}p\abs{\hat{w}_\ire}^2\dx}}
  \]
  we obtain that
 \[
    \lambda_j^\epsilon-\lambda_j\leq \max_{\substack{\alpha_1,\dots,\alpha_j\in \R \\ \sum_{i=1}^j\abs{\alpha_i}^2=1}}\int_{\Omega^\epsilon}\abs{\nabla \left(\sum \alpha_i \tilde{w}_\ire\right)}^2\dx-\lambda_j
    =\max_{\substack{\alpha_1,\dots,\alpha_j\in \R \\ \sum_{i=1}^j\abs{\alpha_i}^2=1}}\sum_{i,n=1}^j L_{i,n}^\epsilon\alpha_i\alpha_n ,
\]
  where
  \[
    L_{i,n}^\epsilon=\frac{\int_{\Omega^\epsilon}\nabla\hat{w}_\ire\cdot\nabla\hat{w}_\nre\dx}{\left(\int_{\Omega^\epsilon}p\abs{\hat{w}_\ire}^2\dx\right)^{1/2}\left(\int_{\Omega^\epsilon}p\abs{\hat{w}_\nre}^2\dx\right)^{1/2}}-\lambda_j\delta_i^n,
  \]
  with  $\delta_i^n$ denoting the usual \emph{Kronecker delta}. 
From estimates \eqref{eqn:w_graham_1}--\eqref{eqn:w_graham_5} it
follows that
  \begin{gather*}
   L_{j,j}^\epsilon=\epsilon^{N+2k-2}(g_R(\epsilon)+o(1)),\quad
   L_{i,j}^\epsilon=O(\epsilon^{N+k-1})\quad\text{for all }i<j, \\
   L_{i,i}^\epsilon=\lambda_i-\lambda_j  \quad\text{for all }i<j, \quad
   L_{i,n}^\epsilon=0\quad\text{for all }i,n<j,\quad i\neq n,
  \end{gather*}
  as $\epsilon\to 0$. Therefore, taking into account that 
$g_R(\epsilon)=O(1)$ as $\epsilon \to 0$ in view of Lemma
\ref{lemma:energy_estim_zero} and \eqref{eqn:wint_estim_1},
the hypotheses of Lemma \ref{lemma:quadratic_form} are satisfied with
  \[
    \sigma(\epsilon)=\epsilon^{N+2k-2},\quad \mu(\epsilon)=g_R(\epsilon)+o(1),\quad \alpha=\frac{N}{2},\quad M>\frac{4(k-1)}{N}.
  \]
  The proof is thereby complete.
  \end{proof}

From the fact that $W^\epsilon\to \psi_k$ in $H^1(B_R^+)$, as
$\epsilon\to 0$, for all $R>0$, and from Lemma
\ref{lemma:conv_U_R_eps} we can deduce the following result.

\begin{lemma}\label{lemma:conv_g_R_eps}
  For all $R>1$ we have that
  \[
    g_R(\epsilon)\longrightarrow g_R\qquad\text{as }\epsilon\to 0,
  \]
  where
  \begin{equation}
        g_R:=\int_{\Pi_R}\abs{\nabla U_R}^2\dx-\int_{B_R^+}\abs{\nabla \psi_k}^2\dx. \label{eqn:def_g_R}
  \end{equation}
\end{lemma}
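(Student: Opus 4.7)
The proof is a direct consequence of the two convergences established right before the statement. Specifically, I would split $g_R(\epsilon)$ into its two summands and pass to the limit in each of them separately.

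For the first summand, $\int_{\Pi_R}|\nabla U_R^\epsilon|^2\dx$, the plan is to invoke Lemma \ref{lemma:conv_U_R_eps}, which asserts that $U_R^\epsilon \to U_R$ in $\mathcal{H}_R$ as $\epsilon\to 0$. Since the norm on $\mathcal{H}_R$ is precisely $\big(\int_{\Pi_R}|\nabla\,\cdot\,|^2\dx\big)^{1/2}$, strong convergence in $\mathcal{H}_R$ immediately yields
\[
\int_{\Pi_R}|\nabla U_R^\epsilon|^2\dx \longrightarrow \int_{\Pi_R}|\nabla U_R|^2\dx\quad\text{as }\epsilon\to 0.
\]

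For the second summand, $\int_{B_R^+}|\nabla W^\epsilon|^2\dx$, I would use the convergence $W^\epsilon \to \psi_k$ in $H^1(B_R^+)$ as $\epsilon\to0$, which was observed in the paragraph preceding Lemma \ref{lemma:conv_U_R_eps} and ultimately comes from the asymptotic expansion \eqref{eq:1}--\eqref{eq:2} of the unperturbed eigenfunction $\phi_j$ at the junction point, after the rescaling $x\mapsto \epsilon x$ and division by $\epsilon^k$. This convergence gives
\[
\int_{B_R^+}|\nabla W^\epsilon|^2\dx \longrightarrow \int_{B_R^+}|\nabla \psi_k|^2\dx\quad\text{as }\epsilon\to 0.
\]

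Subtracting these two limits gives exactly $g_R(\epsilon)\to g_R$, which is the claim. There is essentially no obstacle here: the statement is a routine continuity/convergence consequence of the two ingredients just established. The only minor care needed is that $U_R^\epsilon$ is, by convention, trivially extended outside its natural domain into all of $\Pi_R$, which is consistent with the functional setting of $\mathcal{H}_R$ used in Lemma \ref{lemma:conv_U_R_eps}, so that the two notions of the integral over $\Pi_R$ coincide.
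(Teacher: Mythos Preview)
Your proposal is correct and matches the paper's approach exactly: the paper states that the lemma follows directly from $W^\epsilon\to\psi_k$ in $H^1(B_R^+)$ together with Lemma~\ref{lemma:conv_U_R_eps}, without giving further details. Your splitting into the two summands and passing to the limit via the $\mathcal{H}_R$-norm convergence and the $H^1(B_R^+)$ convergence is precisely what is intended.
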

In order to compute the limit $\lim_{R\to \infty}g_R$, we  introduce
the functions 
    \begin{gather}
   \zeta(r):=\int_{S_1^+}\Phi(r\theta)\Psi(\theta)\dth  \qquad\text{for }r\geq 1, \label{eqn:def_zeta_r} \\
   \chi_R(r):=\int_{S_1^+}U_R(r\theta)\Psi(\theta)\dth \qquad\text{for }1\leq r\leq R. \label{eqn:def_chi_R}
   \end{gather}
  Moreover, we denote
  \begin{equation}\label{eqn:def_gamma_N}
   \gamma_N:=\int_{S_1^+}\abs{\Psi(\theta)}^2\dth.
  \end{equation}
  We immediatly notice, thanks to Lemma \ref{lemma:conv_U_R} and to the embedding $H^1(B_1^+)\hookrightarrow L^2(S_1^+)$, that
  \begin{equation}\label{eqn:zeta_lim_chi}
   \zeta(1)=\lim_{R\to+\infty}\chi_R(1).
  \end{equation}

\begin{lemma}\label{lemma:zeta_1}
  Let $\zeta$ be the function defined in \eqref{eqn:def_zeta_r}, $\gamma_N$ the constant defined in \eqref{eqn:def_gamma_N} and $m_k(\Sigma)$ the one defined in \eqref{eqn:def_m_k}. Then
   \begin{equation}
    \zeta(1)=\gamma_N-\frac{2m_k(\Sigma)}{N+2k-2}\label{eqn:th_lemma_zeta_1}.
   \end{equation}
  \end{lemma}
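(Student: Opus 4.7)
The plan is to use the decomposition $\Phi=\psi_k+w_k$ in $\R^N_+$ provided by \eqref{eqn:Phi_components}. Since $\psi_k(\theta)=\Psi(\theta)$ on $S_1^+$ and $\gamma_N=\int_{S_1^+}|\Psi|^2\dth$, this gives
\[
\zeta(1)=\gamma_N+\eta(1),\qquad\eta(r):=\int_{S_1^+}w_k(r\theta)\Psi(\theta)\dth,
\]
so the task reduces to proving $\eta(1)=-\tfrac{2m_k(\Sigma)}{N+2k-2}$. The computation will rest on three properties of $w_k$: it is harmonic in $\R^N_+$ (from testing the Euler--Lagrange equation of $J$ with functions compactly supported in $\R^N_+$); it vanishes on $\{x_1=0\}\cap\{|x|>1\}$, since $\Phi=0$ on $\partial\Pi$, $\psi_k=0$ on $\{x_1=0\}$, and by normalization $\overline{\Sigma}\subset\overline{B_1^{N-1}}$; and it has finite Dirichlet energy in $\Pi$.

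The first ingredient is Green's identity for the pair $(w_k,\psi_k)$ in $B_1^+$. Both functions are harmonic, so only boundary terms remain. On the flat disk $\mathcal{C}_1$ one has $\psi_k=0$ and $w_k$ is supported on $\Sigma$, leaving the integral $\int_\Sigma w_k\,\partial_{x_1}\psi_k\ds$, which by \eqref{eqn:m_k_integral} equals $-2m_k(\Sigma)$. On $S_1^+$ one picks up $\int_{S_1^+}(kw_k\Psi-\Psi\,\partial_r w_k)\dth$. Matching the two reduces the problem to relating $\int_{S_1^+}\Psi\,\partial_r w_k\dth$ to $\eta(1)$.

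The second ingredient is a spherical-harmonic expansion of $w_k$ on $\{r\geq1\}\cap\R^N_+$. Because $w_k$ is harmonic there, vanishes on the equator, and has finite Dirichlet energy, only the decaying modes $r^{-l-N+2}\Psi_l^m$ with $\Psi_l^m\in E_l^0$, $l\geq1$, appear. Projecting onto the $\Psi$-direction forces $\eta(r)=\eta(1)\,r^{-k-N+2}$ for $r\geq1$, whence $\int_{S_1^+}\Psi\,\partial_r w_k\dth=-(k+N-2)\eta(1)$. Substituting into the Green identity yields $(2k+N-2)\eta(1)=-2m_k(\Sigma)$, which is exactly \eqref{eqn:th_lemma_zeta_1}.

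The main subtlety I expect is justifying the termwise validity of the spherical-harmonic expansion all the way down to $r=1$, since $\overline{\Sigma}$ may touch $\{|x|=1\}$ and the expansion is cleanest for $r>1$. A robust workaround is to run Green's identity on a half-annulus $B_R^+\setminus B_\rho^+$ with $1<\rho<R$, use the spherical-harmonic expansion on $S_\rho^+$, exploit the decay of $w_k$ (encoded in $\int_\Pi|\nabla w_k|^2\dx<\infty$) to kill the $S_R^+$ contribution as $R\to\infty$, and finally pass to the limit $\rho\searrow 1$ using the smoothness of $w_k$ in the interior of $\R^N_+$.
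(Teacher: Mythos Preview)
Your proposal is correct and follows essentially the same route as the paper: both arguments use Green's identity on $B_1^+$ for the pair $(\psi_k,\Phi)$ (equivalently $(\psi_k,w_k)$) to pick up the $m_k(\Sigma)$ contribution from $\Sigma$, and both analyze the radial behavior of the $\Psi$-projection for $r>1$ to compute the normal derivative term on $S_1^+$. The only cosmetic difference is that the paper works with $\zeta$ directly, derives the ODE $(r^{N+2k-1}(r^{-k}\zeta)')'=0$, and pins down the constant via a Kelvin-transform decay estimate $|w_k(x)|\le C|x|^{1-N}$, whereas you work with $\eta$ and invoke the spherical-harmonic expansion together with $\int_\Pi|\nabla w_k|^2<\infty$ to exclude growing modes---these are equivalent ways of obtaining $\eta(r)=\eta(1)r^{-k-N+2}$.
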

  \begin{proof}
   From the definition of $\Phi$, given in \eqref{eqn:def_Phi}, one can easily prove that $\zeta$ satisfies the following ODE
   \begin{equation*}
    \left( r^{N+2k-1}(r^{-k}\zeta(r))' \right)'=0\qquad\text{in }(1,+\infty).
   \end{equation*}
   This yields
   \begin{equation}\label{eqn:zeta_r_1}
    r^{-k}\zeta(r)=\zeta(1)+C\frac{1-r^{-N-2k+2}}{N+2k-2}
   \end{equation}
for some constant $C\in\R$.
   Now we note that $r^{-k}\zeta(r)\to \gamma_N$ as $r\to+\infty$. Indeed, since $\Phi=w_k+\psi_k$, we can rewrite
   \[
    \zeta(r)=\int_{S_1^+}w_k(r\theta)\Psi(\theta)\dth+\gamma_N r^k.
   \]
   By evaluating the vanishing order at $0$ of the Kelvin transform of
   the restriction of the function $w_k$ on $\Pi\setminus\Pi_1$, we
   can prove that
   \[
    \abs{w_k(x)}\leq \const \abs{x}^{1-N}\qquad\text{for }\abs{x}>1.
   \]
   Hence, when $r\to +\infty$
   \[
    \abs{r^{-k}\zeta(r)-\gamma_N}\leq \int_{S_1^+}\frac{\abs{w_k(r\theta)}}{r^k}\abs{\Psi(\theta)}\dth\leq \const r^{1-N-k}\to 0.
   \]
   Then we can find the constant $C$ in \eqref{eqn:zeta_r_1}, letting
   $r\to +\infty$; so we can rewrite $\zeta$ as
   \begin{equation}\label{eqn:zeta_r_7}
    \zeta(r)=\gamma_N r^k+(\zeta(1)-\gamma_N)r^{-N-k+2} \qquad\text{in }(1,+\infty).
   \end{equation}
   Taking the derivative leads to 
   \begin{equation}\label{eqn:zeta'_r}
   \begin{aligned}
    \zeta'(r)&=k\gamma_N r^{k-1}+(N+k-2)(\gamma_N-\zeta(1))r^{-N-k+1} \\
    & =(N+2k-2)\gamma_N r^{k-1}-\frac{(N+k-2)\zeta(r)}{r}.
   \end{aligned}
   \end{equation}
   Hence, taking into account the definition of $\zeta$ and evaluating its derivative at $r=1$, we obtain
   \begin{equation}\label{eqn:zeta_r_2}
    \int_{S_1^+}\frac{\partial\Phi}{\partial\nnu}(\theta)\Psi(\theta)\dth=(N+2k-2)\gamma_N-(N+k-2)\zeta(1).
   \end{equation}
   Since $-\Delta\Phi=0$ in $B_1^+$, multiplying this equation by
   $\psi_k$ and integrating by parts we obtain that 
   \begin{equation}\label{eqn:zeta_r_3}
        \int_{B_1^+}\nabla\Phi\cdot \nabla \psi_k\dx=\int_{S_1^+}\frac{\partial\Phi}{\partial\nnu}\psi_k\ds=\int_{S_1^+}\frac{\partial\Phi}{\partial\nnu}\Psi\ds.
   \end{equation}
   Then, let us test the equation $-\Delta \psi_k=0$ with $\Phi$. From
   \eqref{eqn:m_k_integral} and \eqref{eqn:Phi_components} it follows that
   \begin{equation}\label{eqn:zeta_r_4}
   \begin{split}
    \int_{B_1^+}\nabla \psi_k\cdot\nabla \Phi\dx=\int_{S_1^+}\frac{\partial\psi_k}{\partial\nnu}\Phi\ds-\int_{\Sigma}\frac{\partial\psi_k}{\partial x_1}\Phi\ds &=\int_{S_1^+}\frac{\partial\psi_k}{\partial\nnu}\Phi\ds-\int_{\Sigma}\frac{\partial\psi_k}{\partial x_1}w_k\ds \\&=\int_{S_1^+}\frac{\partial\psi_k}{\partial\nnu}\Phi\ds+2m_k(\Sigma).
   \end{split}
   \end{equation}
   Moreover we note that
   \begin{equation}\label{eqn:zeta_r_5}
        \frac{\partial\psi_k}{\partial\nnu}(\theta)=k\Psi(\theta)\qquad\text{on }S_1^+.
   \end{equation}
   Then, from \eqref{eqn:zeta_r_4} and \eqref{eqn:zeta_r_5} we obtain
   \begin{equation}\label{eqn:zeta_r_6}
    \int_{B_1^+}\nabla \psi_k\cdot\nabla \Phi\dx=k\int_{S_1^+}\Phi\Psi\ds+2m_k(\Sigma)=k\zeta(1)+2m_k(\Sigma).
   \end{equation} 
   Finally, combining \eqref{eqn:zeta_r_2}, \eqref{eqn:zeta_r_3} and \eqref{eqn:zeta_r_6} leads to the thesis. 
     \end{proof}
  
  \begin{lemma}\label{lemma:g_R}
  Let $g_R$ be as defined in \eqref{eqn:def_g_R} and $m_k(\Sigma)$ as
  in \eqref{eqn:def_m_k}. Then $\lim_{R\to+\infty}g_R=2m_k(\Sigma)$.
  \end{lemma}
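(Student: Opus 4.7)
The plan is to express $g_R$ as a boundary integral on $S_R^+$, identify the explicit form of $\chi_R$ on $[1,R]$ via an Euler ODE, and pass to the limit using Lemmas \ref{lemma:conv_U_R} and \ref{lemma:zeta_1}. First I would integrate by parts. Since $U_R$ is harmonic on $\Pi_R$, vanishes on $\partial\Pi_R\setminus S_R^+$ (with decay at the tube end), and equals $\psi_k$ on $S_R^+$, one gets $\int_{\Pi_R}\abs{\nabla U_R}^2\dx=\int_{S_R^+}\psi_k\frac{\partial U_R}{\partial\nnu}\ds$; likewise, since $\psi_k$ is harmonic on $B_R^+$ and vanishes on $\mathcal{C}_R$, $\int_{B_R^+}\abs{\nabla\psi_k}^2\dx=\int_{S_R^+}\psi_k\frac{\partial\psi_k}{\partial\nnu}\ds$. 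Using $\psi_k(R\theta)=R^k\Psi(\theta)$, $\frac{\partial\psi_k}{\partial\nnu}(R\theta)=kR^{k-1}\Psi(\theta)$ and the definition of $\chi_R$, these combine to
\[
g_R = R^{N+k-1}\chi_R'(R) - k R^{N+2k-2}\gamma_N.
\]

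Next I would identify the explicit form of $\chi_R$ on $[1,R]$. Since $U_R$ is harmonic on the spherical shell $B_R^+\setminus\overline{B_1^+}$, writing $\Delta U_R=0$ in polar coordinates, multiplying by $\Psi$, integrating over $S_1^+$, and using Green's identity on the half-sphere together with $-\Delta_{\mathbb{S}^{N-1}}\Psi=k(k+N-2)\Psi$ give the Euler equation
\[
\chi_R''(r)+\tfrac{N-1}{r}\chi_R'(r)-\tfrac{k(k+N-2)}{r^2}\chi_R(r)=0,\qquad r\in(1,R).
\]
The crucial point is that the boundary terms along $\partial S_1^+$ vanish: $\Psi\in E_k^0$ is zero on the equator, and for $\theta\in\partial S_1^+$ and $r\in(1,R)$ the points $r\theta$ lie in $\mathcal{C}_R\setminus\overline{\Sigma}$ (thanks to the assumption $\max_{x\in\partial\Sigma}\abs{x}=1$), where $U_R=0$. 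Hence $\chi_R(r)=\alpha_R r^k+\beta_R r^{2-N-k}$. The condition $\chi_R(R)=R^k\gamma_N$ gives $\alpha_R R^{N+2k-2}+\beta_R=R^{N+2k-2}\gamma_N$; substituting into the previous formula for $g_R$ yields the compact identity
\[
g_R=-(N+2k-2)\beta_R.
\]

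To conclude, I would pass to the limit. Combining $\chi_R(R)=R^k\gamma_N$ with $\chi_R(2)$ gives $\beta_R=\tfrac{\chi_R(2)-2^k\gamma_N}{2^{2-N-k}-2^k R^{2-N-2k}}$. Since $U_R-\Phi$ is harmonic in a neighbourhood of $S_2^+$ and vanishes on $\mathcal{C}_R\setminus\overline{\Sigma}$ (recalling $\Phi=w_k$ there, with $w_k\in\mathcal{D}^{1,2}(\Pi)$ vanishing on $\partial\Pi$), Schwarz reflection upgrades the $H^1$-convergence of Lemma \ref{lemma:conv_U_R} to uniform convergence on $S_2^+$, so $\chi_R(2)\to\zeta(2)$. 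Since $N+2k-2>0$, the denominator tends to $2^{2-N-k}$, and using the explicit formula $\zeta(r)=\gamma_N r^k+(\zeta(1)-\gamma_N)r^{2-N-k}$ from the proof of Lemma \ref{lemma:zeta_1} one finds $\beta_R\to\zeta(1)-\gamma_N=-\tfrac{2m_k(\Sigma)}{N+2k-2}$, so $g_R\to 2m_k(\Sigma)$. I expect the main technical point to be the rigorous justification of the vanishing of the angular boundary terms in the ODE derivation, which depends crucially on the size assumption that $\Sigma$ has radius $1$.
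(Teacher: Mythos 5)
Your proof is correct and essentially coincides with the paper's: both reduce $g_R$ to a boundary integral on $S_R^+$, identify $\chi_R$ as a solution of the Euler ODE $\bigl(r^{N+2k-1}(r^{-k}\chi_R)'\bigr)'=0$ on $(1,R)$, and pass to the limit by combining Lemma \ref{lemma:conv_U_R} with Lemma \ref{lemma:zeta_1}. The only cosmetic differences are that the paper pins down the free constant at $r=1$ rather than $r=2$ and notes via the trace embedding that $\chi_R(1)\to\zeta(1)$ (so your Schwarz-reflection upgrade to uniform convergence on $S_2^+$ is unnecessary --- $L^2$-convergence of the trace already gives $\chi_R(2)\to\zeta(2)$), and it writes out the explicit quotient \eqref{eqn:chi_R_2} instead of your tidier identity $g_R=-(N+2k-2)\beta_R$.
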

  \begin{proof}
     Integrating by parts we have that
  \[
   g_R=\int_{S_R^+}\left( \frac{\partial U_R}{\partial \nnu}-\frac{\partial \psi_k}{\partial\nnu} \right)\psi_k\ds.
  \]
  If $\chi_R$ is the function defined in \eqref{eqn:def_chi_R}, then
  \[
   \chi_R'(r)=\int_{S_1^+}\frac{\partial U_R}{\partial\nnu}(r\theta)\Psi(\theta)\dth
  \]
  and, by a change of variable,
  \begin{equation}\label{eqn:chi'_R}
      \chi_R'(r)=r^{1-N-k}\int_{S_r^+}\frac{\partial U_R}{\partial\nnu}\psi_k\ds.
  \end{equation}
  By simple computations one can prove that $\chi_R$ solves
  \[
   \left( r^{N+2k-1}(r^{-k}\chi_R(r))' \right)'=0\qquad\text{in }(1,R).
  \]
  By integration, we arrive at
  \begin{equation}\label{eqn:chi_R_1}
   r^{-k}\chi_R(r)=\chi_R(1)+C\frac{1-r^{-N-2k+2}}{N+2k-2}.
  \end{equation}
  From the fact that $U_R=R^K\Psi$ on $S_R^+$, we have that $\chi_R(R)=R^k\gamma_N$, and this allows us to know the constant $C$. After some computations, the expression \eqref{eqn:chi_R_1} then becomes as follows
  \begin{equation*}
   r^{-k}\chi_R(r)=\chi_R(1)+(\gamma_N-\chi_R(1))\frac{1-r^{-N-2k+2}}{1-R^{-N-2k+2}},\qquad r\in (1,R).
  \end{equation*}
  From \eqref{eqn:chi'_R}, we get
  \begin{equation}\label{eqn:chi_R_2}
  \begin{split}
   \int_{S_R^+}\frac{\partial U_R}{\partial\nnu}\psi_k\ds&=\chi_R'(R)R^{N+k-1} \\
   & =\frac{[\gamma_N(N+k-2)-\chi_R(1)(N+2k-2)]R^{-N-k+1}+k\gamma_N R^{k-1}}{1-R^{-N-2k+2}}R^{N+k-1} \\
   & = \frac{\gamma_N(N+k-2)-\chi_R(1)(N+2k-2)+k\gamma_N R^{N+2k-2}}{ 1-R^{-N-2k+2}}.
   \end{split}
  \end{equation}
  For what concerns the second part of $g_R$, it is easy to see that
  \begin{equation*}
   \frac{\partial \psi_k}{\partial\nnu}(r\theta)=kr^{k-1}\Psi(\theta).
  \end{equation*}
  Therefore
  \begin{equation}\label{eqn:chi_R_3}
   \int_{S_R^+}\frac{\partial\psi_k}{\partial\nnu}\psi_k\ds=\int_{S_1^+}kR^{N+2k-2}\abs{\Psi}^2\dth=k R^{N+2k-2}\gamma_N.
  \end{equation}
  Finally, combining \eqref{eqn:zeta_lim_chi}, \eqref{eqn:chi_R_2}, \eqref{eqn:chi_R_3} and Lemma \ref{lemma:zeta_1} and taking the limit when $R\to +\infty$ we reach the conclusion.
  \end{proof}

  Combining Propositions \ref{prop:up_bound} and \ref{prop:low_bound}
  with Lemmas \ref{lemma:conv_g_R_eps} and \ref{lemma:g_R} we obtain the following upper-lower
  estimate for the eigenvalue variation.

\begin{proposition}\label{prop:up_low_bound}
    Let $\rho\in(0,1/2)$, $K_\rho$ as defined in Proposition \ref{prop:precise_energy_estim_eps} and $m_k(\Sigma)$ as in \eqref{eqn:def_m_k}. Then, for all $R\geq K_\rho$, we have that, as $\epsilon\to 0$,
    \[
      -2m_k(\Sigma)+o(1)\leq \frac{\lambda_j-\lambda_j^\epsilon}{\epsilon^{N+2k-2}}\leq \frac{H(\phi_j^\epsilon,K_\rho\epsilon)}{\epsilon^{2k}}(f_R(\epsilon)+o(1)).
    \]
\end{proposition}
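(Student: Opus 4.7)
The proof plan is simply to combine the upper and lower bounds already established in Propositions \ref{prop:up_bound} and \ref{prop:low_bound} with the asymptotic information on $g_R$ provided by Lemmas \ref{lemma:conv_g_R_eps} and \ref{lemma:g_R}. Most of the real work has already been done upstream, and this proposition merely repackages those results into the form that will be convenient for the eventual proof of Theorem \ref{thm:main_1}.

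For the upper bound I would take the inequality from Proposition \ref{prop:up_bound} and divide both sides by $\epsilon^{N+2k-2}$. Since $\epsilon^{N-2}/\epsilon^{N+2k-2}=\epsilon^{-2k}$, the prefactor $\epsilon^{N-2}H(\phi_j^\epsilon,K_\rho\epsilon)$ becomes $H(\phi_j^\epsilon,K_\rho\epsilon)/\epsilon^{2k}$, yielding exactly the stated upper estimate; no additional work is needed.

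For the lower bound I would start from Proposition \ref{prop:low_bound}, divide by $\epsilon^{N+2k-2}$ and flip the sign to obtain $(\lambda_j-\lambda_j^\epsilon)/\epsilon^{N+2k-2}\geq -g_R(\epsilon)+o(1)$ as $\epsilon\to 0$, for each fixed $R\geq K_\rho$. Lemma \ref{lemma:conv_g_R_eps} then gives $g_R(\epsilon)\to g_R$ as $\epsilon\to 0$, so the right-hand side converges to $-g_R$, and Lemma \ref{lemma:g_R} provides $g_R\to 2m_k(\Sigma)$ as $R\to\infty$. Writing $-g_R=-2m_k(\Sigma)+(2m_k(\Sigma)-g_R)$ and absorbing the second, $R$-dependent but $\epsilon$-independent, summand into the $o(1)$ term (after enlarging $R$) produces the lower bound $-2m_k(\Sigma)+o(1)$.

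The only mildly delicate point --- which I would not regard as a genuine obstacle, since the Pohozaev identity, the monotonicity formula, the Min-Max estimates and the explicit evaluation of $g_R$ in terms of $m_k(\Sigma)$ are all already in place --- is the interplay between the two limits $\epsilon\to 0$ and $R\to\infty$ in the lower bound. This is resolved by a standard two-scale argument: given any $\delta>0$, first fix $R$ large enough that $g_R\leq 2m_k(\Sigma)+\delta$, then apply Proposition \ref{prop:low_bound} with that $R$ and let $\epsilon\to 0$, obtaining $\liminf_{\epsilon\to0}(\lambda_j-\lambda_j^\epsilon)/\epsilon^{N+2k-2}\geq -2m_k(\Sigma)-\delta$; the arbitrariness of $\delta$ then yields the claimed estimate.
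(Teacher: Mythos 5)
Your proof is correct and follows exactly the route the paper intends: the upper bound is immediate from Proposition \ref{prop:up_bound} upon dividing by $\epsilon^{N+2k-2}$, and the lower bound follows from Proposition \ref{prop:low_bound} combined with Lemmas \ref{lemma:conv_g_R_eps} and \ref{lemma:g_R} via the two-scale ($\epsilon\to 0$ first, then $R\to\infty$) argument you describe, which amounts to establishing $\liminf_{\epsilon\to0}(\lambda_j-\lambda_j^\epsilon)/\epsilon^{N+2k-2}\geq -2m_k(\Sigma)$. The paper's proof is just the one-line remark that the proposition follows by combining those four results, so your proposal spells out precisely what the authors left implicit.
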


Since $-2m_k(\Sigma)>0$, as a direct consequence of Proposition
\ref{prop:up_low_bound} we obtain the following estimate from below 
for $H(\phi_j^\epsilon,K_\rho\epsilon)$.

\begin{corollary}\label{cor:bdd_blow_up}
We have that
\begin{equation*}
\frac{\epsilon^{2k}}{H(\phi_j^\epsilon,K_\rho\epsilon)}=O(1)\qquad\text{as }\epsilon\to 0.
\end{equation*}
\end{corollary}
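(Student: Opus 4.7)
The plan is to read off the statement directly from the upper-lower estimate in Proposition \ref{prop:up_low_bound} by using the sign information on $m_k(\Sigma)$ and the boundedness of $f_R(\epsilon)$.

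First I would fix some $R \geq K_\rho$ once and for all, which is legitimate since Proposition \ref{prop:up_low_bound} holds for every such $R$. For this fixed $R$, the key ingredient is that $f_R(\epsilon) = O(1)$ as $\epsilon \to 0$; this was already observed in the proof of Proposition \ref{prop:up_bound}, being a direct consequence of the energy estimates \eqref{eqn:precise_energy_estim_th1} and \eqref{eqn:precise_energy_estim_vint_th1} together with the definitions of $Z_R^\epsilon$ and $\tilde\phi^\epsilon$ in \eqref{eqn:def_phi_tilde_Z_R}. In particular there is a constant $C_R > 0$ such that $|f_R(\epsilon) + o(1)| \leq C_R$ for $\epsilon$ sufficiently small.

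Next I would combine this with the chain of inequalities provided by Proposition \ref{prop:up_low_bound}, namely
\[
-2m_k(\Sigma) + o(1) \;\leq\; \frac{H(\phi_j^\epsilon,K_\rho\epsilon)}{\epsilon^{2k}}\bigl(f_R(\epsilon) + o(1)\bigr),
\]
and I would exploit \eqref{eqn:def_C_k}, which guarantees $-2m_k(\Sigma) > 0$. For $\epsilon$ small enough, the left-hand side is therefore bounded below by the strictly positive constant $-m_k(\Sigma)$. Since $H(\phi_j^\epsilon, K_\rho\epsilon) > 0$ by Lemma \ref{lemma:H_well_posed1}, the positivity of the right-hand side forces $f_R(\epsilon) + o(1) > 0$ (otherwise the product would be non-positive). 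Bounding this positive factor above by $C_R$ yields
\[
\frac{H(\phi_j^\epsilon,K_\rho\epsilon)}{\epsilon^{2k}} \cdot C_R \;\geq\; -m_k(\Sigma) \;>\; 0,
\]
and rearranging gives
\[
\frac{\epsilon^{2k}}{H(\phi_j^\epsilon,K_\rho\epsilon)} \;\leq\; -\frac{C_R}{m_k(\Sigma)},
\]
which is exactly the $O(1)$ claim.

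I do not expect any genuine obstacle here: once the sign $-2m_k(\Sigma) > 0$ is available from \eqref{eqn:def_C_k} and the boundedness of $f_R(\epsilon)$ is in hand, the corollary is essentially a sign-chasing exercise on the inequality of Proposition \ref{prop:up_low_bound}. The only subtlety worth mentioning explicitly in the write-up is verifying that $f_R(\epsilon) + o(1)$ is actually positive (and bounded), so that dividing by it preserves the inequality in the correct direction; this follows from the positivity of $H$ and the strict positivity of the lower bound $-2m_k(\Sigma)$.
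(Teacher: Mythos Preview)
Your proposal is correct and matches the paper's approach exactly: the paper presents this corollary as an immediate consequence of Proposition \ref{prop:up_low_bound} together with the strict positivity $-2m_k(\Sigma)>0$, which is precisely the sign-chasing argument you describe. The only detail you make explicit that the paper leaves implicit is the boundedness of $f_R(\epsilon)$, but this is indeed already established in the proof of Proposition \ref{prop:up_bound} via \eqref{eqn:precise_energy_estim_th1} and \eqref{eqn:precise_energy_estim_vint_th1}.
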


\section{Blow-up Analysis}\label{sec:blow-up-analysis}

Let us introduce the functional

\begin{align*}
  F\colon \R \times H^1_0(\Omega) & \longrightarrow \R\times H^{-1}(\Omega) \\
  (\lambda,\phi) &\longmapsto (\norm{\phi}_{H^1_0(\Omega)}^2-\lambda_j,-\Delta \phi-\lambda p\phi)
\end{align*}
where $\|\phi\|_{H^1_0(\Omega)}^2=\int_\Omega |\nabla\phi|^2\,dx$ and 
\[
  _{H^{-1}(\Omega)}\<-\Delta\phi-\lambda p\phi, v\>_{H^1_0(\Omega)}=\int_\Omega(\nabla \phi\cdot\nabla v-\lambda p \phi v)\dx.
\]
From the assumptions we know that
$F(\lambda_j,\phi_j)=(0,0)$. Moreover, from the simplicity 
assumption \eqref{eqn:hp_simplicity} and  Fredholm Alternative, one
can easily prove the following result
 (see e.g. \cite{Abatangelo2015} for details for a similar operator).

\begin{lemma}
  The functional $F$ is differentiable at $(\lambda_j,\phi_j)$ and its differential
  \begin{align*}
    &\mathrm{d}F(\lambda_j,\phi_j)\colon \R \times H^1_0(\Omega)  \longrightarrow \R\times H^{-1}(\Omega) \\
    &\mathrm{d}F(\lambda_j,\phi_j)(\lambda,\phi)=\left(2\int_\Omega\nabla\phi_j\cdot\nabla\phi\dx,-\Delta\phi-\lambda p\phi_j-\lambda_j p\phi\right)
  \end{align*}
  is invertible.
  \end{lemma}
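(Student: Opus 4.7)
The plan is to verify differentiability by direct computation and then establish invertibility through the Fredholm alternative, exploiting both the self-adjointness of $-\Delta-\lambda_j p$ and the simplicity assumption \eqref{eqn:hp_simplicity}.

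For differentiability, note that the first component $(\lambda,\phi)\mapsto\|\phi\|_{H^1_0}^2-\lambda_j$ depends quadratically on $\phi$ and is independent of $\lambda$, so its derivative at $\phi_j$ in direction $\phi$ is $2\int_\Omega\nabla\phi_j\cdot\nabla\phi\dx$. The second component $(\lambda,\phi)\mapsto -\Delta\phi-\lambda p\phi$ is bilinear and continuous from $\R\times H^1_0(\Omega)$ into $H^{-1}(\Omega)$, so its derivative is the obvious one. This yields the stated formula for $\mathrm{d}F(\lambda_j,\phi_j)$.

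For injectivity, suppose $\mathrm{d}F(\lambda_j,\phi_j)(\lambda,\phi)=(0,0)$. The second equation $-\Delta\phi-\lambda p\phi_j-\lambda_j p\phi=0$ can be tested against $\phi_j$: integration by parts gives $\int_\Omega\nabla\phi\cdot\nabla\phi_j\dx=\lambda_j\int_\Omega p\phi\phi_j\dx$ (obtained equivalently by testing the eigenvalue equation for $\phi_j$ against $\phi$), so the pairing reduces to $\lambda\int_\Omega p\phi_j^2\dx=0$; since $\int_\Omega p\phi_j^2\dx=1$, this forces $\lambda=0$. Then $-\Delta\phi=\lambda_j p\phi$, and simplicity of $\lambda_j$ gives $\phi=c\phi_j$ for some $c\in\R$. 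The first equation $2\int_\Omega\nabla\phi_j\cdot\nabla\phi\dx=0$ then becomes $2c\lambda_j=0$, so $c=0$ and $\phi=0$.

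For surjectivity, fix $(a,f)\in\R\times H^{-1}(\Omega)$ and seek $(\lambda,\phi)$ with $\mathrm{d}F(\lambda_j,\phi_j)(\lambda,\phi)=(a,f)$. The operator $-\Delta-\lambda_j p\colon H^1_0(\Omega)\to H^{-1}(\Omega)$ is self-adjoint, Fredholm of index $0$, with one-dimensional kernel spanned by $\phi_j$. The Fredholm alternative yields solvability of $-\Delta\phi-\lambda_j p\phi=f+\lambda p\phi_j$ if and only if the right-hand side annihilates $\phi_j$, i.e.
\[
\langle f,\phi_j\rangle_{H^{-1},H^1_0}+\lambda\int_\Omega p\phi_j^2\dx=0,
\]
which uniquely determines $\lambda=-\langle f,\phi_j\rangle_{H^{-1},H^1_0}$. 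The corresponding solution $\phi$ is determined up to the addition of $c\phi_j$; pick $\phi_0$ solving the equation with $\int_\Omega\nabla\phi_0\cdot\nabla\phi_j\dx=0$. The first equation then reads
\[
2c\int_\Omega|\nabla\phi_j|^2\dx=a,
\]
and, since $\int_\Omega|\nabla\phi_j|^2\dx=\lambda_j>0$, we obtain $c=a/(2\lambda_j)$, completing the construction. The main (and only nontrivial) step is the Fredholm alternative applied to $-\Delta-\lambda_j p$, which follows from compactness of the resolvent; the rest is the standard bookkeeping of matching the one-dimensional kernel against the one-dimensional cokernel using the two free parameters $\lambda$ and $c$.
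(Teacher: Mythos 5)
Your proof is correct and follows precisely the route the paper sketches: the paper itself gives no detailed argument for this lemma, merely noting that the conclusion follows from the simplicity assumption \eqref{eqn:hp_simplicity} and the Fredholm alternative (citing \cite{Abatangelo2015}), and your injectivity/surjectivity bookkeeping via the one-dimensional kernel and cokernel of $-\Delta-\lambda_j p$, with $\lambda$ and the $c\phi_j$-component as the two free parameters, is exactly what that citation amounts to. The only cosmetic caveat is that calling $-\Delta-\lambda_j p\colon H^1_0(\Omega)\to H^{-1}(\Omega)$ ``self-adjoint'' is a slight abuse (it is symmetric with respect to the duality pairing, being a compact perturbation of the Riesz isomorphism $-\Delta$), but the Fredholm-alternative reasoning you base on it is sound.
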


\begin{lemma}\label{l:7.2}
  Let $\rho\in(0,1/2)$, $K_\rho$ as defined in Proposition \ref{prop:precise_energy_estim_eps} and $R\geq K_\rho$. Then, when $\epsilon\to 0$,
  \begin{equation*}
    v_\jre \longrightarrow \phi_j\qquad\text{in}\quad H^1_0(\Omega),
  \end{equation*}
  where $v_\jre$ is defined in \eqref{eqn:def_vjre}.
\end{lemma}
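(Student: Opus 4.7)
The plan is to write $\|v_{j,R,\epsilon}-\phi_j\|_{H^1_0(\Omega)}^2$ as a sum of two integrals, corresponding to the two pieces of the definition \eqref{eqn:def_vjre}, namely
\[
\int_{B_{R\epsilon}^+}\abs{\nabla \vint_\jre-\nabla\phi_j}^2\dx+\int_{\Omega\setminus B_{R\epsilon}^+}\abs{\nabla \phi_j^\epsilon-\nabla\phi_j}^2\dx,
\]
and show that each of the two summands vanishes as $\epsilon\to 0$. For the second summand, I would simply observe that it is dominated by $\int_\Omega\abs{\nabla\phi_j^\epsilon-\nabla\phi_j}^2\dx$, which converges to zero by the already established convergence \eqref{eq:3} (after noting $\Omega\subseteq\Omega^1$).

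For the first summand, my plan is to use the trivial bound
\[
\int_{B_{R\epsilon}^+}\abs{\nabla \vint_\jre-\nabla\phi_j}^2\dx\leq 2\int_{B_{R\epsilon}^+}\abs{\nabla \vint_\jre}^2\dx+2\int_{B_{R\epsilon}^+}\abs{\nabla \phi_j}^2\dx
\]
and control both pieces. The first piece is $O((R\epsilon)^{N-2\rho})$ by the energy estimate \eqref{eq:4} of Proposition \ref{prop:precise_energy_estim_vint}, and since $R$ is fixed and $N-2\rho>0$ (as $N\geq 2$ and $\rho<1/2$), it vanishes as $\epsilon\to 0$. The second piece can be handled either by absolute continuity of the Lebesgue integral applied to $\abs{\nabla\phi_j}^2\in L^1(\Omega)$, since $\abs{B_{R\epsilon}^+}\to 0$, or directly via the sharper estimate from Lemma \ref{lemma:energy_estim_zero}, which gives a decay of order $(R\epsilon)^{N+2k-2}$.

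There is no serious obstacle here: all the ingredients have already been established in earlier sections and the argument is essentially a bookkeeping exercise combining \eqref{eq:3} on the exterior region with the small-ball energy estimate for the harmonic extension $\vint_\jre$ on the interior region. The only minor point to keep in mind is that $v_\jre$ actually lies in $H^1_0(\Omega)$, which is guaranteed by the matching of traces on $S_{R\epsilon}^+$ (both sides equal $\phi_j^\epsilon$) together with the boundary condition $\vint_\jre=0$ on $\mathcal{C}_{R\epsilon}\subseteq\partial\Omega$.
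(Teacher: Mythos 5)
Your proof is correct and follows essentially the same route as the paper: decompose $\norm{\nabla(v_\jre-\phi_j)}^2_{L^2(\Omega)}$ over $B_{R\epsilon}^+$ and its complement, use \eqref{eq:3} for the exterior piece, and bound the interior piece by the small-ball energy estimates for $\vint_\jre$ and $\phi_j$. The only cosmetic difference is that the paper writes the exterior contribution as $\int_{\Omega^\epsilon}-\int_{\Omega_{R\epsilon}^\epsilon}$ and estimates both terms separately, whereas you dominate it at once by $\int_{\Omega^1}|\nabla(\phi_j^\epsilon-\phi_j)|^2\dx$, which is slightly leaner.
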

\begin{proof}
First note that
  \begin{multline*}
    \int_\Omega\abs{\nabla(v_\jre-\phi_j)}^2\dx =\int_{\Omega^\epsilon}\abs{\nabla(\phi_j^\epsilon-\phi_j)}^2\dx\\
    -\int_{\Omega_{R\epsilon}^\epsilon}\abs{\nabla(\phi_j^\epsilon-\phi_j)}^2\dx+\int_{B_{R\epsilon}^+}\abs{\nabla(\vint_\jre-\phi_j)}^2\dx.
  \end{multline*}
  The first term tends to zero because of \eqref{eq:3}. For the second
  and the third term we can exploit the energy estimates in
  Proposition \ref{prop:energy_estim_eps}, Lemma
  \ref{lemma:energy_estim_zero} and Proposition
  \ref{prop:precise_energy_estim_vint} to conclude. 
\end{proof}

\begin{lemma}\label{lemma:vjre_estim}
  Let $\rho\in(0,1/2)$, $K_\rho$ as defined in Proposition \ref{prop:precise_energy_estim_eps} and $R\geq K_\rho$. Then
  \begin{equation*}
    \norm{v_\jre-\phi_j}_{H^1_0(\Omega)}=O\left(\epsilon^{N/2-1}\sqrt{H(\phi_j^\epsilon,K_\rho \epsilon})\right)\qquad\text{as}\quad \epsilon\to 0
  \end{equation*}
  and, in particular,
  \begin{equation}\label{eqn:vjre_estim_2}
    \int_{\Omega\setminus B_{R\epsilon}^+}\abs{\nabla(\phi_j^\epsilon-\phi_j)}^2\dx =O\left(\epsilon^{N-2}H(\phi_j^\epsilon,K_\rho\epsilon)\right)\qquad\text{as}\quad \epsilon\to 0.
  \end{equation}
\end{lemma}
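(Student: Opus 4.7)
The plan is to use the local inversion of the smooth functional $F$ at $(\lambda_j,\phi_j)$: since $F$ is polynomial (quadratic in $u$, linear in $\lambda$), vanishes at $(\lambda_j,\phi_j)$, and $dF(\lambda_j,\phi_j)$ is an isomorphism as established above, the inverse function theorem delivers $C>0$ and a neighbourhood $U$ of $(\lambda_j,\phi_j)$ in $\R\times H^1_0(\Omega)$ such that
\[
|\mu-\lambda_j|+\|u-\phi_j\|_{H^1_0(\Omega)}\leq C\|F(\mu,u)\|_{\R\times H^{-1}(\Omega)}\quad \text{for all }(\mu,u)\in U.
\]
By Lemma \ref{l:7.2}, the pair $(\lambda_j^\epsilon,v_\jre)$ belongs to $U$ for $\epsilon$ small, so the proof reduces to estimating the two components of $F(\lambda_j^\epsilon,v_\jre)$.

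For the scalar component, identity \eqref{eqn:estim_vire_1} combined with the upper bound $0\leq \lambda_j-\lambda_j^\epsilon\leq C\epsilon^{N-2}H(\phi_j^\epsilon,K_\rho\epsilon)$ from Proposition \ref{prop:up_low_bound} yields
\[
\bigl|\,\|v_\jre\|_{H^1_0(\Omega)}^2-\lambda_j\,\bigr|=O\bigl(\epsilon^{N-2}H(\phi_j^\epsilon,K_\rho\epsilon)\bigr).
\]
For the $H^{-1}$-component, given $\psi\in H^1_0(\Omega)$ I split the pairing $\int_\Omega \nabla v_\jre\cdot\nabla\psi\dx-\lambda_j^\epsilon\int_\Omega p\,v_\jre\psi\dx$ over $B_{R\epsilon}^+$ and $\Omega\setminus B_{R\epsilon}^+$; exploiting the harmonicity of $\vint_\jre$ in $B_{R\epsilon}^+$, the equation for $\phi_j^\epsilon$ in $\Omega^\epsilon$, and the vanishing trace of $\psi$ on $\mathcal{C}_{R\epsilon}\subset\partial\Omega$, a piecewise integration by parts collapses the whole expression into
\[
\int_{B_{R\epsilon}^+}\nabla(\vint_\jre-\phi_j^\epsilon)\cdot\nabla\psi\dx+\lambda_j^\epsilon\int_{B_{R\epsilon}^+}p(\phi_j^\epsilon-\vint_\jre)\psi\dx.
\]
Cauchy--Schwarz and the gradient energy estimates \eqref{eqn:precise_energy_estim_th1} and \eqref{eqn:precise_energy_estim_vint_th1} bound the first integral by $O\bigl(\epsilon^{(N-2)/2}\sqrt{H(\phi_j^\epsilon,K_\rho\epsilon)}\bigr)\|\psi\|_{H^1_0(\Omega)}$; the second is of strictly smaller order, since $\|\psi\|_{L^2(B_{R\epsilon}^+)}$ can be controlled by a positive power of $\epsilon$ times $\|\psi\|_{H^1_0(\Omega)}$ via H\"older and Sobolev embeddings (using Gagliardo--Nirenberg in dimension $N=2$). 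Hence $\|F(\lambda_j^\epsilon,v_\jre)\|_{\R\times H^{-1}(\Omega)}=O\bigl(\epsilon^{N/2-1}\sqrt{H(\phi_j^\epsilon,K_\rho\epsilon)}\bigr)$.

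Plugging the latter into the local inversion bound produces the first claim. The ``in particular'' assertion \eqref{eqn:vjre_estim_2} is then immediate: $v_\jre=\phi_j^\epsilon$ throughout $\Omega\setminus B_{R\epsilon}^+$, so
\[
\int_{\Omega\setminus B_{R\epsilon}^+}|\nabla(\phi_j^\epsilon-\phi_j)|^2\dx\leq \|v_\jre-\phi_j\|_{H^1_0(\Omega)}^2=O\bigl(\epsilon^{N-2}H(\phi_j^\epsilon,K_\rho\epsilon)\bigr).
\]
I expect the subtlest step to be the piecewise integration by parts itself, where the jumps of the normal derivative on the gluing half-sphere $S_{R\epsilon}^+$ must be correctly recombined with bulk contributions on $B_{R\epsilon}^+$, using at each step that $\mathcal{C}_{R\epsilon}$ sits on the flat part of $\partial\Omega$ and hence kills the associated trace of $\psi$.
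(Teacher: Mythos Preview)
Your argument is correct and follows essentially the same route as the paper: invertibility of $\mathrm{d}F(\lambda_j,\phi_j)$ reduces the problem to estimating the two components of $F(\lambda_j^\epsilon,v_\jre)$, the scalar one via \eqref{eqn:estim_vire_1} and Proposition~\ref{prop:up_low_bound}, and the $H^{-1}$ one by splitting over $B_{R\epsilon}^+$ and $\Omega\setminus B_{R\epsilon}^+$ and using the equation for $\phi_j^\epsilon$ together with the energy estimates \eqref{eqn:precise_energy_estim_th1}, \eqref{eqn:precise_energy_estim_vint_th1}. The only cosmetic difference is that the paper reaches your displayed identity by a direct decomposition (no explicit piecewise integration by parts, and without invoking harmonicity of $\vint_\jre$, which is in fact not needed), and bounds the lower-order term by Poincar\'e on $\Omega$ for $\psi$ together with the $L^2$ estimates for $\phi_j^\epsilon$ and $\vint_\jre$ on $B_{R\epsilon}^+$ rather than by H\"older--Sobolev on $\psi$; both variants give the same order.
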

\begin{proof}
Taking into account Lemma \ref{l:7.2} and  \eqref{eq:7}, from the
differentiability of the functional $F$ it follows that
  \[
    F(\lambda_j^\epsilon,v_\jre)=\mathrm{d}F(\lambda_j,\phi_j)(\lambda_j^\epsilon-\lambda_j,v_\jre-\phi_j)+o\left(\abs{\lambda_j^\epsilon-\lambda_j}+\norm{v_\jre-\phi_j}_{H^1_0(\Omega)}\right)
  \]
as $\epsilon\to0$.
  Now let us apply $\mathrm{d}F(\lambda_j,\phi_j)^{-1}$ to both members and obtain
  \begin{multline*}
    \abs{\lambda_j^\epsilon-\lambda_j}+\norm{v_\jre-\phi_j}_{H^1_0(\Omega)}\\\leq
    \norm{\mathrm{d}F(\lambda_j,\phi_j)^{-1}}_{\mathcal L(\R\times H^{-1}(\Omega),
 \R \times H^1_0(\Omega) )}
\norm{F(\lambda_j^\epsilon,v_\jre)}_{\R\times H^{-1}(\Omega)}(1+o(1))
\end{multline*}
  and so
  \begin{equation}\label{eq:8}
    \norm{v_\jre-\phi_j}_{H^1_0(\Omega)}\leq C\left(\abs{\norm{v_\jre}^2_{H^1_0(\Omega)}-\lambda_j}+\norm{-\Delta v_\jre-\lambda_j^\epsilon p v_\jre}_{H^{-1}(\Omega)}\right).
  \end{equation}
  Thanks to \eqref{eqn:estim_vire_1}, Proposition
  \ref{prop:up_low_bound}, and the fact that $f_R(\epsilon)=O(1)$ as
  $\epsilon\to0$ in view of \eqref{eqn:precise_energy_estim_th1} and \eqref{eqn:precise_energy_estim_vint_th1},
  \begin{equation}\label{eq:9}
    \abs{\norm{v_\jre}^2_{H^1_0(\Omega)}-\lambda_j}\leq \abs{\norm{v_\jre}_{H^1_0(\Omega)}-\lambda_j^\epsilon}+\abs{\lambda_j^\epsilon-\lambda_j}=O(\epsilon^{N-2}H(\phi_j^\epsilon,K_\rho\epsilon)).
  \end{equation}
  Let $u\in H^1_0(\Omega)$ be such that $\norm{u}_{H^1_0(\Omega)}\leq 1$. Note that
  \begin{gather*}
    \int_\Omega\nabla v_\jre\cdot\nabla u\dx=\int_{B_{R\epsilon}^+}\nabla\vint_\jre\cdot\nabla u\dx+\int_{\Omega^\epsilon}\nabla\phi_j^\epsilon\cdot\nabla u\dx-\int_{\Omega_{R\epsilon}^\epsilon}\nabla\phi_j^\epsilon\cdot\nabla u\dx\leq \\
    \leq \sqrt{\int_{B_{R\epsilon}^+}\abs{\nabla \vint_\jre}^2\dx} +\lambda_j^\epsilon\int_{\Omega^\epsilon}p\phi_j^\epsilon u\dx +\sqrt{\int_{\Omega_{R\epsilon}^\epsilon}\abs{\nabla\phi_j^\epsilon}^2\dx}.
  \end{gather*}
  So we have that
  \begin{equation}\label{eqn:vjre_estim_1}
  \begin{split}
    &\int_\Omega \nabla v_\jre\cdot\nabla u\dx-\lambda_j^\epsilon\int_\Omega p v_\jre u\dx\leq \\
    &\leq \sqrt{\int_{B_{R\epsilon}^+}\abs{\nabla \vint_\jre}^2\dx} +\lambda_j^\epsilon\left(\int_{\Omega^\epsilon}p\phi_j^\epsilon u\dx-\int_\Omega p v_\jre u\dx\right) +\sqrt{\int_{\Omega_{R\epsilon}^\epsilon}\abs{\nabla\phi_j^\epsilon}^2\dx}.
  \end{split}
  \end{equation}
  Now let us analyze the middle term
  \begin{gather*}
    \int_{\Omega^\epsilon}p\phi_j^\epsilon u\dx-\int_\Omega p v_\jre u\dx=\int_{B_{R\epsilon}^+}p \phi_j^\epsilon u\dx-\int_{B_{R\epsilon}^+} p \vint_\jre u\dx\leq \\
    \leq \const \left(\sqrt{\int_{B_{R\epsilon}^+}\abs{\phi_j^\epsilon}^2\dx} +\sqrt{\int_{B_{R\epsilon}^+}|\vint_\jre|^2\dx}\right)
  \end{gather*}
  where we implicitly used the Poincar\'e Inequality. Thanks to inequality \eqref{eqn:poincare_type} and to the energy estimates made in Proposition \ref{prop:precise_energy_estim_eps}
  \[
    \int_{B_{R\epsilon}^+}\abs{\phi_j^\epsilon}^2\dx\leq \frac{(R\epsilon)^2}{N-1}\int_{B_{R\epsilon}^+}\abs{\nabla\phi_j^\epsilon}^2\dx+\frac{R\epsilon}{N-1}\int_{S_{R\epsilon}^+}|\phi_j^\epsilon|^2\dx =O(\epsilon^N H(\phi_j^\epsilon,K_\rho\epsilon))\quad\text{as }\epsilon\to 0.
  \]
  Then, from \eqref{eqn:vjre_estim_1}, Proposition \ref{prop:energy_estim_eps} and Proposition \ref{prop:precise_energy_estim_vint} we obtain that
  \[
    \int_\Omega \nabla v_\jre\cdot\nabla
    u\dx-\lambda_j^\epsilon\int_\Omega p v_\jre
    u\dx=O\left(\epsilon^{N/2-1}\sqrt{H(\phi_j^\epsilon,K_\rho\epsilon)}\right)\quad\text{as
    }\epsilon\to0
  \]
uniformly with respect to $u\in H^1_0(\Omega)$ with $\norm{u}_{H^1_0(\Omega)}\leq 1$ and hence
  \begin{equation}\label{eq:10}
   \norm{-\Delta v_\jre -\lambda_j^\epsilon p v_\jre}_{H^{-1}(\Omega)}=O\left(\epsilon^{N/2-1}\sqrt{H(\phi_j^\epsilon,K_\rho\epsilon)}\right) \quad\text{as
    }\epsilon\to0.
  \end{equation}
The conclusion follows by combining \eqref{eq:8}, \eqref{eq:9}, and \eqref{eq:10}.
\end{proof}

\begin{corollary}\label{cor:blow_up_bounded}
   Let $\rho\in(0,1/2)$, $K_\rho$ as defined in Proposition \ref{prop:precise_energy_estim_eps} and $R\geq K_\rho$. Then
  \[
    \int_{\frac{1}{\epsilon}\Omega\setminus B_R^+}\abs{\nabla \tilde{\phi}^\epsilon-\epsilon^k H(\phi_j^\epsilon,K_\rho\epsilon)^{-1/2}\nabla W^\epsilon}^2\dx =O(1),\qquad\text{as }\epsilon\to 0,
  \]
  where $\tilde{\phi}^\epsilon$ is defined in \eqref{eqn:def_phi_tilde_Z_R} and $W^\epsilon$ in \eqref{eqn:def_W_U_R}, while $\frac{1}{\epsilon}\Omega=\{\frac{1}{\epsilon}x\colon x\in \Omega\}$.
\end{corollary}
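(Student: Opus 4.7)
The plan is to reduce the claim to a direct rescaling of the estimate \eqref{eqn:vjre_estim_2} already established in Lemma \ref{lemma:vjre_estim}.

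First, I would unpack the definitions. By the chain rule,
\[
\nabla\tilde{\phi}^\epsilon(x)=\frac{\epsilon\,(\nabla\phi_j^\epsilon)(\epsilon x)}{\sqrt{H(\phi_j^\epsilon,K_\rho\epsilon)}},\qquad \nabla W^\epsilon(x)=\epsilon^{1-k}(\nabla\phi_j)(\epsilon x),
\]
so that
\[
\nabla\tilde{\phi}^\epsilon(x)-\epsilon^k H(\phi_j^\epsilon,K_\rho\epsilon)^{-1/2}\nabla W^\epsilon(x)=\frac{\epsilon}{\sqrt{H(\phi_j^\epsilon,K_\rho\epsilon)}}\bigl[(\nabla\phi_j^\epsilon)(\epsilon x)-(\nabla\phi_j)(\epsilon x)\bigr].
\]

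Next, I would perform the change of variable $y=\epsilon x$, under which $\frac{1}{\epsilon}\Omega\setminus B_R^+$ becomes $\Omega\setminus B_{R\epsilon}^+$ and $dx=\epsilon^{-N}\,dy$. This yields
\[
\int_{\frac{1}{\epsilon}\Omega\setminus B_R^+}\abs{\nabla\tilde{\phi}^\epsilon-\epsilon^k H(\phi_j^\epsilon,K_\rho\epsilon)^{-1/2}\nabla W^\epsilon}^2\dx=\frac{\epsilon^{2-N}}{H(\phi_j^\epsilon,K_\rho\epsilon)}\int_{\Omega\setminus B_{R\epsilon}^+}\abs{\nabla(\phi_j^\epsilon-\phi_j)}^2\,dy.
\]

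Finally, I would invoke estimate \eqref{eqn:vjre_estim_2} of Lemma \ref{lemma:vjre_estim}, which states precisely that
\[
\int_{\Omega\setminus B_{R\epsilon}^+}\abs{\nabla(\phi_j^\epsilon-\phi_j)}^2\,dy=O\bigl(\epsilon^{N-2}H(\phi_j^\epsilon,K_\rho\epsilon)\bigr)\quad\text{as }\epsilon\to 0.
\]
Plugging this into the previous identity, the factors $\epsilon^{N-2}H(\phi_j^\epsilon,K_\rho\epsilon)$ cancel exactly the prefactor $\epsilon^{2-N}/H(\phi_j^\epsilon,K_\rho\epsilon)$, giving the desired $O(1)$ bound.

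There is no real obstacle here: the heavy lifting was already carried out in Lemma \ref{lemma:vjre_estim} through the local inversion of the differential of $F$ at $(\lambda_j,\phi_j)$, which provided the sharp $H^1_0(\Omega)$-control on $v_\jre-\phi_j$. The present corollary is a bookkeeping statement, recording that the precise rate of \eqref{eqn:vjre_estim_2} is exactly what is needed so that, after the natural $\epsilon$-rescaling attached to the blow-up family and the normalization by $\sqrt{H(\phi_j^\epsilon,K_\rho\epsilon)}$, the remainder remains bounded on the exterior region $\frac{1}{\epsilon}\Omega\setminus B_R^+$. This $O(1)$ control will be the crucial ingredient for identifying the blow-up limit with the homogeneous profile $\Phi$ in Theorem \ref{thm:main_2_blowup}.
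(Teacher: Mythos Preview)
Your proof is correct and follows exactly the approach the paper takes: the paper's own proof consists of the single sentence ``It directly follows from a change of variables in \eqref{eqn:vjre_estim_2},'' and you have simply spelled out that change of variables in detail.
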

\begin{proof}
  It directly follows from a change of variables in \eqref{eqn:vjre_estim_2}.
\end{proof}

The following Theorem provides a blow-up analysis for scaled eigenfunctions, which contains Theorem \ref{thm:main_2_blowup}.

\begin{theorem}\label{thm:blow_up}
  Let $\rho\in(0,1/2)$ and $K_\rho$ as defined in Proposition
  \ref{prop:precise_energy_estim_eps}. Then 
 \begin{align}
  &\tilde{\phi}^\epsilon\longrightarrow  \frac{1}{\sqrt{\Lambda_\rho}} \Phi \qquad\text{in }\mathcal{H}_R\quad\text{for all }R>2,\label{eqn:blow_up_th1} \\
  &\frac{H(\phi_j^\epsilon,K_\rho\epsilon)}{\epsilon^{2k}}\longrightarrow \Lambda_\rho, \label{eqn:blow_up_th2}\\
  & \frac{\phi_j^\epsilon (\epsilon x)}{\epsilon^k}\longrightarrow \Phi(x)\qquad\text{in }\mathcal{H}_R\quad\text{for all }R>2,
 \end{align}
 as $\epsilon\to 0$, where
 \begin{equation*}
  \Lambda_\rho:=\frac{1}{K_\rho^{N-1}}\int_{S_{K_\rho}^+}\abs{\Phi}^2\ds.
 \end{equation*}

\end{theorem}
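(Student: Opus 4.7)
The plan is to establish weak convergence of the rescaled family $\{\tilde\phi^\epsilon\}$ in $\mathcal H_R$, identify the weak limit uniquely by matching the characterization of $\Phi$ in Proposition \ref{prop:def_Phi} (harmonicity on $\Pi$, vanishing trace on $\partial \Pi$, and a finite-energy asymptotic at infinity), pin down the multiplicative constant by the normalization $H(\tilde\phi^\epsilon,K_\rho)=1$, and finally promote weak to strong convergence via an integration-by-parts argument modelled on Lemma \ref{lemma:conv_U_R_eps}.

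By construction $H(\tilde\phi^\epsilon,K_\rho)=1$, and rescaling Proposition \ref{prop:precise_energy_estim_eps} shows that $\{\tilde\phi^\epsilon\}$ is bounded in $\mathcal H_R$ for every $R\geq K_\rho$ (and hence, by restriction, for every $R>2$). Corollary \ref{cor:bdd_blow_up} also ensures that $\beta_\epsilon:=\epsilon^k/\sqrt{H(\phi_j^\epsilon,K_\rho\epsilon)}$ is bounded. A diagonal extraction then yields a subsequence $\epsilon_n\to0$, a function $\tilde\phi$, and $\ell\geq 0$ such that $\tilde\phi^{\epsilon_n}\rightharpoonup\tilde\phi$ weakly in $\mathcal H_R$ for every $R>2$ and $\beta_{\epsilon_n}\to\ell$. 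Passing to the weak limit in the scaled eigenvalue equation $-\Delta\tilde\phi^\epsilon=\epsilon^2\lambda_j^\epsilon\,p(\epsilon\cdot)\,\tilde\phi^\epsilon$ on $\Pi_R\subset\tfrac{1}{\epsilon}\Omega^\epsilon$ (whose right-hand side vanishes in $L^2_{\textup{loc}}$) yields $-\Delta\tilde\phi=0$ in $\Pi$, while continuity of the trace preserves the boundary condition $\tilde\phi=0$ on $\partial\Pi$.

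To identify $\tilde\phi$ I invoke Corollary \ref{cor:blow_up_bounded}, which states
\[
\int_{\tfrac{1}{\epsilon}\Omega\setminus B_R^+}\abs{\nabla\tilde\phi^\epsilon-\beta_\epsilon\nabla W^\epsilon}^2\dx=O(1).
\]
Since $W^\epsilon\to\psi_k$ strongly in $H^1(B_{R'}^+)$ for every $R'$ and $\tfrac{1}{\epsilon}\Omega\nearrow\R^N_+$, weak lower semicontinuity on bounded annuli $B_{R'}^+\setminus B_R^+$ combined with exhaustion as $R'\to+\infty$ gives $\int_{\Pi\setminus B_R^+}\abs{\nabla(\tilde\phi-\ell\psi_k)}^2\dx<\infty$. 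Since $\nabla\tilde\phi\in L^2(\Pi_R)$ and $\nabla\psi_k\in L^2(\Pi_R)$ (recalling that $\Psi$ vanishes on $\partial\R^N_+$, so $\psi_k$ extends by zero to $\Pi$ as an $H^1_{\textup{loc}}$-function), we conclude $\int_\Pi\abs{\nabla(\tilde\phi-\ell\psi_k)}^2\dx<\infty$. Therefore $\tilde\phi$ satisfies conditions \eqref{eqn:Phi_1}--\eqref{eqn:Phi_3} with $\psi$ replaced by $\ell\psi_k$, and the uniqueness part of Proposition \ref{prop:def_Phi} forces $\tilde\phi=\Phi(\ell\psi_k)=\ell\Phi$. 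The constant $\ell$ is then fixed by the normalization: Rellich compactness turns the weak convergence into $L^2$-convergence of traces on $S_{K_\rho}^+$, so $1=H(\tilde\phi,K_\rho)=\ell^2\Lambda_\rho$, giving $\ell=1/\sqrt{\Lambda_\rho}$ and in particular ruling out the degenerate case $\ell=0$.

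Since the limit $(\tilde\phi,\ell)=(\Phi/\sqrt{\Lambda_\rho},1/\sqrt{\Lambda_\rho})$ is uniquely determined, Urysohn's subsequence principle upgrades the convergence to the full family $\epsilon\to 0$, proving \eqref{eqn:blow_up_th2}. For strong convergence in $\mathcal H_R$ I would adapt Lemma \ref{lemma:conv_U_R_eps}: the scaled equation shows $\nabla\tilde\phi^\epsilon\rightharpoonup\nabla\tilde\phi$ in $H(\div,\Pi_R)$, hence $\partial\tilde\phi^\epsilon/\partial\nnu\rightharpoonup\partial\tilde\phi/\partial\nnu$ in $(H^{1/2}_{00}(S_R^+))^*$; combined with compact trace convergence of $\tilde\phi^\epsilon$ in $H^{1/2}_{00}(S_R^+)$, the integration-by-parts identity
\[
\int_{\Pi_R}\abs{\nabla\tilde\phi^\epsilon}^2\dx=\int_{S_R^+}\tilde\phi^\epsilon\,\frac{\partial\tilde\phi^\epsilon}{\partial\nnu}\ds+\epsilon^2\lambda_j^\epsilon\int_{\Pi_R}p(\epsilon x)\abs{\tilde\phi^\epsilon}^2\dx
\]
passes to the limit, yielding $\|\tilde\phi^\epsilon\|_{\mathcal H_R}\to\|\tilde\phi\|_{\mathcal H_R}$ and thus strong convergence, i.e. \eqref{eqn:blow_up_th1}. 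The last statement then follows immediately from $\epsilon^{-k}\phi_j^\epsilon(\epsilon x)=\tilde\phi^\epsilon(x)/\beta_\epsilon\to\Phi$ in $\mathcal H_R$. The main delicate point is the exhaustion that upgrades the $\epsilon$-dependent bound of Corollary \ref{cor:blow_up_bounded} to the globally finite-energy condition on $\Pi$ needed to invoke the uniqueness of $\Phi$, together with the careful bookkeeping of boundary traces in $H^{1/2}_{00}$ required for the strong convergence step.
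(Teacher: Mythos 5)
Your strategy mirrors the paper's proof almost step for step: diagonal extraction of a weakly convergent subsequence, normalization of $H(\tilde\phi^\epsilon,K_\rho)$ to guarantee the limit is nontrivial, passage to the limit in the scaled equation to obtain harmonicity and vanishing trace, use of Corollary \ref{cor:blow_up_bounded} together with $W^\epsilon\to\psi_k$ to obtain the finite-energy condition at infinity, identification of the limit via the uniqueness in Proposition \ref{prop:def_Phi}, and Urysohn to upgrade from subsequences to the full family. The only structural difference is order: you identify the weak limit first and prove strong convergence afterwards, whereas the paper proves strong convergence first (which also lets it pass to the limit in the estimate from Corollary \ref{cor:blow_up_bounded} directly, while you correctly observe that weak lower semicontinuity on annuli suffices). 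Both orderings work.

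There is one genuine gap in your strong-convergence step. You invoke ``compact trace convergence of $\tilde\phi^\epsilon$ in $H^{1/2}_{00}(S_R^+)$'' to pair against the weak-$*$ convergence of $\partial\tilde\phi^\epsilon/\partial\nnu$ in $(H^{1/2}_{00}(S_R^+))^*$, but the trace operator $\mathcal{H}_R\to H^{1/2}(S_R^+)$ is continuous, not compact, so boundedness of $\{\tilde\phi^\epsilon\}$ in $\mathcal{H}_R$ does not by itself yield strong trace convergence in $H^{1/2}_{00}$. As written the step is circular, since strong $H^1$ convergence (which would give strong $H^{1/2}$ trace convergence) is precisely what you are trying to prove. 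The paper closes this gap by taking the restriction to the half-annulus $B_R^+\setminus B_{R/2}^+$, reflecting oddly across $\{x_1=0\}$ (using that for $R>2$ the scaled eigenfunction vanishes there), and invoking interior elliptic regularity to get uniform $H^2$ bounds on an annular neighbourhood of $S_R^+$; Rellich then gives strong $L^2(S_R^+)$ convergence of both $\tilde\phi^\epsilon$ and $\partial\tilde\phi^\epsilon/\partial\nnu$, which makes the boundary pairing pass to the limit. You should insert the same reflection-plus-elliptic-regularity argument; once you do, the rest of your proof is correct.
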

\begin{proof}
 Let $\epsilon_n\to 0$. From Corollary \ref{cor:bdd_blow_up} we deduce that, up to a subsequence,
 \[
   \frac{(\epsilon_n)^k}{\sqrt{H(\phi_j^{\epsilon_n},K_\rho\epsilon_n})}\longrightarrow c\geq 0.
 \]
 Since, in view of Proposition \ref{prop:precise_energy_estim_eps}, $\{\tilde{\phi}^{\epsilon_n}\}$ is bounded in $\mathcal{H}_R$, by a diagonal process there exists $\tilde{\Phi}$, with $\tilde{\Phi}\in\mathcal{H}_R$ for all $R>2$, and a subsequence (still denoted by $\epsilon_n$) such that
 \begin{equation}\label{eq:11}
  \tilde{\phi}^{\epsilon_n}\rightharpoonup
  \tilde{\Phi}\qquad\text{weakly in}\ \mathcal{H}_R\quad\text{for all }R>2.
\end{equation}
 Moreover $
 \int_{S_{K_\rho}^+}|\tilde{\phi}^{\epsilon_n}|^2\ds=K_\rho^{N-1}$,
 hence, by compactness of trace embeddings,
 \begin{equation}\label{eqn:blow_up_3}
  \int_{S_{K_\rho}^+}|\tilde{\Phi}|^2\ds=K_\rho^{N-1},
 \end{equation}
thus implying that $\tilde{\Phi}\not\equiv 0$.

Actually we can prove that the convergence in \eqref{eq:11} is
strong. Indeed, consider the equation solved by
$\tilde{\phi}^{\epsilon_n}$:
 \[
  \left\{\begin{aligned}
          -\Delta\tilde{\phi}^{\epsilon_n}
          &=(\epsilon_n)^2\lambda_j^{\epsilon_n}p\,\tilde{\phi}^{\epsilon_n},
          && \text{in
          }\left(\big(-\tfrac1{\epsilon_n},0\big]\times\Sigma\right)\cup B_R^+, \\
          \tilde{\phi}^{\epsilon_n} &=0, &&\text{on }\partial\left(
\left(\big(-\tfrac1{\epsilon_n},0\big]\times\Sigma\right)\cup B_R^+\right)
\setminus S_R^+ ,\\
          \tilde{\phi}^{\epsilon_n}(x) &=\tfrac{\phi_j^{\epsilon_n}(\epsilon_n x)}{\sqrt{H(\phi_j^{\epsilon_n},K_\rho\epsilon_n)}}, &&\text{on }S_R^+.
         \end{aligned}\right.
 \]
 If we consider the restriction to $B_R^+\setminus B_{R/2}^+$ and the
 odd reflection through the hyperplane $x_1=0$, we have that
 $\{\tilde{\phi}^{\epsilon_n}\}$ is bounded in
 $H^2(B_R\setminus B_{R/2})$, where
 $B_R=\{x\in \R^N\colon\abs{x}<R\}$. Hence, up to a subsequence,
 $\frac{\partial\tilde{\phi}^{\epsilon_n}}{\partial \nnu}\to
 \frac{\partial\tilde{\Phi}}{\partial\nnu}$
 in $L^2(S_R^+)$ and therefore
 \[
  \int_{\Pi_R}\abs{\nabla\tilde{\phi}^{\epsilon_n}}^2\dx=(\epsilon_n)^2\lambda_j^{\epsilon_n} \int_{\Pi_R}p\abs{\tilde{\phi}^{\epsilon_n}}^2\dx+\int_{S_R^+}\frac{\partial\tilde{\phi}^{\epsilon_n}}{\partial\nnu}\tilde{\phi}^{\epsilon_n}\ds\to \int_{S_R^+}\frac{\partial\tilde{\Phi}}{\partial\nnu}\tilde{\Phi}\ds=\int_{\Pi_R}|\nabla \tilde{\Phi}|^2\dx. 
 \]
Then we conclude that   $\tilde{\phi}^{\epsilon_n}\to \tilde{\Phi}$
strongly in $\mathcal{H}_R$ for all $R>2$.

From Corollary \ref{cor:blow_up_bounded} it follows that there exist
$c'>0$ and $n_0\in{\mathbb N}$ such that, for all $n\geq n_0$ and
$\tilde{R}>R$,
   \[
    \int_{B_{\tilde{R}}^+\setminus B_R^+}\abs{\nabla
      \tilde{\phi}^{\epsilon_n}-(\epsilon_n)^k
      H(\phi_j^{\epsilon_n},K_\rho{\epsilon_n})^{-1/2}\nabla
      W^{\epsilon_n}}^2\dx \leq c'.
  \]
  Let us recall that $W^{\epsilon_n}\to \psi_k$ in
  $H^1(B_{\tilde{R}}^+)$ and (since the norms are equivalent) also in
  $\mathcal{H}_{\tilde{R}}$: so, passing to the limit as $n\to\infty$
  in the above estimate, we obtain that
  \[
    \int_{B_{\tilde{R}}^+\setminus B_R^+}|\nabla\tilde{\Phi}-c\nabla
    \psi_k|^2\dx\leq c'.
  \]
Since the constant $c'$ is independent on $\tilde{R}$, we deduce that
  \begin{equation}\label{eqn:blow_up_1}
    \int_\Pi|\nabla\tilde{\Phi}-c\nabla\psi_k|^2\dx <+\infty.
  \end{equation}
  Moreover, the function $\tilde{\Phi}$ satisfies the following equation
  \begin{equation}\label{eqn:blow_up_2}
    \left\{\begin{aligned}
             -\Delta\tilde{\Phi} &=0, &&\text{in }\Pi, \\
             \tilde{\Phi}&=0, &&\text{on }\partial\Pi.
           \end{aligned}\right.
  \end{equation}
We claim  that $c>0$. Otherwise, if $c=0$ then, by \eqref{eqn:blow_up_1} and \eqref{eqn:blow_up_2}, we could say that $\tilde{\Phi}=0$, which would contradict \eqref{eqn:blow_up_3}.

From Proposition \ref{prop:def_Phi} we conclude that
$\tilde{\Phi}=c\,\Phi$ and hence, in view of \eqref{eqn:blow_up_3}, 
$c=\Lambda_\rho^{-1/2}$. Since the limit of the sequence
$\{\tilde{\phi}^{\epsilon_n}\}$ is the same for any choice of the
subsequence, we conclude the proof by invoking the \emph{Urysohn's Subsequence
  Principle}.
\end{proof}

\begin{corollary}\label{c:limZ}
For all $R>2$ we have that
 \begin{equation*}
  Z_R^\epsilon\longrightarrow \frac{1}{\sqrt{\Lambda_\rho}}Z_R\qquad\text{in }H^1(B_R^+)\quad\text{as }\epsilon\to 0.
 \end{equation*}
\end{corollary}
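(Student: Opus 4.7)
My plan is to exploit the fact that $Z_R^\epsilon$ and $\frac{1}{\sqrt{\Lambda_\rho}}Z_R$ both solve Dirichlet problems of the same form on $B_R^+$ but with different boundary data on $S_R^+$, and then transfer the boundary-data convergence already established in Theorem \ref{thm:blow_up} to convergence of the solutions. Performing the change of variables $y=\epsilon x$ in the equation defining $\vint_\jre$ and dividing through by $\sqrt{H(\phi_j^\epsilon,K_\rho\epsilon)}$, the function $Z_R^\epsilon$ weakly solves
\begin{equation*}
-\Delta Z_R^\epsilon=0\text{ in }B_R^+,\qquad Z_R^\epsilon=0\text{ on }\mathcal{C}_R,\qquad Z_R^\epsilon=\tilde\phi^\epsilon\text{ on }S_R^+,
\end{equation*}
while, by Lemma \ref{lemma:def_Z_R} and linearity, $\frac{1}{\sqrt{\Lambda_\rho}}Z_R$ solves the same problem with boundary datum $\frac{1}{\sqrt{\Lambda_\rho}}\Phi$ on $S_R^+$.

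Consequently, setting $u^\epsilon:=Z_R^\epsilon-\frac{1}{\sqrt{\Lambda_\rho}}Z_R$, the function $u^\epsilon$ is harmonic in $B_R^+$, vanishes on $\mathcal{C}_R$, and equals $\tilde\phi^\epsilon-\frac{1}{\sqrt{\Lambda_\rho}}\Phi$ on $S_R^+$. I would then invoke the Dirichlet principle: among all functions in $H^1(B_R^+)$ sharing the boundary trace of $u^\epsilon$, the harmonic extension $u^\epsilon$ itself minimizes the Dirichlet energy. Testing against the competitor $g^\epsilon:=\eta\cdot\bigl(\tilde\phi^\epsilon-\frac{1}{\sqrt{\Lambda_\rho}}\Phi\bigr)$, where $\eta\in C^\infty(\overline{B_R^+})$ is a fixed cut-off equal to $1$ in a neighbourhood of $S_R^+$ and vanishing in a neighbourhood of $\mathcal{C}_R$, I obtain
\[
\int_{B_R^+}\abs{\nabla u^\epsilon}^2\dx\leq \int_{B_R^+}\abs{\nabla g^\epsilon}^2\dx\leq C\bigl\|\tilde\phi^\epsilon-\tfrac{1}{\sqrt{\Lambda_\rho}}\Phi\bigr\|_{H^1(B_R^+)}^2,
\]
and the right-hand side tends to zero by the strong $\mathcal{H}_R$-convergence provided by Theorem \ref{thm:blow_up} (which in particular gives strong $H^1(B_R^+)$ convergence).

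To upgrade gradient convergence to full $H^1$-convergence of $u^\epsilon$, I would apply the Poincaré-type inequality \eqref{eqn:poincare_type} to $u^\epsilon$, getting
\[
\frac{N-1}{R^2}\int_{B_R^+}\abs{u^\epsilon}^2\dx\leq \int_{B_R^+}\abs{\nabla u^\epsilon}^2\dx+\frac{1}{R}\int_{S_R^+}\abs{u^\epsilon}^2\ds;
\]
the boundary term vanishes in the limit because the trace of $u^\epsilon$ on $S_R^+$ coincides with $\tilde\phi^\epsilon-\frac{1}{\sqrt{\Lambda_\rho}}\Phi$, which tends to $0$ in $L^2(S_R^+)$ by the trace inequality and Theorem \ref{thm:blow_up}. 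I do not foresee any substantial obstacle: this is essentially continuous dependence of a harmonic lift on its boundary values, and every ingredient has been prepared. The only minor technicality is the need to build a competitor that vanishes on $\mathcal{C}_R$, which the cut-off $\eta$ takes care of.
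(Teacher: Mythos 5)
Your argument is correct and follows essentially the same route as the paper: both exploit the Dirichlet principle for the harmonic difference (the paper works with $\sqrt{\Lambda_\rho}Z_R^\epsilon-Z_R$, you with its rescaling $Z_R^\epsilon-\frac{1}{\sqrt{\Lambda_\rho}}Z_R$), test against a cut-off times the boundary-data difference, and then conclude with a Poincar\'e-type inequality. The only cosmetic difference is in how the cut-off is described — the paper reuses $\eta_R$ from \eqref{eqn:def_cut_off}, which vanishes on $\Pi_{R/2}$ (a neighbourhood of $\Sigma$, where the boundary-data difference fails to vanish on $\mathcal{C}_R$), while you describe a slightly stronger cut-off vanishing near all of $\mathcal{C}_R$; either choice yields an admissible competitor.
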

\begin{proof}
 From the definitions of the functions $Z_R^\epsilon$ and $Z_R$ (in \eqref{eqn:def_phi_tilde_Z_R} and Lemma \ref{lemma:def_Z_R} respectively)
 \[
 \left\{ \begin{aligned}
     -\Delta(\sqrt{\Lambda_\rho}Z_R^\epsilon-Z_R) & =0, &&\text{in }B_R^+, \\
     \sqrt{\Lambda_\rho}Z_R^\epsilon-Z_R & =0, &&\text{on }\mathcal{C}_R, \\
     \sqrt{\Lambda_\rho}Z_R^\epsilon-Z_R &
     =\sqrt{\Lambda_\rho}\tilde{\phi}^\epsilon-\Phi, &&\text{on
     }S_R^+.
   \end{aligned}\right.
 \]
So $Z_R^\epsilon-Z_R$ is the unique, least energy solution with these
prescribed boundary conditions. Now, let $\eta=\eta_R$ be as defined in \eqref{eqn:def_cut_off}. We have that
\begin{gather*}
  \int_{B_R^+}\abs{\nabla (\sqrt{\Lambda_\rho}
    Z_R^\epsilon-Z_R)}^2\dx\leq
  \int_{B_R^+}\abs{\nabla(\eta(\sqrt{\Lambda_\rho}\tilde{\phi}^\epsilon-\Phi))}^2\dx \leq \\
  \leq 2\int_{B_R^+}\abs{\nabla \eta}^2\abs{\sqrt{\Lambda_\rho}
    \tilde{\phi}^\epsilon-\Phi}^2\dx+2\int_{B_R^+}
  \eta^2\abs{\nabla(\sqrt{\Lambda_\rho}\tilde{\phi}^\epsilon-\Phi)}^2\dx\leq \\
  \leq \frac{32}{R^2}\int_{B_R^+}\abs{\sqrt{\Lambda_\rho}
    \tilde{\phi}^\epsilon-\Phi}^2\dx+2\int_{B_R^+}\abs{\nabla(\sqrt{\Lambda_\rho}\tilde{\phi}^\epsilon-\Phi)}^2\dx
  \to 0
\end{gather*}
as $\epsilon\to 0$, thanks to \eqref{eqn:blow_up_th1} and to the
embedding $\mathcal{H}_R\subset L^2(\Pi_R)$. The conclusion follows
taking into account Poincar\'e Inequality for functions vanishing on a
portion of the boundary.
\end{proof}

\section{Proof of Theorem \ref{thm:main_1}}\label{sec:proof-theor-refthm:m}
 Thanks to Theorem \ref{thm:blow_up} and Corollary \ref{c:limZ}, we know that
  \[
   f_R:=\lim_{\epsilon\to 0}f_R(\epsilon)=\frac{1}{\Lambda_\rho}\int_{B_R^+}\abs{\nabla Z_R}^2\dx-\frac{1}{\Lambda_\rho}\int_{\Pi_R}\abs{\nabla \Phi}^2\dx.
  \]
  Moreover, in view of Proposition \ref{prop:up_low_bound} and \eqref{eqn:blow_up_th2}, we have that, for any $R>\max\{2,K_\rho\}$
  \begin{equation}\label{eqn:liminf_eps}
   C_k(\Sigma) \leq \liminf_{\epsilon\to 0}\frac{\lambda_j-\lambda_j^\epsilon}{\epsilon^{N+2k-2}}\leq \limsup_{\epsilon\to 0}\frac{\lambda_j-\lambda_j^\epsilon}{\epsilon^{N+2k-2}}\leq \Lambda_\rho f_R,
  \end{equation}
    where $C_k(\Sigma)=-2m_k(\Sigma)>0$.
 To complete the proof of our main result it is then enough to show that
   \[
   \lim_{R\to+\infty}\Lambda_\rho f_R= C_k(\Sigma).
  \]
For every  $R>2$ let us define
  \begin{equation}\label{eqn:def_xi_R}
   \xi_R(r):=\int_{S_1^+}Z_R(r\theta)\Psi(\theta)\dth \qquad\text{for }0\leq r\leq R.
  \end{equation}

  \begin{lemma}\label{lemma:f_R_a}
  There holds
   \begin{gather}
    \int_{S_R^+}\frac{\partial(Z_R-\psi_k)}{\partial \nnu}(\Phi-\psi_k)\ds\longrightarrow 0 \qquad\text{as }R\to +\infty, \label{eqn:f_R_lemma_a_th1}\\
    \int_{S_R^+}\frac{\partial(\psi_k-\Phi)}{\partial \nnu}(\Phi-\psi_k)\ds\longrightarrow 0 \qquad\text{as }R\to +\infty. \label{eqn:f_R_lemma_a_th2}
   \end{gather}
  \end{lemma}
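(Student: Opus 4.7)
The key identity is \eqref{eqn:Phi_components}, which gives $\Phi-\psi_k=w_k$ on each $S_R^+$, and the fact that $w_k$ vanishes on $\{x_1=0\}\setminus\Sigma$ (since $\Phi=0$ on $\partial\Pi$ and $\psi_k=0$ on $\{x_1=0\}$). Because $\Sigma\subset\overline{B_1^+}$, for every $R>1$ we have $w_k=0$ on the portion $\{x_1=0,|x|>R\}$ of $\partial(\R^N_+\setminus\overline{B_R^+})$. As observed in the proof of Lemma \ref{lemma:zeta_1} (via a Kelvin transform argument), $|w_k(x)|\leq \const\,|x|^{1-N}$ for $|x|>1$, and, combining this with interior gradient estimates for harmonic functions after odd reflection through $\{x_1=0\}$, one also gets $|\nabla w_k(x)|\leq \const\,|x|^{-N}$ at infinity.

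\textbf{Proof of \eqref{eqn:f_R_lemma_a_th2}.} Using $\Phi-\psi_k=w_k$ on $S_R^+$, I rewrite
\[
I_2(R):=\int_{S_R^+}\frac{\partial(\psi_k-\Phi)}{\partial\nnu}(\Phi-\psi_k)\ds=-\int_{S_R^+}w_k\,\frac{\partial w_k}{\partial r}\ds.
\]
For $\rho>R>1$, I would apply Green's identity to the harmonic function $w_k$ on the domain $(\R^N_+\cap B_\rho)\setminus\overline{B_R^+}$: the flat boundary contribution vanishes because $w_k=0$ on $\{x_1=0,|x|>1\}$, while the boundary integral on $S_\rho^+$ is controlled by $\const\,\rho^{1-2N}\cdot\rho^{N-1}=\const\,\rho^{-N}\to 0$ thanks to the pointwise bounds above. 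Letting $\rho\to+\infty$ yields
\[
I_2(R)=\int_{\R^N_+\setminus B_R^+}|\nabla w_k|^2\dx,
\]
which tends to $0$ as $R\to+\infty$ since $\int_{\R^N_+}|\nabla w_k|^2\dx\leq\int_\Pi|\nabla(\psi_k-\Phi)|^2\dx<+\infty$ by \eqref{eqn:Phi_3}.

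\textbf{Proof of \eqref{eqn:f_R_lemma_a_th1}.} I observe that $Z_R-\psi_k$ is harmonic in $B_R^+$, vanishes on $\mathcal{C}_R$ (since $Z_R=0$ and $\psi_k=0$ there), and equals $w_k$ on $S_R^+$. Green's identity then gives
\[
I_1(R):=\int_{S_R^+}\frac{\partial(Z_R-\psi_k)}{\partial\nnu}(\Phi-\psi_k)\ds=\int_{B_R^+}|\nabla(Z_R-\psi_k)|^2\dx\geq 0.
\]
By the Dirichlet principle, $Z_R-\psi_k$ minimizes this energy among all $v\in H^1(B_R^+)$ with $v=0$ on $\mathcal{C}_R$ and $v=w_k$ on $S_R^+$. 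For $R>4$ I would test with $v=\eta_R w_k$, where $\eta_R$ is a smooth radial cutoff with $\eta_R\equiv 0$ on $B_{R/2}^+$, $\eta_R\equiv 1$ outside $B_{3R/4}^+$ and $|\nabla\eta_R|\leq \const/R$; since $\Sigma\subset\overline{B_1^+}\subset B_{R/2}^+$, the competitor $v$ indeed vanishes on $\mathcal{C}_R$. Expanding the gradient I obtain
\[
I_1(R)\leq 2\int_{\R^N_+\setminus B_{R/2}^+}|\nabla w_k|^2\dx+\frac{\const}{R^2}\int_{B_R^+\setminus B_{R/2}^+}|w_k|^2\dx.
\]
The first term is again a tail of $\int_{\R^N_+}|\nabla w_k|^2\dx$ and vanishes as $R\to+\infty$. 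The second term is bounded by $\const\int_{\R^N_+\setminus B_{R/2}^+}|w_k|^2/|x|^2\dx$, which vanishes by the Hardy inequality (in dimension $N\geq 3$ the classical one; in dimension $N=2$ via \eqref{eqn:hardy2}, exactly as in the proof of Lemma \ref{lemma:conv_U_R}).

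\textbf{Expected obstacle.} The only subtle point is the pointwise gradient decay $|\nabla w_k(x)|\leq\const\,|x|^{-N}$, needed solely to discard the $S_\rho^+$-contribution in the Green identity for \eqref{eqn:f_R_lemma_a_th2}; everything else reduces to the finite-energy property \eqref{eqn:Phi_3} together with the Hardy inequality already invoked in Section \ref{section:prelim}.
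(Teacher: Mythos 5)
Your proof is correct and follows essentially the same route as the paper: for \eqref{eqn:f_R_lemma_a_th1} you bound the boundary integral by the Dirichlet energy of a cutoff of $\Phi-\psi_k=w_k$ and invoke Hardy's inequality together with \eqref{eqn:Phi_3}, and for \eqref{eqn:f_R_lemma_a_th2} you establish the identity $I_2(R)=\int_{\R^N_+\setminus B_R^+}|\nabla w_k|^2\dx$ by Green's identity, which is what the paper states without proof. Two small remarks: (i) your observation that testing with $Z_R-\psi_k$ itself gives $I_1(R)=\int_{B_R^+}|\nabla(Z_R-\psi_k)|^2\dx$ is a clean shortcut compared to the paper's test-then-Cauchy--Schwarz-then-Dirichlet argument, but they are equivalent; (ii) the pointwise gradient decay $|\nabla w_k|\leq C|x|^{-N}$ you flag as the ``only subtle point'' does follow from interior estimates after odd reflection, as you say, but one can also avoid pointwise bounds altogether by justifying the Green identity on the exterior region via a cutoff argument (testing $-\Delta w_k=0$ with $\xi_\rho w_k$ and using Hardy plus \eqref{eqn:Phi_3} to kill the error), exactly in the spirit of what you already do for Part 1.
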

  \begin{proof}
   In order to prove \eqref{eqn:f_R_lemma_a_th1}, we first take into account the equation solved by $Z_R-\psi_k$, i.e.
   \begin{equation}\label{eqn:f_R_lemma_a1}
    \left\{\begin{aligned}
     -\Delta(Z_R-\psi_k) &=0,& & \text{in }B_R^+, \\
     Z_R-\psi_k &=0 ,&& \text{on }\mathcal{C}_R ,\\
     Z_R-\psi_k &=\Phi-\psi_k ,&& \text{on }S_R^+.
    \end{aligned}\right.
   \end{equation}
  Let $\eta=\eta_R$ as defined in \eqref{eqn:def_cut_off}. Testing
  \eqref{eqn:f_R_lemma_a1} with $\eta(\Phi-\psi_k)$, we obtain that
   \begin{equation*}
    \int_{B_R^+}\nabla(Z_R-\psi_k)\cdot \nabla (\eta(\Phi-\psi_k))\dx=\int_{S_R^+}\frac{\partial(Z_R-\psi_k)}{\partial \nnu}(\Phi-\psi_k)\ds.
   \end{equation*}
   Then, by the Dirichlet principle,
   \begin{gather*}
    \int_{S_R^+}\frac{\partial(Z_R-\psi_k)}{\partial \nnu}(\Phi-\psi_k)\ds \leq \sqrt{\int_{B_R^+}\abs{\nabla(Z_R-\psi_k)}^2\dx}\sqrt{\int_{B_R^+}\abs{\nabla(\eta(\Phi-\psi_k))}^2\dx}\leq \\
     \leq \int_{B_R^+}\abs{\nabla(\eta(\Phi-\psi_k))}^2\dx\leq \frac{32}{R^2}\int_{B_R^+\setminus B_{R/2}^+}\abs{\Phi-\psi_k}^2\dx+2\int_{B_R^+\setminus B_{R/2}^+}\abs{\nabla(\Phi-\psi_k)}^2\dx\to 0    
   \end{gather*}
   as $R\to+\infty$, thanks to the fact that $\Phi-\psi_k\in \mathcal{D}^{1,2}(\Pi)$ and to Hardy's inequality (reasoning as in Lemma \ref{lemma:conv_U_R}).
   
   For the second part, since $-\Delta(\Phi-\psi_k)=0$ in $\Pi\setminus \Pi_R$ and $\Phi-\psi_k=0$ on $\{x_1=0\}\setminus\Sigma$, then
   \[
    \int_{S_R^+}\frac{\partial(\psi_k-\Phi)}{\partial \nnu}(\Phi-\psi_k)\ds=\int_{\Pi\setminus\Pi_R}\abs{\nabla (\Phi-\psi_k)}^2\dx\to 0
   \]
   as $R\to+\infty$. 
  \end{proof}

\begin{lemma}\label{lemma:f_R}
 We have that $ \lim_{R\to+\infty}\Lambda_\rho f_R=-2m_k(\Sigma)$.
\end{lemma}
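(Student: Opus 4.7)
The plan is to evaluate $\Lambda_\rho f_R = \int_{B_R^+}\abs{\nabla Z_R}^2\dx - \int_{\Pi_R}\abs{\nabla\Phi}^2\dx$ by turning it into a surface integral on $S_R^+$ via integration by parts, decomposing $\Phi = \psi_k + (\Phi-\psi_k)$ so that the cross-terms involving $\Phi-\psi_k$ vanish in the limit thanks to Lemma \ref{lemma:f_R_a}, and explicitly computing the remaining $\psi_k$-weighted fluxes via the Fourier projections $\zeta$ and $\xi_R$.

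First I would integrate by parts. Since $Z_R$ is harmonic in $B_R^+$ with $Z_R = 0$ on $\mathcal{C}_R$ and $Z_R = \Phi$ on $S_R^+$, and $\Phi$ is harmonic in $\Pi_R$ with $\Phi = 0$ on $\partial\Pi\cap\partial\Pi_R$ (with sufficient decay as $x_1\to-\infty$ along the tube, since $\Phi = w_k \in \mathcal{D}^{1,2}(\Pi)$ there), I obtain $\int_{B_R^+}\abs{\nabla Z_R}^2 = \int_{S_R^+}\Phi\,\partial_{\nnu}Z_R\,\ds$ and $\int_{\Pi_R}\abs{\nabla\Phi}^2 = \int_{S_R^+}\Phi\,\partial_{\nnu}\Phi\,\ds$. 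Writing $\Phi = \psi_k + (\Phi-\psi_k)$ and $\partial_{\nnu}(Z_R-\Phi) = \partial_{\nnu}(Z_R-\psi_k)+\partial_{\nnu}(\psi_k-\Phi)$ expands $\Lambda_\rho f_R$ into four surface integrals; the two containing the factor $(\Phi-\psi_k)$ tend to zero as $R\to+\infty$ by Lemma \ref{lemma:f_R_a}.

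The remaining terms $\int_{S_R^+}\psi_k\,\partial_{\nnu}(Z_R-\psi_k)\,\ds$ and $\int_{S_R^+}\psi_k\,\partial_{\nnu}(\psi_k-\Phi)\,\ds$ I would compute via the projections $\zeta$ and $\xi_R$. Projecting $-\Delta Z_R = 0$ against the spherical harmonic $\Psi$ of degree $k$ gives the Euler-type equation $\bigl(r^{N+2k-1}(r^{-k}\xi_R(r))'\bigr)' = 0$ on $(0,R)$; regularity of $Z_R$ at the origin eliminates the singular mode and forces $\xi_R(r) = r^k R^{-k}\zeta(R)$ (using the boundary value $\xi_R(R) = \zeta(R)$ since $Z_R = \Phi$ on $S_R^+$). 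A direct computation in polar coordinates then yields
\[
 \int_{S_R^+}\psi_k\,\partial_{\nnu}Z_R\,\ds = R^{N+k-1}\xi_R'(R) = kR^{N+k-2}\zeta(R),
\]
as well as $\int_{S_R^+}\psi_k\,\partial_{\nnu}\psi_k\,\ds = kR^{N+2k-2}\gamma_N$ and $\int_{S_R^+}\psi_k\,\partial_{\nnu}\Phi\,\ds = R^{N+k-1}\zeta'(R)$. Substituting the explicit formula \eqref{eqn:zeta_r_7} for $\zeta(R)$ and \eqref{eqn:zeta'_r} for $\zeta'(R)$, the divergent $R^{N+2k-2}$-contributions cancel exactly within each of the two surface integrals, leaving the $R$-independent values $k(\zeta(1)-\gamma_N)$ and $(N+k-2)(\zeta(1)-\gamma_N)$ respectively. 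Summing and invoking Lemma \ref{lemma:zeta_1} to write $\zeta(1)-\gamma_N = -\tfrac{2m_k(\Sigma)}{N+2k-2}$ gives
\[
 \lim_{R\to+\infty}\Lambda_\rho f_R = (N+2k-2)(\zeta(1)-\gamma_N) = -2m_k(\Sigma),
\]
as required.

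The main subtlety I foresee is not conceptual but careful: justifying the integration by parts over the unbounded region $\Pi_R$ requires showing that the flux of $\Phi\nabla\Phi$ through the cross-section $\{x_1=-L\}\cap T_1^-$ vanishes as $L\to+\infty$, which follows from $w_k\in\mathcal{D}^{1,2}(\Pi)$ combined with Hardy's inequality (the 2-dimensional Hardy inequality \eqref{eqn:hardy2} in the case $N=2$). The rest is essentially bookkeeping with the ODE for $\xi_R$, but it is noteworthy that the divergent leading-order $R^{N+2k-2}$-terms cancel twice, once inside each remaining surface integral.
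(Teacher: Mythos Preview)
Your proposal is correct and follows essentially the same approach as the paper's proof: both reduce $\Lambda_\rho f_R$ to the surface integral $\int_{S_R^+}(\partial_{\nnu}Z_R-\partial_{\nnu}\Phi)\Phi\,\ds$ via integration by parts, split $\Phi=\psi_k+(\Phi-\psi_k)$ and use Lemma~\ref{lemma:f_R_a} to discard the $(\Phi-\psi_k)$-weighted terms, then compute the $\psi_k$-weighted fluxes through the ODEs for $\zeta$ and $\xi_R$ together with \eqref{eqn:zeta_r_7}, \eqref{eqn:zeta'_r}, and Lemma~\ref{lemma:zeta_1}. Your organization of the final arithmetic (observing cancellation of the $R^{N+2k-2}$ terms separately in the two pieces $\int_{S_R^+}\psi_k\,\partial_{\nnu}(Z_R-\psi_k)$ and $\int_{S_R^+}\psi_k\,\partial_{\nnu}(\psi_k-\Phi)$) is a minor and inessential variant of the paper's computation, and your explicit mention of the flux-at-infinity issue for the integration by parts over the unbounded $\Pi_R$ is a point the paper passes over silently.
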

\begin{proof}
 Thanks to Lemma \ref{lemma:f_R_a} we know that
 \begin{equation}\label{eq:12}
  \lim_{R\to+\infty}\Lambda_\rho f_R=\lim_{R\to +\infty}\int_{S_R^+}\left( \frac{\partial Z_R}{\partial \nnu}-\frac{\partial \Phi}{\partial \nnu} \right)\psi_k\ds.
\end{equation}
 From the definition of $\zeta$ \eqref{eqn:def_zeta_r} and from \eqref{eqn:zeta'_r} we deduce that
\begin{equation}\label{eq:13}
  \int_{S_R^+}\frac{\partial \Phi}{\partial \nnu} \psi_k\ds=R^{N+k-1}\zeta'(R)=k\gamma_N R^{N+2k-2}+(N+k-2)(\gamma_N-\zeta(1)).
\end{equation}
 It's easy to verify that the function $\xi_R$ defined in
 \eqref{eqn:def_xi_R} satisfies the following ODE
 \[
  \left( r^{N+2k-1}(r^{-k}\xi_R(r))' \right)'=0\qquad\text{in }(0,R).
 \]
 By integration, we obtain
 \[
  r^{N+k-2}\xi_R(r)=r^{N+2k-2}R^{-k}\xi_R(R)-\frac{C}{N+2k-2}+\frac{C}{N+2k-2}r^{N+2k-2}R^{-N-2k+2}.
 \]
 Since $Z_R$ is regular at $0$, we have necessarily that $C=0$; hence
 \begin{equation*}
  \xi_R(r)=\left( \frac{r}{R} \right)^k \xi_R(R).
 \end{equation*}
 From the definition of $\xi_R$ \eqref{eqn:def_xi_R} we have
 \begin{equation}\label{eqn:f_R_1}
  \int_{S_R^+}\frac{\partial Z_R}{\partial \nnu}\psi_k\ds=R^{N+k-1}\xi_R'(R)=k R^{N+k-2}\xi_R(R)=k R^{N+k-2} \zeta(R).
 \end{equation}
 Then, from \eqref{eq:12}, \eqref{eq:13}, \eqref{eqn:f_R_1}, \eqref{eqn:zeta_r_7} and \eqref{eqn:th_lemma_zeta_1}
 \begin{align*}
  \lim_{R\to +\infty} \Lambda_\rho f_R&=\lim_{R\to +\infty} \left(
    kR^{N+k-2}\zeta(R)-
k\gamma_N R^{N+2k-2}-(N+k-2)(\gamma_N-\zeta(1))
\right) \\
&  = \lim_{R\to +\infty} R^{N+k-2}(N+2k-2)(\zeta(R)-\gamma_N R^k)\\
&=  (N+2k-2)(\zeta(1)-\gamma_N) = -2m_k(\Sigma),
 \end{align*}
 thus concluding the proof.
\end{proof}

We are now able to prove our main result.
\begin{proof}[Proof of Theorem \ref{thm:main_1}]
 From \eqref{eqn:liminf_eps} and Lemma \ref{lemma:f_R} we conclude that
  \[
   \liminf_{\epsilon\to 0}\frac{\lambda_j-\lambda_j^\epsilon}{\epsilon^{N+2k-2}}=\limsup_{\epsilon\to 0}\frac{\lambda_j-\lambda_j^\epsilon}{\epsilon^{N+2k-2}}=C_k(\Sigma),
  \]
  thus completing the proof. 
\end{proof}

\section{Appendix}

 It is well known that the classical Hardy's Inequality
 \[
  \left(\frac{N-2}{2}\right)^2\int_{\R^N}\frac{\abs{u}^2}{\abs{x}^2}\dx\leq
  \int_{\R^N}\abs{\nabla u}^2\dx,\quad u\in C^{\infty}_c(\R^N),\quad N\geq 3,
 \]
 fails in dimension 2. However we observe, in the following theorem,
 that, under a vanishing condition on part of the domain (at least on a
 half-line), it is possible to recover a Hardy-type Inequality even in
 dimension 2.

 Let $\mathbf{p}=(x_\mathbf{p},0)\in \R^2$ with $x_\mathbf{p}>0$ and
 let $s_\mathbf{p}:=\{(x,0)\colon x\geq x_\mathbf{p}\}$. Let
 $\mathcal{D}_\mathbf{p}$ denote the completion of the space
 $C_c^\infty(\R^2\setminus s_{\mathbf{p}})$ with respect to the norm
  \[
    \norm{u}_{\mathcal{D}_\mathbf{p}}:=\left( \int_{\R^2}\abs{\nabla u}^2\dx \right)^{1/2}.
  \]
  Let us consider  the function
  \[
    \theta_\mathbf{p}\colon \R^2\setminus s_\mathbf{p}\longrightarrow
    (0,2\pi),
\quad 
    \theta_\mathbf{p}(x_\mathbf{p}+r\cos t, r\sin t)= t.
  \]
  We have that $\theta_\mathbf{p}\in C^\infty(\R^2\setminus s_\mathbf{p})$.
  \begin{theorem}\label{thm:2dim_hardy}
    For all $\phi\in C_c^\infty(\R^2\setminus s_{\mathbf{p}})$ 
\begin{equation}\label{eq:14}
      \frac{1}{4}\int_{\R^2}\frac{\abs{\phi(z)}^2}{\abs{z-\mathbf{p}}^2}\dz\leq \int_{\R^2}\abs{\nabla\phi(z)}^2\dz.
    \end{equation}
Moreover the space $\mathcal{D}_\mathbf{p}$ can be  characterized as
\[
\mathcal{D}_\mathbf{p}=\left\{u\in L^1_{\rm loc}(\R^2):\nabla u\in
L^2(\R^2),\ \tfrac{u}{\abs{z-\mathbf{p}}}\in L^2(\R^2),\text{ and
}u=0\text{ on }s_{\mathbf{p}}\right\}
\]
and inequality \eqref{eq:14} holds for every $\varphi\in \mathcal{D}_\mathbf{p}$.

  \end{theorem}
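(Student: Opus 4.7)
The plan is to work in polar coordinates $(r,\theta)$ centered at $\mathbf{p}$, with $\theta=\theta_{\mathbf p}\in(0,2\pi)$ as introduced in the statement, and then to expand the angular dependence of $\phi$ in a suitable Fourier basis. Precisely, since $\phi\in C_c^\infty(\R^2\setminus s_{\mathbf p})$, for every fixed $r>0$ the function $\theta\mapsto \phi(\mathbf p+r(\cos\theta,\sin\theta))$ belongs to $C_c^\infty(0,2\pi)$, so it admits a convergent expansion
\[
\phi(\mathbf p+r(\cos\theta,\sin\theta))=\sum_{k=1}^{\infty}a_k(r)\sin(k\theta/2),\qquad r>0,\ \theta\in(0,2\pi),
\]
with $a_k\in C_c^\infty(0,+\infty)$. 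This is the right basis because the functions $\sin(k\theta/2)$ form an orthogonal system on $L^2(0,2\pi)$ vanishing at the endpoints $\theta=0$ and $\theta=2\pi$, corresponding to the two sides of $s_{\mathbf p}$.

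The second step is to compute the two sides of \eqref{eq:14} in this representation. Using $|\nabla\phi|^2=|\partial_r\phi|^2+r^{-2}|\partial_\theta\phi|^2$ together with Parseval's identity on $(0,2\pi)$, I would obtain
\[
\int_{\R^2}|\nabla\phi|^2\dz=\pi\sum_{k=1}^{\infty}\int_0^{+\infty}\biggl(|a_k'(r)|^2+\frac{k^2}{4r^2}|a_k(r)|^2\biggr)r\,dr
\]
and, since $|z-\mathbf p|=r$,
\[
\int_{\R^2}\frac{|\phi(z)|^2}{|z-\mathbf p|^2}\dz=\pi\sum_{k=1}^{\infty}\int_0^{+\infty}\frac{|a_k(r)|^2}{r}\,dr.
\]
Dropping the non-negative radial term $|a_k'(r)|^2 r$ and using $k^2/4\geq 1/4$ for every $k\geq 1$ (saturated at $k=1$) immediately yields \eqref{eq:14} with constant $1/4$.

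For the characterization of $\mathcal{D}_{\mathbf p}$, one inclusion is essentially automatic: if $(\phi_n)\subset C_c^\infty(\R^2\setminus s_{\mathbf p})$ is Cauchy for $\|\nabla\cdot\|_{L^2}$, then by \eqref{eq:14} applied to $\phi_n-\phi_m$ the sequence $(\phi_n/|z-\mathbf p|)$ is Cauchy in $L^2(\R^2)$, so every element of $\mathcal{D}_{\mathbf p}$ lies in the set on the right-hand side. The opposite inclusion is the more delicate step: given $u\in L^1_{\rm loc}(\R^2)$ with $\nabla u\in L^2$, $u/|z-\mathbf p|\in L^2$ and $u=0$ on $s_{\mathbf p}$, I would approximate $u$ in $\|\nabla\cdot\|_{L^2}$ by smooth compactly supported functions vanishing near $s_{\mathbf p}$. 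The construction proceeds in three moves: first, a radial cutoff $\eta_R(|z-\mathbf p|)u$ of large-scale support, controlled through \eqref{eq:14} since $|\nabla\eta_R|\leq C/R$; second, an angular cutoff removing a thin neighbourhood of $s_{\mathbf p}$ of angular width $\delta$, whose gradient contribution is controlled again by \eqref{eq:14} because the cutoff only involves factors of order $1/\delta\cdot \mathbf 1_{\{\theta\in(0,\delta)\cup(2\pi-\delta,2\pi)\}}$, and the smallness follows from the assumption $u/|z-\mathbf p|\in L^2$; third, standard mollification in a domain on which we have now moved strictly away from $s_{\mathbf p}$.

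The main obstacle I expect is precisely this angular-cutoff step: unlike in dimension $N\geq3$, where the half-line $s_{\mathbf p}$ is of codimension $2$ and can essentially be ignored by a capacity argument, here $s_{\mathbf p}$ has codimension one and the vanishing trace $u=0$ on $s_{\mathbf p}$ must be exploited quantitatively. The Hardy inequality \eqref{eq:14} itself is precisely the tool that makes this angular truncation work, which gives the proof a pleasantly self-contained circular structure. Once these three approximations are combined via a diagonal procedure, one obtains $u\in\mathcal{D}_{\mathbf p}$ together with \eqref{eq:14} extended to the full space by passing to the limit.
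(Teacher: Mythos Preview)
Your proof of the inequality \eqref{eq:14} is correct but follows a genuinely different route from the paper's. The paper introduces the gauge transformation $\tilde\phi(z)=\phi(z)e^{i\theta_{\mathbf p}(z)/2}$, observes that $i\nabla\phi=e^{-i\theta_{\mathbf p}/2}(i\nabla+\mathrm A_{\mathbf p})\tilde\phi$ where $\mathrm A_{\mathbf p}$ is the Aharonov--Bohm potential with circulation $1/2$, and then invokes the magnetic Hardy inequality of Laptev--Weidl, whose sharp constant $\min_{k\in\mathbb Z}|k-\tfrac12|^2=\tfrac14$ gives exactly \eqref{eq:14}. Your Fourier expansion in $\sin(k\theta/2)$ is, in effect, the computation underlying that magnetic Hardy inequality, done directly without the detour through $\mathrm A_{\mathbf p}$: the half-integer modes $k/2$ are precisely the spectrum of $-i\partial_\theta$ on $(0,2\pi)$ with Dirichlet conditions, which is what the flux-$1/2$ gauge produces. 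So your argument is more self-contained; the paper's is shorter but rests on an external reference.

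For the characterization of $\mathcal D_{\mathbf p}$, the paper only says it ``follows from \eqref{eq:14} by classical completion and density arguments'', so your outline already contains more detail than the text. One remark on your angular-cutoff step: the bound you quote, namely that $|\nabla\chi_\delta|^2|u|^2$ integrates to something like $\delta^{-2}\int_{\{\theta\in(0,\delta)\cup(2\pi-\delta,2\pi)\}}|u|^2/|z-\mathbf p|^2\,dz$, does \emph{not} tend to zero merely because $u/|z-\mathbf p|\in L^2$; dominated convergence kills the integral but not the prefactor $\delta^{-2}$. What makes it work is precisely the trace condition $u=0$ on $s_{\mathbf p}$, which via the one-dimensional Poincar\'e inequality $\int_0^\delta|u(r,\theta)|^2\,d\theta\le \delta^2\int_0^\delta|\partial_\theta u(r,\theta)|^2\,d\theta$ yields $\delta^{-2}\int_{\text{strip}}|u|^2/r^2\cdot r\,dr\,d\theta\le \int_{\text{strip}}|\nabla u|^2\,dz\to 0$. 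You flag this issue in words (``the vanishing trace must be exploited quantitatively'') but the mechanism you name just below is the wrong one; the correction is minor.
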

  \begin{proof}
    Let $\phi\in C_c^\infty(\R^2\setminus s_\mathbf{p})$ and let
    $\tilde{\phi}(z):=\phi(z) e^{i\frac{\theta_\mathbf{p}(z)}{2}}\in
    C_c^\infty(\R^2\setminus s_\mathbf{p},{\mathbb C})$.
    By direct calculations, we have that
   \[
     i\nabla \phi(z)=e^{-i\frac{\theta_p(z)}{2}}(i\nabla +\mathrm{A}_{\mathbf{p}})\tilde{\phi}(z),
   \]
   where
   \[
    \mathrm{A}_{\mathbf{p}}(x,y):=\frac{1}{2}\left( \frac{-y}{(x-x_\mathbf{p})^2+y^2},\frac{x-x_\mathbf{p}}{(x-x_\mathbf{p})^2+y^2} \right)
   \]
is the Aharonov-Bohm vector potential with pole $\mathbf{p}$ and
circulation $1/2$. 
   Now let us compute the $L^2$-norm of $\abs{i\nabla \phi}$ and use the Hardy's Inequality for Aharonov-Bohm operators (see \cite{Laptev1999}):
   \[
     \int_{\R^2}\abs{\nabla\phi}^2\dz =\int_{\R^2}\abs{(i\nabla +\mathrm{A}_{\mathbf{p}})\tilde{\phi}}^2\dz\geq \frac{1}{4}\int_{\R^2}\frac{\abs{\tilde{\phi}}^2}{\abs{z-\mathbf{p}}^2}\dz =\frac{1}{4}\int_{\R^2}\frac{\abs{\phi}^2}{\abs{z-\mathbf{p}}^2}\dz.
   \]
The second part of the statement follows from \eqref{eq:14} by classical completion and  density arguments.  
   \end{proof}
  \begin{corollary}\label{cor:2dim_hardy}
    There exists $C=C(\mathbf{p})>0$ such that
    \begin{equation}\label{eqn:hardy2}
      \int_{\R^2}\frac{\abs{\phi(z)}^2}{1+\abs{z}^2}\dz\leq C\int_{\R^2}\abs{\nabla\phi(z)}^2\dz\quad\text{for all } \phi\in\mathcal{D}_\mathbf{p}.
    \end{equation}
  \end{corollary}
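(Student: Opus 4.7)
The plan is to reduce Corollary \ref{cor:2dim_hardy} to the weighted Hardy inequality \eqref{eq:14} in Theorem \ref{thm:2dim_hardy} by a pointwise comparison of the two weights $(1+|z|^2)^{-1}$ and $|z-\mathbf{p}|^{-2}$. Since both weights are singularity-free at all points where they are compared, the argument reduces to a global elementary estimate valid uniformly on $\R^2$, with no need to split the domain.

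First I would observe that, by the elementary inequality $|z-\mathbf{p}|^2 \leq 2|z|^2 + 2|\mathbf{p}|^2$, one has
\[
\frac{|z-\mathbf{p}|^2}{1+|z|^2} \leq \frac{2|z|^2}{1+|z|^2} + \frac{2|\mathbf{p}|^2}{1+|z|^2} \leq 2 + 2|\mathbf{p}|^2 \qquad\text{for all }z\in\R^2.
\]
This yields the pointwise bound
\[
\frac{1}{1+|z|^2} \leq \frac{2(1+|\mathbf{p}|^2)}{|z-\mathbf{p}|^2} \qquad\text{for a.e. }z\in\R^2.
\]

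Next I would multiply by $|\phi(z)|^2$ and integrate over $\R^2$, and then invoke Theorem \ref{thm:2dim_hardy} (in particular the Hardy inequality \eqref{eq:14} extended to all $\phi \in \mathcal{D}_\mathbf{p}$) to obtain
\[
\int_{\R^2}\frac{|\phi(z)|^2}{1+|z|^2}\dz \leq 2(1+|\mathbf{p}|^2)\int_{\R^2}\frac{|\phi(z)|^2}{|z-\mathbf{p}|^2}\dz \leq 8(1+|\mathbf{p}|^2)\int_{\R^2}|\nabla\phi(z)|^2\dz,
\]
so that the conclusion \eqref{eqn:hardy2} holds with $C(\mathbf{p}) = 8(1+|\mathbf{p}|^2)$.

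There is essentially no obstacle here: the pointwise inequality comparing the two weights is elementary, and the work has already been done in Theorem \ref{thm:2dim_hardy}, which supplies the Hardy inequality with pole at $\mathbf{p}$ valid on the whole space $\mathcal{D}_\mathbf{p}$. The only point worth a line of explanation is that the density of $C_c^\infty(\R^2\setminus s_\mathbf{p})$ in $\mathcal{D}_\mathbf{p}$ (together with the characterization of $\mathcal{D}_\mathbf{p}$ given at the end of Theorem \ref{thm:2dim_hardy}) allows one to pass from test functions to arbitrary $\phi\in\mathcal{D}_\mathbf{p}$ via Fatou's lemma on the left-hand side.
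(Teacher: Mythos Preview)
Your proof is correct and follows essentially the same approach as the paper: a pointwise comparison $|z-\mathbf{p}|^2 \leq K(|\mathbf{p}|)(1+|z|^2)$ followed by an application of Theorem~\ref{thm:2dim_hardy}. You have simply made the constant explicit, taking $K(|\mathbf{p}|)=2(1+|\mathbf{p}|^2)$ and hence $C(\mathbf{p})=8(1+|\mathbf{p}|^2)$, whereas the paper leaves $K$ unspecified and writes $C(\mathbf{p})=4K(|\mathbf{p}|)$.
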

  \begin{proof}
    We observe that there exists $K=K(\abs{\mathbf{p}})>0$ such that
    \[
    \abs{z-\mathbf{p}}^2\leq K(\abs{\mathbf{p}})(1+\abs{z}^2)
    \quad\text{for all }z\in \R^2.
    \]
    Therefore the claim easily follows from Theorem \ref{thm:2dim_hardy} with $C(\mathbf{p})=4 K(\abs{\mathbf{p}})$.
  \end{proof}
  
  We conclude this appendix by recalling from \cite{Abatangelo2015}
  the following lemma about maxima of quadratic forms depending on a
  parameter, which we used in Section \ref{sec:estim-diff-texorpdfs}.
  
  \begin{lemma}\label{lemma:quadratic_form}
   For every $\epsilon>0$ let us consider a quadratic form
   \[
   \begin{aligned}
   &Q_\epsilon\colon \R^j \longrightarrow \R, \\
    &Q_\epsilon(\xi_1,\dots,\xi_j)=\sum_{i,n=1}^j M_{i,n}(\epsilon)\xi_i \xi_n,
    \end{aligned}
   \]
   with real coefficients $M_{i,n}(\epsilon)$ such that $M_{i,n}(\epsilon)=M_{n,i}(\epsilon)$. Let us assume that there exist $\alpha>0$, $\epsilon\mapsto \sigma(\epsilon)\in\R$ with $\sigma(\epsilon)\geq 0$ and $\sigma(\epsilon)=O(\epsilon^{2\alpha})$ as $\epsilon\to 0$, and $\epsilon\mapsto \mu(\epsilon)\in\R$ with $\mu(\epsilon)=O(1)$ as $\epsilon\to 0$, such that the coefficients $M_{i,n}(\epsilon)$ satisfy the following conditions:
   \begin{align*}
    & M_{j,j}(\epsilon)=\sigma(\epsilon)\mu(\epsilon), \\
    & \text{for all }i<j~M_{i,i}(\epsilon)\to M_i<0,~\text{as }\epsilon\to 0, \\
    & \text{for all }i<j~M_{i,j}(\epsilon)=O(\epsilon^{\alpha}\sqrt{\sigma(\epsilon)})~\text{as }\epsilon\to 0, \\
    & \text{for all }i,n<j~\text{with }i\neq n~M_{i,n}=O(\epsilon^{2\alpha})~\text{as }\epsilon\to 0, \\
    & \text{there exists }M\in \mathbb{N}~\text{such that }\epsilon^{(2+M)\alpha}=o(\sigma(\epsilon))~\text{as }\epsilon\to 0.
   \end{align*}
   Then
   \[
    \max_{\substack{\xi \in\R^j \\ \norm{\xi}=1}}Q_\epsilon(\xi)=\sigma(\epsilon)(\mu(\epsilon)+o(1))\quad\text{as }\epsilon\to 0,
   \]
   where $\norm{\xi}=\norm{(\xi_1,\dots,\xi_j)}=\big( \sum_{i=1}^j\xi_i^2 \big)^{1/2}$.   
  \end{lemma}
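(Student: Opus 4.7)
The plan is to establish matching upper and lower bounds for $\max_{\|\xi\|=1} Q_\epsilon(\xi)$. The lower bound is immediate: evaluating the quadratic form at the $j$-th canonical basis vector $e_j=(0,\dots,0,1)\in\R^j$ gives $Q_\epsilon(e_j)=M_{j,j}(\epsilon)=\sigma(\epsilon)\mu(\epsilon)$, whence
\[
\max_{\|\xi\|=1} Q_\epsilon(\xi)\geq \sigma(\epsilon)\mu(\epsilon)=\sigma(\epsilon)(\mu(\epsilon)+o(1))\quad\text{as }\epsilon\to 0.
\]

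For the upper bound, I would take an arbitrary $\xi=(\xi_1,\dots,\xi_j)$ with $\|\xi\|=1$ and use the constraint $\xi_j^2=1-\sum_{i<j}\xi_i^2$ to rewrite
\[
Q_\epsilon(\xi)-\sigma(\epsilon)\mu(\epsilon)
=\sum_{i<j}\bigl[M_{i,i}(\epsilon)-\sigma(\epsilon)\mu(\epsilon)\bigr]\xi_i^2
+2\sum_{i<j}M_{i,j}(\epsilon)\xi_i\xi_j
+2\sum_{i<n<j}M_{i,n}(\epsilon)\xi_i\xi_n.
\]
Since $M_{i,i}(\epsilon)-\sigma(\epsilon)\mu(\epsilon)\to M_i<0$, for $\epsilon$ small these coefficients are $\leq M_i/2<0$. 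The last assumption $\epsilon^{(2+M)\alpha}=o(\sigma(\epsilon))$ forces $\sigma(\epsilon)>0$ for $\epsilon$ small, so the Young weight $\delta(\epsilon)=1/\sqrt{\sigma(\epsilon)}$ is well-defined. Applying Young's inequality with this weight to the mixed terms involving $\xi_j$ gives
\[
|2M_{i,j}(\epsilon)\xi_i\xi_j|\leq |M_{i,j}(\epsilon)|\bigl(\sigma(\epsilon)^{-1/2}\xi_i^2+\sigma(\epsilon)^{1/2}\xi_j^2\bigr)\leq C\epsilon^\alpha\xi_i^2+C\epsilon^\alpha\sigma(\epsilon)\xi_j^2,
\]
while the ordinary Young inequality bounds $|2M_{i,n}(\epsilon)\xi_i\xi_n|\leq C\epsilon^{2\alpha}(\xi_i^2+\xi_n^2)$ for $i,n<j$, $i\neq n$. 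Adding everything, each $\xi_i^2$ with $i<j$ carries a coefficient of the form $M_{i,i}(\epsilon)-\sigma(\epsilon)\mu(\epsilon)+O(\epsilon^\alpha)$, which is $\leq M_i/2<0$ for $\epsilon$ small; consequently the whole $\sum_{i<j}(\cdots)\xi_i^2$ part is nonpositive. Using $\xi_j^2\leq 1$, the residual error is $O(\epsilon^\alpha\sigma(\epsilon))=o(\sigma(\epsilon))$, so
\[
Q_\epsilon(\xi)\leq\sigma(\epsilon)\mu(\epsilon)+o(\sigma(\epsilon))=\sigma(\epsilon)(\mu(\epsilon)+o(1))
\]
uniformly in $\xi$ on the unit sphere, which matches the lower bound and proves the claim.

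The main obstacle is precisely the calibration of the Young weight $\delta(\epsilon)$. A constant choice would leave cross terms of size $\epsilon^\alpha\sqrt{\sigma(\epsilon)}$, a residue of order $\sqrt{\sigma(\epsilon)}\gg\sigma(\epsilon)$, and the target asymptotic would be lost. The rescaling $\delta=1/\sqrt{\sigma(\epsilon)}$ is dictated by the off-diagonal scaling $M_{i,j}(\epsilon)=O(\epsilon^\alpha\sqrt{\sigma(\epsilon)})$: it produces the balanced split in which the $\xi_i^2$-side is $O(\epsilon^\alpha)$ (absorbable by the strictly negative diagonal block contributed by $M_1,\dots,M_{j-1}$), while the $\xi_j^2$-side is $O(\epsilon^\alpha\sigma(\epsilon))$, exactly of the target error order. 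The hypothesis $\epsilon^{(2+M)\alpha}=o(\sigma(\epsilon))$ enters precisely to guarantee $\sqrt{\sigma(\epsilon)}$ is bounded below by a positive quantity, so this weight is legitimate.
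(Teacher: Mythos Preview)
Your argument is correct. The paper does not actually prove this lemma: it is stated in the appendix and attributed to \cite{Abatangelo2015}, with no proof given. So there is no ``paper's own proof'' to compare against here.

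Your approach is a clean, self-contained elementary argument. The decomposition
\[
Q_\epsilon(\xi)-\sigma(\epsilon)\mu(\epsilon)
=\sum_{i<j}\bigl[M_{i,i}(\epsilon)-\sigma(\epsilon)\mu(\epsilon)\bigr]\xi_i^2
+2\sum_{i<j}M_{i,j}(\epsilon)\xi_i\xi_j
+2\sum_{i<n<j}M_{i,n}(\epsilon)\xi_i\xi_n
\]
(obtained by substituting $\xi_j^2=1-\sum_{i<j}\xi_i^2$ into the $M_{j,j}$ term) is exactly right, and the weighted Young inequality with $\delta=\sigma(\epsilon)^{-1/2}$ is the correct calibration: it turns the dangerous cross terms $M_{i,j}\xi_i\xi_j$ into an $O(\epsilon^\alpha)\xi_i^2$ piece absorbed by the strictly negative diagonal block, plus an $O(\epsilon^\alpha\sigma(\epsilon))\xi_j^2=o(\sigma(\epsilon))$ remainder. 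All error terms are uniform in $\xi$ on the unit sphere, so the upper bound matches the trivial lower bound $Q_\epsilon(e_j)=\sigma(\epsilon)\mu(\epsilon)$.

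One minor comment on your closing remark: the fifth hypothesis $\epsilon^{(2+M)\alpha}=o(\sigma(\epsilon))$ does not give a uniform positive lower bound on $\sigma(\epsilon)$, only that $\sigma(\epsilon)>0$ for all sufficiently small $\epsilon$ (which is all you need to make $\sigma(\epsilon)^{-1/2}$ well-defined). In fact your proof shows that this hypothesis is used only for that purpose; the specific polynomial rate is not exploited anywhere in your argument, so your proof actually works under the weaker assumption that $\sigma(\epsilon)>0$ eventually.
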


 \paragraph{Acknowledgements} The authors are partially supported by the INDAM-GNAMPA 2018 grant ``Formula di monotonia e applicazioni: problemi frazionari e stabilit\`a spettrale rispetto a perturbazioni del dominio''.
 V. Felli is partially supported by the PRIN 2015
grant ``Variational methods, with applications to problems in
mathematical physics and geometry''.

\bibliographystyle{siam}

\end{document}